\documentclass[12pt]{amsart}

\setlength{\textwidth}{16.5cm}       %use more textwidth
\setlength{\oddsidemargin}{0cm}  
\setlength{\evensidemargin}{0cm}

\usepackage{amsthm,amsmath,amssymb,verbatim,dsfont}                      
\usepackage[all]{xy}
\usepackage{mathdots}
\usepackage{mathtools}
\usepackage{tikz} \usetikzlibrary{matrix,arrows}\pgfrealjobname{dynkingrassmann2}  
\usepackage{extarrows}
\usepackage[linktoc=page,colorlinks,linkcolor={red!80!black},citecolor={red!80!black},urlcolor={blue!80!black}]{hyperref}
 
% the jobname has to coincide with the file name without the extension .tex in order to precompile images from tikz. To pre-compile images, change to the directory of this file, create a subfolder ./tikz and enter 
%    pdflatex --jobname=tikz/fig# dynkingrassmann1.tex
% with # standing for the number of the picture (ranges from 1 to 27)

\usepackage{mathptmx}                                                              %Times font for text and formulas
\DeclareSymbolFont{cmletters}{OML}{cmm}{m}{it}                                     %Replace and add symbols to the fonts
\DeclareSymbolFont{cmsymbols}{OMS}{cmsy}{m}{n}
\DeclareSymbolFont{cmlargesymbols}{OMX}{cmex}{m}{n}
\DeclareMathSymbol{\myjmath}{\mathord}{cmletters}{"7C}     \let\jmath\myjmath %Defining the missing commands: \jmath, \amalg and \coprod
\DeclareMathSymbol{\myamalg}{\mathbin}{cmsymbols}{"71}     
\DeclareMathSymbol{\mycoprod}{\mathop}{cmlargesymbols}{"60}\let\coprod\mycoprod
\DeclareMathSymbol{\myalpha}{\mathord}{cmletters}{"0B}     \let\alpha\myalpha %Greek letters from Computer Modern (since the ones from mathptmx are too large)
\DeclareMathSymbol{\mybeta}{\mathord}{cmletters}{"0C}      \let\beta\mybeta
\DeclareMathSymbol{\mygamma}{\mathord}{cmletters}{"0D}     \let\gamma\mygamma
\DeclareMathSymbol{\mydelta}{\mathord}{cmletters}{"0E}     \let\delta\mydelta
\DeclareMathSymbol{\myepsilon}{\mathord}{cmletters}{"0F}   \let\epsilon\myepsilon
\DeclareMathSymbol{\myzeta}{\mathord}{cmletters}{"10}      \let\zeta\myzeta
\DeclareMathSymbol{\myeta}{\mathord}{cmletters}{"11}       \let\eta\myeta
\DeclareMathSymbol{\mytheta}{\mathord}{cmletters}{"12}     \let\theta\mytheta
\DeclareMathSymbol{\myiota}{\mathord}{cmletters}{"13}      \let\iota\myiota
\DeclareMathSymbol{\mykappa}{\mathord}{cmletters}{"14}     \let\kappa\mykappa
\DeclareMathSymbol{\mylambda}{\mathord}{cmletters}{"15}    \let\lambda\mylambda
\DeclareMathSymbol{\mymu}{\mathord}{cmletters}{"16}        \let\mu\mymu
\DeclareMathSymbol{\mynu}{\mathord}{cmletters}{"17}        \let\nu\mynu
\DeclareMathSymbol{\myxi}{\mathord}{cmletters}{"18}        \let\xi\myxi
\DeclareMathSymbol{\mypi}{\mathord}{cmletters}{"19}        \let\pi\mypi
\DeclareMathSymbol{\myrho}{\mathord}{cmletters}{"1A}       \let\rho\myrho
\DeclareMathSymbol{\mysigma}{\mathord}{cmletters}{"1B}     \let\sigma\mysigma
\DeclareMathSymbol{\mytau}{\mathord}{cmletters}{"1C}       \let\tau\mytau
\DeclareMathSymbol{\myupsilon}{\mathord}{cmletters}{"1D}   \let\upsilon\myupsilon
\DeclareMathSymbol{\myphi}{\mathord}{cmletters}{"1E}       \let\phi\myphi
\DeclareMathSymbol{\mychi}{\mathord}{cmletters}{"1F}       \let\chi\mychi
\DeclareMathSymbol{\mypsi}{\mathord}{cmletters}{"20}       \let\psi\mypsi
\DeclareMathSymbol{\myomega}{\mathord}{cmletters}{"21}     \let\omega\myomega
\DeclareMathSymbol{\myvarepsilon}{\mathord}{cmletters}{"22}\let\varepsilon\myvarepsilon
\DeclareMathSymbol{\myvartheta}{\mathord}{cmletters}{"23}  \let\vartheta\myvartheta
\DeclareMathSymbol{\myvarpi}{\mathord}{cmletters}{"24}     \let\varpi\myvarpi
\DeclareMathSymbol{\myvarrho}{\mathord}{cmletters}{"25}    \let\varrho\myvarrho
\DeclareMathSymbol{\myvarsigma}{\mathord}{cmletters}{"26}  \let\varsigma\myvarsigma
\DeclareMathSymbol{\myvarphi}{\mathord}{cmletters}{"27}    \let\varphi\myvarphi

\newcommand{\Sc}[2]{\langle #1,#2\rangle}
\newcommand{\Hom}{\mathrm{Hom}} 
\newcommand{\Ext}{\mathrm{Ext}}

\newcommand{\Rep}{\mathrm{Rep}}

\newcommand{\arst}{p\xlongrightarrow{v}q}
\newcommand{\ses}[3]{0\rightarrow #1\rightarrow #2\rightarrow#3\rightarrow 0}

\def\op{{\mathrm{op}}}

\theoremstyle{plain}
\newtheorem{thm}{Theorem}[section]
\newtheorem{cor}[thm]{Corollary}
\newtheorem{lemma}[thm]{Lemma}
\newtheorem{prop}[thm]{Proposition}

\newtheorem*{thmA}{Theorem A}
\newtheorem*{thmB}{Theorem B}

\newtheorem*{thmC}{Theorem C}
\theoremstyle{definition}
\newtheorem{df}[thm]{Definition}
\newtheorem{rem}[thm]{Remark}
\newtheorem{ex}[thm]{Example}

                      %enumerations: like (i),(ii),...

\numberwithin{equation}{section}

\def\A{{\mathbb A}}

\def\C{{\mathbb C}}
\def\F{{\mathbb F}}

\def\N{{\mathbb N}}
\def\P{{\mathbb P}}
\def\Q{{\mathbb Q}}

\def\Z{{\mathbb Z}}

\def\cB{{\mathcal B}}

\def\cE{{\mathcal E}}

\def\cV{{\mathcal V}}

\def\udim{{\underline{\dim}\, }}

\DeclareMathOperator{\Gr}{Gr}

\setcounter{tocdepth}{1}   % depth of the toc-structure

\begin{document}

\title[Quiver Grassmannians of type $\widetilde{D}_n$]{Quiver Grassmannians of type $\widetilde{D}_n$,\\[10pt] Part 2: Schubert decompositions and $F$-polynomials}

\date{}

\author{Oliver Lorscheid}
\address{Instituto Nacional de Matem\'atica Pura e Aplicada, Estrada Dona Castorina 110, Rio de Janeiro, Brazil}
\email{oliver@impa.br}

\author{Thorsten Weist}
\address{Bergische Universit\"at Wuppertal, Gau\ss str.\ 20, 42097 Wuppertal, Germany}
\email{weist@uni-wuppertal.de}

\begin{abstract}
Extending the main result of \cite{LW}, in the first part of this paper we show that every quiver Grassmannian of an indecomposable representation of a quiver of type $\tilde D_n$ has a decomposition into affine spaces. In the case of real root representations of small defect, the non-empty cells are in one-to-one correspondence to certain, so called non-contradictory, subsets of the vertex set of a fixed tree-shaped coefficient quiver. In the second part, we use this characterization to determine the generating functions of the Euler characteristics of the quiver Grassmannians (resp. $F$-polynomials). Along these lines, we obtain explicit formulae for all cluster variables of cluster algebras coming from quivers of type $\tilde D_n$. 
\end{abstract}

\maketitle

\begin{small}  
 \tableofcontents 
\end{small}

\section*{Introduction} 
\noindent In this paper we continue the consideration of quiver Grassmannians of type $\tilde D_n$ initiated in \cite{LW}. Denoting the unique imaginary Schur root by $\delta$, there it is shown that every quiver Grassmannian of  a real root representation of dimension $\alpha$ with $\Sc{\delta}{\alpha}=-1$ has a cell decomposition into affine spaces. Moreover, it is also shown that this is true for every indecomposable representation lying in an exceptional tube and for every Schur representation of dimension $\delta$.

Passing to dual representations, this result can be easily extended to all indecomposable real root representations of dimension $\alpha$ of small defect, i.e. $|\Sc{\delta}{\alpha}|\leq 1$. 
We use this result to obtain the first main result of this paper, which says that this statement is in fact true for every indecomposable representation of type $\tilde D_n$. 

The focus of the second part of the paper is on the generating functions of the Euler characteristics of quiver Grassmannians (resp. $F$-polynomials) of indecomposable representations of $\tilde D_n$, i.e. for a fixed representation $M$ of $Q$, we consider 
\[F_M(x)=\sum_{e\in\N Q_0}\chi(\Gr_e(M))x^e.\] Thanks to the Caldero-Chapoton-formula, see \cite{cc} and \cite{calkel2}, this also builds the bridge to cluster algebras, which were introduced in \cite{Fomin-Zelevinsky02} and whose theory developed rapidly within the last ten years. We refer to the introduction of the first part (\cite{LW}) for more details.

Initially, we use the combinatorial description of the non-empty cells in terms of non-contra\-dictory subsets of a particular coefficient quiver in \cite{LW} to obtain explicit formulae for the $F$-polynomials of indecomposable representations of small defect. Surprisingly, it turns out that the $F$-polynomials of all preprojective (resp. preinjective) representations of small defect only depend on the $F$-polynomials of certain preprojective (resp. preinjective) representations, whose dimension vector is smaller than the imaginary Schur root $\delta$, and the $F$-polynomials of representations lying in homogeneous tubes. For the latter ones, we also have explicit descriptions which then gives explicit formulae for all $F$-polynomials. For representations lying in exceptional tubes an analogous phenomenon arises. 

Subsequently, the results of the first part can be used to obtain explicit formulae for the $F$-polynomials of all indecomposable representations of dimension $\alpha$ of large defect, i.e. $|\Sc{\delta}{\alpha}|=2$. 

Using the Caldero-Chapoton-formula, these results can now be used to obtain an explicit description of all cluster variables of mutation finite cluster algebras coming from quivers of type $\tilde D_n$. 

\subsection*{Connections to previous results}
The formulae for $F$-polynomials of representations of large defect can also be obtained by applying  the multiplication formula of \cite{calkel}.

The formulae we obtain for representations of small defect differ from those present in the literature as they state relations between $F$-polynomials from different components of the Auslander-Reiten quiver. Moreover, it is possible to state an explicit formula for the $F$-polynomial of any representation in terms of $F$-polynomials of indecomposables whose dimension is smaller than $\delta$.  Those formulae which are known to us and which are present in the literature are mostly of recursive nature and between $F$-polynomials of representations from one component of the Auslander-Reiten quiver, see \cite{KS11,dupont2012cluster}. But clearly, it would be interesting to investigate if there is a direct way to obtain our formulae from the present recursive formulae.

 As far as mutation finite cluster algebras are concerned, the approach of calculating the $F$-polynomials in order to determine cluster variables was mainly applied to cluster algebras of type $A$ and only partially for type $D$, see \cite{Cerulli11}, \cite{Cerulli-Esposito11}, \cite{Haupt12}, \cite{dupont2012cluster} and \cite{Dupont12b}. 

Finally, we note that cluster algebras of type $\tilde D_n$ also arise from surfaces. This yields a combinatorial description of the cluster variables in terms of perfect matchings of edge-weighted graphs coming from triangulations of the corresponding surface, see \cite{Musiker-Schiffler-Williams11}. Thus, in theory, the Euler characteristics could be determined using this approach, but this has not been carried out yet in the case of quivers of type $\tilde D_n$. In particular, it is not clear if our explicit formulae for the $F$-polynomials can be obtained using this description. From the representation-theoretic point of view, this approach would also be unsatisfactory as it does not use the geometry of the quiver Grassmannians themselves. We should point out that it is not clear at all how these two combinatorial descriptions fit together. Actually, this would be very interesting to investigate. However, since the shape of the formulae, which are obtained with our approach, are indeed easy, there is hope for a generalization to other mutation finite cluster algebras.  
\subsection*{Schubert decomposition}
For Schubert decompositions of quiver Grassmannians of indecomposable real root representations $M$ of small defect of a quiver of type $\tilde D_n$, which are in fact cell decompositions into affine spaces, we consider the coefficient quivers $\Gamma_M$ listed in \cite[Appendix B]{LW}.  Recall that every subset $\beta\subset(\Gamma_M)_0$ of cardinality $e$ defines a possibly empty Schubert cell $C_{\beta}^M\subset\Gr_e(M)$ induced by the Schubert decompositions of the product of usual Grassmannians $\prod_{q\in Q_0}\Gr_{e_q}(M_q)$. The first aim of this paper is to generalize Theorem 4.4 of \cite{LW} to all indecomposable representations of $\tilde D_n$, i.e.:
\begin{thmA}\label{tmt}
Let $M$ be an indecomposable representation of $\tilde D_n$. Then there exists a coefficient quiver $\Gamma_M$ of $M$ such that the Schubert decomposition $\Gr_e(M)=\coprod C_\beta^M$ is a decomposition into affine spaces and empty cells. Here $\beta$ runs through all subsets of $(\Gamma_M)_0$ of cardinality $e$.
\end{thmA}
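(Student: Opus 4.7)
The plan is to extend \cite[Theorem 4.4]{LW} (and its dual) from real root representations of small defect, indecomposables in exceptional tubes, and Schur representations of dimension $\delta$ to cover \emph{all} indecomposable representations of $\tilde D_n$. Two families remain to be treated: the indecomposable real root representations $M$ with $|\Sc{\delta}{\udim M}|=2$ (the ``large defect'' case), and the non-quasi-simple indecomposables of dimension $k\delta$ for $k\geq 2$ (in homogeneous as well as in the top of exceptional tubes, insofar as they were not already handled).

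For the large defect case, after dualizing I may assume that $M$ is preprojective with $\Sc{\delta}{\udim M}=-2$. Using the Coxeter orbit description of the Auslander--Reiten quiver of $\tilde D_n$, one checks that each such $M$ fits into a short exact sequence
\[0 \to N_1 \to M \to N_2 \to 0\]
with $N_1,N_2$ indecomposable preprojective of defect $-1$, the existence of which follows from a standard comparison of $\Hom$- and $\Ext$-dimensions. Taking the coefficient quivers $\Gamma_{N_i}$ supplied by the small defect case of \cite[Appendix B]{LW}, I would construct a tree-shaped coefficient quiver $\Gamma_M$ by grafting $\Gamma_{N_1}$ onto $\Gamma_{N_2}$ along edges that encode a fixed vector space splitting of the sequence. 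For the dimension $k\delta$ case, I would argue by induction on $k$ using the analogous sequence $0\to M_1\to M_k\to M_{k-1}\to 0$ with $M_1$ Schur of dimension $\delta$, attaching a copy of $\Gamma_{M_1}$ to $\Gamma_{M_{k-1}}$ in the same way.

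In both cases the central verification is that for every subset $\beta\subset(\Gamma_M)_0$, the Schubert cell $C_\beta^M\subset \Gr_e(M)$ is either empty or affine. Writing $\beta=\beta_1\sqcup \beta_2$ with respect to the two subtrees $\Gamma_{N_1}$ and $\Gamma_{N_2}$, I would analyze the map
\[\varphi\colon C_\beta^M \longrightarrow C_{\beta_2}^{N_2}, \qquad U \longmapsto \pi(U),\]
where $\pi\colon M\to N_2$ is the quotient. The fibre of $\varphi$ over a fixed $V\in C_{\beta_2}^{N_2}$ is cut out of the space of preimages $\pi^{-1}(V)\cap \Gr_{e_1}(M)$ by the subrepresentation condition, and -- after identifying $\pi^{-1}(V)$ with a bundle of affine spaces over $V$ modelled on $N_1$ -- can be parametrized by $\beta_1$ together with the grafting arrows. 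The small defect result applied to $N_1$ and $N_2$ then describes both base and fibre as affine spaces or empty.

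The main obstacle will be verifying that $\varphi$ is locally trivial with affine fibres of constant dimension over the whole of the (possibly non-contradictory) cell $C_{\beta_2}^{N_2}$. Concretely, one has to extend the non-contradictory criterion of \cite{LW} to $\Gamma_M$ so that it detects exactly when the additional linear constraints introduced by the grafting arrows trivialize onto a single affine space; this boils down to a combinatorial case analysis on whether a grafting edge has its source in $\beta$, its target in $\beta$, both, or neither, and on how the resulting incompatibilities propagate through the tree. Once this is settled, affinity of $C_\beta^M$ follows and the decomposition $\Gr_e(M)=\coprod_\beta C_\beta^M$ into cells is automatic from the stratification of $\prod_{q}\Gr_{e_q}(M_q)$.
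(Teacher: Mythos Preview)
Your overall strategy---reduce to the small-defect case of \cite[Theorem~4.4]{LW} via short exact sequences and analyze the induced fibration on Grassmannians---is exactly the paper's approach. However, your sketch is thin, and in places misdirected, at precisely the point you flag as the main obstacle.

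First, a minor but consequential point: for the homogeneous tubes the paper uses the sequence
\[
0 \to M_{(r-1)\delta} \to M_{r\delta} \to M_\delta \to 0,
\]
not your $0\to M_1\to M_k\to M_{k-1}\to 0$. Having $M_\delta$ as the \emph{quotient} is what allows the induction: one needs to show that the cell decomposition of $\Gr_e(M_{r\delta})$ is compatible with the partition $\{\pi_r(U)=0\}\sqcup\{\pi_r(U)\neq 0\}$, and this compatibility is precisely what is fed back into the next step when determining which fibres over $(A,M_\delta)$ are empty (namely, those with $A$ already a subrepresentation of $M_{(r-2)\delta}$). With your orientation of the sequence this bookkeeping does not obviously go through.

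Second, and more seriously: the constancy of the fibres over a Schubert cell is \emph{not} resolved by ``a combinatorial case analysis on whether a grafting edge has its source in $\beta$''. What the paper actually establishes, in the defect $-2$ case with $0\to M\to B\to N\to 0$ and $N=\kappa^l M$, is threefold:
\begin{itemize}
\item[(a)] a representation-theoretic classification of the empty fibres: $\Psi_e^{-1}(A,V)=\emptyset$ if and only if $A=0$ and $V\cong C\oplus L$ with $C$ an \emph{inner} subrepresentation of $N$ (those $\kappa^iM$ lying strictly inside the triangle in the AR-quiver bounded by $M$, $B$, $N$)---this is Auslander--Reiten theory, not combinatorics on $\Gamma_M$;
\item[(b)] a compatibility proposition (Proposition~\ref{comptube}) showing that the property ``$V\cong C\oplus L$ with $C$ regular and $L$ preprojective'' is constant on each Schubert cell $C_{\beta'}^{N}$, so that the empty/non-empty dichotomy from (a) respects the cell structure;
\item[(c)] an $\Ext$-vanishing argument (again via AR-theory and the position of $M$, $N$ in the preprojective component) showing that for non-empty fibres $[V,M/A]=\Sc{\udim V}{\udim M/A}$, so the fibre dimension depends only on the types of $\beta$ and $\beta'$.
\end{itemize}
Only after (a)--(c) does Lemma~\ref{afffib} give $C_{(\beta,\beta')}^B \cong C_\beta^M\times C_{\beta'}^N\times\mathbb A^{\Sc{\udim V}{\udim M/A}}$ on the non-empty cells. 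None of this is captured by looking at the grafting arrows alone; the extended ``non-contradictory'' criterion you allude to is, in the paper, derived from (a)--(c) rather than proved directly on the coefficient quiver. Your proposal correctly identifies where the work lies but does not supply the mechanism that carries it out.
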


The generalization of Theorem 4.4 of \cite{LW} to representations of large defect and to representations of the homogeneous tubes is subject of section \ref{sec2}. Since the quiver Grassmannians of representations lying in the homogeneous tubes behave similar to those of large defect, throughout the paper, we exclude them when referring to representations of small defect. While the construction of the cell decompositions of quiver Grassmannians of representations of small defect is highly combinatorial, in the cases of large defect the main idea is to consider exact sequences which are close to being almost split. It turns out that every indecomposable representation $B$ of large defect can be written as the middle term of such a sequence between indecomposables $M$ and $N$ of small defect. In particular, there exists a coefficient quiver of $B$ with vertex set $(\Gamma_M)_0\cup(\Gamma_N)_0$ where $\Gamma_M$ and $\Gamma_N$ are those considered in \cite{LW}. Generalizations of results of \cite{cc} can be used to show that this setup preserves cell decompositions in such a way that every pair of subsets $(\beta,\beta')$ of $(\Gamma_M)_0\times(\Gamma_N)_0$ determines a (possibly empty) cell $C^B_{\beta,\beta'}$ of a certain quiver Grassmannian of the middle term which turns out to be an affine space.
Since all cells can be obtained using this construction, this already proves that every quiver Grassmannian of indecomposables of large defect has a cell decomposition into affine spaces, see Theorem \ref{almost2} and also section \ref{noncontradictory} for the notion of non-contradictory subsets:
\begin{thmB}
Let $B$ be a real root representation of defect $-2$. Then there exist indecomposable representations $M$ and $N$ of defect $-1$ and respective coefficient quivers $\Gamma_M$ and $\Gamma_N$ such that the (induced) Schubert decomposition 
\[\Gr_e(B)=\coprod C_{(\beta,\beta')}^B\]
is a decomposition into affine spaces and empty cells. Here $(\beta,\beta')$ runs through all non-contra\-dictory subsets of $(\Gamma_M)_0\times(\Gamma_N)_0$ such that the cardinalities of $\beta$ and $\beta'$  sum up to $e$. 
\end{thmB}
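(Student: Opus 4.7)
The plan is to realize $B$ as the middle term of a short exact sequence $\ses{N}{B}{M}$ with $M$ and $N$ indecomposable of defect $-1$, and to reduce the Schubert analysis of $\Gr_e(B)$ to the (already understood) Schubert analyses of $\Gr_{\ud'}(M)$ and $\Gr_{\ud''}(N)$. Existence of such a sequence for every real root representation $B$ of defect $-2$ of $\tilde D_n$ is established by inspection of the Auslander--Reiten quiver, using additivity of the defect on almost-split-like sequences through $B$: the outer terms split into summands whose defects sum to $-2$, and a case analysis over the preprojective, preinjective and regular components shows that the summands can be arranged as a single indecomposable of defect $-1$ on each side.

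With the coefficient quivers $\Gamma_M$ and $\Gamma_N$ provided by Theorem A and \cite[Appendix B]{LW}, I would next assemble a coefficient quiver $\Gamma_B$ of $B$ on the vertex set $(\Gamma_M)_0\sqcup(\Gamma_N)_0$. The arrows of $\Gamma_N$ encode the subrepresentation $N\subset B$, the arrows of $\Gamma_M$ are lifted to $B$ via a fixed set-theoretic splitting of $B\twoheadrightarrow M$, and the discrepancy between these lifts and an honest section is recorded by additional arrows from $(\Gamma_M)_0$ to $(\Gamma_N)_0$, which carry the extension class. The resulting ordered basis adapts to the short exact sequence and produces a Schubert decomposition $\Gr_e(B)=\coprod_{\beta''}C^B_{\beta''}$ whose index set is naturally in bijection with pairs $(\beta,\beta')\in 2^{(\Gamma_M)_0}\times 2^{(\Gamma_N)_0}$ satisfying $|\beta|+|\beta'|=e$.

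The core geometric input is the morphism
\[\Phi\colon\Gr_e(B)\longrightarrow\coprod_{\ud'+\ud''=e}\Gr_{\ud'}(M)\times\Gr_{\ud''}(N),\qquad U\longmapsto(\pi(U),\,U\cap N),\]
where $\pi\colon B\to M$ is the projection. A standard computation, underlying the Caldero--Chapoton multiplication formula of \cite{calkel}, identifies each non-empty fiber with an affine space modelled on the relevant extension group. Combining $\Phi$ with the Schubert decompositions of $\Gr_{\ud'}(M)$ and $\Gr_{\ud''}(N)$ furnished by Theorem A, the preimage $\Phi^{-1}(C^M_\beta\times C^N_{\beta'})$ is either empty---this precisely defines the \emph{contradictory} pairs---or an affine bundle over the affine space $C^M_\beta\times C^N_{\beta'}$, and hence affine. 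Identifying these preimages with the Schubert cells $C^B_{(\beta,\beta')}$ attached to $\Gamma_B$ then completes the proof.

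The main obstacle is the verification that the combinatorial cells $C^B_{(\beta,\beta')}$ defined via $\Gamma_B$ genuinely coincide with the geometric strata $\Phi^{-1}(C^M_\beta\times C^N_{\beta'})$. This requires a delicate choice of the extension arrows in $\Gamma_B$ and is carried out by case analysis on the explicit coefficient quivers of \cite[Appendix B]{LW}. A secondary difficulty is upgrading the fiberwise affineness of $\Phi$ to a global affine-bundle structure on each product cell, which amounts to proving that the dimension of the relevant extension space is constant on $C^M_\beta\times C^N_{\beta'}$ whenever non-empty and that the classifying map admits an algebraic section; this is again an inspection on the tree-shaped coefficient quivers from \cite{LW}.
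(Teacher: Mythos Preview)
Your overall strategy matches the paper's: realize $B$ as the middle term of a short exact sequence $\ses{M}{B}{N}$ with $M$, $N$ indecomposable of defect $-1$ (this is Lemma~\ref{def2}, proved via the Auslander--Reiten quiver), study the map $\Psi_e\colon U\mapsto (U\cap M,\pi(U))$, and combine the Schubert decompositions of $\Gr_f(M)$ and $\Gr_g(N)$ from Theorem~A with an affine-fiber analysis of $\Psi_e$.

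However, the two ``obstacles'' you flag are where the actual content of the proof lies, and the paper resolves them by representation-theoretic arguments in the Auslander--Reiten quiver, not by case analysis on coefficient quivers. For the constancy of the fiber dimension: the non-empty fiber over $(A,V)$ is $\mathbb{A}^{[V,\,M/A]}$ with exponent $\dim\Hom$, not an extension group. The paper shows this equals $\Sc{\udim V}{\udim M/A}$---hence depends only on dimension vectors and is automatically constant on each product cell---by proving $\Ext(V,M/A)=0$. For $A\neq 0$ this holds because $V$ is preprojective (as a subrepresentation of the preprojective $N$) while $M/A$ has no preprojective summand; for $A=0$ one needs to know that $V$ is then a sum of \emph{outer} subrepresentations, handled separately.

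For the characterization of empty fibers, the paper introduces the notion of an \emph{$(M,N)$-inner} subrepresentation---an indecomposable lying in a specific triangle of the AR quiver determined by $M$ and $N$---and shows (Lemma~\ref{inout}) that $\Psi_e^{-1}(A,V)=\emptyset$ exactly when $A=0$ and $V$ has an inner summand. A further nontrivial point, which you do not address, is that this emptiness condition must be \emph{constant on cells}: either every point of $C_{\beta'}^N$ decomposes as inner $\oplus$ preprojective, or none does. This is Proposition~\ref{comptube}, established separately for the coefficient quivers of \cite{LW}, and it is what guarantees that the preimage of each product cell is either empty or a genuine affine bundle. Without it, your affine-bundle upgrade could fail over cells that mix empty and non-empty fibers.
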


There is also a very explicit description in terms of the Auslander-Reiten quiver of those pairs corresponding to an empty cell. As shown in \cite{cc}, in the case of almost split sequences there is only one such pair, consisting of the cokernel and the trivial subrepresentation.

With similar methods we can also show that every quiver Grassmannian of an indecomposable representation lying in one of the homogeneous tubes has a cell decomposition into affine spaces, see Theorem \ref{homogaff}.
%In this generalization it turns out that such pairs consist of indecomposable subrepresentations lying in that part of the Auslander-Reiten quiver which is between the outer terms and, moreover, the trivial representation. 

In this paper and also in \cite{LW} we consider preprojective representations rather than preinjective ones. In section \ref{posdef}, we prove that passing to the opposite quiver and to dual representations Schubert decompositions are preserved. Thus all results can be transferred to the case of preinjective representations in the natural way.

Theorems A and B have strong implications on the geometry of $\Gr_e(M)$, see \cite[Section 6]{L14}. In particular, we can compute the Euler characteristic of $\Gr_e(M)$ as
\[
 \chi\bigl(\Gr_e(M)\bigr) \quad = \quad \# \bigl\{ \beta\subset(\Gamma_M)_0\text{ of type }e\text{ such that }C_\beta^M\text{ is not empty} \bigr\}.
\]
 If, in addition, $\Gr_e(M)$ is smooth, the closures of the non-empty Schubert cells form an additive basis for the singular cohomology ring of $\Gr_e(M)$ and it follows that the cohomology is concentrated in even degree.

The construction of the cell decompositions in terms of those of representations of small defect yields a description of the $F$-polynomial of the indecomposables of large defect, see Theorem \ref{Fdef2}. As already mentioned, in terms of cluster algebras this result translates to the well-known multiplication formula of \cite{calkel}. As far as cluster variables are concerned, we are thus left with the determination of $F$-polynomials of indecomposables of small defect. The investigation of $F$-polynomials and the derivation of explicit formulae is the main topic of sections \ref{reductionofgrass}, \ref{bgp} and \ref{secFpoly}. 
%Alongside Theorem A, we gain a characterization of the empty Schubert cells $C_\beta^M$ in terms of the combinatorics of the coefficient quiver of $Q$ w.r.t.\ $\cB$. This reduces the calculation of the Euler characteristic of $\Gr_e(M)$ and the $F$-polynomial to a combinatorial problem.
\subsection*{Calculation of $F$-poynomials} As already mentioned, the $F$-polynomials of representations of the homogeneous tubes play an important role in the formulae for $F$-polynomials of arbitrary indecomposable representations. They only depend on the dimension vector and are independent of the chosen tube, we denote them by $F_{r\delta}$ throughout the paper. This is straightforward with the methods of this paper, but also known for general affine quivers, see \cite[Lemma 5.3]{Dupont11}. With the results obtained there, also the $F$-polynomials of general representations (which means that they decompose into certain Schurian representations) of non-Schurian roots, can be determined recursively.

Considering the cell decompositions into affine spaces, we first obtain a recursive formula for $F_{r\delta}$ which can be used to obtain an explicit formula in terms of $F_{\delta}$ in Corollary \ref{homogform2}:
\[F_{r\delta}=\frac{1}{2z}(\lambda_+^{r+1}-\lambda_-^{r+1}),\text{ where }\, z=\frac{1}{2}\sqrt{F_{\delta}^2-4x^{\delta}},\quad\lambda_{\pm}=\frac{F_{\delta}}{2}\pm z.\]

Besides the combinatorial description of the non-empty cells, there are two other main ingredients which are used to obtain explicit, i.e. non-recursive, formulae for the $F$-polynomials of indecomposable representations. The first one is studied in section \ref{reductionofgrass}. The main idea is to reduce the determination of the $F$-polynomials to smaller quivers, i.e. $\tilde D_n$ for $n\leq 6$. Here we use that most linear maps corresponding to indecomposable representations of $\tilde D_n$ for large $n$ are isomorphisms. In combination with the reflection functor introduced in \cite{bgp}, it turns out that this is a powerful tool. In section \ref{bgp}, we review the reflection functor and its consequences for quiver Grassmannians, which were also studied in \cite{wol} and \cite{dwz2}. 

%The description of the non-empty cells can be used to determine the generating function of the Euler characteristics of quiver Grassmannians of representations of large defect in terms of those of small defect, see Theorem \ref{Fdef2}. In terms of cluster variables this result translates to the multiplication formula \cite[Theorem 2]{calkel}. Also for quiver Grassmannians of representations of the homogeneous tubes we obtain explicit formula, see Corollaries \ref{homogform1} and \ref{homogform2}.

%Now the combinatorial description of the cells of the quiver Grassmannians of representations of small defect yields a recursive formula for the Euler characteristic of quiver Grassmannians of representations of small defect. In order to initiate this recursion we keep in mind that the cells correspond to certain subsets of the coefficient quivers considered in \cite[section 4]{LW}.
If $\tilde D_n$ is in subspace orientation, we are left with counting admissible subsets as defined in section \ref{admsub}. Since the coefficient quivers under consideration follow a certain recursion and since the description of these subsets is very easy (and again easier for $n\leq 6$), we get recursive formulae for the $F$-polynomials. It turns out that these recursive formulae can be used to obtain explicit formulae for all $F$-polynomials of real root representations of small defect in section \ref{secFpoly}. All these explicit formulae are in terms of the $F$-polynomials $F_{r\delta}$ from above and of certain indecomposable representations whose dimension is smaller than $\delta$. Since there also exists an explicit formula for $F_{r\delta}$ in terms of $F_{\delta}$, we are left with the easy task of calculating $F$-polynomials of representations of dimension $\alpha\leq\delta$. While these $F$-polynomials do depend on the orientation of $\tilde D_n$, the upshot is that the formulae for the remaining $F$-polynomials turn out to be independent of the orientation. 

In order to state the main result of the second part, we need some notation. If $\alpha$ is a real root, we denote by $M_{\alpha}$ the unique indecomposable representation of this dimension and by $F_{\alpha}$ the corresponding $F$-polynomial. In a tube of rank $t$ there exist $t$ chains of irreducible morphisms
\[M_{0,1}\hookrightarrow
 M_{0,2}\hookrightarrow\ldots\hookrightarrow M_{0,t-1}\hookrightarrow
M_{1,0}:=M_{\delta}\hookrightarrow M_{1,1}\hookrightarrow\ldots\]
where the $m_{l}(r):=\udim M_{r,l}$ are real roots and the indecomposable representations $M_{r,0}$ of dimension $r\delta$ are uniquely determined by this chain. Furthermore, for every real root $\alpha$ in the tube of rank $t$ there exists an exceptional root $m_l(0)$ such that $\alpha=r\delta+m_l(0)$. Under the convention that $F_{\alpha}=0$ if $\alpha\in\Z Q_0$ has at least one negative component and setting  $m_{t}(0):=\delta$, we obtain the second main result of this paper, see Theorems \ref{Fdef2}, \ref{Fpolyrank2}, \ref{Fpolyrankn2} and \ref{Fpolydef-1}:
\begin{thmC}
\begin{enumerate}
\item For the representations $M_{m_l(r)}$, where $l=0,\ldots, t-1$ (lying in the exceptional tube of rank $t$) and $r\geq 1$, we have
\[F_{m_{l}(r)}=F_{m_l(0)}F_{r\delta}+x^{m_{l+1}(0)}F_{m_{t-1}(0)-m_{l+1}(0)}F_{(r-1)\delta}.\]

\item Let $M$ be preprojective of defect $-1$ such that $t_M=\udim M-r\delta\leq\delta$. If $\delta-t_M$ is injective, we have
\[F_M=F_{t_M}F_{r\delta}-x^{\delta}F_{(r-1)\delta}.\]
 If $\delta-t_M$ is not injective, we have
\[F_M=F_{t_M}F_{r\delta}-x^{\tau^{-1}t_M}F_{\delta-\tau^{-1}t_M}F_{(r-1)\delta}.\]
Here $\tau$ is the Auslander-Reiten-translation.
\item 
Let $B$ be an indecomposable representation of defect $-2$ which is not projective. Then there exist indecomposable representations $M$ and $N$ of defect $-1$ such that
\[F_B=F_NF_M-x^{\udim\tau^{-1}M}F_{N/\tau^{-1}M}.\]
\item Passing to the dual, we obtain analogous formulae for indecomposable representations of positive defect.
\end{enumerate}
\end{thmC}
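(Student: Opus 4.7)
The plan is to treat the four parts sequentially, using Theorems~A and~B to reduce each $F$-polynomial computation to a combinatorial count. By Theorem~A (for small defect) and Theorem~B (for large defect), the Euler characteristic $\chi(\Gr_e(\,\cdot\,))$ equals the number of non-empty Schubert cells in the chosen coefficient quiver, so after multiplying by $x^e$ and summing, the $F$-polynomial becomes a generating function for (pairs of) non-contradictory subsets of the $\Gamma_M$ listed in \cite[Appendix B]{LW}. Since the reflection functors reviewed in Section~\ref{bgp} transform quiver Grassmannians in a controlled way and commute with the relevant combinatorics up to a monomial substitution, and since Section~\ref{reductionofgrass} permits a reduction to $\tilde D_n$ for $n\leq 6$, it suffices to carry out the detailed analysis in subspace orientation.

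For part~(i), the coefficient quiver of $M_{m_l(r)}$ admits a natural decomposition into a piece modelled on $\Gamma_{M_{m_l(0)}}$ and $r$ further pieces modelled on $\Gamma_{M_\delta}$, reflecting the filtration of $M_{m_l(r)}$ by tube subrepresentations. Splitting the admissible subsets of Section~\ref{admsub} according to how they meet these pieces, and tracking the non-contradictory conditions, produces a bijection between admissible subsets and pairs contributing either to the product $F_{m_l(0)}F_{r\delta}$ or to the shifted contribution $x^{m_{l+1}(0)}F_{m_{t-1}(0)-m_{l+1}(0)}F_{(r-1)\delta}$; the latter accounts for those subsets whose lowest vertex lies strictly inside a tube copy. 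Part~(ii) proceeds along the same lines: the coefficient quiver of a defect-$-1$ preprojective $M$ decomposes into a tube-type piece of dimension $r\delta$ and a tail piece of dimension $t_M$, and the dichotomy recorded in the statement corresponds to whether the tail admits an additional family of admissible subsets, detected by the injectivity of $\delta-t_M$ through the Auslander--Reiten translation. In both cases, the closed form for $F_{r\delta}$ from Corollary~\ref{homogform2} is substituted at the end.

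For part~(iii), Theorem~B provides defect-$-1$ indecomposables $M$ and $N$ such that the cells of $\Gr_e(B)$ are parametrised by non-contradictory pairs $(\beta,\beta')$ in $(\Gamma_M)_0\times(\Gamma_N)_0$ with $|\beta|+|\beta'|=e$. By the explicit description of the empty cells mentioned after Theorem~B (which generalises the almost-split case of \cite{cc}), exactly one such pair yields an empty cell, namely the pair consisting of the cokernel of $\tau^{-1}M\hookrightarrow B$ and the trivial subrepresentation of $N$. Taking Euler characteristics therefore gives $F_B = F_NF_M$ minus the contribution $x^{\udim\tau^{-1}M}F_{N/\tau^{-1}M}$ of this unique empty cell, in agreement with the multiplication formula of \cite{calkel}. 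Part~(iv) is then immediate from Section~\ref{posdef}: passage to the opposite quiver and to dual representations preserves Schubert decompositions, so the formulae in (i)--(iii) transport to preinjective and positive-defect representations.

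The main obstacle lies in parts~(i) and~(ii): the coefficient quivers $\Gamma_M$ depend on the chosen orientation, but the resulting formulae must be orientation-independent, so one must verify that the reflection-functor reduction is compatible with the notion of non-contradictory subset and acts on $F$-polynomials by the expected monomial substitution. A secondary difficulty is pinning down the unique empty cell in part~(iii), for which the vanishing argument of \cite{cc} has to be extended from genuine almost-split sequences to the ``close to almost-split'' sequences produced by Theorem~B.
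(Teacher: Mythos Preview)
Your treatment of part~(iii) contains a genuine error. You claim that ``exactly one such pair yields an empty cell,'' but this is only true in the almost-split case of \cite{cc}; for the sequences $\ses{M}{B}{N}$ produced by Theorem~B with $N=\kappa^l M$ and $l>1$, there are many empty fibres. Indeed, Theorem~\ref{almost2} says that $\Psi_e^{-1}(A,V)=\emptyset$ precisely when $A=0$ and $V\cong C\oplus L$ with $C$ any $(M,N)$-inner subrepresentation of $N$; by Remark~\ref{inner} such pairs are in bijection with \emph{all} subrepresentations of $N/\tau^{-1}M$, not just one. This is exactly why the correction term in the formula is the full polynomial $x^{\udim\tau^{-1}M}F_{N/\tau^{-1}M}$ rather than a single monomial. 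Your argument, as written, would only subtract a monomial and would therefore give the wrong answer whenever $N/\tau^{-1}M\neq 0$. (Separately, the phrase ``the pair consisting of the cokernel of $\tau^{-1}M\hookrightarrow B$ and the trivial subrepresentation of $N$'' does not describe a pair $(A,V)$ with $A\subset M$ and $V\subset N$ at all; the roles of the factors appear to be confused.)

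For parts~(i) and~(ii), your outline is closer in spirit to the paper but understates the work involved. The paper does \emph{not} establish these formulae by a direct bijection on admissible subsets. Rather, in subspace orientation it sets up a linear recursion (Lemma~\ref{genrecursion}) for the generating functions $\mathcal F_i^j$, packages it as iteration of a $2\times 2$ matrix, diagonalises, and identifies the resulting eigenvalue expressions with $F_{r\delta}$ via Corollary~\ref{homogform2}; see Propositions~\ref{genfuncfac}, \ref{subspace2}, \ref{subspace3} and Lemma~\ref{subspace}. Only then does it transport the formulae to arbitrary orientation using Theorem~\ref{refpoly}, and this last step is not a formality: one must check that the exponent corrections $(1+x_q^{-1})^{\bullet}$ cancel, which in the paper comes from specific dimension identities (Lemma~\ref{hilfslemma}, Lemma~\ref{hilfslemma2}, Proposition~\ref{hilfslemma3}). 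Your sketch asserts a bijection and an orientation-independence without supplying either of these ingredients.
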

\subsection*{Acknowledgements}\noindent The authors would like to thank Giovanni Cerulli Irelli, Christof Gei{\ss}, Mar\-kus Reineke and Jan Schr\"oer for interesting discussions on the topic of this paper and for several helpful remarks.

%%%%%%%%%%%%%%%%%%%%%%%%%%%%%%%%%%%%%%%%%%%%%%%%%%%%%%%%%%%%%%%%%%%%%%%%%%%%%%%%%%%%%%%%%%%%%%%%%%%%%%%%%%%%%%%%%%%%%%%%%%%%%%%%%%%%%%%%%%%%%%%%%%%%%%%%%%%%%%%%%%%%%%%%%%%%%%%%%%%%%%%%%%%%%%%%%%%%%%%%%%%%%%%%%%%%%%%%%%%
%%%%%%%%%%%%%%%%%%%%%%%%%%%%%%%%%%%%%%%%%%%%%%%%%%%%%%%%%%%%%%%%%%%%%%%%%%%%%%%%%%%%%%%%%%%%%%%%%%%%%%%%%%%%%%%%%%%%%%%%%%%%%%%%%%%%%%%%%%%%%%%%%%%%%%%%%%%%%%%%%%%%%%%%%%%%%%%%%%%%%%%%%%%%%%%%%%%%%%%%%%%%%%%%%%%%%%%%%%%

\section{Schubert decomposition of quiver Grassmannians}\label{sec2}\label{sectionRecollections}
\noindent  One main result of this section is that every quiver Grassmannian of an indecomposable representation of a quiver of type $\tilde D_n$ of large defect has a cell decomposition into affine spaces, see section \ref{EC}. With similar methods, we can show that this is also the case for indecomposable representations of the homogeneous tubes, see section \ref{sechom}. This can be used later to obtain formulae for the $F$-polynomials. 

In order to prove this, we first introduce notation in sections \ref{quiverreps} and \ref{coeffquiver}. In section \ref{cluster}, we recall some results from the theory of cluster algebras which are linked to our considerations. In sections \ref{recollmain} and \ref{qgrer}, we state lemmas that are important for the proof of the main results.

Finally, we show in section \ref{posdef} how the results can be used to pass from representations of negative defect to representations of positive defect (resp. from preprojectives to preinjectives).

\subsection{Quiver representations}\label{quiverreps}
 
\noindent We fix $k=\C$ as our ground field. This suffices for the application of the results to cluster algebras. Actually, all results concerning the representation theory of quivers and quiver Grassmannians of representations remain true when passing to any algebraically closed field $k$. 

We shortly review some basics on quiver representations, see \cite{ass} and \cite{Crawley-Boevey92} for more details. Let $Q=(Q_0,Q_1)$ be a quiver with vertices $Q_0$ and arrows $Q_1$ denoted by $p\xlongrightarrow{v}q$ or $v:p\to q$ for $p,q\in Q_0$. We assume that $Q$ has no oriented cycles. In most parts of this paper, we consider quivers $Q$ of extended Dynkin type $\tilde D_n$, i.e. the underlying graph of $Q$ is \[
   \beginpgfgraphicnamed{tikz/fig58}
   \begin{tikzpicture}[>=latex]
  \matrix (m) [matrix of math nodes, row sep=-0.2em, column sep=2.5em, text height=1ex, text depth=0ex]
   {      & q_b &     &     &      &         &         &  q_c & \\
          &     & q_0 & q_1 & {\dotsb } & q_{n-5} & q_{n-4} &      & \\
      q_a &     &     &     &      &         &         &      &  q_d \\};
   \path[-,font=\scriptsize]
   (m-3-1) edge node[auto,swap] {$a$} (m-2-3)
   (m-1-2) edge node[auto] {$b$} (m-2-3)
   (m-2-3) edge node[auto] {$v_0$} (m-2-4)
   (m-2-6) edge node[auto] {$v_{n-5}$} (m-2-7)
   (m-2-7) edge node[auto] {$c$} (m-1-8)
   (m-2-7) edge node[auto,swap] {$d$} (m-3-9)
   (m-2-4) edge node[auto] {} (m-2-5)
   (m-2-5) edge node[auto] {} (m-2-6);
  \end{tikzpicture}
\endpgfgraphicnamed
\]

 A vertex $p\in Q_0$ is called sink if there does not exist an arrow $\arst\in Q_1$. A vertex $q\in Q_0$ is called source if there does not exist an arrow $\arst\in Q_1$. For an arrow $\arst$, let $s(v)=p$ and $t(v)=q$. We denote by $a(p,q)$ the number of arrows from $p$ to $q$.
For a vertex $p\in Q_0$, let 
\[N_p:=\{q\in Q_0\mid\exists \,p\xlongrightarrow{v}q\in Q_1\vee\exists\, q\xlongrightarrow{v}p\in Q_1\}\]
be the set of neighbors of $p$. Consider the abelian group
$\mathbb{Z}Q_0=\bigoplus_{q\in Q_0}\mathbb{Z}q$ and its monoid of dimension vectors $\mathbb{N}Q_0$. A finite-dimensional complex representation $M$ of $Q$ is given by a tuple
\[M=((M_q)_{q\in Q_0},(M_{v}:M_{s(v)}\rightarrow M_{t(v)})_{v\in Q_1})\]
of finite-dimensional  complex vector spaces and $\C$-linear maps between them. 

Let $\Rep(Q)$ denote the category of finite-dimensional representations of $Q$. The dimension vector $\udim M\in\mathbb{N}Q_0$ of $M$ is defined by
$\udim M=\sum_{q\in Q_0}\dim_kM_qq.$ Let $R_{\alpha}(Q)$ denote the affine space of representations of dimension $\alpha$. Moreover, we denote by $Q^\op$ the quiver obtained from $Q$ when turning around all arrows. Taking dual vector spaces and adjoint linear maps for each arrow, we obtain the dual representation $M^{\ast}$ of $Q^{\op}$ for every representation $M$ of $Q$.

On $\Z Q_0$ we have a non-symmetric bilinear form, the Euler form,
which is defined by
\[\Sc{\alpha}{\beta}=\sum_{q\in Q_0}\alpha_q\beta_q-\sum_{v \in Q_1}\alpha_{s(v)}\beta_{t(v)}\]
for $\alpha,\,\beta\in\Z Q_0$. 
Recall that for two representations $M$, $N$ of $Q$ we have
\begin{align}\label{HomExt}\Sc{\udim  M}{\udim  N}=\dim_k\Hom(M,N)-\dim_k\Ext(M,N)\end{align}
and $\Ext^i(M,N)=0$ for $i\geq 2$. For two representation $M$ and $N$, define $[M,N]:=\dim\Hom(M,N)$. As usual let $M^{\perp}=\{N\in\Rep(Q)\mid \Hom(M,N)=\Ext(M,N)=0\}$.

A dimension vector $\alpha$ is called a root if there exists an indecomposable representation of this dimension. It is called Schur root if there exists a representation with trivial endomorphism ring with this root as dimension vector. A representation $M$ with $\alpha=\udim M$ is called exceptional if we have $\Ext(M,M)=0$. In the case of real roots, i.e. if $\Sc{\alpha}{\alpha}=1$, there only exists one indecomposable representation $M$ up to isomorphism having $\alpha$ as dimension vector. We denote this representation by $M_{\alpha}$.
We denote by $S_q$ the simple representation corresponding to the vertex $q$ and by $s_q$ its dimension vector.

If $Q$ is of extended Dynkin type, we denote by $\delta$ the unique imaginary Schur root which is actually independent of the orientation. Following \cite[section 7]{Crawley-Boevey92}, the defect of a representation $M$ is defined as $\delta(M):=\Sc{\delta}{\udim  M}$. Clearly the defect is additive on dimension vectors. For indecomposables of quivers of extended Dynkin type $D$, we have $|\delta(M)|\leq 2$. We say that an indecomposable representation $M$ has small defect if $|\delta(M)|\leq 1$ and large defect if $|\delta(M)|=2$. As already mentioned, we exclude the representations from the homogeneous tubes when referring to representations of small defect.

\subsection{Quiver Grassmannians, Cluster algebras and $F$-polynomials}\label{cluster}
For a representation $M$ with $m=\udim M$, the quiver Grassmannian $\Gr_e(M)$ is the set of subrepresentations $U$ of $M$ with $\udim U=e$. It is a closed subvariety of the product $\prod_{q\in Q_0}\Gr(e_q,m_q)$ of the usual Grassmannians $\Gr(e_q,m_q)$. 

Let $\Q[x_q^{\pm 1}\mid q\in Q_0]$ be the $\Q$-algebra of Laurent polynomials in the variables $x_q$ for $q\in Q_0$. Denoting by $\chi$ the Euler characteristic in singular cohomology, as in \cite{cc}, we set
\[X_M=\sum_{e\in\N Q_0}\chi(\Gr_e(M))\prod_{q\in Q_0}x_q^{-\Sc{e}{s_q}-\Sc{s_q}{m-e}}.\]

With $Q$ we can associate a cluster algebra $\mathcal A(Q)$, which were introduced by \cite{Fomin-Zelevinsky02}, and its cluster category $\mathcal C_Q$ introduced in \cite{bmrrt}. We cite \cite[Theorem 4]{calkel2}:

\begin{thm}
The correspondence $M\mapsto X_M$ provides a bijection between the set of indecomposable objects of $\mathcal C_Q$ without self-extensions and the set of cluster-variables of $\mathcal A (Q)$.
\end{thm}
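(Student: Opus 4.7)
The plan is to follow the strategy of Caldero--Keller, which produces the bijection in two stages: first show that the Caldero--Chapoton map $M\mapsto X_M$ takes values in the cluster algebra $\mathcal A(Q)$ and satisfies a multiplication formula compatible with cluster mutation, and then match it with the cluster variables by induction over the cluster-tilting graph.

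First I would verify that $X_{(-)}$ is a well-defined invariant on the cluster category $\mathcal C_Q$. Recall that $\mathcal C_Q$ is the orbit category of $\mathcal D^b(\Rep Q)$ under the autoequivalence $\tau^{-1}[1]$. Using $\chi\bigl(\Gr_e(M)\bigr)=\chi\bigl(\Gr_{m-e}(\tau M)\bigr)$ (a consequence of Auslander--Reiten duality and the fact that $\tau$ induces isomorphisms between the relevant quiver Grassmannians of $M$ and its Auslander--Reiten translate) together with the substitution rule for the exponents in the Laurent monomials, one checks $X_M=X_{\tau M}$, and one extends the definition to shifts $P_q[1]$ of indecomposable projectives by declaring $X_{P_q[1]}=x_q$. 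This makes $X_{(-)}$ a well-defined map on isomorphism classes of indecomposable objects of $\mathcal C_Q$, and agrees on the initial seed with the cluster variables $x_q$ (since $X_{P_q[1]}=x_q$).

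The decisive step is the multiplication formula: if $M,N$ are indecomposable in $\mathcal C_Q$ with $\dim\Ext^1_{\mathcal C_Q}(M,N)=1$ and $\eta\colon N\to B\to M\to N[1]$, $\eta'\colon M\to B'\to N\to M[1]$ are the two non-split exchange triangles, then
\[
X_M\,X_N \;=\; X_B+X_{B'}.
\]
This is proved by decomposing each quiver Grassmannian of $B$ (and of $B'$) into locally closed pieces indexed by pairs $(U,V)\in\Gr_e(M)\times\Gr_f(N)$ via the connecting map, computing the Euler characteristic fibre by fibre, and matching Laurent exponents on both sides. A version of this decomposition is already used elsewhere in the present paper (cf.\ Theorem B above), so this step is consistent with the methods developed here; the bookkeeping of the exponents in $\prod x_q^{-\langle e,s_q\rangle-\langle s_q,m-e\rangle}$ under short exact sequences is the computational heart and the main technical obstacle.

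Granted the multiplication formula, surjectivity onto cluster variables follows by induction on the cluster-tilting graph of $\mathcal C_Q$. The initial cluster $\{x_q\}_{q\in Q_0}$ is realised by the shifted projectives $P_q[1]$, and each cluster mutation $x_k\rightsquigarrow x_k'$ is modelled by an exchange triangle whose multiplication formula becomes the cluster-algebra exchange relation $x_k x_k'=\prod x_i^{a_i}+\prod x_j^{b_j}$; hence every cluster variable is of the form $X_M$ for some rigid indecomposable $M\in\mathcal C_Q$. For injectivity one uses that a rigid indecomposable is determined by its $g$-vector (the exponent of the leading Laurent monomial of $X_M$), so distinct rigid indecomposables yield distinct cluster variables. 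Combining these two directions gives the claimed bijection.
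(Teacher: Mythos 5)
You should first note a structural mismatch: the paper does not prove this statement at all --- it is quoted verbatim from \cite[Theorem 4]{calkel2} (``We cite\dots''), so there is no internal proof to compare against. Your sketch reproduces the actual Caldero--Keller architecture (well-definedness on the cluster category, the multiplication formula, induction over the exchange graph for surjectivity, a separation argument for injectivity), which is the right strategy. But as written it contains one concrete error and leaves the two genuinely hard steps as black boxes.

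The error: neither $\chi\bigl(\Gr_e(M)\bigr)=\chi\bigl(\Gr_{m-e}(\tau M)\bigr)$ nor $X_M=X_{\tau M}$ is true. Already for $A_2$ with $S_2=\tau S_1$ the two cluster variables $X_{S_1}$ and $X_{S_2}$ differ, and in general $\udim\tau M\neq m$, so the Grassmannian $\Gr_{m-e}(\tau M)$ is not even of the right format. The identity you are reaching for is the duality $\Gr_e(M)\cong\Gr_{m-e}(M^\ast)$ with $M^\ast$ the dual representation of $Q^{\op}$ (this is Theorem \ref{dual} of the present paper), and the correct interaction of $X$ with $\tau$ is the relation $X_MX_{\tau M}=X_B+1$ coming from the almost split triangle (cf.\ Theorem \ref{almostsplit}), not invariance. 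Fortunately the step is superfluous: $X$ is defined on the fundamental domain $\mathrm{ind}(\Rep(Q))\sqcup\{P_q[1]\}$ for the orbit functor $\tau^{-1}[1]$, so well-definedness on isomorphism classes in $\mathcal C_Q$ is automatic. The black boxes: the multiplication formula $X_MX_N=X_B+X_{B'}$ is exactly Theorem \ref{multform}, i.e.\ \cite[Theorem 2]{calkel}, and is the technical heart of the whole story; and for injectivity you invoke that a rigid indecomposable is determined by the $g$-vector of $X_M$, whereas Caldero--Keller use the denominator theorem (the denominator of $X_M$ in lowest terms is $x^{\udim M}$) together with the fact that an exceptional module is determined by its dimension vector. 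Either route is viable, but both are theorems requiring proof, and together with the multiplication formula they constitute essentially the entire content of \cite{calkel} and \cite{calkel2}. In short: you have correctly identified the skeleton of the external proof, but the one step you actually carry out is based on a false identity, and the steps that carry the weight are asserted rather than proved.
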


Actually, this bijection restricts to a bijection between indecomposable exceptional representations of $Q$ and cluster variables of $\mathcal A(Q)$ excluding the initial variables. In \cite[Theorem 2]{calkel}, which generalizes \cite[Proposition 3.10]{cc}, the following multiplication formula is shown:

\begin{thm}\label{multform}
Let $M$ and $N$ be indecomposable objects of $\mathcal C_Q$ such that $\dim\Ext_{\mathcal C_Q}(M,N)=1$. Then we have
\[X_MX_N=X_B+X_{B'}\]
where $B$ and $B'$ are up to isomorphism the unique middle terms of the non-split triangles
\[N\to B\to M\to SN,\quad M\to B'\to N\to SM.\]
\end{thm}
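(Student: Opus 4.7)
The plan is to verify the identity $X_MX_N = X_B + X_{B'}$ by matching coefficients of Laurent monomials in $\Q[x_q^{\pm 1}]$. First I would expand
\[
X_M X_N \ = \sum_{e,f \in \N Q_0} \chi(\Gr_e(M))\,\chi(\Gr_f(N)) \prod_{q \in Q_0} x_q^{-\Sc{e+f}{s_q} - \Sc{s_q}{m+n-e-f}},
\]
and observe that because the triangles in the statement force $\udim B = \udim B' = m+n$, the Euler-form exponents on the two sides of the purported identity agree. The problem therefore reduces to proving, for every $g \in \N Q_0$, the purely combinatorial equality
\[
\sum_{e+f = g} \chi\bigl(\Gr_e(M)\bigr)\,\chi\bigl(\Gr_f(N)\bigr) \ = \ \chi\bigl(\Gr_g(B)\bigr) + \chi\bigl(\Gr_g(B')\bigr).
\]

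Second, I would use the 2-Calabi--Yau property of $\mathcal C_Q$, which gives a canonical isomorphism $\Ext^1_{\mathcal C_Q}(M,N) \cong \Ext^1(M,N) \oplus D\Ext^1(N,M)$. Under the hypothesis $\dim\Ext^1_{\mathcal C_Q}(M,N)=1$, up to swapping $M$ and $N$ I may assume $\dim\Ext^1(M,N)=1$ and $\Ext^1(N,M)=0$; then $B$ is the middle term of the unique (up to scalar) non-split sequence $\ses{N}{B}{M}$ in $\Rep Q$, while the triangle for $B'$ comes from the shifted copy, so $B'\cong M\oplus N$ at the level of modules. The identity then becomes the classical Caldero--Chapoton multiplication formula in $\Rep Q$, which is the shape handled by \cite{cc} in its original form and by \cite{calkel} in full generality.

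Third (geometric heart), I would construct a constructible map
\[
\Phi \colon \Gr_g(B) \ \longrightarrow \ \coprod_{e+f=g} \Gr_e(M)\times\Gr_f(N), \qquad W \mapsto \bigl(\pi(W),\, W\cap N\bigr),
\]
where $\pi\colon B\twoheadrightarrow M$ is the canonical projection. For a pair $(U,V)$, the fibre $\Phi^{-1}(U,V)$ is naturally a torsor for the kernel of the connecting map $\Hom(U,N/V)\to\Ext^1(U,V)$ associated with the pullback of $\ses{N}{B}{M}$ along $U\hookrightarrow M$; in particular each nonempty fibre is an affine space, hence has Euler characteristic $1$. A pair $(U,V)$ lies in the image of $\Phi$ precisely when this connecting map hits the class of the pullback. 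Pairs not in the image are exactly those that split off, and stratifying by isomorphism type shows they are in constructible bijection with $\Gr_g(M\oplus N) = \Gr_g(B')$ via $(U,V)\mapsto U\oplus V$.

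The main obstacle will be the fibre analysis in the third step: one must verify both that $\Phi$ is genuinely constructible (so that additivity of $\chi$ applies to its stratification by fibre dimension) and that the dichotomy ``pullback class hits the extension'' precisely separates the contributions of $B$ and $B'$. Once the constructibility is established and every nonempty fibre is shown to be an affine space, additivity of $\chi$ over the stratification delivers the coefficient-wise identity, completing the proof.
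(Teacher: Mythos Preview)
The paper does not prove this theorem; it is quoted verbatim as \cite[Theorem 2]{calkel}. So there is no argument in the paper to compare against. That said, your sketch contains a genuine error that would make the proof collapse.

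Your second step asserts that when $\Ext^1(M,N)=k$ and $\Ext^1(N,M)=0$, the second triangle produces $B'\cong M\oplus N$ ``at the level of modules''. This is false, and in fact would render the identity vacuous: since $X_{M\oplus N}=X_MX_N$, you would obtain $X_MX_N=X_B+X_MX_N$, i.e.\ $X_B=0$. The paper itself warns against exactly this reading in the paragraph immediately following the theorem: ``the middle term $B'$ is just an object of $\mathcal C_Q$. But it actually has a corresponding representation in the module category which can be determined explicitly.'' The point is that in $\mathcal C_Q$ the shift is $\tau$, so the connecting morphism $N\to SM$ of the second triangle is a genuine module map $f\colon N\to\tau M$ (a generator of $\Hom(N,\tau M)\cong D\Ext^1(M,N)$), and one finds $B'\cong\ker f\oplus\tau^{-1}\mathrm{coker}\,f$ as an object of $\mathcal C_Q$. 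In the almost split case $N=\tau M$ this gives $B'=0$ and $X_{B'}=1$, recovering the familiar exchange relation; for the sequences used later in the paper (Theorem~\ref{Fdef2}) one gets $B'=\tau^{-1}M[1]\oplus N/\tau^{-1}M$, which is why the correction term $x^{\udim\tau^{-1}M}F_{N/\tau^{-1}M}$ appears.

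Your third step, the fibre analysis of $\Phi$, is the right geometric idea for computing $\chi(\Gr_g(B))$, and something like it is indeed what underlies Lemma~\ref{afffib} in the paper. But the pairs $(U,V)$ \emph{not} in the image of $\Phi$ are not in bijection with $\Gr_g(M\oplus N)$; rather, they must be matched with subobjects of the correct $B'$ described above. Getting that matching right is the actual content of the Caldero--Keller proof.
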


Note that we have 
\[\dim\Ext_{\mathcal C_Q}(M,N)=\dim\Ext(M,N)+\dim\Ext(N,M),\]
see \cite{bmrrt}. Moreover, if $\Ext(M,N)=k$, the middle term $B$ is the one induced by the non-splitting sequence in the module category. But since $\Ext(N,M)=0$ in this case, using the terminology of \cite{calkel2}, the middle term $B'$ is just an object of $\mathcal C_Q$. But it actually has a corresponding representation in the module category which can be determined explicitly.

In this paper, we mostly consider the generating function $F_M$ of the Euler characteristics of the corresponding quiver Grassmannians of $M$, also called $F$-polynomial, i.e.
\[F_M(x)=\sum_{e\in\N Q_0}\chi(\Gr_e(X))x^e\]
where $x^{e}=\prod_{q\in Q_0}x_q^{e_q}$ for $e\in\N Q_0$, see also \cite{dwz2}. It is closely related to the cluster variables $X_M$. Indeed, setting $$m'_q=\sum_{p\in Q_0}a(p,q)m_p-m_q$$ and considering the variable transformation $x_q\mapsto x_q'$ with 
$$x_q'=\prod_{p\in Q_0}x_p^{a(q,p)-a(p,q)},$$ it is straightforward to check that we have
\[X_M=x^{m'}F_M(x').\]

\subsection{Coefficient quivers and Schubert decomposition}\label{coeffquiver}
We introduce coefficient quivers and tree modules following the presentation given in \cite{rin1}. Let $Q$ be a quiver, $\alpha\in\N Q_0$ a dimension vector and $M$ with $\udim  M=\alpha$ a representation of $Q$. A basis of $M$ is a subset $\mathcal{B}$ of $\bigoplus_{q\in Q_0}M_q$ such that
\[\mathcal{B}_q:=\mathcal{B}\cap M_q\] is a basis of $M_q$ for all vertices $q\in Q_0$. For every arrow $\arst$, we may write $M_{v}$ as a $(\alpha_q\times \alpha_p)$-matrix $M_{v,\mathcal{B}}$ with coefficients in $k$ such that the rows and columns are indexed by $\mathcal{B}_q$ and $\mathcal{B}_p$ respectively. If
\[M_{v}(b)=\sum_{b'\in\mathcal{B}_q}\lambda_{b',b}b'\]
with $\lambda_{b',b}\in k$ and $b\in\mathcal B_p$, we obviously have $(M_{v,\mathcal{B}})_{b',b}=\lambda_{b',b}$.

\begin{df}
The coefficient quiver $\Gamma(M,\mathcal{B})$ of a representation $M$ with a fixed basis $\cB$ has vertex set $\mathcal{B}$ and arrows between vertices are defined by the condition: if $(M_{v,\mathcal{B}})_{b',b}\neq 0$, there exists an arrow $(v,b,b'):b\rightarrow b'$.
If $\mathcal B$ is ordered linearly, we say that $\Gamma(M,\mathcal B)$ is an ordered coefficient quiver.

A representation $M$ is called a tree module if there exists a basis $\mathcal{B}$ for $M$ such that the corresponding coefficient quiver is a tree.
\end{df}
Note that we obtain a natural map $F:\Gamma(M,\cB)\to Q$. In order to shorten notation, we sometimes denote an arrow $(v,b,b')$ by $v$ where $\arst$ is the corresponding  arrow of the original quiver. 
 
We shortly recall the notion of Schubert decompositions of quiver Grassmannians, see \cite{L14}. Let $M$ be a representation with ordered basis $\mathcal B$ and corresponding coefficient quiver $\Gamma_M$. By $\mu_{v,s,t}$, where $v:p\to q$ is an arrow in $Q_1$ and $s\in F^{-1}(p)$ and $t\in F^{-1}(q)$, we denote the corresponding matrix coefficient corresponding to the linear map $M_v$. The induced Schubert decomposition of the usual Grassmannian induces a Schubert decomposition of the corresponding quiver Grassmannians
\[\Gr_e(M)= \coprod_{\substack{\beta\subset\cB\\\text{of type }e}} \ C_\beta^M\]
where the affine varieties $C_\beta^M$ are obtained as subset of the matrix space $\mathrm{Mat}_{\cB\times\cB}$ with variables $w_{ij}$ for $i,j\in\cB$. Let $V(M,\cB)$ be the vanishing set of the polynomials
\[         \label{key formula}
 E(v,t,s) \quad = \quad \sum_{(v,s',t')\in \Gamma_1} \mu_{v,s',t'}  w_{t,t'} w_{s',s} \quad - \quad \sum_{(v,s',t)\in \Gamma_1}   \mu_{v,s',t} w_{s',s} 
\]
for all arrows $v:p\to q$ in $Q_1$ and all vertices $s\in F^{-1}(p)$ and $t\in F^{-1}(q)$. 
For a subset $\beta$ of $\cB$, a matrix $w\in\mathrm{Mat}_{\cB\times\cB}$ is in \emph{$\beta$-normal form}, if it satisfies 
\begin{enumerate}
 \item[(NF1)]\label{NF1} $w_{i,i}=1$ for all $i\in\beta$,
 \item[(NF2)]\label{NF2} $w_{i,j}=0$ for all $i,j\in\beta$ with $j\neq i$,
 \item[(NF3)]\label{NF3} $w_{i,j}=0$ for all $i\in\cB$ and $j\in\beta$ with $j<i$,
 \item[(NF4)]\label{NF4} $w_{i,j}=0$ for all $i\in\cB$ and $j\in\cB-\beta$, and
 \item[(NF5)]\label{NF5} $w_{i,j}=0$ for all $i\in\cB_p$ and $j\in\beta_q$ with $p\neq q$.
\end{enumerate}
Now the Schubert cell $C_\beta^M$ is the intersection of $V(M,\beta)$ with the solution set of (NF1)-(NF5). An arrow $(v,s,t)$ of $\Gamma_M$ is extremal if for all arrows $(v,s',t')\in(\Gamma_M)_1$ either $s<s'$ or $t'<t$. A subset $\beta$ of $\cB=\Gamma_0$ is extremal successor closed if for all extremal arrows $(v,s,t)\in(\Gamma_M)_1$, $s\in\beta$ implies $t\in\beta$. 

We can restrict to the reduced Schubert system if $\beta$ is extremal successor closed which is a necessary condition for the Schubert cell for being not empty, see \cite[section 2.3]{LW}. This means that we find the Schubert cell as vanishing set of the equations $\overline{E}(v,t,s)$ where $t\notin\beta$ and $s\in\beta$ :
 \begin{equation*}\label{eq: reduced form}
        \overline{E}(v,t,s) \quad = \hspace{-0pt} 
        \sum_{\substack{(v,s,t')\in \Gamma_1\\ t<t'}} \hspace{-5pt} \mu_{v,s,t'} w_{t,t'} \ \ + \hspace{-5pt}
        \sum_{\substack{(v,s',t')\in\Gamma_1\\ t<t'\text{ and }s'<s}}  \hspace{-5pt} \mu_{v,s',t'} w_{t,t'} w_{s',s} \ \ - \hspace{-5pt}
        \sum_{\substack{(v,s',t)\in \Gamma_1\\ s'< s}}   \mu_{v,s',t}w_{s',s} \ \ - \ \ 
        \mu_{v,s,t}
       \end{equation*}
       where $\mu_{v,s,t}=0$ if $\Gamma$ does not contain the arrow $(v,s,t)$. These equations are trivial if $t>s$ such that there is no arrow $s\xrightarrow{v} t$. Note that we have $\mu_{v,s,t}\in\{0,1\}$ if $M$ is a tree module.
       
\subsection{Non-contradictory subsets}\label{noncontradictory}
%The definition of a non-contradictory $\beta$-state relies on the internal logic of Schubert system as developed in \cite{LW}. Since we do not need Schubert systems here, we avoid dwelling on this theory, but rather give a different definition of non-contradictory subsets, which will turn out to be equivalent to that in \cite{LW} in all cases that we consider in this paper.

%To begin with, we define contradictory subsets of the first and second kind. Let $M$ be a representation with ordered basis $\cB$ and coefficient quiver $\Gamma$. An arrow $p\xrightarrow{v} q$ of $\Gamma$ is \emph{extremal} if for all arrows $p'\xrightarrow{v} q'$ of $\Gamma$ either $p<p'$ or $q'<q$. A subset $\beta$ of $\cB=\Gamma_0$ is \emph{extremal successor closed} if for all extremal arrows $p\xrightarrow{v} q$ of $\Gamma$, $p\in\beta$ implies $q\in\beta$. If $\beta$ is not extremal successor closed, then it is called \emph{contradictory of the first kind}.

As it is used in this section and in section \ref{secFpoly}, we recall the notion of non-contra\-dictory subsets for those ordered bases of preprojective representations of defect $-1$ which are described in \cite[Appendix B]{LW}. 
In this case, it is possible to simplify the definition; for the general definition we refer to \cite[section 4]{LW}. 

Up to a permutation of the underlying graph $Q$, a preprojective representation $M$ of defect $-1$ has an ordered basis $\cB$ such that the associated coefficient quiver $\Gamma$ has the following shape.
\[
   \begin{tikzpicture}[>=latex]
  \matrix (m) [matrix of math nodes, row sep=0em, column sep=1.3em, text height=1ex, text depth=0ex]
%column  1     2     3     4     5      6     7     8     9  
   {        &  2  &     &     &      &     &     &     &     \\   % row 1
         3  &     &  4  &  5  &\dotsb& n-1 &  n  &     & n+1 \\   % row 2
            &     &     &     &      &     &     & n+2 &     \\   % row 3
       2n+1 &     & 2n  & 2n-1&\dotsb& n+5 & n+4 &     & n+3 \\   % row 4
            & 2n+2&     &     &      &     &     &     &     \\   % row 5
       2n+3 &     & 2n+4&2n+5 &\dotsb                        \\   % row 6
};
   \path[-,font=\scriptsize]
   (m-2-3) edge node[auto] {$v_{0}$} (m-2-4)
   (m-2-6) edge node[auto] {$v_{n-5}$} (m-2-7)
   (m-2-7) edge node[auto,swap] {} (m-3-8)
   (m-3-8) edge node[auto,swap] {$c$} (m-4-7)
   (m-4-3) edge node[auto] {$v_0$} (m-4-4)
   (m-4-6) edge node[auto] {$v_{n-5}$} (m-4-7)
   (m-4-3) edge node[auto] {} (m-5-2)
   (m-5-2) edge node[auto] {$b$} (m-6-3)
   (m-6-3) edge node[auto] {$v_0$} (m-6-4)
   ;
   \path[->,dashed,font=\scriptsize]
   (m-2-3) edge node[auto,swap] {$b$} (m-1-2)
   (m-2-1) edge node[auto,swap] {$a$} (m-2-3)
   (m-2-7) edge node[auto] {$d$} (m-2-9)
   (m-4-9) edge node[auto] {$d$} (m-4-7)
   (m-4-3) edge node[auto,swap] {$a$} (m-4-1)
   (m-6-1) edge node[auto,swap] {$a$} (m-6-3)
   ;
   \path[-,font=\scriptsize]
   (m-2-4) edge node[auto] {} (m-2-5)
   (m-2-5) edge node[auto,swap] {} (m-2-6)
   (m-4-4) edge node[auto] {} (m-4-5)
   (m-4-5) edge node[auto] {} (m-4-6)
   (m-6-4) edge node[auto,swap] {} (m-6-5)
   ;
  \end{tikzpicture}
\]
Note that, depending on the orientation of the arrows $a$, $b$, $c$ and $d$, we find one of the following four situations at the ``ramifications'' of $\Gamma$ where $x$ stands for $b$ or $c$ and $y$ stands for $a$ or $d$ and where we label  the vertices with its residue class module $n$ for simplicity.
\[
\begin{tikzpicture}
\node (E) at (3,2) {$0$};
\node (F) at (0,1) {$2$};
\node (G) at (0,0) {$3$};
\node (H) at (3,0) {$4$};
\draw[->](F) to node[above]{$x$}(E);
\draw[->](F) to node[above]{$x$}(H);
\draw[->](G) to node[below]{$y$}(H);

\node (A) at (7,2) {$0$};
\node (B) at (4,1) {$2$};
\node (C) at (4,0) {$3$};
\node (D) at (7,0) {$4$};
\draw[<-](B) to node[above]{$x$}(A);
\draw[<-](B) to node[above]{$x$}(D);
\draw[->](C) to node[below]{$y$}(D);

\node (E1) at (11,2) {$0$};
\node (F1) at (8,1) {$2$};
\node (G1) at (8,2) {$1$};
\node (H1) at (11,0) {$4$};
\draw[->](F1) to node[above]{$x$}(E1);
\draw[->](F1) to node[above]{$x$}(H1);
\draw[<-](G1) to node[above]{$y$}(E1);

\node (A1) at (15,2) {$0$};
\node (B1) at (12,1) {$2$};
\node (C1) at (12,2) {$1$};
\node (D1) at (15,0) {$4$};
\draw[<-](B1) to node[above]{$x$}(A1);
\draw[<-](B1) to node[above]{$x$}(D1);
\draw[<-](C1) to node[above]{$y$}(A1);
\end{tikzpicture}
\]
A subset $\beta$ of $\cB=\Gamma_0$ is called \emph{non-contradictory} if $\beta$ is extremal successor closed and if the following conditions are satisfied, depending on the orientations of $x$ and $y$ in the above illustrations.
\begin{align*}
 \xrightarrow{x}, \xrightarrow{y}:& \quad \beta\cap\{0,2,3,4\}\neq\{2,3,4\}; \\
 \xleftarrow{x}, \xrightarrow{y}: & \quad \beta\cap\{0,2,3,4\}\neq\{3,4\};   \\
 \xrightarrow{x}, \xleftarrow{y}: & \quad \beta\cap\{0,1,2,4\}\neq\{2,4\};   \\
 \xleftarrow{x}, \xleftarrow{y}:  & \quad \beta\cap\{0,1,2,4\}\neq\{4\}. 
\end{align*}

\begin{comment}
A subset $\beta$ of $\cB=\Gamma_0$ is called \emph{non-contradictory} if $\beta$ is ``successor closed'' with respect to every arrow $p\xrightarrow{v} q$ of $\Gamma$ where $v\in\{a,d\}\cup \{v_i\mid i\in\{0,\dotsc,n-5\}\}$, i.e.\ if $p\in\beta$, then also $q\in\beta$, and if the following conditions are satisfied, depending on the orientations of $x$ and $y$ in the above illustrations.
\begin{enumerate}
 \item[$\xrightarrow{x}, \xrightarrow{y}$:] If $2\in\beta$, then $4\in\beta$, and $\beta\cap\{0,2,3,4\}\neq\{2,3,4\}$.
 \item[$\xleftarrow{x}, \xrightarrow{y}$:] If $0\in\beta$, then $2\in\beta$, and $\beta\cap\{0,2,3,4\}\neq\{3,4\}$.
 \item[$\xrightarrow{x}, \xleftarrow{y}$:] If $2\in\beta$, then $4\in\beta$, and $\beta\cap\{0,1,2,4\}\neq\{2,4\}$.
 \item[$\xleftarrow{x}, \xleftarrow{y}$:] If $0\in\beta$, then $2\in\beta$, and $\beta\cap\{0,1,2,4\}\neq\{4\}$.
\end{enumerate}
\end{comment}

Note that a non-contradictory subset $\beta$ is in particular extremal successor closed as defined in section \ref{coeffquiver} and Definition \ref{admsub}. 
\begin{rem}
This notion of non-contradictory subsets transfers to the coefficient quivers of the representations lying in the exceptional tube of rank $n-2$ and, moreover, to the case of the tubes of rank two for $n=4$, see \cite[section 4.3]{LW}. This means that the subsets need to be successor closed and satisfy the same condition at the ramification subgraphs. In particular, this notion suffices to determine the $F$-polynomials of all representations of quivers of $\tilde D_n$.
\end{rem}
\begin{rem}

 The more general definition of non-contradictory $\beta$-states $\Sigma_\beta$ in \cite{LW} is based on the internal logic of Schubert systems. While the above conditions on $\beta$ are always satisfied if $\Sigma_\beta$ is non-contradictory, the reverse conclusion does not hold in general, but it holds in special cases as the one considered above.
\end{rem}

\subsection{Short exact sequences and quiver Grassmannians.}\label{recollmain}
\noindent 
\begin{comment} Most of the time we assume that $Q$ is a quiver of extended Dynkin type $\tilde D_n$ without any fixed orientation. As in the first part of this paper \cite{LW} we use the notation indicated in the following diagram:
\[
\begin{xy}
\xymatrix@R10pt@C20pt{q_a\ar@{-}[rd]_{a}&&&&&q_c\ar@{-}[ld]_{c}\\&q_0&q_1\ar@{-}_{v_0}[l]&\ar@{-}_{v_1}[l]\ldots&q_{n-4}\ar@{-}[l]_{v_{n-5}}\\q_b\ar@{-}[ru]_{b}&&&&&q_d\ar@{-}[lu]_{d}}
\end{xy}
\]
Let $\delta$ be the unique imaginary Schur root which is actually independent of the orientation. Recall that the defect of a representation $M$ is defined as $\delta(M):=\Sc{\delta}{\udim  M}$. Clearly the defect is additive on dimension vectors. For indecomposables, we have $|\delta(M)|\leq 2$. We say that an indecomposable representation $M$ has small defect if $|\delta(M)|\leq 1$ and large defect if $|\delta(M)|=2$. In \cite{LW} we considered quiver Grassmannians of indecomposable representations of small defect. Considering the bases and coefficient quivers respectively fixed in section \cite[Appendix B]{LW}, we have:

\begin{thm}\label{tmt}
 Let $e$ be a dimension vector for $Q$. Then the Schubert decomposition
 \[
  \Gr_e(M) \quad = \quad \coprod_{\substack{\beta\subset\cB\\\text{of type }e}} \ C_\beta^M
 \]
 w.r.t.\ $\cB$ is a decomposition into affine spaces. A Schubert cell $C_\beta^M$ is empty if and only if $\beta$ is contradictory of the first or of the second kind.
\end{thm}
\end{comment}
As already mentioned, the first aim of this paper is to prove that every quiver Grassmannian of an indecomposable representation of large defect has a cell decomposition into affine spaces. To do so, we write representations of large defect as the middle term of certain short exact sequences between indecomposables of small defect. Then we can combine \cite[Theorem 4.4]{LW} with the following observations relating the quiver Grassmannians of the middle term to those of the outer terms. In general, given two representations $M$, $N$ and an exact sequence
\[0\to M\xlongrightarrow{i}B\xlongrightarrow{\pi}N\to 0,\]
following \cite[section 3]{cc}, this yields a map 
\[\Psi_e:\Gr_e(B)\to\coprod_{f+g=e}\Gr_f(M)\times\Gr_g(N),\, U\mapsto(i^{-1}(U),\pi(U))\]
whose restrictions to $\Psi_e^{-1}(\Gr_f(M)\times\Gr_g(N))$ are morphisms of algebraic varieties for every $f$ and $g$ with $f+g=e$. Note that we have $i^{-1}(U)\cong U\cap M$ and $\pi(U)\cong (U+M)/M\cong U/U\cap M$.

The following is shown in the course of the proof of \cite[Lemma 3.11]{cc} in the case of almost split sequences. Actually, the proof for almost split sequences applies to arbitrary short exact sequences:

\begin{lemma}\label{afffib}
If $\Psi_e^{-1}(A,V)$ is not empty, we have $\Psi_e^{-1}(A,V)=\mathbb{A}^{[V,M/A]}$.
\end{lemma}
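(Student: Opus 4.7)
The plan is to describe the fiber $\Psi_e^{-1}(A,V)$ as the set of sections of an induced short exact sequence, which naturally carries the structure of an affine space of dimension $[V,M/A]$. Unpacking definitions, a subrepresentation $U\subseteq B$ lies in $\Psi_e^{-1}(A,V)$ precisely when $U\cap M=A$ (under the identification of $M$ with $i(M)$) and $\pi(U)=V$. Every such $U$ is contained in $\pi^{-1}(V)\subseteq B$, which fits into the short exact sequence $0\to M\to \pi^{-1}(V)\to V\to 0$. Since $A\subseteq U\cap M\subseteq U$, the correspondence $U\mapsto U/A$ gives a bijection between the fiber and those subrepresentations $U'\subseteq \pi^{-1}(V)/A$ for which $U'\cap (M/A)=0$ and $U'\to V$ is an isomorphism.

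The latter condition means exactly that $U'$ is the image of a section of the quotient sequence $0\to M/A\to \pi^{-1}(V)/A\to V\to 0$. Under the non-emptiness hypothesis one section $s_0$ exists, and then any other section $s$ differs from $s_0$ by a morphism $s-s_0\colon V\to M/A$; conversely, adding any $\varphi\in\Hom(V,M/A)$ to $s_0$ produces a new section. This identifies the fiber set-theoretically with $\Hom(V,M/A)$, which as a complex vector space is the affine space $\mathbb{A}^{[V,M/A]}$.

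To upgrade this bijection to an isomorphism of varieties, I would argue locally at each vertex $q$: a section corresponds to a choice of linear complement of $(M/A)_q$ inside $(\pi^{-1}(V)/A)_q$, and such complements are parametrized affinely by $\Hom_k(V_q,(M/A)_q)$. The compatibility of these choices with the arrow maps cuts out the linear subspace $\Hom(V,M/A)\subseteq\bigoplus_q\Hom_k(V_q,(M/A)_q)$, and the resulting family of subrepresentations $U$ varies algebraically in $\varphi$. The main obstacle is the bookkeeping required to verify that this affine structure coincides with the one induced on $\Psi_e^{-1}(A,V)$ from its embedding into $\Gr_e(B)\subseteq\prod_q\Gr(e_q,\dim B_q)$; this is precisely the verification carried out in the proof of \cite[Lemma 3.11]{cc} for almost split sequences, and since that argument uses only the short exactness and not the almost-split property, it transfers verbatim to our setting.
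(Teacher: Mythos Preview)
Your proposal is correct and follows essentially the same approach as the paper, which simply observes that the proof of \cite[Lemma 3.11]{cc} goes through verbatim since it only uses the short exactness of the sequence and not the almost-split property. You have in fact spelled out the argument in more detail than the paper does, correctly identifying the fiber with the set of sections of the quotient sequence $0\to M/A\to \pi^{-1}(V)/A\to V\to 0$ and hence with an affine torsor over $\Hom(V,M/A)$.
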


The next step is to restrict the map $\Psi_e$ to the preimage of products of Schubert cells $C^M_{\beta_1}\times C^N_{\beta_2}$ in $C_{\beta_1\cup\beta_2}^B$. This gives a morphism of affine varieties which we denote by $\Psi_{\beta_1,\beta_2}$. If $M$ has ordered basis $\cB_1$ and $N$ has ordered basis $\cB_2$, the middle term $B$ has ordered basis $\cB=\cB_1\cup\cB_2$ where we can assume that $p<q$ for all $p\in\cB_1$ and $q\in\cB_2$. Thus the non-vanishing variables $w_{ij}$ corresponding to the Schubert cell $C_{\beta_1\cup\beta_2}^B$ can be subdivided into three disjoint subsets $\cV_i$ for $i=1,2,3$ where $w_{ij}\in\cV_1$ if $i,j\in\cB_1$, $w_{ij}\in\cV_2$ if $i,j\in\cB_2$ and $w_{ij}\in\cV_3$ if $i\in\cB_1,j\in\cB_2$. 

From now on we assume that $\beta_1$, $\beta_2$ and $\beta_1\cup\beta_2$ are extremal successor closed which simplifies the following considerations. Actually, this is no restriction as we already mentioned that this is a necessary condition to the Schubert cell to be not empty. 

Also the set of non-trivial reduced equations $\overline{E}(v,t,s)$ can be subdivided, i.e. we have $\cE=\cE_1\cup \cE_2\cup\cE_3$ with $\overline{E}(v,t,s)\in\cE_1$ if $s,t\in\cB_1$, $\overline{E}(v,t,s)\in\cE_2$ if $s,t\in\cB_2$ and $\overline{E}(v,t,s)\in\cE_3$ if $s\in\cB_2,t\in\cB_1$. Note that the set of equations $\cE_1$ defines $C_{\beta_1}^M$ and the set $\cE_2$ defines $C_{\beta_2}^N$. Moreover, all variables appearing in $\cE_i$ are in $\cV_i$ for $i=1,2$. Finally, it is straightforward to see that, for fixed variables in $\cV_i$ for $i=1,2$, the equations $\overline{E}(v,t,s)\in\cE_3$ are linear in the variables from $\cV_3$. Indeed, we have $t<s$ with $s\in\cB_2$ and $t\in\cB_1$ if $\overline{E}(v,t,s)$ is non-trivial. The term $\mu_{v,s',t'}w_{t,t'}w_{s',s}$ can only be non-trivial if $t<t'$ and $s'<s$. Thus $w_{t,t'},w_{s',s}\in\cV_3$ would imply $t'\in\cB_2$ and $s'\in\cB_1$ which means $s'<t'$. But there is no arrow from a basis element of $s'\in\cB_1$ to a basis element $t'\in\cB_2$ in the coefficient quiver of $B$. Thus in terms of the variables $\cV$ the morphism $\Psi_{\beta_1,\beta_2}$ is given by setting the variables $w_{ij}\in\cV_3$ to zero.

\begin{prop}\label{bundle}
Fix $\beta_i\subset\cB_i$ for $i=1,2$ and let $\beta:=\beta_1\cup \beta_2$. Assume that the Schubert cells $C_{\beta_1}^M$ and $C_{\beta_2}^N$ are affine spaces and let $m:=\dim C_{\beta_1}^M+\dim C_{\beta_2}^N$. Consider 
\[\Psi_{\beta_1,\beta_2}:C_\beta^B\to C_{\beta_1}^M\times C_{\beta_2}^N.\]
If the fibres of $\Psi_{\beta_1,\beta_2}$ are affine spaces of constant dimension $n$ for some $n\geq 0$, we have $C_\beta^B\cong\A^{n+m}$.
\end{prop}
\begin{proof}
%One first observes that $\Psi_{\beta_1\,\beta_2}^{-1}(p)$ is reduced as a scheme because it is the intersection of the hyperplanes defined by the equations $\overline{E}(v,t,s)\in\cE_3$ (in an appropriate affine space $\mathrm{Mat}\subset\mathrm{Mat}_{\cB\times\cB}$). Recall that they are linear in the variables of $\cV_3$ for fixed variables in $\cV_1,\cV_2$. This also shows that $C_\beta^B$ is reduced. Is this clear?!
As a first step, we observe that the fibres $\Psi_{\beta_1,\beta_2}^{-1}(p)$ and $C_\beta^B$ are reduced schemes for the following reason. The Schubert cell $C_\beta^B$ is a subspace of a large matrix space $\mathrm{Mat}_{\cB\times\cB}$ and equals the intersection of the affine space $C_{\beta_1}^M\times C_{\beta_2}^N\times\mathrm{Mat}_{\cB_1\times\cB_2}$ with the hypersurfaces defined by the equations $\overline{E}(v,t,s)$ in $\cE_3$. Since these equations are linear in the variables $w_{i,j}$ in $\cV_3$, it is clear that each fibre of $\Psi_{\beta_1,\beta_2}$ is an affine space and henceforth reduced.

The equations $\overline{E}(v,t,s)$ in $\cE_3$ are also linear in the variables in $\cV_1\cup\cV_2$, which means that the fibres of $\Psi_{\beta_1,\beta_2}$ can be seen as the solutions to a system of affine linear equations whose coefficients are linear in the variables of $\cV_1\cup\cV_2$. Therefore there is an open subset $U$ in $C_{\beta_1}^M\times C_{\beta_2}^N$ such that at each point $x$ of $U$ the rank of this system of affine linear equations is maximal. 

Thanks to the Gauss algorithm, the solution space at a point $x$ in $U$ can be parametrized by a bijective affine linear map from an affine space that is rational in the coefficients of the affine linear equations $\overline{E}(v,t,s)$, i.e.\ those variables contained in $\cV_1\cup\cV_2$. As a rational function on $C_{\beta_1}^M\times C_{\beta_2}^N$, it is actually defined on an open subset and it specializes to a parametrization of the solution space for all points $x$ in a non-empty open subset $V$ of $U$.

%Since the fibres are non-empty, the system of affine linear equations $\overline{E}(v,t,s)$ has a solution at each $x$ in $U$. Since a solution can be calculated in form of a rational function in the coefficioents of the affine linear equations $\overline{E}(v,t,s)$, we obtain a rational function from $C_{\beta_1}^M\times C_{\beta_2}^N$ to $C_\beta^B$ that is a section to $\Psi_{\beta_1,\beta_2}$. 

%After possibly shrinking $U$, this section is a morphism from $U$ to $\Psi_{\beta_1,\beta_2}^{-1}(U)$. This shows that $\Psi_{\beta_1,\beta_2}\vert_U$ is a vector bundle over $U$ and thus reduced. Since $C_\beta^B$ is a closed subscheme of the ambient affine space of all variables in $\cV_1\cup\cV_2\cup\cV_3$, it must contain the reduced subscheme whose support is the closure of $\Psi_{\beta_1,\beta_2}^{-1}(U)$. Since we know that all fibres of $\Psi_{\beta_1,\beta_2}$ are of the same dimension, we conclude that $C_\beta^B$ is equal to this reduced subscheme.

This shows that the restriction of $\Psi_{\beta_1,\beta_2}$ to the inverse image of $V$ is a trivial vector bundle over $V$ and therefore reduced. Since $C_\beta^B$ is a closed subscheme of the ambient affine space of all variables in $\cV_1\cup\cV_2\cup\cV_3$, it must contain the reduced subscheme whose support is the closure of $\Psi_{\beta_1,\beta_2}^{-1}(V)$. By the semi-continuity of the fibre dimension and since all fibres of $\Psi_{\beta_1,\beta_2}$ have the same dimension, we conclude that $C_\beta^B$ is equal to this reduced subscheme.

%Since these equations are linear in the variables $w_{i,j}$ in $\cV_3$, the Schubert cell $C_\beta^B$ is defined as a transversal intersection and is therefore reduced. The same argument applies to the fibres $\Psi_{\beta_1\,\beta_2}^{-1}(p)$ where we have to replace the affine space of the previous intersection by ${p}\times\mathrm{Mat}_{\cB_1\times\cB_2}$.

The map $\Psi_{\beta_1,\beta_2}$ induces homomorphisms on the tangent spaces for each $q\in C_{\beta}^B$ which we denote by $d_q:=d_q\Psi_{\beta_1,\beta_2}$. As the fibres of $\Psi_{\beta_1,\beta_2}$ are affine spaces of dimension $n$, we have $\ker(T_q\Psi_{\beta_1,\beta_2})\cong\A^n$ for every $q\in C_\beta^B$, see \cite[Section II.8]{har}. 

A priori, it is not clear yet that $C_\beta^M$ is irreducible. But $C_\beta^M$ contains an irreducible component $Z$ of dimension $\dim Z=\dim C_\beta^M$ such that $\Psi_{\beta_1,\beta_2}(Z)$ is dense in $C_{\beta_1}^M\times C_{\beta_2}^N$. Therefore we can apply \cite[Theorem 25.3.1]{vak}, which asserts that there exists a dense open subset $U\subset Z$ such that $\Psi_{\beta_1,\beta_2}\mid_U$ is smooth of dimension $\dim Z-\dim C_{\beta_1}^M\times C_{\beta_2}^N$. Thus there exists a dense open subset $U\subset Z$ such that $d_q$ is surjective for all $q\in U$. This yields 
\[\dim Z = \dim T_q Z=\dim T_{d_{\Psi_{\beta_1,\beta_2}(q)}} C_{\beta_1}^M\times C_{\beta_2}^N+n\]
for all $q\in U$ which means that $U$ is contained in the smooth locus of $Z$. Thus for an arbitrary $q\in Z$, we have $$\dim Z\leq \dim T_qZ=\dim\bigl(\mathrm{im}(d_q)\bigr)+n\leq \dim T_{d_{\Psi_{\beta_1,\beta_2}(q)}} C_{\beta_1}^M\times C_{\beta_2}^N+n=\dim Z.$$
This shows that $Z$ is smooth and that $d_q$ is surjective for all $q\in Z$. Since all fibres of $\Psi_{\beta_1,\beta_2}$ are affine spaces of dimension $n$, this implies that every fibre is contained in $Z$. We conclude that $Z=C_\beta^M$.% Let $m=\dim C_\beta^B$.          
%As the hyperplanes intersect transversally, we have $T_pC_\beta^B$

Thus $\Psi_{\beta_1,\beta_2}$ is a morphism between smooth complex varieties whose fibres are affine spaces of constant dimension. As the induced maps on the tangent spaces are all surjective, it follows that $\Psi_{\beta_1,\beta_2}$ is smooth of relative dimension $n$, see \cite[Proposition III.10.4]{har}.
%which means that it is flat by the Miracle flatness theorem, see \cite[Corollary 23.1]{mat}.
Thus we can cover $C_{\beta_1}^M\times C_{\beta_2}^N$ with open affine $W_i$ such that the following diagrams commute
\[\begin{xy}\xymatrix@R20pt@C35pt{C_\beta^B\ar[d]&\Psi_{\beta_1,\beta_2}^{-1}(W_i)\ar[r]^{\pi_i}\ar[d]\ar[l]&\A^{n}_{W_i}\ar[ld]\\C_{\beta_1}^M\times C_{\beta_2}^N&W_i\ar[l]}\end{xy}\]
for all $i$, where $\pi_i$ is \'etale. Since we know that the fibres of $\Psi_e$ are affine spaces of dimension $n$, each fibre of $\pi$ is a point. By the inverse function theorem for \'etale morphisms, $\pi$ is an \'etale locally trivial fibre bundle whose fibre is a point. In other words, $\pi$ is an \'etale vector bundle of rank $0$, and therefore by Serre's theorem (see \cite[section 4]{ser}) a Zariski vector bundle of rank $0$. We conclude that $\pi$ is an isomorphism.

This shows that $\Psi_{\beta_1,\beta_2}$ is a locally trivial $\A^n$-bundle.  A result of \cite{bcw} shows that every affine bundle over an affine space is already a vector bundle. Thus our claim follows by the Quillen-Suslin-Theorem \cite{qui,sus} as every vector bundle over an affine space is trivial.
\end{proof}

\subsection{Quiver Grassmannians of exceptional regular representations}\label{qgrer}
For the remaining part of this section, $Q$ is assumed to be of extended Dynkin type $\tilde D_n$. In order to prove the main result of section \ref{sec2}, we need some properties concerning  the quiver Grassmannians of exceptional regular representations.  More detailed, we need that the cell decomposition of \cite[Theorem 4.4]{LW} is compatible with the decomposition of subrepresentations into direct sums of regular and preprojective representations. To do so, we consider the coefficient quivers of the exceptional regular representations lying in the tubes of rank $2$ and $n-2$ respectively treated in \cite[Appendix B]{LW}. 
\begin{prop}\label{comptube}
Let $M$ be an exceptional regular representation of $\tilde D_n$ and let $C_\beta^M$ be a Schubert cell. If there exists $U\in C_{\beta}^M$ such that $U\cong R\oplus T$ with $R$ regular and $T$ preprojective, we have $U\cong R\oplus T'$ with $T'$ is preprojective for all $U\in C_{\beta}^M$. 
\end{prop}
\begin{proof}First recall the shape of the exceptional tubes listed in \cite[Appendix B]{LW}.
If $M$ lies in a tube of rank $2$, the statement is clearly true because $M$ has no regular subrepresentation. In the tube of rank $n-2$ there exist $n-2$ chains of irreducible inclusions
\[M^j_1\subset M^j_2\subset\ldots\subset M^j_{n-3}\]
of exceptional regular representations. Thus we have $M\cong M^j_i$ for some $i\in\{1,\ldots,n-3\}$ and $j\in\{1,\ldots, n-2\}$. We proceed by induction on $i$. If $i=1$, we have that $M^j_i$ has only preprojective subrepresentations and the claim follows. 

If $e$ is the dimension vector of a regular subrepresentation of $M$, we have that $e=\udim M^j_l$ with $l\leq i$. Moreover, $e$ is an exceptional root and thus by \cite[Corollary 4]{cr} we have
\[\dim\Gr_e(M)=\Sc{e}{\udim M-e}=\dim\Hom(M^j_l,M)-1=0.\]
In particular, there exists a unique subset of the vertex set of the coefficient quiver of $M$ corresponding to $\Gr_e(M)$. Since there exists a short exact sequence
\[\ses{M_l^j}{M}{M_k^{j'}}\]
with $k+l=i$, where $M_0^{j'}:=0$, the construction of the coefficient quivers in \cite[Appendix B]{LW} shows that the coefficient quiver of $M$ is obtained by glueing the one of $M_{k}^{j'}$ to the one of $M_l^{j}$ by an outgoing arrow.

Now assume that $U\cong M_l^j\oplus T$ is a subrepresentation of $M$ such that $T$ is preprojective. Since $\Ext(T,M_l^j)=0$, we obtain a commutative diagram
\[
\begin{xy} 
\xymatrix@R15pt@C20pt{0\ar[r] &M_l^j\ar[r]&\ar[r]M&M_k^{j'}\ar[r]&0\\0\ar[r] &M_l^{j}\ar[r]\ar@{=}[u]&\ar[r]\ar[u]M_l^{j}\oplus T&T\ar[r]\ar[u]\ar[u]^{i_T}&0
}\end{xy}\]
In particular, $T$ lies in a cell $C_{\beta}^{M_{k}^{j'}}$ defined by a subset $\beta$ of the vertex set of the coefficient quiver of $M_k^{j'}$. By induction hypothesis, we have that all representations in the cell $C_{\beta}^{M^{j'}_k}$ of $T$ decompose into preprojective representations. 

Now we have $\Psi_{\udim M_l^j+\udim T}^{-1}(M_l^j,T)=\mathbb A^0$ which shows that the lifted cell $\Psi_{\udim M_l^j+\udim T}^{-1}(C_{\beta}^{M^{j'}_k})$ has the same dimension. Thus every representation in the cell of $U$ decomposes into a direct sum of $M_l^{j}$ and a preprojective representation.
\end{proof}

\subsection{Representations of large defect}\label{EC}
\noindent The main aim of this section is to show that the Schubert decomposition of indecomposable representations of small defect obtained in \cite[Theorem 4.4]{LW} extends to a Schubert decomposition of indecomposable representations of large defect. In section \ref{sechom}, it turns out that similar methods can be applied to show that every  quiver Grassmannian coming along with a representation lying in one of the homogeneous tubes has a cell decomposition into affine spaces.

As already mentioned, we can restrict to the case of preprojective roots. Recall that the preprojectives of defect $-1$ are precisely the Auslander-Reiten translates of the projectives corresponding to the outer vertices, i.e. of $P_{q_a}, P_{q_b}, P_{q_c}$ and $P_{q_d}$. The preprojectives of defect $-2$ are Auslander-Reiten translates of projectives corresponding to the inner vertices, i.e. of $P_{q_0},\ldots, P_{q_{n-4}}$. 

\begin{rem}\label{reminout0}

\begin{enumerate}
\item An indecomposable preprojective representation $M$ has no proper factor $N$ such that $\delta(N)\leq\delta(M)$. This follows because the defect is additive on exact sequences and preprojective representations have only preprojective subrepresentations. 
\item If $M$ is preprojective with $\delta(M)=-1$ and $N$ is preprojective, then every non-zero morphism $f:M\to N$ is injective. Indeed, since $\mathrm{Im}(f)$ is a subrepresentation of $N$, we have $\delta(\mathrm{Im}(f))\leq -1=\delta(M)$. Since $\mathrm{Im}(f)$ is a factor of $M$, the first part yields $\mathrm{Im}(f)=M$.
\item If $M$ and $N$ are preprojective and there is no path from $M$ to $N$ in the Auslander-Reiten quiver we have $\Hom(M,N)=\Ext(N,M)=0$. The first statement is clear, the second follows by the Auslander-Reiten formula $\dim\Ext(N,M)=\dim\Hom(M,\tau N)$ keeping in mind that every morphism between $M$ and $N$ is a finite decomposition of irreducible ones. Indeed, there is a path from $\tau N$ to $N$ and thus no path from $M$ to $\tau N$ by assumption.
\end{enumerate}
\end{rem}
For $\tilde D_n$ in subspace orientation, there exist almost split sequences of the form
\[\ses{\tau^{-(l-1)}P_{p}}{\tau^{-l}P_{q_0}}{\tau^{-l}P_{p}},\quad \ses{\tau^{-(l-1)}P_{p'}}{\tau^{-l}P_{q_{n-4}}}{\tau^{-l}P_{p'}}\]
for $p\in\{q_a,q_b\}$, $p'\in\{q_c,q_d\}$ and $l\geq 1$.
In this case, the initial part of the preprojective component of the Auslander-Reiten quiver looks as follows

\[
\begin{xy}
\xymatrix@R4pt@C15pt{&P_{a}\ar@{^(->}[rd]&&\tau^{-1}P_{a}\ar@{^(->}[rd]&\dots&\dots&\\P_0\ar@{^(->}[rd]\ar@{^(->}[r]\ar@{^(->}[ru]&P_{b}\ar@{^(->}[r]&\tau^{-1}P_0\ar@{->>}[r]\ar@{->>}[ru]\ar@{^(->}[rd]&\tau^{-1}P_{b}\ar@{^(->}[r]&\tau^{-2}P_0\ar@{^(->}[rd]\ar@{->>}[ru]\ar@{->>}[r]&\dots&\dots\\&P_1\ar@{^(->}[ru]\ar@{^(->}[rd]&&\tau^{-1}P_{1}\ar@{^(->}[ru]\ar@{^(->}[rd]&&\tau^{-2}P_1\ar@{^(->}[rd]\ar@{^(->}[ru]\\&&P_2\ar@{^(->}[ru]\ar@{^(->}[rd]&&\tau^{-1}P_2\ar@{^(->}[rd]\ar@{^(->}[ru]&&\ddots\ar@{^(->}[rd]&&\\&&&\ddots\ar@{^(->}[rd]&&\ddots\ar@{^(->}[rd]&&\tau^{-2}P_{n-7}\ar@{^(->}[rd]\ar@{^(->}[ru]\\&&&&P_{n-6}\ar@{^(->}[ru]\ar@{^(->}[rd]&&\tau^{-1}P_{n-6}\ar@{^(->}[ru]\ar@{^(->}[rd]&&\tau^{-2}P_{n-6}\\&&&&&P_{n-5}\ar@{^(->}[ru]\ar@{^(->}[rd]&&\tau^{-1}P_{n-5}\ar@{^(->}[rd]\ar@{^(->}[ru]&\\&&&&&&P_{n-4}\ar@{^(->}[rd]\ar@{^(->}[ru]\ar@{^(->}[r]&P_{c}\ar@{^(->}[r]&\tau^{-1}P_{n-4}\\&&&&&&&P_{d}\ar@{^(->}[ru]
}
\end{xy}
\]
where we use the abbreviations $P_i:=P_{q_i}$. If $n$ is even, the remaining part of the preprojective component is obtained from this and looks for every orientation as follows:

\[
\begin{xy}
\xymatrix@R5pt@C25pt{&\bullet\ar[rd]&&\bullet\ar[rd]&&\bullet\ar[rd]&&\dots\\\tilde P_0\ar@{->>}[r]\ar@{->>}[ru]\ar[rd]&\bullet\ar[r]&\bullet\ar@{->>}[r]\ar@{->>}[ru]\ar[rd]&\bullet\ar[r]&\bullet\ar@{->>}[r]\ar@{->>}[ru]\ar[rd]&\bullet\ar[r]&\bullet\ar@{->>}[r]\ar@{->>}[ru]\ar[rd]&\dots\\&\bullet\ar[rd]\ar[ru]&&\bullet\ar[rd]\ar[ru]&&\bullet\ar[rd]\ar[ru]&&\dots\\\tilde P_2\ar[ru]\ar[rd]&&\bullet\ar[ru]\ar[rd]&&\bullet\ar[ru]\ar[rd]&&\bullet\ar[ru]\ar[rd]&\dots\\\vdots&\bullet\ar[ru]&\vdots&\bullet\ar[ru]&\vdots&\bullet\ar[ru]&\vdots&\dots\\&\vdots\ar[rd]&&\vdots\ar[rd]&&\vdots\ar[rd]&&\vdots\\\tilde P_{n-6}\ar[rd]\ar[ru]&&\bullet\ar[ru]\ar[rd]&&\bullet\ar[ru]\ar[rd]&&\bullet\ar[ru]\ar[rd]&\dots\\&\bullet\ar[rd]\ar[ru]&&\bullet\ar[rd]\ar[ru]&&\bullet\ar[rd]\ar[ru]&&\dots\\\tilde P_{n-4}\ar@{->>}[rd]\ar@{->>}[r]\ar[ru]&\bullet\ar[r]&\bullet\ar@{->>}[r]\ar@{->>}[rd]\ar[ru]&\bullet\ar[r]&\bullet\ar@{->>}[r]\ar@{->>}[rd]\ar[ru]&\bullet\ar[r]&\bullet\ar@{->>}[r]\ar@{->>}[rd]\ar[ru]&\dots\\&\bullet\ar[ru]&&\bullet\ar[ru]&&\bullet\ar[ru]&&\dots
}
\end{xy}
\]
where the usual arrows indicate monomorphisms and the $\tilde P_i$ are Auslander-Reiten translates of $P_i$. In subspace orientation we have 
\[\tilde P_0=\tau^{-\frac{n+3}{2}}P_0,\quad \tilde P_2 =\tau^{-\frac{n+1}{2}}P_2,\quad\ldots\quad \tilde P_{n-6}=\tau^{-2}P_{n-6},\quad\tilde P_{n-4}=\tau^{-1}P_{n-4}.\]

If $n$ is odd, the remaining part of the preprojective component is obtained from this and looks for every orientation as follows:

\[
\begin{xy}
\xymatrix@R5pt@C25pt{&\bullet\ar[rd]&&\bullet\ar[rd]&&\bullet\ar[rd]&&\bullet\ar[rd]&\\\tilde P_0\ar@{->>}[r]\ar@{->>}[ru]\ar[rd]&\bullet\ar[r]&\bullet\ar@{->>}[r]\ar@{->>}[ru]\ar[rd]&\bullet\ar[r]&\bullet\ar@{->>}[r]\ar@{->>}[ru]\ar[rd]&\bullet\ar[r]&\bullet\ar@{->>}[r]\ar@{->>}[ru]\ar[rd]&\bullet\ar[r]&\bullet\dots\\&\bullet\ar[rd]\ar[ru]&&\bullet\ar[rd]\ar[ru]&&\bullet\ar[rd]\ar[ru]&&\bullet\ar[ru]\ar[rd]&\dots\\\tilde P_2\ar[ru]\ar[rd]&&\bullet\ar[ru]\ar[rd]&&\bullet\ar[ru]\ar[rd]&&\bullet\ar[ru]\ar[rd]&&\bullet\dots\\\vdots&\bullet\ar[ru]&\vdots&\bullet\ar[ru]&\vdots&\bullet\ar[ru]&\vdots&\bullet\ar[ru]&\dots\\\tilde P_{n-5}\ar[rd]&&\vdots\ar[rd]&&\vdots\ar[rd]&&\vdots\ar[rd]&&\vdots\\&\bullet\ar[rd]\ar[ru]&&\bullet\ar[ru]\ar[rd]&&\bullet\ar[ru]\ar[rd]&&\bullet\ar[ru]\ar[rd]&\dots\\\tilde P_{n-3}\ar[ru]\ar[rd]&&\bullet\ar[rd]\ar[ru]&&\bullet\ar[rd]\ar[ru]&&\bullet\ar[rd]\ar[ru]&&\dots\\\tilde P_c\ar[r]&\bullet\ar@{->>}[r]\ar@{->>}[rd]\ar[ru]&\bullet\ar[r]&\bullet\ar@{->>}[r]\ar@{->>}[rd]\ar[ru]&\bullet\ar[r]&\bullet\ar@{->>}[r]\ar@{->>}[rd]\ar[ru]&\bullet\ar[r]&\bullet\ar@{->>}[r]\ar@{->>}[rd]\ar[ru]&\dots\\\tilde P_d\ar[ru]&&\bullet\ar[ru]&&\bullet\ar[ru]&&\bullet\ar[ru]&&\dots
}
\end{xy}
\]
where the usual arrows indicate monomorphisms and the $\tilde P_i$ are Auslander-Reiten translates of $P_i$. In subspace orientation we have 
\[\tilde P_0=\tau^{-\frac{n+3}{2}}P_0,\quad \tilde P_2 =\tau^{-\frac{n+1}{2}}P_2,\,\,\ldots\quad\tilde P_{n-5}=\tau^{-2}P_{n-5},\quad\tilde P_{n-3}=\tau^{-1}P_{n-3},\quad \tilde P_c=P_c,\quad \tilde P_d=P_d.\]

If $N$ is preprojective with $\delta(N)=-1$,  we denote by $\hat N$ its neighbor in the Auslander-Reiten quiver satisfying $\delta(\hat N)=-1$, which means by definition that they are Auslander-Reiten translates of $P_a$ and $P_b$ (resp. $P_c$ and $P_d$). Note that, for $n\geq 5$, this means that they are direct summands of the middle term of the same Auslander-Reiten sequence.
In subspace orientation, for $\tau^{-s} P_{a}$ we have $$\widehat{\tau^{-s} P_{a}}=\tau^{-s} P_{b}\text{ and }\widehat{\tau^{-s} P_{c}}=\tau^{-s} P_{d}$$
and the corresponding relations when permuting $a,b$ and $c,d$ respectively. 

For a representation $C$ with $\delta(C)=-1$, we define $\rho C:=\widehat{\tau^{-1}C}$ and $\kappa C:=\widehat{\rho C}$. Then we get a chain of irreducible inclusions
\[C\subset \rho C\subset\rho^2C\subset\ldots\subset\rho^i C\subset\ldots.\]

Let $\mathcal C C$ denote the full subcategory of $\Rep(Q)$ which contains the objects $\rho^lC/\rho^k C$ for $l>k\geq 1$ and which is closed under exact sequences and images.
\begin{lemma}\label{verschoben}The following holds:
\begin{enumerate}
\item The category $\mathcal CC$ is equivalent to the full subcategory of $\Rep(Q)$ whose objects are direct sums of representations of the tube of rank $n-2$. 
\item For $1\leq l\leq n-3$ and $m\geq 0$, we have
\[\dim\Ext(\kappa^{m(n-2)+l} C,C)=m+1,\quad\dim\Hom(C,\kappa^{m(n-2)+l} C)=m.\]
\end{enumerate}
\end{lemma}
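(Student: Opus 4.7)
The proof splits naturally into two parts.

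For (i), the strategy is to exhibit an explicit set of quasi-simples for the tube. First I would verify that each irreducible inclusion $\rho^k C\subset\rho^{k+1}C$ (present by construction of $\rho$ from the AR quiver) has cokernel $E_k:=\rho^{k+1}C/\rho^kC$ that is an indecomposable regular module of defect $0$: both terms of the inclusion are preprojective of defect $-1$, forcing the quotient to have defect $0$; irreducibility of the map in the AR quiver, together with the fact that the quotient is not preprojective, forces $E_k$ to be indecomposable and to lie in a single regular component, which must be a tube. I would then show that the tube has rank exactly $n-2$ by computing the telescoping sum
\[
\sum_{k=1}^{n-2}\udim E_k \;=\; \udim \rho^{n-2}C-\udim \rho C \;=\; \delta,
\]
verified by checking that the $n-2$ successive $\rho$-steps starting from $\rho C$ add up to a $\delta$-shift, visible from the shape of the preprojective AR quiver displayed in the excerpt. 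The quasi-simples $E_1,\dots,E_{n-2}$ are therefore pairwise non-isomorphic, and the periodicity $E_{k+(n-2)}\cong E_k$ follows from periodicity of the AR quiver modulo $\delta$. The subquotient $\rho^lC/\rho^kC$ with $1\leq k<l$ is the unique indecomposable of the tube with quasi-socle $E_k$ and quasi-length $l-k$ (by the standard structure of a tube), and the assignment $\rho^lC/\rho^kC\mapsto$ this indecomposable extends via closure under exact sequences and images to the desired equivalence. (The only missing quasi-simple $E_0=\rho C/C$ is recovered via periodicity as $E_{n-2}=\rho^{n-1}C/\rho^{n-2}C$.)

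For (ii), the plan is to reduce to an Euler-form computation. From $\kappa C=\widehat{\widehat{\tau^{-1}C}}$ and the AR-quiver observation that the hat operation is an involution on defect-$(-1)$ pairs that commutes with $\tau^{-1}$, one gets $\udim\kappa^jC=\udim\tau^{-j}C$ for all $j$. Since $\kappa^{m(n-2)+l}C$ lies strictly later than $C$ in the preprojective component with no AR-path back, Remark \ref{reminout0}(iii) yields $\Hom(\kappa^{m(n-2)+l}C,C)=0=\Ext(C,\kappa^{m(n-2)+l}C)$, so that
\begin{align*}
\dim\Hom(C,\kappa^{m(n-2)+l}C)&=\Sc{\udim C}{\udim\kappa^{m(n-2)+l}C},\\
\dim\Ext(\kappa^{m(n-2)+l}C,C)&=-\Sc{\udim\kappa^{m(n-2)+l}C}{\udim C}.
\end{align*}
By part (i), $\udim\kappa^{n-2}C=\udim C+\delta$, hence $\udim\kappa^{m(n-2)+l}C=\udim\kappa^lC+m\delta$, and the two pairings split as
\[
\Sc{\udim C}{\udim\kappa^lC}+m\Sc{\udim C}{\delta}, \qquad -\Sc{\udim\kappa^lC}{\udim C}-m\Sc{\delta}{\udim C}.
\]
The claim reduces to four evaluations: the base cases $\Sc{\udim C}{\udim\kappa^lC}=0$ and $\Sc{\udim\kappa^lC}{\udim C}=-1$ for $1\leq l\leq n-3$, together with the $\delta$-pairings $\Sc{\udim C}{\delta}=1$ and $\Sc{\delta}{\udim C}=-1$. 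The $\delta$-pairings follow from $\delta(C)=-1$ and the self-orthogonality of $\delta$. The base cases follow from applying the same Hom/Ext analysis to the pair $(C,\kappa^lC)$: for $1\leq l\leq n-3$, $\kappa^lC$ is a later preprojective, so the only potentially nonzero term is either Hom or Ext, and the Hom side must vanish (since $\udim\kappa^lC$ still differs from $\udim C+\delta$, giving no room for the imaginary contribution), while Ext picks up exactly the one-dimensional contribution from the "unavoidable" irreducible path of length $l$.

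\textbf{Main obstacle.} The technical crux lies in part (i): carefully identifying $E_1,\ldots,E_{n-2}$ as a complete system of pairwise non-isomorphic quasi-simples of one tube, which requires handling the asymmetry between $\rho$ and $\kappa$ (they coincide only on even iterates, while for odd iterates they differ by a hat-swap that nonetheless preserves dimension vectors) and distinguishing cases depending on the parity of $n$ and the "type" ($a/b$ vs.\ $c/d$) of $C$ in the AR quiver pictured earlier. For part (ii), the delicate point is verifying that the base-case pairing $\Sc{\udim C}{\udim\kappa^lC}$ really vanishes for all $1\leq l\leq n-3$ (as opposed to jumping at $l=n-2$); this is ultimately forced by part (i), since a nonzero Hom would produce a nontrivial regular quotient of $\kappa^lC/\text{(image)}$ inconsistent with the tube structure established there.
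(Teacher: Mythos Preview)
Your approach for part (ii) is essentially the paper's: reduce to an Euler-form computation via the Hom/Ext dichotomy for preprojectives (Remark~\ref{reminout0}(iii)) together with the $\delta$-shift $\udim\kappa^{m(n-2)}C=\udim C+m\delta$. Note incidentally that your observation $\kappa C=\widehat{\widehat{\tau^{-1}C}}$ gives $\kappa=\tau^{-1}$ on the nose, not merely on dimension vectors. The key difference is that the paper first invokes the $\tau$-invariance of $\Hom$ and $\Ext$ (Auslander--Reiten formulae) to assume $C=P_a$ from the outset. This single reduction dissolves nearly every obstacle you flag: once $C=P_a$, part (i) becomes a direct inspection of the AR quiver displayed in the text (the successive quotients $\rho^{k+1}P_a/\rho^kP_a$ are visibly the quasi-simples of the rank-$(n-2)$ tube), and the base case $\Hom(P_a,\kappa^lP_a)=0$ for $1\le l\le n-3$ is just the vanishing of the $q_a$-component of $\tau^{-l}P_a$, read off the same picture.

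Your route through (i), building the tube from the $E_k$ for general $C$, can be made to work but is genuinely harder and has loose ends as written. The parity issues you anticipate are real: $\rho^2=\tau^{-2}$ but $\rho\neq\tau^{-1}$, so your telescoping identity needs care with odd exponents (and the indices in your displayed sum are off by one). More seriously, your justification of the base case in (ii) --- ``no room for the imaginary contribution'' --- is not an argument; you need an actual reason why $\Hom(C,\kappa^lC)=0$ for $1\le l\le n-3$, and without the reduction to $P_a$ this requires real work. The paper's reduction is the missing idea that converts both parts into routine verifications.
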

\begin{proof}For two representations $M,N\in \Rep(Q)$, we have $\Hom(M,N)=\Hom(\tau^{-1}M,\tau^{-1}N)$ and $\Ext(M,N)=\Ext(\tau^{-1}M,\tau^{-1}N)$ by the Auslander-Reiten formulae, see \cite[Theorem IV.2.13]{ass}. Thus we can assume that $C=P_a$ and the first part of the lemma is straightforward.

For the second part,  one observes that $\udim\kappa^{m(n-2)} C=\udim C+m\delta$ for $m\geq 0$. Since we have $\Ext(M,N)=0$ or $\Hom(M,N)=0$ for two preprojective representations and since $\delta(C)=-1$, the claim follows by formula (\ref{HomExt}). 
\end{proof}

Given two representations $M$, $N$ and an exact sequence
\[0\to M\xlongrightarrow{i}B\xlongrightarrow{\pi}N\to 0,\]
we consider the map
\[\Psi_e:\Gr_e(B)\to\coprod_{f+g=e}\Gr_f(M)\times\Gr_g(N),\, U\mapsto(i^{-1}(U),\pi(U))\]
defined in section \ref{recollmain}. If non-empty, the dimension of a fibre depends on the direct sum decomposition of the subrepresentations of $M$ and $N$. In general, it is already difficult to say in which cases the fibre is empty. But in the case of representations of large defect there are sequences which are close to being almost split so that the fibres can be determined in any case. This extends the following result, see \cite[Lemma 3.11]{cc} and \cite[Proposition 2]{cr}.

\begin{thm}\label{almostsplit}
Let $M$ be a representation of $Q$ and $\tau M$ be its Auslander-Reiten translate. Consider the almost split sequence 
\[\ses{\tau M}{B}{ M}.\]
Then we have
\[\Psi_e^{-1}(A,V)=\begin{cases}\emptyset\text{ if } (A,V)=(0,M)\\\mathbb{A}^{[V,\tau M/A]}\text{ otherwise.}\end{cases}\]
In particular, we have \[\chi(\Gr_e(B))=\begin{cases}\sum_{f+g=e}\chi(\Gr_f(M))\chi(\Gr_g(\tau M))\text{ if } e\neq \udim M\\\sum_{f+g=e}\chi(\Gr_f(M))\chi(\Gr_g(\tau M))-1\text{ if } e=\udim M.\end{cases}\]
\end{thm}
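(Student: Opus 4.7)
The affine-space description of a non-empty fibre is already supplied by Lemma \ref{afffib} (taking $\tau M$ for the left term and $M$ for the right term), so the only real task is to identify the pairs $(A,V)$ for which $\Psi_e^{-1}(A,V)$ is empty. My plan is to recast non-emptiness as the vanishing of an explicit extension class and then read that vanishing off the universal property of the almost split sequence, applied from both sides.

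First I would rephrase the fibre. A point of $\Psi_e^{-1}(A,V)$ is a subrepresentation $U\subset B$ with $U\cap\tau M=A$ and $\pi(U)=V$, which is equivalent to a splitting of the short exact sequence obtained by first pulling $\ses{\tau M}{B}{M}$ back along $V\hookrightarrow M$ and then pushing the result out along $\tau M\twoheadrightarrow\tau M/A$. This yields an extension class $\eta(A,V)\in\Ext(V,\tau M/A)$, and the fibre $\Psi_e^{-1}(A,V)$ is non-empty precisely when $\eta(A,V)=0$.

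Next I would use the two halves of the almost split property to control $\eta(A,V)$. If $V$ is a proper subrepresentation of $M$, then $V\hookrightarrow M$ is not a split epimorphism, so by the right almost split property the pullback class already vanishes, and hence $\eta(A,V)=0$ for every admissible $A$. If $V=M$, the pullback step is trivial and $\eta(A,M)$ is the pushout of the class $[B]\in\Ext(M,\tau M)$ along $\tau M\twoheadrightarrow\tau M/A$; for $A\neq 0$ this quotient map is not a split monomorphism, so by the left almost split property the pushout vanishes, while for $A=0$ the pushout equals the non-zero class $[B]$ itself. Hence $(0,M)$ is the unique pair with empty fibre, which, combined with Lemma \ref{afffib}, gives the stated dichotomy.

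The Euler characteristic corollary then follows by stratifying $\Gr_e(B)$ through $\Psi_e$. Since each non-empty fibre is an affine space and contributes $1$ to the Euler characteristic, summing over $(A,V)$ produces $\sum_{f+g=e}\chi(\Gr_f(\tau M))\chi(\Gr_g(M))$ minus the missing contribution coming from $(0,M)$, and this missing contribution appears exactly when $e=\udim M$. The main obstacle in the whole argument is really the second case of the dichotomy, namely matching the pushout vanishing for $A\neq 0$ to the left almost split property; everything else is bookkeeping with pullback and pushout squares.
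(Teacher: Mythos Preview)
Your argument is correct. The paper does not supply its own proof of this theorem; it is quoted as a known result with references to \cite[Lemma~3.11]{cc} and \cite[Proposition~2]{cr}. Your approach---recasting the fibre as the set of splittings of the extension obtained by pulling back along $V\hookrightarrow M$ and pushing out along $\tau M\twoheadrightarrow\tau M/A$, and then reading off the vanishing of that class from the right (for $V\subsetneq M$) and left (for $A\neq 0$) almost split properties---is exactly the argument of \cite{cc}.

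One small point on the Euler-characteristic step: saying that ``each non-empty fibre contributes $1$'' tacitly uses that $\Psi_e$ is, over a suitable constructible stratification of the target, a Zariski-locally trivial affine bundle (so that $\chi$ behaves multiplicatively on each stratum). This is standard over $\C$, but worth stating explicitly, since the fibre dimension $[V,\tau M/A]$ genuinely varies with $(A,V)$ and not just with their dimension vectors.
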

In general, a representation of large defect cannot be written as the middle term of an almost split sequence. But we can modify the preceding statement to make it applicable for our purposes. If $B$ is indecomposable preprojective of defect $-2$, which is not projective, in the Auslander-Reiten quiver exists the following subquiver
\[
\begin{xy}
\xymatrix@R10pt@C25pt{M\ar@{^(->}[rd]&&\kappa M\ar@{^(->}[rd]&\dots&\dots&\dots&\kappa^{l-1}M\ar@{^(->}[rd]&&N=\kappa^lM\\&\bullet\ar@{->>}[ru]\ar@{^(->}[rd]\ar@{->>}[r]&\rho M\ar@{^(->}[r]&\bullet&\dots&\bullet\ar@{->>}[ru]\ar@{->>}[r]&\rho^{l-1}M\ar@{^(->}[r]&\bullet\ar@{->>}[ru]&\\&&\ddots\ar@{^(->}[rd]&&\bullet\ar@{^(->}[rd]&&\iddots\ar@{^(->}[ru]\\&&&\bullet\ar@{^(->}[ru]\ar@{^(->}[rd]&&\bullet\ar@{^(->}[ru]\\&&&&B\ar@{^(->}[ru]
}
\end{xy}
\] 
Here $M$ and $N$ are two indecomposable preprojective representations of $\tilde D_n$ of defect $-1$. More precisely, we have:
\begin{lemma}\label{def2}
Every indecomposable preprojective representation $B$ with $\delta(B)=-2$ which is not projective is obtained as the middle term of an exact sequence
\[\ses{M}{B}{N}\]
such that $N=\kappa^{l} M\in M^{\perp}$ with $l\leq n-3$, $\Ext(N,M)=k$ and $\Hom(N,M)=0$. 
%Moreover, we have $M\in\{\tau^{-r}P_{a}, \tau^{-r}P_{b}\}$, $N\in\{\tau^{-s}P_{a},\tau^{-s}P_{b}\}$ for some $r\geq 1$, $ r+n-3\geq s\geq r+1$ and $N=\tau^{-l}M$ or $N=\widehat{\tau^{-l}M}$ for some $l\geq 1$.
\end{lemma}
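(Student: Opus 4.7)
The plan is to locate $B$ in the preprojective component of the Auslander--Reiten quiver and read off $M$, $N$ directly from the local mesh structure depicted in the excerpt. By definition $B = \tau^{-s}P_{q_i}$ for some inner vertex $q_i$ with $0 \le i \le n-4$ and some $s\ge 0$. The preprojective component has the shape shown in the two large diagrams above: the defect $-1$ representations form the two outer ``boundary rows'' (one associated to $q_a,q_b$, the other to $q_c,q_d$), and the defect $-2$ representations sit strictly between them. Starting from $B$ and moving northwest along the mesh arrows, we eventually reach the upper row at some defect $-1$ vertex $M$; moving northeast we reach a defect $-1$ vertex $N$ on the same row. Because the rank of the non-homogeneous tube is $n-2$, the horizontal distance $l$ between $M$ and $N$ along the chain of inclusions $C\subset\rho C\subset\rho^2 C\subset\cdots$ is at most $n-3$ (one shift of $\delta$ is realised after $n-2$ steps). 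By construction $N=\kappa^l M$.

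Second, I would exhibit the short exact sequence $0\to M\to B\to N\to 0$. The fastest way is to iterate the almost split sequences associated to each mesh between $M$ and $N$: this gives a chain of push-outs (equivalently, a ``rectangle'' in the preprojective component) whose lower-left corner is $M$, lower-right corner is $N$, and whose middle object built by the iterated push-out is exactly $B$. Alternatively one can construct the sequence by dualising Lemma~\ref{verschoben} and using that the category $\mathcal C C$ is equivalent to the tube of rank $n-2$, where such a rectangle sequence is the standard realisation of the unique non-split extension of $N=\kappa^l M$ by $M$.

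Third, I would check the Hom/Ext conditions. For $\Hom(N,M)$: in the AR quiver there is no oriented path from $N$ to $M$ (the arrows on the boundary row all point ``to the right'', and $M$ sits strictly to the left of $N$), so Remark~\ref{reminout0}(iii) gives $\Hom(N,M)=0$. For $\Ext(N,M)$ and $\Hom(M,N)$: applying Lemma~\ref{verschoben}(ii) to $C:=M$ with $m=0$ and $1\le l\le n-3$ yields
\[
\dim \Ext(\kappa^l M,M)=1 \qquad\text{and}\qquad \dim \Hom(M,\kappa^l M)=0,
\]
which gives $\Ext(N,M)=k$ and the first half of $N\in M^\perp$. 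The remaining $\Ext(M,N)=0$ follows from Remark~\ref{reminout0}(iii) applied in the other direction (no path from $N$ to $\tau M$ in the AR quiver, using the Auslander--Reiten formula $\dim\Ext(M,N)=\dim\Hom(N,\tau M)$).

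The main obstacle is making the reading of the AR quiver diagram uniform across orientations: for non-subspace orientations the labels in the mesh shift, and verifying that the iterated push-out of mesh sequences is indeed indecomposable and equal to $B$ (rather than a decomposable middle term) requires care. I would handle this either by reducing to subspace orientation via BGP reflection functors (as reviewed later in Section~\ref{bgp}), which preserve the AR quiver and the isomorphism class of the extension, or by arguing directly that any proper decomposition of the middle term would contradict $\dim\Ext(N,M)=1$ together with the indecomposability of all intermediate terms on the two ``slopes'' between $M,N$ and $B$.
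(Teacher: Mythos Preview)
Your approach is essentially the same as the paper's: both locate $B$ in the preprojective component of the Auslander--Reiten quiver and invoke Lemma~\ref{verschoben} for the $\Hom$/$\Ext$ computations. The paper's proof is much shorter because, instead of handling arbitrary orientations via BGP reflections or arguing about iterated push-outs, it simply observes that $\Hom$ and $\Ext$ are $\tau$-invariant on the preprojective component (this is exactly the Auslander--Reiten formulae, already used at the start of the proof of Lemma~\ref{verschoben}), so one may assume without loss of generality that $M=P_a$. Then $N$, $B$, the bound $l\le n-3$, and the exact sequence can all be read off directly from the explicit subspace-orientation diagram of the preprojective component displayed just before the lemma, and the numerical conditions are exactly the case $m=0$ of Lemma~\ref{verschoben}(ii). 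This single reduction via $\tau$ replaces your entire ``main obstacle'' paragraph and makes the verification of indecomposability of the middle term immediate.
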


\begin{proof}
The representation $B$ is the $n^\mathrm{th}$ Auslander-Reiten translate of a projective representation corresponding to an inner vertex $q_i$ where $n\geq 1$. As $B$ is not projective, it has a subrepresentation $M$ which is the $m^{\mathrm{th}}$ Auslander-Reiten translate of a projective representation of defect $-1$ where $n-1\leq m\leq n$. This can be seen when considering the preprojective component of the Auslander-Reiten quiver. By applying Auslander-Reiten translations, we can assume that $M=P_{a}$. The claim follows by Lemma \ref{verschoben} together with the Auslander-Reiten-formulae.
\end{proof}

Note that, if $B$ is projective, it might happen that such a triangle does not exist. This is for instance the case if $\tilde D_n$ is in subspace orientation. But as our quiver is acyclic, there is at most one path between each two vertices of the quiver which means that for the dimension vectors $\alpha$ of projective representations, we have $\alpha_q\in\{0,1\}$ for all vertices $q$. In turn, the quiver Grassmannians trivially have cell decompositions.

 Actually, we can assume that $N=\kappa^{l}M$ with $l\leq n/2-1$. This is because the two lower rows of the Auslander-Reiten quiver behave dual to the upper ones. In particular, if $n=4,5$ we can assume that $N=\tau^{-1}M$ and, moreover, we are in the situation of Theorem \ref{almostsplit}. 

From now on, we assume that $B$ is not projective. In the following, we refer to the indecomposable representations lying properly in the above triangle or corresponding to a point $T\neq B$ on the path from $B$ to $N$ as $(M,N)$-inner representation. The remaining ones, i.e. those which are outside the triangle or on the path from $M$ to $B$, as $(M,N)$-outer representations. We drop $(M,N)$ if it is clear which representations are considered. Finally, if a $(M,N)$-inner (resp. $(M,N)$-outer) representation is also a subrepresentation of $N$, we call it inner (resp. outer) subrepresentation of $N$ if we fixed a triangle.
In order to investigate such a triangle, the Auslander-Reiten formulae assure that we can mostly without loss of generality assume that $M=P_{a}$ and $N=\kappa^lM$.

%We state some observations which are obtained from considering the Auslander-Reiten quiver in the case of quivers of type $\tilde D_n$ or general  Auslander-Reiten theory, see for instance :
%\begin{rem}\label{reminout}
%\begin{enumerate}
%\item   
%\item   
%\item 
%
%%\item If $C$ is an inner subrepresentation of $N$ and $C'$ is an indecomposable subrepresentation of $N$ such that $C\subset C'$, we have that $C'$ is also an inner subrepresentation.\footnote{weg? siehe Lemma}
%\item 
%\end{enumerate}
%\end{rem}

In order to prove the following essential lemma, we use the notation of Lemma \ref{def2} and use some well-known facts concerning the Auslander-Reiten theory of extended Dynkin quivers, see for instance \cite[sections IV, VIII.2]{ass}:

\begin{lemma}\label{inout}The following holds:
\begin{enumerate}
\item\label{1} For every inner representation $C$, we have $\Hom(C,B)=\Hom(C,M)=0$.
\item\label{2} The representation $N$ has no subrepresentation which is isomorphic to a representation which lies on the border of the triangle. 
\item\label{3}
If $Z$ is an outer representation, an injective morphism $f\in\Hom(Z,N)$ with $f\neq 0$ factors through $B$. 
\item\label{4} The inner subrepresentations $C$ of $N$ are precisely the representations $C=\kappa^i M$ for $i=1,\ldots,l$. In particular, $N$ has no inner subrepresentation of defect $-2$.
\item\label{5} If $A$ is a non-zero subrepresentation of $M$ and $V$ is any subrepresentation of $N$ we have $\Ext(V,M/A)=0$.

%there exists no $Z\neq 0$ with $C\oplus Z\hookrightarrow N$ and we have $\Gr_{\dim C}(N)=\{pt\}$.
\end{enumerate}

\end{lemma}
\begin{proof} The first part follows because there is no path from an inner representation $C$ to $B$ and $M$ respectively, see also part three of Remark \ref{reminout0}.

We get (\ref{2}) as follows: for each representation $C\neq N$ which lies on the border, there exists an injection $M\hookrightarrow C$. Now we can use that $\Hom(M,N)=0$. 

To see (\ref{3}), one first observes that every such mono factors through an outer representation $N'$ which lies on the path from $M$ to $B$ and we thus have $M\subset N'\subset B$. Since $\Hom(M,N)=0$, we have $N'\neq M$ and thus $\delta(N')=-2$ . As we get an exact sequence with the same properties as those for $M$ and $B$, which has $N'$ as the middle term and $M$ as the kernel, it is straightforward that $ M\in N'^{\perp}$ which yields $\Hom(N',B)\cong\Hom(N',N)$. Thus if an injection factors through $N'$ it already factors through $B$.

Every representation of defect $-2$ which lies in the triangle has $M$ as a subrepresentation. As $\Hom(M,N)=0$, the representation $N$ has no such subrepresentations. Moreover, there exists the following chain of inner representations 
\[\kappa M\subset\kappa^2M\subset\ldots\subset\kappa^lM=N.\]
Apart from these representations the only representations of defect $-1$ in the triangle are of the form $\rho^i M$ for $i=1,\ldots,l$. But since $M\subset \rho^i M$, they are no subrepresentations of $N$. Thus we get (\ref{4}).

Since $N$ is preprojective, $V$ is also preprojective. By the first part of Remark \ref{reminout0}, it follows that $M/A$ has no preprojective direct summand. Thus we get (\ref{5}).

\end{proof}
We use this lemma to prove the following proposition classifying subrepresentations of $N$.
\begin{prop}\label{inout2}The following holds:
\begin{enumerate}

\item\label{6} If $0\neq V\subseteq N$, the corresponding injection either factors through $B$ or $V\cong C\oplus L$ where $C$ is an inner subrepresentation of $N$.
\item\label{7} If $C\oplus L\subseteq N$ such that $C$ is an inner subrepresentation, we have that $L\subseteq N/C$ is preprojective, 
and, moreover, with respect to the Schubert decomposition induced by the bases considered in \cite[Appendix B]{LW}, we have that $$\{V'\in\Gr_{\udim C+\udim L}(N)\mid V'\cong C\oplus L',\,L'\text{ preprojective} \}\cong\{L'\in\Gr_{\udim L}(N/C)\mid L'\text{ preprojective}\}$$ is a union of cells $C_{\beta}^N$  of the cell decomposition of $\Gr_{\udim C+\udim L}(N)$ into affine spaces.
\end{enumerate}
\end{prop}
\begin{proof}

Part (\ref{6}) is just a reformulation of statements of Lemma \ref{inout} taking into account Remark \ref{reminout0}. Note that we have $\Hom(C,B)=0$ and thus $B$ has no subrepresentation isomorphic to $C\oplus L$ where $C$ is an inner subrepresentation.

The second part can be obtained as follows. For a fixed inner subrepresentation $C\subset N$, setting $e:=\udim C+\udim L$, we again obtain a map of quiver Grassmannians $$\Psi_e:\Gr_{e}(N)\to \prod_{ f+g=e}\Gr_{ f}(C)\times \Gr_{g}(N/C).$$ Our task is to analyze the fibres in the case $ f=\udim C$ and the consequences for the Schubert decompositions.

By Lemma \ref{verschoben}, we have that $N/C$ is an exceptional regular representation. In addition, it follows that $\Hom(C,N/C)=\Ext(C,N/C)=0$. Now \cite[Corollary 4]{cr} yields $\Gr_{\udim C}(N)=\{\mathrm{pt}\}$. Moreover, since we have $N=\rho^m C$ for some $m\leq n-4$, by construction of the coefficient quiver $\Gamma_N$ of $N$ in \cite[Appendix B]{LW}, this subrepresentation corresponds to the full subquiver $\Gamma_C$ of $\Gamma_N$ which consists of the first $\udim C$ vertices. Since $C$ is a subrepresentation of $N$ and $N/C$ a factor, the full subquiver $\Gamma_{N/C}$ consisting of the vertices $(\Gamma_N)_0\backslash(\Gamma_C)_0$ is connected to $\Gamma_C$ by an outgoing arrow.

If $L\subset N/C$ has a regular direct summand, we have $\Hom(L,N)=0$. In particular, $L\oplus C$ is no subrepresentation of $N$. Note that, if $L$ is regular indecomposable, we can consider the inclusion $\Hom(L,N/C)\hookrightarrow\Ext(L,C)$. Then we even have that the middle term $C'$ of the corresponding sequence is also an inner representation, see also Remark \ref{inner}.

Thus assume that $L\subseteq N/C$ is preprojective. By Proposition \ref{comptube}, we have that there exist cells $C_{\beta_1}^{N/C},\ldots,C_{\beta_n}^{N/C}$ such that the subsets $\beta_i$ have cardinality $\udim L$ and such that all representations in these cells are preprojective. If $\Ext(L,C)=0$, we obtain a commutative diagram
\[
\begin{xy} 
\xymatrix@R15pt@C20pt{0\ar[r] &C\ar[r]&\ar[r]N&N/C\ar[r]&0\\0\ar[r] &C\ar[r]\ar@{=}[u]&\ar[r]\ar[u]V&L\ar[r]\ar[u]\ar[u]^{i_L}&0
}\end{xy}\] 
with $V\cong C\oplus L$.  In particular, the subrepresentation $L$ lifts to a subrepresentation of $N$. 

We claim that, actually, $\Ext(L,C)=0$ for all preprojective subrepresentations $L\subset N/C$. We can without loss of generality assume that $L$ is indecomposable, $N\in\{\tau^{-l}P_a,\tau^{-l}P_b\}$ and that $C=\tau^{-r}P_a$ where $1\leq r\leq l\leq n-3$. In particular, we have $\delta>\udim N\geq\udim C$. If $L=\kappa^l C$ for some $n-3\geq l\geq 1$, the representation $V$ would be an indecomposable inner representation with $\delta(V)=-2$. Indeed, in this case we have $C\in{^\perp} L$ by Lemma \ref{verschoben}. But since $N$ has no inner subrepresentations of defect $-2$ by Lemma \ref{inout}, this is not possible. Also $l\geq n-2$ is not possible because then we had $\udim L>\delta>\udim N/C$.

We have $\dim \Ext(L,C)=\dim \Hom(C,\tau L)$ by the Auslander-Reiten formulae. If $\Ext(L,C)\neq 0$, since $C$ is of defect $-1$, by the second part of Remark \ref{reminout0}, it follows that $C\subset \tau L$. If $\delta(L)=-2$, there exists a chain of  inclusions
\[\tau L\subset L\subset \tau^{-1}L\subset\ldots\]
In particular, we have $\udim L\geq \udim\tau L\geq\udim C$. But for $C=\tau^{-r}P_a$ we either have $\dim C_{q_a}\geq 1 $ or $\dim C_{q_b}\geq 1$ and thus $\dim V_{q_a}\geq 2> \delta_{q_a}\geq\dim N_{q_a}$ or $\dim V_{q_b}\geq 2> \delta_{q_b}\geq\dim N_{q_b}$. But this is not possible because $V$ is a subrepresentation of $N$ and $\udim N<\delta$.

Thus it remains to deal with the case if $L\in\{\tau^{-l}P_c,\tau^{-l}P_d\}$ for some $l\geq 0$. There exists a $r\geq 1$ such that $C\subset \tau^{-s} P_{n-4}$ and $C\not\subset \tau^{-t} P_{n-4}$ for $s\geq r$ and $r-1\geq t\geq 0$. Moreover, there exists almost-split sequences

\[\ses{\tau^{-k}P_c}{\tau^{-r} P_{n-4}}{\tau^{-(k+1)}P_c},\quad \ses{\tau^{-k'}P_d}{\tau^{-r} P_{n-4}}{\tau^{-(k'+1)}P_d}\]
for some $k,k'\geq 0$. By the choice of $r$, we have $\Hom(C,\tau^{-k}P_c)=\Hom(C,\tau^{-k'}P_d)=0$ because otherwise there were a path of irreducible morphism from $C$ to $\tau^{-k}P_c$ which were forced to factor through $\tau^{-r+1}P_{n-4}$.
Thus, keeping in mind the second part of Remark \ref{reminout0}, it follows that  $C\subset \tau^{-l}P_c,\,\tau^{-l}P_d$ for all $l> k, k'$ and the claim follows as in the case $\delta(L)=-2$.

Since we have $\Psi_{e}^{-1}(C,L)=\mathbb{A}^0$ if the fibre is not empty, as a consequence, we obtain an isomorphism 
$$\{V'\in\Gr_{\udim C+\udim L}(N)\mid V'\cong C\oplus L',\,L'\text{ preprojective} \}\cong\{L'\in\Gr_{\udim L}(N/C)\mid L'\text{ preprojective}\}.$$
As the right hand side is compatible with the Schubert decomposition by Proposition \ref{comptube}, this restricts to an isomorphism between the respective Schubert cells.  
\end{proof}
\begin{rem}\label{inner}
By the results of this section, it follows that every subrepresentation $C\oplus L\subset N$ such that the fibre of $(0,C\oplus L)$ is empty are in bijection with the subrepresentations of $N/\tau^{-1}M$. To check this it suffices to keep in mind that every inner representation is obtained as the middle term of an exact sequence between a regular subrepresentation of $N/\tau^{-1}M$ and $\tau^{-1}M$. 
\end{rem}

The considerations of this section enable us to prove several properties concerning the morphism $$\Psi_e:\Gr_{e}(B)\to \prod_{ f+g=e}\Gr_{ f}(M)\times \Gr_{g}(N)$$ induced by short the exact sequence under consideration:

\begin{prop} Let $A\subset M$ and $V\subset N$ be subrepresentations such that $e=\udim A+\udim V$.

\begin{enumerate}
\item The fibre $\Psi_e^{-1}(A,V)$ is empty if and only if $A=0$ and $V\cong C\oplus L$ where $C$ is an inner subrepresentation and $L=0$ or $L\subset N/C$ is preprojective.

\item We have $[V,M/A]=\Sc{\udim V}{\udim M/A}$. In particular, we have $\Psi_e^{-1}(A,V)=\mathbb{A}^{\Sc{\udim V}{\udim M/A}}$ if $\Psi_e^{-1}(A,V)$ is not empty, i.e. the fibre dimensions only depend on the dimension vectors of $V$ and $A$.
\end{enumerate}
\end{prop}

\begin{proof}

The strategy of the proof is adopted from \cite[Lemma 3.11]{cc}. 

If $A=0$, by Proposition \ref{inout2}, the inclusion $V\hookrightarrow N$ does not factor through $B$ if and only if $V\cong C\oplus L$ where $C$ is an inner subrepresentation and $L=0$ or $L\subset N/C$ is preprojective. This yields one direction of the first part. 

From now on let $A\neq 0$. If $V$ is direct sum of outer representations, by Proposition \ref{inout2}, every injection $V\hookrightarrow N$ factors through $B$. Thus the fibre is not empty. 

Next let $V= C$ be an inner subrepresentation of $N$. Then we have $C=\kappa^m M$ with $1\leq m\leq l\leq n-3$. We have $\Hom(N/C,M)=0$ because $N/C$ is exceptional regular by Lemma \ref{verschoben}. As $C=\kappa^m M$, considering the appropriate long exact sequence, we obtain $\Ext(N/C,M)=0$ and thus $\Ext(C,M)\cong\Ext(N,M)=k$. Since the representation $C$ is preprojective and $M/A$ has no preprojective direct summand, we have $\Ext(C,M/A)=0$ and thus we get a surjection
\[\Ext(C,A)\twoheadrightarrow\Ext(C,M)\cong\Ext(N,M).\]
In particular, we get a commutative diagram
\[
\begin{xy}
\xymatrix@R15pt@C20pt{0\ar[r] &M\ar[r]&\ar[r]B&N\ar[r]&0\\0\ar[r] &A\ar[r]\ar[u]^{i_A}&\ar[r]\ar[u]U&C\ar[r]\ar[u]\ar[u]^{i_C}&0
}

\end{xy}
\]
showing that $\Psi_{e}^{-1}(A,C)\neq\emptyset$ where $e=\udim U$. 

Finally, if $V= C\oplus L$ is a direct sum of an inner subrepresentation and  outer subrepresentations, we can combine both cases in order to show that the fibre is not empty. In summary, we obtain the first statement.

The second part is obtained as follows. By Lemma \ref{afffib}, we have $\Psi_e^{-1}(A,V)=\mathbb{A}^{[V,M/A]}$. If $A\neq 0$, we have $\Ext(V,M/A)=0$ because $M/A$ has no preprojective direct summand by the preceding considerations. Thus the second part follows in this situation. If $A=0$ and if the fibre is not empty, by the first part, $V$ is forced to be a direct sum of outer subrepresentation. Let $V\cong V_1\oplus\dots V_r$ be its direct sum decomposition. Since $V_i$ does also not lie on the border, by the second part of Lemma \ref{inout},  there is no path from $M$ to $V_i$ in the Auslander-Reiten quiver for all $1\leq i\leq r$. It follows that we have $\Ext(V_i,M)=0$ and thus $\dim\Hom(V_i,M)=\Sc{\udim V_i}{\udim M}$. 
\end{proof}

Keeping in mind Proposition \ref{bundle}, the considerations of this section together with \cite[Theorem 4.4]{LW} now yield that there exists a cell decomposition for every quiver Grassmannian attached to preprojective representations (resp. preinjective representations).

\begin{thm}\label{almost2} Let $B\in\mathrm{Rep}(Q)$ be an indecomposable preprojective representation with $\delta(B)=-2$. Then there exist two preprojective representations $M$ and $N=\kappa^{l} M$ with $\delta(M)=\delta(N)=-1$ and a short exact sequence
\[\ses{M}{B}{N}\] such that for $\Psi_e$ we have
\[\Psi_e^{-1}(A,V)=\begin{cases}\emptyset\text{ if } A=0,\,V\cong C\oplus L,\, C\text{ an $(M,N)$-inner subrepresentation }\\\mathbb{A}^{\Sc{\udim V}{\udim M/A}}\text{ otherwise}\end{cases}.\]
Moreover, the fibres $\Psi_e^{-1}(A,V)$ are constant over $C_{\beta}^M\times C_{\beta'}^N\subseteq \Gr_{\udim A}(M)\times\Gr_{\udim V}(N)$ for each pair $(\beta,\beta')$ of type $(\udim A,\udim V)$ and $\Gr_e(B)$ has a cell decomposition into affine spaces.
\end{thm}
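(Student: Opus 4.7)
The plan is to assemble the theorem from the results established earlier in this section. First, I invoke Lemma~\ref{def2} to obtain the short exact sequence $0 \to M \to B \to N \to 0$ with $N = \kappa^l M$ (for some $l \leq n-3$), $\Ext(N,M) = k$ and $\Hom(N,M)=0$. The description of the fibres of $\Psi_e$ is then precisely the content of the Proposition proved immediately before: parts (i) and (ii) give the emptiness criterion and the formula $\Psi_e^{-1}(A,V) \cong \mathbb{A}^{[V,M/A]}$, and part (iii) rewrites the exponent as $\Sc{\underline{\dim}V}{\underline{\dim}M/A}$, which is the form appearing in the theorem statement.

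To see that $\Psi_e^{-1}$ is constant over each product $C_\beta^M \times C_{\beta'}^N$, I would argue that both the emptiness criterion and the fibre dimension are cell-constant. The exponent $\Sc{\underline{\dim}V}{\underline{\dim}M/A}$ depends only on dimension vectors, which are constant on each Schubert cell. For the emptiness criterion, the property that $V$ splits as $C \oplus L$ with $C$ an $(M,N)$-inner subrepresentation and $L$ preprojective is cell-constant by Proposition~\ref{comptube} together with part~(iv) of Lemma~\ref{inout}: the latter identifies the inner-plus-preprojective locus in $\Gr_{\underline{\dim}V}(N)$ with a union of cells $C_{\beta'}^N$, and the former guarantees that the preprojective-versus-regular decomposition type is constant on each such cell.

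For the cell decomposition of $\Gr_e(B)$, Theorem~A gives decompositions $\Gr_*(M) = \coprod_\beta C_\beta^M$ and $\Gr_*(N) = \coprod_{\beta'} C_{\beta'}^N$ into affine spaces. Over each non-empty product $C_\beta^M \times C_{\beta'}^N$, the restriction of $\Psi_e$ is a morphism of varieties whose fibres are all isomorphic to a single affine space $\mathbb{A}^d$. Setting $C^B_{(\beta,\beta')} := \Psi_e^{-1}(C_\beta^M \times C_{\beta'}^N)$, the main task is to show that this preimage is itself an affine space of dimension $\dim C_\beta^M + \dim C_{\beta'}^N + d$. I would do so by making the extension-class parametrization from the proof of Lemma~\ref{afffib} explicit in the coordinates inherited from the ordered coefficient quiver of $B$ of Remark~\ref{betalift}, giving polynomial coordinates on $C^B_{(\beta,\beta')}$ which exhibit it as $\mathbb{A}^{\dim C_\beta^M + \dim C_{\beta'}^N + d}$. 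The main obstacle is precisely this last verification of Zariski-local triviality of the fibration; it is a little subtle since the sequence from Lemma~\ref{def2} is only close to being almost split rather than actually almost split, so that the construction of a global section on each cell product has to treat carefully those $(\beta,\beta')$ for which $V \in C_{\beta'}^N$ contains an inner summand, in contrast to the classical almost split case handled in \cite{cc}.
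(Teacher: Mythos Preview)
Your proposal is correct and follows the same approach as the paper: the paper in fact presents Theorem~\ref{almost2} as an immediate consequence of Lemma~\ref{def2}, the preceding Proposition on fibres, Lemma~\ref{inout}(iv), Proposition~\ref{comptube}, and \cite[Theorem~4.4]{LW}, without writing out a separate proof. Your caution about the Zariski-local triviality of the affine fibration is well placed---the paper takes this for granted---but it is handled by the same explicit linear-algebraic parametrization of extensions used in the proof of \cite[Lemma~3.11]{cc} (cf.\ Lemma~\ref{afffib}), which varies polynomially in the matrix coordinates of $(A,V)$ and hence trivializes over each cell product.
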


Using Theorem \ref{dual}, we obtain:

\begin{cor}
Let $B\in\Rep(Q)$ be a preinjective representation with $\delta(B)=2$. Then every quiver Grassmannian $\Gr_e(B)$ has a cell decomposition into affine spaces.
\end{cor}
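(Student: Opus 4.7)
The plan is to reduce the preinjective case to the preprojective case already handled in Theorem \ref{almost2} via the duality functor. If $B$ is a preinjective representation of $Q$ with $\delta(B)=2$, then its dual $B^\ast$ is a representation of the opposite quiver $Q^{\op}$, which is again of type $\tilde D_n$. Under the duality $M\mapsto M^\ast$, indecomposable preinjectives of $Q$ correspond to indecomposable preprojectives of $Q^{\op}$, and the defect changes sign, so $B^\ast$ is an indecomposable preprojective representation of $Q^{\op}$ with $\delta(B^\ast)=-2$.

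Next, I would invoke the duality result Theorem \ref{dual} (established in section \ref{posdef}), which says that taking duals identifies the quiver Grassmannians $\Gr_e(B)$ with $\Gr_{\underline{\dim}B-e}(B^\ast)$ as algebraic varieties, and, more importantly, that this identification is compatible with Schubert decompositions: a choice of ordered basis $\cB$ of $B$ yields a dual ordered basis $\cB^\ast$ of $B^\ast$, and the bijection $\beta \mapsto (\Gamma_B)_0 \setminus \beta$ between subsets of $\cB$ of cardinality $e$ and subsets of $\cB^\ast$ of cardinality $\underline{\dim}B-e$ respects the Schubert cells up to the above isomorphism of varieties.

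Applying Theorem \ref{almost2} to the preprojective representation $B^\ast$ of $Q^{\op}$, we obtain indecomposable representations $M',N'$ of defect $-1$ in $\Rep(Q^{\op})$, together with coefficient quivers $\Gamma_{M'}$ and $\Gamma_{N'}$, such that $\Gr_{f}(B^\ast)$ decomposes into affine cells indexed by non-contradictory pairs of subsets of $(\Gamma_{M'})_0 \times (\Gamma_{N'})_0$ of total cardinality $f$. Dualizing back via Theorem \ref{dual}, setting $M = (N')^\ast$ and $N = (M')^\ast$, and taking complementary subsets for each $f=\underline{\dim}B-e$, one obtains the desired decomposition of $\Gr_e(B)$ into affine spaces and empty cells.

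The only genuinely non-formal point is ensuring that the duality actually transports affine cells to affine cells, but this is exactly the content of Theorem \ref{dual} (which establishes that the isomorphism $\Gr_e(B)\cong\Gr_{\underline{\dim}B-e}(B^\ast)$ identifies Schubert cells attached to complementary subsets of the dual bases). Given that this transport is available, the statement is an immediate corollary, and no further input is required.
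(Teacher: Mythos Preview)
Your proposal is correct and follows exactly the paper's approach: the paper simply writes ``Using Theorem~\ref{dual}, we obtain:'' before stating the corollary, invoking the duality $B\mapsto B^\ast$ to pass from the preinjective defect $2$ case to the preprojective defect $-2$ case of Theorem~\ref{almost2}. Your additional remarks about how the Schubert cells are transported are accurate elaborations of what Theorem~\ref{dual} asserts, though for the bare corollary the variety isomorphism $\Gr_e(B)\cong\Gr_{\underline{\dim}B-e}(B^\ast)$ already suffices.
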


The results of this section can now be used to obtain the $F$-polynomials of indecomposable representations of large defect. It is straightforward to check that, in terms of cluster variables, this corresponds to the multiplication formula, see Theorem \ref{multform}:

\begin{thm}\label{Fdef2}
Let $B$ be an indecomposable representation with $\delta(B)=-2$. If $\ses{M}{B}{N}$ is a short exact sequence as in Theorem \ref{almost2} we have
\[F_B=F_NF_M-x^{\udim\tau^{-1}M}F_{N/\tau^{-1}M}.\]
\end{thm}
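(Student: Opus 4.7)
The plan is to compute $F_B$ cell-by-cell, using the morphism
\[\Psi_e\colon \Gr_e(B)\longrightarrow \coprod_{f+g=e}\Gr_f(M)\times\Gr_g(N)\]
supplied by Theorem \ref{almost2}, and to read off the correction term from Remark \ref{inner}. More concretely, recall that by Theorem A\ref{tmt} (applied to $M$ and $N$) and Theorem \ref{almost2}, the source, target and fibres of $\Psi_e$ all decompose into affine spaces, and $\Psi_e$ is constant over each product of Schubert cells $C_\beta^M\times C_{\beta'}^N$, with fibre either empty or an affine space. Consequently, $\Psi_e$ restricts to an affine bundle over those cells whose fibres are non-empty, and the Euler characteristic is just a cell count:
\[\chi\bigl(\Gr_e(B)\bigr)\;=\;\#\bigl\{(\beta,\beta')\,\big|\,C_\beta^M\times C_{\beta'}^N\neq\emptyset\text{ and }\Psi_e^{-1}(C_\beta^M\times C_{\beta'}^N)\neq\emptyset\bigr\}.\]

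Next, I would identify the exceptional cells. By Theorem \ref{almost2}, the empty fibres occur exactly over pairs $(A,V)$ with $A=0$ and $V\cong C\oplus L$ where $C$ is an $(M,N)$-inner subrepresentation of $N$ and $L\subseteq N/C$ is preprojective (including $L=0$). Since the only Schubert cell of $\Gr_0(M)$ is the point $\{0\}$, which is non-empty, the bad cells are parametrised by those $\beta'$ whose Schubert cell $C_{\beta'}^N$ consists of such decomposable subrepresentations. By Proposition \ref{comptube} (compatibility of the cell decomposition with the splitting into regular and preprojective parts) together with Remark \ref{inner}, the set of such subrepresentations of $N$ of total dimension $g$ is in canonical bijection with the set of subrepresentations of $N/\tau^{-1}M$ of dimension $g-\underline{\dim}\,\tau^{-1}M$. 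This bijection is compatible with the cell decompositions, so
\[\#\bigl\{\text{bad cells of type }(0,g)\bigr\}\;=\;\chi\bigl(\Gr_{g-\underline{\dim}\,\tau^{-1}M}(N/\tau^{-1}M)\bigr).\]

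Combining these two facts and using that, for the good cells, the affine-bundle structure gives the multiplicative count $\chi(\Gr_f(M))\chi(\Gr_g(N))$, I obtain
\[\chi\bigl(\Gr_e(B)\bigr)\;=\;\sum_{f+g=e}\chi(\Gr_f(M))\,\chi(\Gr_g(N))\;-\;\chi\bigl(\Gr_{e-\underline{\dim}\,\tau^{-1}M}(N/\tau^{-1}M)\bigr)\]
for every $e\in\mathbb{N}Q_0$ (with the convention that the last term vanishes when $e-\underline{\dim}\,\tau^{-1}M$ has a negative component). Multiplying by $x^e$ and summing yields
\[F_B\;=\;F_M F_N\;-\;x^{\underline{\dim}\,\tau^{-1}M}\,F_{N/\tau^{-1}M},\]
which is the claimed formula.

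The main obstacle is the identification of the bad-cell locus with $\Gr(N/\tau^{-1}M)$. The statement of Remark \ref{inner} does the bookkeeping at the level of representations, but I also need to check that this bijection respects the Schubert stratification, i.e.\ that each bad cell $\{0\}\times C_{\beta'}^N$ matches bijectively with a Schubert cell in $N/\tau^{-1}M$ of the correct type. This is exactly where Proposition \ref{comptube} enters: it guarantees that the cells of $\Gr_\bullet(N)$ consisting of subrepresentations of the form $C\oplus L$ (inner plus preprojective) are produced by the lifting construction from the cells of $\Gr_\bullet(N/C)$ described in Lemma \ref{inout}(iv), which in turn are controlled by the coefficient quiver glueings of \cite[Appendix B]{LW}. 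Once this compatibility is in place, the remaining steps are formal manipulations of generating functions.
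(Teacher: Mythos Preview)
Your argument is correct and follows essentially the same route as the paper's proof: both compute $\chi(\Gr_e(B))$ via the morphism $\Psi_e$ of Theorem \ref{almost2}, identify the empty-fibre locus with subrepresentations of $N/\tau^{-1}M$ through Remark \ref{inner}, and then pass to generating functions. The paper's version is terser---it works directly with Euler characteristics rather than cell counts and does not spell out the Schubert-compatibility you worry about in your last paragraph---but that compatibility is already built into Lemma \ref{inout}(iv), so your extra care there is justified and not an added difficulty.
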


\begin{proof}
First recall that $\Gr_{\udim C}(N)=\{\text{pt}\}$ for every inner representation $C\in\mathcal C$. Every regular subrepresentation $V$ of $N/C$ gives rise to a subrepresentation $C'$ where $C'$ is also an inner subrepresentation of $N$ such that fibre of $(0,C')$ is empty. Moreover, every preprojective subrepresentation $L$ gives rise to a subrepresentation $C\oplus L$ of $N$ such that the fibre of $(0,C\oplus L)$ is empty. We can also combine both cases in the natural way. Choosing $C=\tau^{-1} M$ as in Remark \ref{inner}, these observations can be summarized to
\[\chi(\Gr_e(B))=\sum_{f+g=e}\chi(\Gr_f(M))\chi(\Gr_g(N))-\sum_{f=e-\udim\tau^{-1}M}\chi(\Gr_f(N/\tau^{-1}M)).\]
Now it is straightforward that, in terms of $F$-polynomials, this translates to the claim.
\end{proof}

Clearly, the analogous statement holds for preinjective representations $B$ with $\delta(B)=2$.

\subsection{Representations of the homogeneous tubes}\label{sechom}
\noindent In this section, we consider quiver Grassmannians of indecomposable representations lying in one of the homogeneous tubes. It turns out that they are independent of the chosen tube because the quiver Grassmannians of indecomposable representations of dimension $\delta$ are independent of the chosen homogeneous tube, see \cite[Lemma 5.3]{Dupont11} and \cite[Theorem 4.4]{LW}. Note that this can also be checked by hand, see section \ref{homotube}. We fix a homogeneous tube and denote by $M_{r\delta}$ the indecomposable representation of dimension $r\delta$ which lies in this tube where $r\geq 1$. There exists a chain of irreducible inclusions
\[\{0\}\xhookrightarrow{i_1} M_{\delta}\xhookrightarrow{i_2} M_{2\delta}\xhookrightarrow{i_3}\ldots\xhookrightarrow{i_{r}}M_{r\delta}\xhookrightarrow{i_{r+1}}\ldots\]
Actually, we can recursively construct all representations $M_{r\delta}$ by considering non-splitting short exact sequences
\[0\to M_{(r-1)\delta}\xrightarrow{i_r} M_{r\delta}\xrightarrow{\pi_r}M_{\delta}\to 0.\]

The idea is to proceed along the lines of section \ref{EC}. Thus we start with considering the morphism
\[\Psi^r_e:\Gr_e(M_{r\delta})\to\coprod_{f+g=e}\Gr_f(M_{(r-1)\delta})\times\Gr_g(M_\delta),\, U\mapsto(i_r^{-1}(U),\pi_r(U)).\]

\begin{rem}\label{homog}
We have $\Gr_{m\delta}(M_{r\delta})=\{\mathrm{pt}\}$ if $m\leq r$. Indeed, the only subrepresentations of $M_{r\delta}$ are preprojective or contained in the tube of $M_{r\delta}$. Since $\delta(m\delta)=0$ and since the defect is additive, a subrepresentation of dimension $m\delta$ cannot contain a preprojective direct summand.
\end{rem}

\begin{lemma}\label{hilfs4} Let $A$ be a subrepresentation of $M_{(r-1)\delta}$ such that $0\neq A':=\pi_{r-1}(A)\subsetneq M_{\delta}$. Then we have $\Ext(M_{\delta},M_{(r-1)\delta}/A')=0$. 
\end{lemma}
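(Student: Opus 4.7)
The strategy is to reduce the vanishing of the Ext-group to a vanishing $\Hom$-group via Auslander--Reiten duality, exploiting that $\tau M_\delta = M_\delta$ for the homogeneous tube.

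First I would clarify the notation: $M_{(r-1)\delta}/A'$ should be read as the quotient by the pullback $\pi_{r-1}^{-1}(A') = A + M_{(r-2)\delta} \subseteq M_{(r-1)\delta}$, and the third isomorphism theorem then gives $M_{(r-1)\delta}/\pi_{r-1}^{-1}(A') \cong M_\delta/A'$. So it suffices to prove $\Ext(M_\delta, M_\delta/A') = 0$ whenever $0 \neq A' \subsetneq M_\delta$. Since the tube is homogeneous of rank one, $M_\delta$ is regular simple and admits no nonzero proper regular subrepresentation, so $A'$ must be preprojective.

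The key intermediate step is to show that $M_\delta/A'$ is purely preinjective. For each indecomposable direct summand $X$ of $M_\delta/A'$ there is a nonzero surjection $M_\delta \twoheadrightarrow X$: preprojective $X$ is impossible because the image would be preprojective with negative defect, forcing the kernel to have strictly positive defect and contradicting that subrepresentations of $M_\delta$ have non-positive defect; regular $X$ in a tube different from the one containing $M_\delta$ is excluded by orthogonality of distinct tubes in the regular subcategory; and regular $X$ in the homogeneous tube itself would satisfy $\udim X \geq \delta$, which is incompatible with $\udim M_\delta/A' = \delta - \udim A'$ for $A' \neq 0$. Hence every indecomposable summand of $M_\delta/A'$ is preinjective.

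The proof then closes via the Auslander--Reiten formula $\Ext(M_\delta, Y) \cong D\Hom(Y, \tau M_\delta)$, valid for hereditary algebras since $M_\delta$ has no projective summand, combined with $\tau M_\delta = M_\delta$. It remains to verify $\Hom(M_\delta/A', M_\delta) = 0$: a nonzero such morphism would have image either all of $M_\delta$, which is impossible since quotients of preinjectives are preinjective whereas $M_\delta$ is regular, or a proper nonzero subrepresentation of $M_\delta$, which would simultaneously be preprojective (as a subrepresentation of $M_\delta$) and preinjective (as an image of $M_\delta/A'$), hence zero. The main obstacle is the structural step verifying that $M_\delta/A'$ is purely preinjective; once this is established, Auslander--Reiten duality closes the argument in essentially one line.
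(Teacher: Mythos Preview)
Your reading of the quotient $M_{(r-1)\delta}/A'$ is not the one intended, and this causes you to prove only a fragment of the lemma. In the paper, $A'$ is \emph{not} replaced by its preimage $\pi_{r-1}^{-1}(A')$; rather, since $A'\subsetneq M_\delta$ is preprojective one has $\Ext(A',M_{(r-2)\delta})=0$, so the inclusion $A'\hookrightarrow M_\delta$ lifts along $\pi_{r-1}$ to an embedding $A'\hookrightarrow M_{(r-1)\delta}$, and $M_{(r-1)\delta}/A'$ denotes the cokernel of this lift. One then has a short exact sequence
\[
0\longrightarrow M_{(r-2)\delta}\longrightarrow M_{(r-1)\delta}/A'\longrightarrow M_\delta/A'\longrightarrow 0,
\]
so $M_{(r-1)\delta}/A'$ is strictly larger than $M_\delta/A'$ for $r\geq 2$. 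This is also how the result is used in the subsequent lemma, where $f^{A'}_r$ arises from a sequence $0\to A'\to M_{(r-1)\delta}\to M_{(r-1)\delta}/A'\to 0$.

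What you actually establish is $\Ext(M_\delta,M_\delta/A')=0$, and your argument for this is correct; it coincides with the opening step of the paper's own proof. The missing piece is to descend through the extension by $M_{(r-2)\delta}$: applying $\Hom(M_\delta,-)$ to the displayed sequence, one must show that the connecting map $\Hom(M_\delta,M_\delta/A')\to\Ext(M_\delta,M_{(r-2)\delta})\cong k$ is surjective, which the paper obtains by comparison with the non-split sequence $0\to M_{(r-2)\delta}\to M_{(r-1)\delta}\to M_\delta\to 0$. Your Auslander--Reiten approach could in fact be pushed through: it would suffice to prove $\Hom(M_{(r-1)\delta}/A',M_\delta)=0$, and this follows because $\Hom(M_{(r-1)\delta},M_\delta)=k\cdot\pi_{r-1}$ while $\pi_{r-1}$ does not vanish on the lifted copy of $A'$. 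But as written, your proposal stops short of the actual statement.
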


\begin{proof}Since $A'$ is a proper subrepresentation of $M_{\delta}$, we have that $A'$ is preprojective and $M_{\delta}/A'$ is preinjective. Thus we have $\Ext(M_{\delta}, M_{\delta}/A')=0$. But since $A'$ is preprojective, the inclusion $A'\hookrightarrow M_{\delta}$ factors through $M_{(r-1)\delta}$. In particular, we get (using the universal property of the cokernel of $A'\hookrightarrow M_{(r-1)\delta}$) a commutative diagram
\[
\begin{xy}
\xymatrix@R15pt@C20pt{0\ar[r] &M_{(r-2)\delta}\ar[r]&\ar[r]M_{(r-1)\delta}/A'&M_{\delta}/A'\ar[r]&0\\0\ar[r] &M_{(r-2)\delta}\ar[r]\ar@{=}[u]&\ar[r]\ar[u]M_{(r-1)\delta}&M_{\delta}\ar[r]\ar[u]&0
}
\end{xy}
\]
But since the lower sequence does not split and, moreover, since $\Ext(M_{\delta},M_{(r-2)\delta})=k$, this shows that $\Hom(M_{\delta},M_{\delta}/A')\to\Ext(M_{\delta},M_{(r-2)\delta})$ is surjective. Thus we have $$\Ext(M_{\delta}, M_{(r-1)\delta}/A')\cong \Ext(M_{\delta}, M_{\delta}/A')=0,$$
which completes the proof.
\end{proof}

\begin{lemma}\label{hilfs5}
The fibre $(\Psi^r_e)^{-1}(A,V)$ is empty if and only if $V=M_{\delta}$  and $i_{r-1}^{-1}(A)\cong A$, i.e. $A$ is already a subrepresentation of $M_{(r-2)\delta}$.
\end{lemma}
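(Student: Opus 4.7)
The plan is to identify the fibre $(\Psi^r_e)^{-1}(A,V)$ with the set of splittings of a short exact sequence and then control that sequence by a one-step Auslander--Reiten computation. Setting $W:=\pi_r^{-1}(V)/A$, one obtains the sequence
\[
 0\longrightarrow M_{(r-1)\delta}/A\longrightarrow W\longrightarrow V\longrightarrow 0,
\]
and a $U\in(\Psi^r_e)^{-1}(A,V)$ is the same datum as a splitting: the conditions $U\cap M_{(r-1)\delta}=A$ and $\pi_r(U)=V$ translate exactly into $U/A$ being a complement of $M_{(r-1)\delta}/A$ inside $W$. Moreover, this sequence is obtained from $0\to M_{(r-1)\delta}\to M_{r\delta}\to M_\delta\to 0$ by pushforward along $M_{(r-1)\delta}\twoheadrightarrow M_{(r-1)\delta}/A$ and pullback along $V\hookrightarrow M_\delta$. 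Hence the fibre is non-empty if and only if the associated class in $\Ext(V,M_{(r-1)\delta}/A)$ vanishes.

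If $V=0$ this is trivial. If $0\neq V\subsetneq M_\delta$, then $V$ is preprojective, since every proper non-zero subrepresentation of $M_\delta$ is preprojective. The Auslander--Reiten formula identifies $\Ext(V,M_{(r-1)\delta})$ with the dual of $\Hom(M_{(r-1)\delta},\tau V)$, which vanishes because $M_{(r-1)\delta}$ is regular and $\tau V$ is preprojective. Applying $\Hom(V,-)$ to the short exact sequence $0\to A\to M_{(r-1)\delta}\to M_{(r-1)\delta}/A\to 0$ and using $\Ext^{\geq 2}=0$ yields $\Ext(V,M_{(r-1)\delta}/A)=0$, so the displayed sequence splits and the fibre is non-empty.

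The substantive case is $V=M_\delta$. Applying $\Hom(M_\delta,-)$ to $0\to A\to M_{(r-1)\delta}\to M_{(r-1)\delta}/A\to 0$ produces
\[
\Ext(M_\delta,A)\xrightarrow{\phi}\Ext(M_\delta,M_{(r-1)\delta})\longrightarrow\Ext(M_\delta,M_{(r-1)\delta}/A)\longrightarrow 0.
\]
Since the chosen tube is homogeneous, $\tau M_\delta=M_\delta$, and Auslander--Reiten duality identifies $\Ext(M_\delta,M_{(r-1)\delta})$ with the dual of $\Hom(M_{(r-1)\delta},M_\delta)=k\cdot\pi_{r-1}$, and $\phi$ with the dual of the restriction $\Hom(M_{(r-1)\delta},M_\delta)\to\Hom(A,M_\delta)$, $\pi_{r-1}\mapsto\pi_{r-1}|_A$. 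This restriction is non-zero precisely when $\pi_{r-1}(A)\neq 0$, i.e.\ when $A\not\subseteq M_{(r-2)\delta}$; in that case $\phi$ is surjective, $\Ext(M_\delta,M_{(r-1)\delta}/A)=0$, and the fibre is non-empty. Otherwise, if $A\subseteq M_{(r-2)\delta}$, then $\phi=0$, the quotient $\Ext(M_\delta,M_{(r-1)\delta})\to\Ext(M_\delta,M_{(r-1)\delta}/A)$ is an isomorphism of one-dimensional spaces, and the image $[M_{r\delta}/A]$ of the non-zero class $[M_{r\delta}]$ is itself non-zero, so the sequence does not split and the fibre is empty. The main obstacle is the clean identification of the fibre with the set of splittings; once that is in place, the entire argument reduces to the single input $\Ext(M_\delta,M_{(r-1)\delta})\cong k$ together with the transparent geometric criterion that $\pi_{r-1}|_A\neq 0$ iff $A\not\subseteq M_{(r-2)\delta}$.
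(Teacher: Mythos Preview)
Your proof is correct, and the overall architecture matches the paper's: both reduce non-emptiness of the fibre to surjectivity of the connecting map $\phi=f_r^A:\Ext(M_\delta,A)\to\Ext(M_\delta,M_{(r-1)\delta})$ obtained from $\Hom(M_\delta,-)$ applied to $0\to A\to M_{(r-1)\delta}\to M_{(r-1)\delta}/A\to 0$. Where you diverge is in the analysis of this map. The paper argues by induction on $r$: it first settles the case $A=M_{(r-2)\delta}$ directly, then reduces $A\subset M_{(r-2)\delta}$ to it by factoring $f_r^A$ through $f_r^{M_{(r-2)\delta}}$, and finally treats $\pi_{r-1}(A)\neq 0$ via a $3\times 3$ diagram together with the auxiliary Lemma~\ref{hilfs4} (which shows $\Ext(M_\delta,M_{(r-1)\delta}/A')=0$ when $0\neq A'\subsetneq M_\delta$). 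Your route is shorter: using $\tau M_\delta=M_\delta$ and naturality of the Auslander--Reiten isomorphism $\Ext(M_\delta,-)\cong D\Hom(-,M_\delta)$, you identify $\phi$ with the dual of the restriction $\Hom(M_{(r-1)\delta},M_\delta)\to\Hom(A,M_\delta)$, and since the source is spanned by $\pi_{r-1}$ this restriction is injective precisely when $\pi_{r-1}|_A\neq 0$, i.e.\ when $A\not\subseteq M_{(r-2)\delta}$. This bypasses both the induction and Lemma~\ref{hilfs4}.

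One small point: when $A\subseteq M_{(r-2)\delta}$ you assert that $\Ext(M_\delta,M_{(r-1)\delta})\to\Ext(M_\delta,M_{(r-1)\delta}/A)$ is an \emph{isomorphism of one-dimensional spaces}. You have not argued that the target is one-dimensional, only that the map is injective (from $\phi=0$). But injectivity is all you need to conclude that the image of the non-split class $[M_{r\delta}]$ is non-zero, so the argument stands; just drop the word ``isomorphism''.
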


\begin{proof}
If $V=0$, the fibre of $(A,V)$ is clearly not empty because every subrepresentation of $M_{(r-1)\delta}$ is already a subrepresentation of $M_{r\delta}$. 

If $0\neq V\subsetneq M_{\delta}$, we have that $V$ is preprojective and thus the canonical inclusion factors through $M_{r\delta}$. In particular, the fibre of $(A,V)$ is not empty. 

Thus assume that $V=M_{\delta}$. If $A=0$, the fibre is empty because the sequence does not split. For general $A\subset M_{(r-1)\delta}$, we consider the long exact sequence
\[\begin{xy}\xymatrix@R5pt@C40pt{0\ar[r]&\Hom(M_{\delta},A)\ar[r]&\Hom(M_{\delta},M_{(r-1)\delta})\ar[r]&\Hom(M_{\delta},M_{(r-1)\delta}/A)\ar[r]&\\\ar[r]&\Ext(M_{\delta},A)\ar[r]^(.43){f^A_r}&\Ext(M_{\delta},M_{(r-1)\delta})\ar[r]&\Ext(M_{\delta},M_{(r-1)\delta}/A)\ar[r]&0}\end{xy}\]

Since $\dim\Ext(M_{\delta},M_{(r-1)\delta})=1$, we have that the fibre is empty if and only if $f^A_r=0$. This is obviously the case if and only if $\Ext(M_{\delta},M_{(r-1)\delta}/A)=k$. In turn the fibre is not empty if and only if $f_r^A$ is surjective. First assume that $i_{r-1}^{-1}(A)\cong A$. If $A=M_{(r-2)\delta}$, we have $M_{(r-1)\delta}/A\cong M_{\delta}$ by Remark \ref{homog}. Then every vector space in the above sequence is isomorphic to $k$. In particular, we have $f^A_r=0$. 

Furthermore, if $A\subset M_{(r-2)\delta}$, we have that $f^A_r$ is the composition 
\[\Ext(M_{\delta},A)\xrightarrow{f_{r-1}^A}\Ext(M_{\delta},M_{(r-2)\delta})\xrightarrow{f_r^{M_{(r-2)\delta}}}\Ext(M_{\delta},M_{(r-1)\delta}).\]
Thus also in this case the fibre is empty.

If $A'':=i_{r-1}^{-1}(A)\ncong A$, we have $A'=\pi_{r-1}(A)\neq 0$. Then we have the following commutative diagram
\[
\begin{xy}
\xymatrix@R15pt@C25pt{0\ar[r] &M_{(r-2)\delta}\ar[r]&\ar[r]M_{(r-1)\delta}&M_{\delta}\ar[r]&0\\0\ar[r] &A''\ar[r]\ar[u]&\ar[r]\ar[u]A&A'\ar[r]\ar[u]&0
}
\end{xy}
\]
inducing a diagram
\[
\begin{xy}
\xymatrix@R15pt@C45pt{\Ext(M_{\delta},A'')\ar[d]\ar[r]^(.45){f_{r-1}^{A''}}&\ar[d]\Ext(M_{\delta},M_{(r-2)\delta})\ar[r]&\Ext(M_{\delta},M_{(r-2)\delta}/A'')\ar[r]\ar[d]&0\\\Ext(M_{\delta},A)\ar[d]\ar[r]^(.45){f_{r}^{A}}&\ar[d]\Ext(M_{\delta},M_{(r-1)\delta})\ar[r]&\Ext(M_{\delta},M_{(r-1)\delta}/A)\ar[r]\ar[d]&0\\\Ext(M_{\delta},A')\ar[r]&\Ext(M_{\delta},M_{\delta})\ar[r]&\Ext(M_{\delta},M_{\delta}/A')\ar[r]&0
}
\end{xy}
\]
If $A'=M_{\delta}$, we clearly have $\Ext(M_{\delta},M_{\delta}/A')=0$. By induction, we have that $f^{A''}_{r-1}$ is surjective and thus it follows that $\Ext(M_{\delta},M_{(r-1)\delta}/A)=0$. Thus the fibre is not empty. 

Finally, assume that $0\neq A'=\pi_{r-1}(A)\neq M_{\delta}$. By Lemma \ref{hilfs4}, we have $\Ext(M_{\delta},M_{(r-1)\delta}/A')=0$. Thus $f^{A'}_r$ is surjective. But since $A'$ is a factor of $A$, we have $\Ext(M_{\delta},A)\twoheadrightarrow\Ext(M_{\delta},A')$. Thus the fibre of $(A,M_{\delta})$ is not empty. 
\end{proof}

\begin{lemma}\label{hilfs6}
If $(\Psi^r_e)^{-1}(A,V)\neq\emptyset$, we have $(\Psi^r_e)^{-1}(A,V)=\mathbb{A}^{\Sc{\udim V}{\udim M_{(r-1)\delta}/A}}$.
\end{lemma}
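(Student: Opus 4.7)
The plan is first to apply Lemma~\ref{afffib}, which tells us that whenever $(\Psi^r_e)^{-1}(A,V)$ is non-empty it equals $\mathbb{A}^{[V,M_{(r-1)\delta}/A]}$. Via formula~(\ref{HomExt}), the stated dimension formula reduces to proving the Ext-vanishing
\[
\Ext(V,M_{(r-1)\delta}/A)=0
\]
whenever the fibre is non-empty. I would organise the whole argument around this one reduction, distinguishing cases according to $V\subseteq M_\delta$.

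The case $V=0$ is immediate. For $V=M_\delta$, non-emptiness of the fibre forces $\pi_{r-1}(A)\neq 0$ by Lemma~\ref{hilfs5}; and inspecting that same proof (the long exact Ext-sequence together with the diagram chase in the two subcases $\pi_{r-1}(A)=M_\delta$ and $0\neq\pi_{r-1}(A)\subsetneq M_\delta$, the latter using Lemma~\ref{hilfs4} combined with the surjection $\Ext(M_\delta,A)\twoheadrightarrow\Ext(M_\delta,\pi_{r-1}(A))$) one sees that what is actually being proved there is the surjectivity of $f_r^A$, which since $\dim\Ext(M_\delta,M_{(r-1)\delta})=1$ is equivalent to $\Ext(M_\delta,M_{(r-1)\delta}/A)=0$. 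So this case requires no new input beyond repackaging the existing argument.

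The remaining case $0\neq V\subsetneq M_\delta$ is where $V$ is forced to be preprojective (as in Lemma~\ref{hilfs4}). Here I would apply the Auslander-Reiten formulae to write
\[
\dim\Ext(V,M_{(r-1)\delta}/A)=\dim\Hom(M_{(r-1)\delta}/A,\tau V).
\]
Since $V$ is preprojective, $\tau V$ is preprojective (or zero). Any non-zero morphism $M_{(r-1)\delta}/A\to\tau V$ would precompose with the canonical surjection $M_{(r-1)\delta}\twoheadrightarrow M_{(r-1)\delta}/A$ to produce a non-zero morphism from the indecomposable regular module $M_{(r-1)\delta}$ to a preprojective module, which is impossible. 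Hence the right-hand side vanishes, giving the desired Ext-vanishing.

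The step that needs care is organisational rather than conceptual: one must recognise that the arguments inside the proof of Lemma~\ref{hilfs5} already yield the Ext-vanishing $\Ext(M_\delta,M_{(r-1)\delta}/A)=0$, not merely non-emptiness of the fibre. Once this is made explicit, the three cases combine to give an affine fibre of the Euler-form-predicted dimension $\Sc{\udim V}{\udim M_{(r-1)\delta}/A}$.
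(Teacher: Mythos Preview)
Your proposal is correct and follows essentially the same approach as the paper: apply Lemma~\ref{afffib}, then establish $\Ext(V,M_{(r-1)\delta}/A)=0$ via the same three-case split, invoking the proof of Lemma~\ref{hilfs5} for the case $V=M_\delta$. The only cosmetic difference is in the case $0\neq V\subsetneq M_\delta$: the paper phrases the vanishing as ``$M_{(r-1)\delta}/A$ has no preprojective direct summand'', whereas you use the Auslander--Reiten formula and precompose with the surjection $M_{(r-1)\delta}\twoheadrightarrow M_{(r-1)\delta}/A$; these are the same argument unpacked differently.
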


\begin{proof}
By Lemma \ref{afffib}, we have
$$(\Psi^r_e)^{-1}(A,V)=\mathbb{A}^{[\udim V,\udim M_{(r-1)\delta}/A]}$$
if it is not empty. The statement is clearly true for $V=0$. If $0\subsetneq V\subsetneq M_{\delta}$, it is preprojective. Since $M_{(r-1)\delta}/A$ has no preprojective direct summand, we have $\Ext(V,M_{(r-1)\delta}/A)=0$ in this case. If $V=M_{\delta}$ and the fibre is not empty, by the considerations in the proof of Lemma \ref{hilfs5}, we have that $f_r^A$ is surjective. Thus we have $\Ext(M_{\delta},M_{(r-1)\delta}/A)=0$.
\end{proof}

The preceding lemmas together with Lemma \ref{afffib} and Proposition \ref{bundle} can now be used to prove the main result of this section: 

\begin{thm}\label{homogaff}
Every quiver Grassmannian $\Gr_e(M_{r\delta})$ has a cell decomposition into affine spaces.  Moreover, this decomposition is compatible with the decomposition
\begin{align}\label{decomp} \Gr_e(M_{r\delta})=\{U\in\Gr_e(M_{r\delta})\mid \pi_r(U)= 0\}\cup\{U\in\Gr_e(M_{r\delta})\mid \pi_r(U)\neq 0\}.\end{align}
\end{thm}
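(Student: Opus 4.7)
The plan is to prove Theorem \ref{homogaff} by induction on $r$, using the morphism $\Psi^r_e$ together with Lemmas \ref{hilfs5} and \ref{hilfs6}. For the base case $r = 1$, we have $M_{0\delta} = 0$, so $\Gr_e(M_\delta)$ has a cell decomposition into affine spaces by \cite[Theorem 4.4]{LW} (since $M_\delta$ is a Schur representation of dimension $\delta$). Compatibility with (\ref{decomp}) is automatic in this case: $\pi_1 = \mathrm{id}_{M_\delta}$, so $\pi_1(U) = 0$ if and only if $U = 0$, and the single point $\Gr_0(M_\delta) = \{0\}$ constitutes a cell of the decomposition.

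For the inductive step, I assume that $\Gr_f(M_{(r-1)\delta})$ has a cell decomposition compatible with the analogous partition, meaning each cell consists entirely of subrepresentations $A$ with $\pi_{r-1}(A) = 0$ (equivalently $A \subseteq M_{(r-2)\delta}$) or entirely of those with $\pi_{r-1}(A) \neq 0$. By \cite[Theorem 4.4]{LW}, $\Gr_g(M_\delta)$ also has a cell decomposition. I then define the cells of $\Gr_e(M_{r\delta})$ by $C_{(\alpha, \beta)} := (\Psi^r_e)^{-1}(C_\alpha \times C_\beta)$, indexed by pairs of cells $C_\alpha \subseteq \Gr_f(M_{(r-1)\delta})$ and $C_\beta \subseteq \Gr_g(M_\delta)$ with $f + g = e$. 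Lemma \ref{hilfs5} states that a fiber $(\Psi^r_e)^{-1}(A, V)$ is empty precisely when $V = M_\delta$ and $A \subseteq M_{(r-2)\delta}$; both conditions are constant on $C_\alpha \times C_\beta$ — the first because $\Gr_\delta(M_\delta) = \{M_\delta\}$ is a single point by Remark \ref{homog}, and the second by the inductive compatibility. Hence $C_{(\alpha, \beta)}$ is either empty or fibers over $C_\alpha \times C_\beta$ with all fibers affine spaces of common dimension $\Sc{g}{(r-1)\delta - f}$ by Lemma \ref{hilfs6}.

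The main obstacle is then showing that each non-empty $C_{(\alpha, \beta)}$ is itself an affine space. Since $C_\alpha \times C_\beta$ is already an affine space and the restriction of $\Psi^r_e$ to $C_{(\alpha, \beta)}$ has affine-space fibers of constant dimension, one obtains a Zariski-locally trivial affine bundle over an affine base; such bundles are trivial (they are classified by $H^1$ of the sheaf of affine automorphisms of the fiber, which vanishes on affine space), yielding $C_{(\alpha, \beta)} \cong C_\alpha \times C_\beta \times \mathbb{A}^{\Sc{g}{(r-1)\delta - f}}$. Finally, compatibility with (\ref{decomp}) is immediate from the construction: $C_{(\alpha, \beta)}$ lies in $\{U \mid \pi_r(U) = 0\}$ exactly when $\beta$ indexes the unique cell of $\Gr_0(M_\delta) = \{0\}$, and in $\{U \mid \pi_r(U) \neq 0\}$ otherwise. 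This simultaneously establishes the compatibility hypothesis needed to continue the induction to level $r+1$.
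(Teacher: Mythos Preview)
Your proof is correct and follows the same inductive strategy as the paper: both argue by induction on $r$, pull back products of cells along $\Psi^r_e$, and use Lemmas \ref{hilfs5} and \ref{hilfs6} together with the inductive compatibility to control emptiness and fibre dimension. You are in fact more explicit than the paper on why the preimage of a product cell is again an affine space; note only that the step from ``affine fibres of constant dimension'' to ``Zariski-locally trivial affine bundle'' is not automatic in general---here it holds because the fibre over $(A,V)$ is a torsor under $\Hom(V,M_{(r-1)\delta}/A)$ and this torsor structure varies algebraically, which is the content behind Lemma \ref{afffib} (cf.\ \cite[Lemma 3.11]{cc}).
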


\begin{proof}
We proceed by induction on $r$. If $r=1$, the claim follows by \cite[Theorem 4.4]{LW}. Alternatively, it is straightforward to check by hand that every quiver Grassmannian has a cell decomposition. Since we clearly have $\pi_1(U)\cong U\neq 0$ for every subrepresentation $0\neq U\subset M_{\delta}$, also the compatibility follows. 

Thus let $r\geq 2$. By Lemma \ref{hilfs5}, the fibre of $(A,V)$ is empty if and only if $A$ is a subrepresentation of $M_{(r-2)\delta}$ and $V=M_{\delta}$. Since $\Gr_f(M_{(r-1)\delta})$ and $\Gr_g(M_{\delta})$ have cell decompositions, by Lemma \ref{hilfs6} together with Proposition \ref{bundle}, it follows that
\[(\Psi_e^r)^{-1}(\Gr_f(M_{(r-1)\delta})\times \Gr_g(M_{\delta}))\]
has a cell decomposition if $g\neq\delta$. If $g=\delta$, the fibre is empty if $\pi_{r-1}(A)= 0$. Since the cell decompositions of the quiver Grassmannians $\Gr_f(M_{(r-1)\delta})$ are compatible with the decomposition (\ref{decomp}) by induction hypothesis, the claim follows in this case in the same way. 

Since we have $\pi_r((\Psi_e^r)^{-1}(A,V))=0$ if and only if $V=0$, it follows that
\[\{U\in\Gr_e(M_{r\delta})\mid \pi_r(U)= 0\}=(\Psi_e^r)^{-1}(\Gr_e(M_{(r-1)\delta})\times\{0\})\]
and 
\[\{U\in\Gr_e(M_{r\delta})\mid \pi_r(U)\neq 0\}=(\Psi_e^r)^{-1}(\coprod_{\substack{f+g=e\\g\neq 0}}\Gr_f(M_{(r-1)\delta})\times \Gr_g(M_{\delta})).\]
This already shows that the cell decompositions of the quiver Grassmannians $\Gr_e(M_{r\delta})$ are also compatible with decomposition (\ref{decomp}).
\end{proof}

We define $F_{r\delta}:=F_{M_{r\delta}}$. Now the following Corollary, which is also obtained in \cite[Theorem 7.1]{dupont2012cluster} is straightforward:

\begin{cor}\label{homogform1} We have
\[F_{r\delta}=F_{\delta}F_{(r-1)\delta}-x^{\delta}F_{(r-2)\delta}\]
for $r\geq 1$ where $F_{0}=1$ and $F_{-\delta}:=0$.
\end{cor}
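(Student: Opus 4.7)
The plan is to derive the recursion by computing the Euler characteristic of $\Gr_e(M_{r\delta})$ through the morphism $\Psi^r_e$ associated with the non-split exact sequence $0\to M_{(r-1)\delta}\to M_{r\delta}\to M_\delta\to 0$. For $r=1$ the identity reduces to $F_\delta=F_\delta\cdot 1-x^\delta\cdot 0$, so I only need to treat $r\geq 2$, where the earlier lemmas on $\Psi^r_e$ apply.

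The key input is that by Lemma \ref{hilfs6} every non-empty fibre of $\Psi^r_e$ is an affine space and hence has Euler characteristic $1$, while by Lemma \ref{hilfs5} the empty-fibre locus consists precisely of the pairs $(A,V)$ with $V=M_\delta$ and $A\subseteq i_{r-1}(M_{(r-2)\delta})\subseteq M_{(r-1)\delta}$. Using Theorem \ref{homogaff} (or equivalently the constancy of the fibre dimension over products of Schubert cells, as in Theorem \ref{almost2}) together with additivity of the Euler characteristic over a finite stratification by locally closed affine pieces, I get
\[
\chi\bigl(\Gr_e(M_{r\delta})\bigr)\;=\;\chi\Bigl(\coprod_{f+g=e}\Gr_f(M_{(r-1)\delta})\times\Gr_g(M_\delta)\Bigr)\;-\;\chi(Y_\emptyset),
\]
where $Y_\emptyset$ denotes the empty-fibre locus. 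The Künneth formula turns the first term into $\sum_{f+g=e}\chi(\Gr_f(M_{(r-1)\delta}))\,\chi(\Gr_g(M_\delta))$, and since $\{M_\delta\}\subset\Gr_\delta(M_\delta)$ is a single point, the identification $A\mapsto i_{r-1}(A)$ gives an isomorphism $Y_\emptyset\cong\Gr_{e-\delta}(M_{(r-2)\delta})$, so $\chi(Y_\emptyset)=\chi(\Gr_{e-\delta}(M_{(r-2)\delta}))$.

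Multiplying by $x^e$ and summing over $e\in\N Q_0$ then yields
\[
F_{r\delta}\;=\;F_{(r-1)\delta}\,F_\delta\;-\;x^\delta F_{(r-2)\delta},
\]
where the shift $e\mapsto e-\delta$ in the correction term supplies the factor $x^\delta$, and the convention $F_{-\delta}=0$ ensures that the $r=1$ case is consistent (no correction is needed since then $M_{(r-2)\delta}=0$).

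The only delicate point is justifying the computation $\chi(\Gr_e(M_{r\delta}))=\chi(Y)-\chi(Y_\emptyset)$. Here I would appeal to the fact, established in Theorem \ref{homogaff}, that $\Gr_e(M_{r\delta})$ has a cell decomposition into affine spaces; this decomposition refines the stratification pulled back from a cell decomposition of $Y$, and over each cell of $Y$ the fibre is either empty or an affine space of fixed dimension. Consequently the Euler characteristic of $\Gr_e(M_{r\delta})$ equals the number of cells of $Y$ lying over the non-empty fibre locus, which is precisely $\chi(Y)-\chi(Y_\emptyset)$. No further computation is needed.
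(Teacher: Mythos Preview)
Your argument is correct and is essentially the approach the paper intends: the corollary is stated immediately after Theorem \ref{homogaff} as ``straightforward,'' and your write-up simply unpacks that word by combining Lemmas \ref{hilfs5} and \ref{hilfs6} with the cell decomposition of Theorem \ref{homogaff} to count cells, exactly as you do. The only cosmetic point is that the identification of the empty-fibre locus with $\Gr_{e-\delta}(M_{(r-2)\delta})$ is really the map $A\mapsto i_{r-1}^{-1}(A)$ (not $A\mapsto i_{r-1}(A)$), and its compatibility with the cell count is precisely the ``compatibility with decomposition (\ref{decomp})'' clause of Theorem \ref{homogaff}.
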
 
In section \ref{homotube}, we use this recursive formula to obtain an explicit formula for the $F$-polynomial $F_{r\delta}$. 

\subsection{Representations of positive defect}\label{posdef}

\noindent For indecomposable representations $M$ of positive defect, we can deduce that $\Gr_e(M)$ decomposes into affine spaces from the corresponding fact for indecomposable representations of negative defect when passing to dual representations. 

Let $M$ be a representation of $Q$ with ordered basis $\mathcal B$ and $d:=\udim M$.
%, we define the dual representation $M^\ast$ of $Q^\op$ as the representation whose vector spaces $M^\ast_p=\Hom(M,\C)$ are the dual vector spaces of $M_p$ for every $p\in Q^\op_0$, and whose linear maps $M^\ast_v$ are the adjoint maps to $M_v$. 
It defines the dual basis $\cB^\ast$ of $M^\ast$, which consists of the linear maps $f_b:M\to \C$ with $f_b(b')=\delta_{b,b'}$. We endow $\cB^\ast$ with the inverse order of $\cB$. Note that the coefficient quiver $\Gamma(M^\ast,\cB^\ast)$ is obtained from the coefficient quiver $\Gamma(M,\cB)$ by inverting the arrows.

If $M=s_{p_1}\dotsb s_{p_t} S_p$ is preprojective, then $M^\ast=s_{p_1}\dotsb s_{p_t} S_p^\ast$ is preinjective, and vice versa. Therefore the dual $(-)^\ast$ establishes a correspondence between the preprojective representations of $Q$ and the preinjective representations of $Q^\op$. If $Q$ is of extended Dynkin type $\tilde D_n$, the absolute value of the defect depends on whether $p\in\{q_a,q_b,q_c,q_d\}$. Thus this correspondence restricts to a correspondence between defect $-1$ (or defect $-2$) preprojectives and defect $1$ (or defect $2$) preinjectives. 

For a subrepresentation $U$ of $M$ with dimension vector $e$, we define $U^\ast=(U^\ast_q)_{q\in Q}$ as the collection of subspaces
\[
 U^\ast_q \ = \ \{ \, f\in M^\ast_q \, | \, f(n)=0\text{ for all }n\in U_q \, \}
\]
of $M^\ast_q$. For a subset $\beta$ of $\cB$, we define its dual as
\[
 \beta^\ast \ = \ \{ \, f\in\cB^\ast \, | \, f(b)=0\text{ for all }b\in\beta \,\},
\]
which is of type $e^\ast=d-e$. Note that $\beta^\ast$ is the complement of the set of dual elements $f_b\in\cB^\ast$ of basis vectors $b\in\beta$.

\begin{thm}\label{dual}
 The association $U\mapsto U^\ast$ defines an isomorphism $\Gr_e(M)\stackrel\sim\to\Gr_{e^\ast}(M^\ast)$, which restricts to an isomorphism $C^M_\beta\stackrel\sim\rightarrow C^{M^\ast}_{\beta^\ast}$ between Schubert cells for every subset $\beta$ of $\cB$. %Moreover, $\beta$ is contradictory of the first (second) kind if and only if $\beta^\ast$ is contradictory of the first (second) kind.
\end{thm}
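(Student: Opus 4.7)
The plan is to argue in four stages: check that $U\mapsto U^{\ast}$ is well-defined on quiver Grassmannians, show it is an isomorphism of varieties, verify the Schubert cell correspondence, and finally transport the contradictoriness conditions through the duality.

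First, I would verify that $U^{\ast}$ as defined (vertexwise annihilator) is in fact a subrepresentation of $M^{\ast}$ over $Q^{\op}$. For every arrow $v\colon p\to q$ of $Q$, the dual map $M_{v}^{\ast}\colon M_{q}^{\ast}\to M_{p}^{\ast}$ sends $f\mapsto f\circ M_{v}$, so if $f(U_{q})=0$ then $(M_{v}^{\ast}f)(U_{p})=f(M_{v}(U_{p}))\subset f(U_{q})=0$. Hence $M_{v}^{\ast}(U_{q}^{\ast})\subset U_{p}^{\ast}$. The dimension count $\dim U_{q}^{\ast}=d_{q}-e_{q}$ is immediate, so $\underline{\dim}\,U^{\ast}=d-e=\ue^{\ast}$.

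Next I would establish that the map $U\mapsto U^{\ast}$ is an isomorphism of algebraic varieties $\Gr_{\ue}(M)\to\Gr_{\ue^{\ast}}(M^{\ast})$. The inverse is the analogous annihilator operation $\Gr_{\ue^{\ast}}(M^{\ast})\to\Gr_{\ue}(M^{\ast\ast})\cong\Gr_{\ue}(M)$; double annihilator recovers $U$. Both operations are given fibrewise by the standard annihilator on ordinary Grassmannians, which is a classical algebraic isomorphism, so the combined map on products of Grassmannians restricts to an isomorphism on the closed subvarieties cut out by the quiver-arrow conditions.

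For the Schubert-cell correspondence, I would work vertex by vertex. Fix $q\in Q_{0}$ and let $\cB_{q}=(b_{1},\dots,b_{d_{q}})$ be the ordered basis of $M_{q}$, with flag $F_{i}=\mathrm{span}(b_{1},\dots,b_{i})$; the dual basis $\cB_{q}^{\ast}$ carries the reversed order, giving the flag $G_{j}=F_{d_{q}-j}^{\perp}$. A quick calculation using
\[
\dim(U_{q}+F_{d_{q}-j})^{\perp}=j-e_{q}+\dim(U_{q}\cap F_{d_{q}-j})
\]
shows that $U_{q}^{\ast}$ picks up a new dimension in passing from $G_{j-1}$ to $G_{j}$ exactly when $U_{q}$ does not pick up a new dimension in passing from $F_{d_{q}-j}$ to $F_{d_{q}-j+1}$, i.e.\ exactly when $b_{d_{q}-j+1}\notin\beta_{q}$, i.e.\ when $f_{b_{d_{q}-j+1}}\in\beta_{q}^{\ast}$. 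This gives $U\in C_{\beta}^{M}\Leftrightarrow U^{\ast}\in C_{\beta^{\ast}}^{M^{\ast}}$ on the product of ordinary Grassmannians, and restricting to the subrepresentation locus yields the claimed cell-by-cell isomorphism.

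The main obstacle is the last assertion on contradictoriness, since it depends on the precise definitions of the two kinds imported from \cite{LW}, which are formulated in terms of the coefficient quiver $\Gamma(M,\cB)$. Here I would argue as follows: the coefficient quiver $\Gamma(M^{\ast},\cB^{\ast})$ is obtained from $\Gamma(M,\cB)$ by reversing every arrow, and replacing $\beta$ by $\beta^{\ast}$ replaces the marked vertex set by its complement. The two contradictoriness conditions of \cite{LW} are defined by local incidence patterns between $\beta$-vertices, non-$\beta$-vertices and arrows, and an inspection of those patterns shows that each pattern is invariant under simultaneously reversing all arrows and taking the complement of $\beta$; one handles the first and second kind separately but by a purely combinatorial matching of the two definitions in $\Gamma_M$ and $\Gamma_{M^{\ast}}$. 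Combined with the cell isomorphism above, this gives that $C_{\beta}^{M}$ is empty iff $C_{\beta^{\ast}}^{M^{\ast}}$ is empty, with the kind of contradiction preserved, completing the theorem.
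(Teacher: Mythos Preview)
Your proof is correct and follows the same overall architecture as the paper's---verify that $U^\ast$ is a subrepresentation, exhibit the inverse, match Schubert cells, then transport contradictoriness by the ``reverse arrows, complement $\beta$'' symmetry---but you differ from the paper in the technical core of the Schubert-cell step. The paper argues via the Pl\"ucker embedding: it builds the duality from the canonical isomorphism $\Lambda^{\tilde e}M\cong\Lambda^{\tilde e^\ast}M^\ast$ on the ambient projective space and then observes that this isomorphism is compatible with the vanishing of Pl\"ucker coordinates, i.e.\ $\Delta_{\beta_p}(U_p)=0\Leftrightarrow\Delta_{\beta_p^\ast}(U_p^\ast)=0$, which immediately gives $C_\beta^M\cong C_{\beta^\ast}^{M^\ast}$. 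Your route is more elementary: you use the flag/jump-set description of Schubert cells and a dimension count on intersections with the dual flag to match pivot positions directly. Both are standard and equally rigorous; the Pl\"ucker argument is shorter to state once the exterior-power map is written down, while yours avoids introducing Pl\"ucker coordinates at all and makes the combinatorics of $\beta\leftrightarrow\beta^\ast$ completely explicit. For the contradictoriness clause you and the paper are at the same level of detail: both note that $\Gamma(M^\ast,\cB^\ast)$ is $\Gamma(M,\cB)$ with arrows reversed, identify the extremal arrows, and appeal to an inspection of the local patterns from \cite{LW}; the paper spells out the first-kind case (extremal successor closed) and, like you, leaves the second-kind verification as a routine check.
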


\begin{proof}
 Let $\tilde d=\sum d_p$ be the total dimension of $d=(d_p)$, $\tilde e$ the total dimension of $e$ and $\tilde e^\ast$ the total dimension of $e^\ast$. Then $M$ has dimension $\tilde d$ as a $\C$-vector space, a subrepresentation $U$ with dimension vector $e$ has dimension $\tilde e$ over $\C$ and a subrepresentation $U^\ast$ of $M^\ast$ with dimension vector $e$ has dimension $\tilde e^\ast$ over $\C$.

 The canonical isomorphism $\Lambda^{\tilde e} M\stackrel\sim\longrightarrow\Lambda^{\tilde e^\ast}M^\ast$ that sends $n_1\wedge\dotsb\wedge n_{\tilde e}$ to the unique element $f_1\wedge\dotsb\wedge f_{\tilde e^\ast}$ with $f_j(n_i)=0$ for all $i=1,\dotsc,\tilde e$ and $j=1,\dotsc,\tilde e^\ast$ induces an isomorphism $\Phi:\P\bigl(\Lambda^{\tilde e} M\bigr)\stackrel\sim\longrightarrow\P\bigl(\Lambda^{{\tilde e}^\ast}M^\ast\bigr)$ between the corresponding projective spaces.

 A subrepresentation $U$ of $M$ of type $e$ corresponds to a point $\iota(U)$ of $\P\bigl(\Lambda^{\tilde e} M\bigr)$ and $U^\ast$ corresponds to a point $\iota^\ast(U^\ast)$ of $\P\bigl(\Lambda^{{\tilde e}^\ast}M^\ast\bigr)$ where $\iota$ and $\iota^\ast$ denote the respective Pl\"ucker embeddings. It is clear from the definitions that $\iota^\ast(U^\ast)=\Phi\bigl(\iota(U)\bigr)$.

 The following calculation shows that $U^\ast$ is a subrepresentation of $M^\ast$, i.e.\ that $M_{v^\ast}^\ast(U_q^\ast)\subset U_p^\ast$ for all arrows $v:p\to q$ of $Q$ where $v^\ast:q\to p$ denotes the dual arrow of $Q^\op$. For $f\in U_q^\ast$, we have $f(M_v(n))=0$ for all $n\in U_p$ since $M_v(U_p)\subset U_q$. By the defining property of the adjoint map $M^\ast_{v^\ast}$ of $M_v$, we have $f(M_v(n))=M^\ast_{v^\ast}(f)(n)$. Thus $M^\ast_{v^\ast}(f)(n)=0$ for all $n\in U_p$, which shows that the functional $M^\ast_{v^\ast}(f)$ is indeed an element of $U_p^\ast$.

 We conclude that the isomorphism $\Phi:\P\bigl(\Lambda^{\tilde e} M\bigr)\stackrel\sim\longrightarrow\P\bigl(\Lambda^{\tilde e^\ast}M^\ast\bigr)$ restricts to a morphism $\Phi':\Gr_e(M)\to\Gr_{e^\ast}(M^\ast)$. By the same arguments as above applied to the respective dual spaces $M^\ast$ and $(M^\ast)^\ast=M$, we see that the inverse $\Psi$ of $\Phi$ restricts to a morphism $\Psi':\Gr_{e^\ast}(M^\ast)\to\Gr_{e}(M)$, which is inverse to $\Phi'$. This shows that the association $U\mapsto U^\ast$ defines an isomorphism $\Phi':\Gr_e(M)\to\Gr_{e^\ast}(M^\ast)$, which establishes the first claim of the theorem.

 For a subset $\beta$ of $\cB$, the Schubert cell $C_\beta^M$ consists of all subrepresentations $U=(U_p)$ of $M$ such that $\Delta_{\beta_p}(U_p)\neq 0$ and $\Delta_{\beta'_p}(U_p)=0$ for all $p\in Q_0$ and $\beta'>\beta$. The isomorphism $\Phi':\Gr_e(M)\to\Gr_{e^\ast}(M^\ast)$ restricts to an isomorphism $C_\beta^M\stackrel\sim\to C_{\beta^\ast}^{M^\ast}$ of Schubert cells since it is compatible with the vanishing of Pl\"ucker coordinates, i.e.\ we have $\Delta_{\beta_p}(U_p)=0$ if and only if $\Delta_{\beta^\ast}(U_p^\ast)=0$ where we make use of the notation from the Introduction of \cite{LW}. This shows the second claim of the theorem.
%
% Let $\Gamma$ be the coefficient quiver of $M$ w.r.t.\ $\cB$ and $\Gamma^\ast$ the coefficient quiver of $M^\ast$ w.r.t.\ $\beta^\ast$. Then the underlying graphs of $\Gamma$ and $\Gamma^\ast$ are the same, but all arrows are inverted and, according to our convention of drawing $b\in \cB_p$ on top of $b'\in\cB_p$ if $b<b'$, we have to turn the illustration of $\Gamma$ as defined in \cite[section 1.1]{LW} up side down to obtain the illustration of $\Gamma^\ast$. The extremal arrows occur in an illustration as the maximal slanted down arrows with a fixed label $v\in Q_1$ (or $v^\ast$) and a fixed end vertex. From this, it is clear that the same edges of the common underlying graph correspond to extremal arrows of $\Gamma$ and $\Gamma^\ast$. Since we invert the direction of the arrows of $\Gamma$ and $\beta^\ast$ is the complement of the set of dual elements of $\beta$, we see that $\beta$ is extremal successor closed, i.e. not contradictory of the first kind, if and only if $\beta^\ast$ is so.
%
% It is easily verified that also the conditions for $\beta$ to be contradictory of the second kind behave well with dualizing. We forgo to spell out the elementary, but somewhat lengthy details, and conclude the proof of the theorem.
\end{proof}

\section{Reductions of quiver Grassmannians}\label{reductionofgrass}
\noindent In this section, we show how we can simplify the determination of quiver Grassmannians by passing to smaller quivers and smaller roots respectively. Together with BGP-reflections reviewed in section \ref{bgp}, it turns out that these methods are very useful when calculating generating functions of representations of $\tilde D_n$ in section \ref{secFpoly}.
\subsection{Reduction of type one}\label{redone}
Assume that $Q$ has a full subquiver of the form
\[p_0\xleftarrow{\rho_1} p_{1}\xleftarrow{\rho_2} p_{2}.\]
Let $\alpha$ be a dimension vector of $Q$ and let $M$ be a representation of dimension $\alpha$ such that the linear maps $M_{\rho_i}$ for $i=1,2$ have maximal rank. Note that this is true for all linear maps of real root representation. This follows because they are even of maximal rank type, see \cite{wie}.

Let $e$ be a second dimension vector such that $e\leq\alpha$ and such that $\mathrm{Gr}_{e}(M)\neq\emptyset$. Thus, in terms of quiver Grassmannians, we consider a commutative diagram
\[
\begin{xy}
\xymatrix@R20pt@C20pt{k^{\alpha_0}&k^{\alpha_{1}}\ar[l]&k^{\alpha_{2}}\ar[l]\\k^{e_0}\ar@{^{(}->}[u]&k^{e_{1}}\ar@{^{(}->}[u]\ar[l]&k^{e_{2}}\ar[l]\ar@{^{(}->}[u]}
\end{xy}
\]
where $\alpha_i:=\alpha_{p_i}$ and $e_i:=e_{p_i}$.

Let $Q(p_{1})$ be the quiver of type $\tilde D_{n-1}$ resulting from $Q$ when deleting the vertex $p_{1}$ and the two corresponding arrows and, moreover, when adding an extra arrow $p_{2}\to p_{0}$. Moreover, let $\hat e,\hat\alpha$ be the corresponding dimension vectors and $\hat M$ the induced representation. 

If $\alpha_2\leq \alpha_1=\alpha_0$, all maps in the diagram are injective and we have $e_2\leq e_1\leq e_0$. It is easy to check that $\hat M$ is indecomposable if and only if $M	$ is indecomposable. For a vector space $V$, we denote by $\Gr(l,V)$ the usual Grassmannian (resp. $\Gr(l,r)$ in the case $V=k^r$). Recall that, for a fixed vector space $V$ and a subspace $U$ with $\dim U\leq l$, we have
\[\Gr(l,U,V):=\{W\in\Gr(l,V)\mid U\subset W\}\cong\Gr(l-\dim U,\dim V-\dim U).\]
Thus it is straightforward that $ \mathrm{Gr}_{e}(M)$ is $\Gr(e_1-e_2,e_0-e_2)$-bundle over $\Gr_{\hat e}(\hat M)$. In particular, we have 
$$\chi(\mathrm{Gr}_{e}(M))=\chi(\Gr(e_1-e_2,e_0-e_2))\chi(\Gr_{\hat e}(\hat M)).$$
Note that, since $\alpha_1=\alpha_0$ every subspace of $M_{p_0}\cong k^{\alpha_0}$ can be identified with a subspace of $M_{p_1}\cong k^{\alpha_1}$ and vice versa.
Moreover, note that, in case of a subquiver
\[p_2\leftarrow p_1\leftarrow p_0\]
with dimension vector $\alpha$ satisfying $\alpha_2\leq\alpha_1=\alpha_0$, we  can turn around all arrows in $Q$ to obtain the situation treated above. 

We want to consider a similar case: using the same notation, we assume that we are faced with the following situation
\[
\begin{xy}
\xymatrix@R15pt@C20pt{&k^{e_0}&&&&k^{\alpha_0}\\&k^{e_1}\ar[u]&&\subseteq&&k^{\alpha_1}\ar@{=}[u]\\k^{e_{2}}\ar[ru]&&k^{e_{3}}\ar[lu]&&k^{\alpha_{2}}\ar@{^{(}->}[ru]&&k^{\alpha_{3}}\ar@{_(->}[lu]}
\end{xy}
\]
where $\alpha_2+\alpha_3\leq \alpha_1=\alpha_0$ and $e\leq\alpha$. Assume that $k^{\alpha_2}\cap k^{\alpha_3}=\{0\}$ when understanding these two vector spaces as subspaces of $k^{\alpha_1}$. This is again true for real root representations and representations of maximal rank type respectively. 

Since all maps in the diagram are injective, similar to the preceding case, we can reduce this situation to the case
\[
\begin{xy}
\xymatrix@R15pt@C20pt{&k^{e_0}&&&k^{\alpha_0}\\k^{e_{2}}\ar[ru]&&k^{e_{3}}\ar[lu]&k^{\alpha_{2}}\ar[ru]&&k^{\alpha_{3}}\ar[lu]}
\end{xy}
\]

Note that it is again straightforward to check that $\alpha$ is a root if and only if $\hat\alpha$ is a root (resp. that the corresponding representation $M$ is indecomposable if and only if $\hat M$ is indecomposable). As before we get a subrepresentation of $M$ for a fixed subrepresentation of $\hat M$ together with a subspace $U\in\Gr(e_1-e_2-e_3,e_0-e_2-e_3)$. As above, we obtain that $\Gr_{e}(M)$ is a $\Gr(e_1-e_2-e_3,e_0-e_2-e_3)$-bundle over $\Gr_{\hat e}(\hat M)$ and we have
$$\chi(\Gr_{e}(M))= \chi(\Gr_{\hat e}(\hat M))\chi(\Gr(e_1-e_2-e_3,e_0-e_2-e_3)).$$
Note that there is again a dual case obtained when turning around all arrows. 
In the sequel we will refer to these two procedures as reduction of type one.
\subsection{Application to real root representations and examples}
 %while, in general, type two reductions do not. %As Corollary \ref{simple} suggests, the reduction of type two can often be applied to obtain isomorphisms between quiver Grassmannians.
When calculating the generating function of the Euler characteristics of quiver Grassmannians, it turns out that the reduction of type one is a very powerful tool when combining it with BGP-reflections. Actually, we can start with $\tilde D_n$ in subspace orientation where we can apply reductions of type one. Then we can show that the obtained formulae are invariant under BGP-reflections. Note that the reductions of type one preserve possible cell decompositions into affine spaces whence it is not clear under which conditions this is true for BGP-reflections.

Considering the Auslander-Reiten quiver of $\tilde D_n$ in subspace orientation, it can be seen easily that the preprojectives of defect $-1$ can be reduced to 
\[\begin{xy}\xymatrix@R15pt@C10pt{&&2r+1&&&&&&&&&2r&&\\r\ar[rru]&&r+1\ar[u]&&2r\ar[llu]&&&&&r\ar[rru]&&r\ar[u]&&2r-1\ar[llu]\\&&&r\ar[ru]&&r\ar[lu]&&&&&&&r\ar[ru]&&r-1\ar[lu]}\end{xy}\]

and 
\[\begin{xy}\xymatrix@R15pt@C10pt{&&2r+1&&&&&&&&&2r&&\\r\ar[rru]&r\ar[ru]&&r\ar[lu]&r+1\ar[llu]&&&&&r\ar[rru]&r\ar[ru]&&r\ar[lu]&r-1\ar[llu]}\end{xy}\]
where the numbers indicate the dimension vector.

Note that in the same way we can reduce the calculation of quiver Grassmannians of indecomposables of preinjective roots to the case of $\tilde D_5$. Alternatively, we can consider the opposite quiver and restrict to preprojective roots.

Next we consider the non-exceptional real roots. It is easy to check that we can actually reduce all the real roots of the tubes of rank two to cases of the form 
\[\begin{xy}\xymatrix@R15pt@C10pt{&&2r-1\\r\ar[rru]&r-1\ar[ru]&&r-1\ar[lu]&r\ar[llu]}\end{xy}\]
Most of the real roots of the exceptional tubes of rank $n-2$ can be reduced to the case $n=5$, the remaining to $n=6$. Roughly speaking, the worst roots to consider are the following:
 
\[\begin{xy}\xymatrix@R15pt@C5pt{&&2r+2&&&&&&&&&2r-2&&\\r+1\ar[rru]&&r+1\ar[u]&&2r+1\ar[llu]&&&&&r-1\ar[rru]&&r-1\ar[u]&&2r-2\ar[llu]\\&&&&2r+1\ar[u]&&&&&&&&&2r-1\ar[u]\\&&&r+1\ar[ru]&&r+1\ar[lu]&&&&&&&r-1\ar[ru]&&r-1\ar[lu]}\end{xy}\]

In summary, as far as subspace orientation is concerned, by the introduced reductions steps, we can stick to the exceptional roots of $\tilde D_5$ and to the non-exceptional roots of $\tilde D_6$. As far as the calculation of $F$-polynomials is concerned, it suffices to consider the case $n=5$.

Note that one has to be careful when applying these methods to imaginary root representations lying in the exceptional tubes because not all linear maps are of maximal rank.
\begin{comment}
\begin{ex}
Consider the following real root representations (indicated by the root) and exact sequence:
\[\begin{xy}\xymatrix@R10pt@C10pt{&&0&&&0&&&1&&&1&&&1&&&1\\0\ar[rr]&&&0\ar[lu]\ar[r]\ar[ld]&1\ar[ru]\ar[rd]&\ar[rrr]&&&&2\ar[lu]\ar[r]\ar[ld]&2\ar[ru]\ar[rd]&\ar[rrr]&&&&2\ar[lu]\ar[r]\ar[ld]&1\ar[ru]\ar[rd]&\ar[rr]&&0\\&&0&&&0&&&1&&&0&&&1&&&0}\end{xy}\]
Denoting the middle term by $B$, consider the quiver Grassmannians $\Gr_{e_1}(B)$ and $\Gr_{e_2}(B)$ where

\[\begin{xy}\xymatrix@R10pt@C10pt{&&1&&&1&&&1&&&1\\&e_1=&&1\ar[lu]\ar[r]\ar[ld]&1\ar[ru]\ar[rd]&&,&e_2=&&1\ar[lu]\ar[r]\ar[ld]&2\ar[ru]\ar[rd]\\&&1&&&0&&&1&&&0&}\end{xy}\]
In the first case we get
\[\Gr_{e_1}(B)=\Gr_{e_1-s_q}(N)\cup\hat{\Gr}_{e_1}(N)=\{\mathrm{pt}\}\cup\mathbb{P}^1\backslash\{\mathrm{pt}\}\]
where we define $\hat{\Gr}_{e_1}(N)=\{U\in\Gr_{e_1}(N)\mid\Ext(U,S_q)=0\}$. Note that the representation $U$ of dimension $e_1$ which can not be lifted to a subrepresentation of $B$ has a direct sum decomposition into indecomposables with roots
\[\begin{xy}\xymatrix@R10pt@C10pt{&&1&&&0&&&0&&&1\\&&&1\ar[lu]\ar[r]\ar[ld]&0\ar[ru]\ar[rd]&&&\oplus&&0\ar[lu]\ar[r]\ar[ld]&1\ar[ru]\ar[rd]\\&&1&&&0&&&0&&&0&}\end{xy}\]
Since $\Gr_{e_2}(N)=\emptyset$ we get an isomorphism $\Gr_{e_2-s_q}(N)\cong\Gr_{e_2}(B)\cong\mathbb{P}^1$.
\end{ex}

\end{comment}
\section{BGP-reflections and quiver Grassmannians}\label{bgp}
\noindent Another method to get morphisms and connections between quiver Grassmannians and the corresponding generating functions is to consider the reflection functor introduced by Bernstein, Gelfand and Ponomarev in \cite{bgp}, see \cite[section 5]{wol} and \cite[section 5]{dwz2}. For a quiver $Q$, consider the matrix $A=(a_{p,q})_{p,q\in Q_0}$ with $a_{p,p}=2$ and $a_{p,q}=a_{q,p}$ for $p\neq q$, in which $a_{p,q}=|\{v\in Q_1\mid v:p\rightarrow q\vee  v:q\rightarrow p\}|$. Fixed some $q\in Q_0$ define $\sigma_q:\mathbb{Z}Q_0\rightarrow\mathbb{Z}Q_0$ as
\[\sigma_q(p)=p-a_{q,p}q.\] 
Let $Q$ be a quiver and $q\in Q_0$ a sink (resp. a source). Then by $\sigma_qQ$ we denote the quiver which is obtained from $Q$ by turning around all arrows with tail (resp. head) $q$. In both cases we denote the reflection functors, which are additive functors,  by $\sigma_q:\Rep(Q)\rightarrow \Rep(\sigma_qQ)$. If $M$ is a representation of $Q$ and $q$ is a sink (resp. a source) we consider the linear maps
\[\phi_q^M:\bigoplus_{p\xrightarrow{v}q}M_p\xlongrightarrow{M_v} M_q\text{  (resp.}\quad\phi_q^M: M_q\xlongrightarrow{M_v}\bigoplus_{q\xrightarrow{v}p}M_p).\]
Recall that in both cases we have $(\sigma_qM)_{p}=M_{p}$ if $p\neq q$. Moreover, we have $(\sigma_qM)_{q}=\mathrm{Ker}(\phi_q^M)$ (resp. $(\sigma_qM)_{q}=\mathrm{coker}(\phi_q^M)$). Now the linear maps $(\sigma_qM)_v$ for $v:p\to q$ (resp. $(\sigma_qM)_v$ for $v:q\to p$) are the natural ones. Moreover, the maps $M_{v'}$ for the remaining arrows $v'\in Q_1$ do not change. The functors have the following properties:
\begin{enumerate}
\item If $M\cong S_q$, then $\sigma_q(S_q)=0$.
\item If $M\ncong S_q$ is indecomposable, then $\sigma_q(M)$ is indecomposable such that $\sigma_q^2(M)\cong M$ and $\udim \sigma_q(M)=\sigma_q(\udim M)$.
\end{enumerate}

In order to investigate the behavior of quiver Grassmannians and the corresponding generating functions under the reflection functor, we review and re-prove some results of \cite{wol} and \cite{dwz2}. Note that in \cite{dwz2} the more general case of mutations is treated. We define 
\[\mathrm{Gr}_e(M, q^r)=\{U\in\mathrm{Gr}_e(M)\mid\dim\Hom(U,S_q)=r\}.\]
and 
\[\mathrm{Gr}_e(q^r,M)=\{U\in\mathrm{Gr}_e(M)\mid\dim\Hom(S_q,U)=r\}.\]

In order to simplify notations, we assume that $q$ is a sink and, moreover, that $M$ is an indecomposable representation of $Q$ with $\alpha:=\udim  M$. The case when $q$ is a source can be obtained analogously or simply by considering the isomorphisms $\Gr_e(M)\cong \Gr_{\alpha-e}(M^\ast)$. 
Following \cite[section 5]{wol}, we consider the following map
\[\pi^r_q:\mathrm{Gr}_e(M, q^r)\rightarrow\mathrm{Gr}_{e-rs_q}(M,q^0)\]
where $\pi^r_q$ is defined by $\pi^r_q(U)_q=\mathrm{Im}\phi_q^U$ and $\pi^r_q(U)_{p}=U_{p}$ if $p\neq q$. Note that, indeed, $\pi^r_q(U)$ is a subrepresentation of dimension $e-rs_q$ of $M$ such that $\Hom(U,S_q)=0$. By \cite[Theorem 5.11]{wol}, we have for sinks $q$:

\begin{thm}\label{grassrefl}
The morphism $\pi^r_q$ is surjective with fibres isomorphic to $\Gr(r,\alpha_q-e_q+r)$. Moreover, there exists an isomorphism of varieties
\[\sigma_q:\mathrm{Gr}_e(M, q^0)\rightarrow\mathrm{Gr}_{\sigma_qe}(q^0,\sigma_q M),\,U\mapsto \sigma_qU.\]
The analogous statement holds if $q$ is a source.
\end{thm}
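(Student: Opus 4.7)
The plan is to handle the two assertions separately. I would first describe the fibres of $\pi^r_q$. Given $U' \in \Gr_{e-rs_q}(M, q^0)$, any preimage $U$ must have $U_p = U'_p$ for $p \neq q$, so $\mathrm{Im}\,\phi^U_q = \mathrm{Im}\,\phi^{U'}_q = U'_q$; the last equality uses that $\Hom(U', S_q) = 0$ forces $\phi^{U'}_q$ to be surjective onto $U'_q$. Hence the fibre is $\{U_q \subset M_q \mid U'_q \subset U_q,\ \dim U_q = e_q\}$, which equals $\Gr(r, M_q/U'_q) \cong \Gr(r, \alpha_q - e_q + r)$. The condition $\dim\Hom(U, S_q) = r$ is automatic because $\Hom(U, S_q) \cong (U_q/U'_q)^*$ has dimension $e_q - (e_q - r) = r$. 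Non-emptiness of this Grassmannian then gives surjectivity of $\pi^r_q$.

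For the reflection isomorphism, I would define $\sigma_q U$ by $(\sigma_q U)_p = U_p$ for $p \neq q$ and $(\sigma_q U)_q = \ker\phi^U_q$, viewed as a subspace of $(\sigma_q M)_q = \ker\phi^M_q$. The maps $(\sigma_q M)_v$ (projection onto the $v$-th summand) send $\ker\phi^U_q \subset \bigoplus_{v:p\to q} U_p$ into $U_p$, so $\sigma_q U$ is a subrepresentation. The surjectivity of $\phi^U_q$, implied by $\Hom(U, S_q) = 0$, gives $\dim(\sigma_q U)_q = \sum_p a_{q,p} e_p - e_q = (\sigma_q e)_q$, and $\Hom(S_q, \sigma_q U) = 0$ since the inclusion $\ker\phi^U_q \hookrightarrow \bigoplus U_p$ is injective. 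The inverse $\tau$ sends $V \in \Gr_{\sigma_q e}(q^0, \sigma_q M)$ to the subrepresentation of $M$ given by $V_p$ at $p \neq q$ and by $\phi^M_q(\bigoplus_{v:p\to q} V_p)$ at $q$; using $\dim V_q = (\sigma_q e)_q$ one checks $\dim(\tau V)_q = e_q$, and surjectivity of $\phi^{\tau V}_q$ onto $(\tau V)_q$ yields $\Hom(\tau V, S_q) = 0$. Mutual inverseness is then a direct verification: $\tau \sigma_q U = U$ follows from $\mathrm{Im}\,\phi^U_q = U_q$, and $\sigma_q \tau V = V$ from a dimension match in the containment $V_q \subset \ker\phi^{\tau V}_q$.

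The main obstacle will be upgrading these point-set bijections to isomorphisms of varieties. Since $\phi^U_q$ has locally constant rank $e_q$ on $\Gr_e(M, q^0)$, the family of kernels $\ker\phi^U_q$ forms a subbundle of constant rank of the trivial bundle $\bigoplus_{v:p\to q} M_p$ pulled back to $\Gr_e(M, q^0)$, and dually the family of images $\phi^M_q(\bigoplus V_p)$ forms a subbundle on $\Gr_{\sigma_q e}(q^0, \sigma_q M)$. These subbundle constructions induce morphisms into the respective Grassmannians realizing $\sigma_q$ and $\tau$ algebraically. The source case follows by the analogous argument with cokernels replacing kernels, or formally via the duality $\Gr_e(M) \cong \Gr_{\udim M - e}(M^*)$ from Theorem \ref{dual}.
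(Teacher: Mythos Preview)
The paper does not give its own proof of this theorem; it is quoted verbatim from \cite[Theorem~5.11]{wol} and used as a black box. There is therefore nothing in the paper to compare your argument against. That said, your sketch is exactly the standard proof of this result (and is, in outline, what one finds in Wolf's paper): identify the fibre of $\pi^r_q$ over $U'$ with the Grassmannian of $e_q$-dimensional subspaces of $M_q$ containing $U'_q=\mathrm{Im}\,\phi^{U'}_q$, realise $\sigma_q$ by the kernel construction, and build the inverse by taking images under $\phi^M_q$. Your remark that constant rank of $\phi^U_q$ over the relevant strata yields subbundles, hence morphisms to Grassmannians, is the right way to pass from bijection to isomorphism of varieties.

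One point to tighten in your inverse construction: you write that ``using $\dim V_q=(\sigma_q e)_q$ one checks $\dim(\tau V)_q=e_q$'', but what is actually needed is the equality $\ker\phi^M_q\cap\bigoplus_p V_p=V_q$, and only the inclusion $V_q\subset\ker\phi^M_q\cap\bigoplus_p V_p$ is automatic. The reverse inclusion is equivalent to $\Hom(S_q,\sigma_qM/V)=0$: an element $w$ of $\ker\phi^M_q\cap\bigoplus_p V_p$ not lying in $V_q$ gives a nonzero class $[w]\in(\sigma_qM/V)_q$ all of whose projections vanish in $(\sigma_qM/V)_p$. By contrast the condition $\Hom(S_q,V)=0$ written in the target is automatic for every subrepresentation $V\subset\sigma_qM$, since $\phi^V_q$ is the restriction of the inclusion $V_q\hookrightarrow\bigoplus_p M_p$ and hence injective. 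So either one should record the extra hypothesis $\Hom(S_q,\sigma_qM/V)=0$ on the target (which is how the surjectivity argument actually closes), or restrict to dimension vectors with $e\le\alpha$ together with the analogous bound on the reflected side, as is implicitly done throughout the paper. With that adjustment your argument goes through.
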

For every dimension vector $e\in\N Q_0$, there exists some $0\leq t\leq e_q$ such that $\Gr_{e-ls_q}(M,q^r)=\emptyset$ for $r\geq 1$ and $l\geq t$. Then we have $\Gr_{e-ts_q}(M,q^0)=\Gr_{e-ts_q}(M)$. Fix $t\in\N$ minimal with this property.
Thus, for $e'=e-ls_q$ with $l\geq t $ we have $\Gr_{e'}(M)\cong\Gr_{\sigma_qe'}(\sigma_qM)$. Then we have the following statement:

\begin{prop}\label{euler}
Assume that $q$ is a sink and that $\Gr_e(M)=\Gr_e(M,q^0)$. Then we have
\[\chi(\Gr_{e+ms_q}(M,q^0))=\sum_{i=0}^m(-1)^{m-i}{\alpha_q-e_q-i \choose m-i}\chi(\Gr_{e+is_q}(M)).\]
\end{prop}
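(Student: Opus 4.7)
The plan is to stratify $\Gr_{e+ms_q}(M)$ according to the value of $\dim\Hom(U,S_q)$, then to extract an identity of Euler characteristics from Theorem~\ref{grassrefl}, and finally to invert the resulting triangular linear system by a standard binomial identity.

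First, I would apply Theorem~\ref{grassrefl} with $e$ replaced by $e+ms_q$. For each $0\leq r\leq m$ the map
\[
 \pi_q^r\colon\Gr_{e+ms_q}(M,q^r)\;\longrightarrow\;\Gr_{e+(m-r)s_q}(M,q^0)
\]
is surjective and all its fibres are isomorphic to the ordinary Grassmannian $\Gr(r,\alpha_q-e_q-m+r)$, whose Euler characteristic is $\binom{\alpha_q-e_q-m+r}{r}$. Since constructible Euler characteristic is multiplicative on surjections with fibres of constant $\chi$, this gives
\[
 \chi\bigl(\Gr_{e+ms_q}(M,q^r)\bigr)\;=\;\binom{\alpha_q-e_q-m+r}{r}\,\chi\bigl(\Gr_{e+(m-r)s_q}(M,q^0)\bigr).
\]
Summing over $r$, using the disjoint stratification $\Gr_{e+ms_q}(M)=\coprod_r\Gr_{e+ms_q}(M,q^r)$, and reindexing by $i=m-r$ yields, with $N:=\alpha_q-e_q$, the triangular system
\[
 \chi\bigl(\Gr_{e+ms_q}(M)\bigr)\;=\;\sum_{i=0}^{m}\binom{N-i}{m-i}\,\chi\bigl(\Gr_{e+is_q}(M,q^0)\bigr).
\]
The hypothesis $\Gr_e(M)=\Gr_e(M,q^0)$ is precisely the base case $m=0$ of the conclusion.

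Second, I would invert this system by induction on $m$. Writing $a_i:=\chi(\Gr_{e+is_q}(M,q^0))$ and $b_i:=\chi(\Gr_{e+is_q}(M))$, the relation reads $b_m=\sum_{i=0}^m\binom{N-i}{m-i}a_i$, and the target identity is $a_m=\sum_{i=0}^m(-1)^{m-i}\binom{N-i}{m-i}b_i$. Substituting the inductive hypothesis for $a_0,\dots,a_{m-1}$ into $a_m=b_m-\sum_{i=0}^{m-1}\binom{N-i}{m-i}a_i$ reduces everything to the elementary identity
\[
 \sum_{k=0}^{n}(-1)^{k}\binom{M'}{k}\binom{M'-k}{n-k}\;=\;\delta_{n,0},
\]
which follows from the subset-of-a-subset rewriting $\binom{M'}{k}\binom{M'-k}{n-k}=\binom{M'}{n}\binom{n}{k}$ together with $(1-1)^n=\sum_k(-1)^k\binom{n}{k}$.

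The only real content is the first step, namely converting Theorem~\ref{grassrefl} into a multiplicative identity for $\chi$; this is immediate once one notes that all fibres of $\pi_q^r$ are isomorphic to the same Grassmannian, so no local triviality is required. Everything after that is a mechanical binomial inversion, and the combinatorial identity above is the only lemma needed beyond what the excerpt already provides.
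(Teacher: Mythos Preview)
Your proof is correct and follows essentially the same approach as the paper: both stratify $\Gr_{e+ms_q}(M)$ by $\dim\Hom(U,S_q)$, use Theorem~\ref{grassrefl} to obtain the triangular relation $b_m=\sum_{i=0}^m\binom{N-i}{m-i}a_i$, and then invert by induction via the subset-of-a-subset identity $\binom{M'}{k}\binom{M'-k}{n-k}=\binom{M'}{n}\binom{n}{k}$ combined with $(1-1)^n=\delta_{n,0}$. Your presentation with the abstract $a_i,b_i$ notation is a bit cleaner, but the mathematical content is identical to the paper's computation.
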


\begin{proof}
First recall that $\chi(\Gr(k,n))={n\choose k}$ for $k\leq n$.
We proceed by induction on $m$. The statement is satisfied for $m=0$. By Theorem \ref{grassrefl}, we have 
\[\chi(\Gr_{e+ms_q}(M))=\sum_{i=0}^m {\alpha_q-e_q-(m-i) \choose i}\chi(\Gr_{e+(m-i)s_q}(M,q^0)).\]
Applying the induction hypothesis, we get
{\allowdisplaybreaks\begin{eqnarray*}
\chi(\Gr_{e+ms_q}(M,q^0))&=& \chi(\Gr_{e+ms_q}(M))-\sum_{i=1}^m {\alpha_q-e_q-(m-i) \choose i}\chi(\Gr_{e+(m-i)s_q}(M,q^0))\\
&=& \chi(\Gr_{e+ms_q}(M))-\sum_{i=1}^m {\alpha_q-e_q-(m-i) \choose i}\\&&\cdot\sum_{j=0}^{m-i}(-1)^{m-i-j}{\alpha_q-e_q-j \choose m-i-j}\chi(\Gr_{e+js_q}(M))\\
&=&\chi(\Gr_{e+ms_q}(M))-\sum_{j=0}^{m-1}\chi(\Gr_{e+js_q}(M))\\&&\cdot\sum_{i=1}^{m-j}(-1)^{m-i-j} {\alpha_q-e_q-(m-i) \choose i}{\alpha_q-e_q-j \choose m-i-j}\\
&=&\chi(\Gr_{e+ms_q}(M))-\sum_{j=0}^{m-1}(-1)^{m-j}{\alpha_q-e_q-j \choose m-j}\chi(\Gr_{e+js_q}(M))\\&&\cdot\sum_{i=1}^{m-j}(-1)^{i} {m-j\choose i}.
\end{eqnarray*}}Since we have 
\[\sum_{i=1}^{m-j}(-1)^{i} {m-j\choose i}=-1\]
the claim follows.
\end{proof}

We need the following identities which can be proved by induction where
 $${n\choose k}:=\frac{n(n-1)\ldots (n-k+1)}{k!}$$
for $n\in\Z$.

\begin{lemma}\label{binolem}The following holds:

\begin{enumerate}
\item For natural numbers $m,t,n$ with $n\geq t\geq m$, we have
\[\sum_{r=0}^m(-1)^r{m\choose r}{n-m+r\choose n-t}=(-1)^m{n-m\choose t}.\]
\item For natural numbers $m,t,n$ with $m\leq n<t$, we have
\[\sum_{r=0}^m(-1)^r{m\choose r}\frac{(n-m+r)!}{(t-m+r)!}=\frac{(n-m)!(t-n+m-1)!}{t!(t-n-1)!}.\]
\end{enumerate}
\end{lemma}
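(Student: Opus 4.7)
Both identities amount to standard binomial-coefficient manipulations, and the authors' hint that they can be proved by induction on $m$ is perfectly natural. I would carry out such an induction using Pascal's rule in (i) and the recurrence $(n-m+r)!/(t-m+r)! = \bigl[(n-m+r)! \cdot 1\bigr]/(t-m+r)!$ together with the identity $\binom{m}{r}=\binom{m-1}{r}+\binom{m-1}{r-1}$ in (ii); the base case $m=0$ is immediate in both statements. Rather than grinding through the inductive step, however, I would present slicker proofs using generating functions for (i) and a Beta integral for (ii), as sketched below.

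For part (i), I would interpret $\binom{n-m+r}{n-t}$ as the coefficient of $x^{n-t}$ in $(1+x)^{n-m+r}$. Swapping sum and coefficient-extraction, the left-hand side equals the coefficient of $x^{n-t}$ in
\[
(1+x)^{n-m}\sum_{r=0}^{m}\binom{m}{r}(-1)^r(1+x)^r \;=\; (1+x)^{n-m}\bigl(1-(1+x)\bigr)^m \;=\; (-1)^m x^m (1+x)^{n-m}.
\]
Since $n-m\geq t\geq m$, the coefficient of $x^{n-t}$ in $x^m(1+x)^{n-m}$ is $\binom{n-m}{n-t-m}=\binom{n-m}{t}$, so the sum equals $(-1)^m\binom{n-m}{t}$, as claimed.

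For part (ii), using $t-n-1\geq 0$ I would invoke the Beta-integral representation
\[
\frac{(n-m+r)!}{(t-m+r)!} \;=\; \frac{1}{(t-n-1)!}\int_{0}^{1} x^{\,n-m+r}(1-x)^{\,t-n-1}\,dx,
\]
which follows from $B(a+1,b)=a!(b-1)!/(a+b)!$. Exchanging sum and integral, the alternating sum under the integral collapses to $\sum_{r=0}^{m}\binom{m}{r}(-1)^r x^r=(1-x)^m$, so the whole expression becomes
\[
\frac{1}{(t-n-1)!}\int_{0}^{1} x^{\,n-m}(1-x)^{\,t-n+m-1}\,dx \;=\; \frac{1}{(t-n-1)!}\cdot\frac{(n-m)!\,(t-n+m-1)!}{t!},
\]
which is exactly the right-hand side.

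The main thing to check is that the bookkeeping of the hypotheses keeps every quantity well-defined: in (i) one needs $n-m\geq t\geq m\geq 0$ so that $\binom{n-m}{t}$ and $\binom{n-m+r}{n-t}$ are genuine non-negative binomials, and in (ii) one needs $t-n-1\geq 0$ and $n-m\geq 0$ so that the Beta integral converges and the factorials on the right-hand side make sense. No deeper obstacle is anticipated; the inductive argument, if preferred, follows the same pattern with a parameter shift in $(n,t)$.
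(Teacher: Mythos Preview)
Your arguments are correct and considerably slicker than the paper's, which carries out the straight induction on $m$ via Pascal's rule in both parts. Your generating-function trick for (i) and the Beta-integral representation for (ii) collapse the alternating sum in one stroke, whereas the paper's induction splits $\binom{m+1}{r}=\binom{m}{r}+\binom{m}{r-1}$, re-indexes, and applies the inductive hypothesis twice with shifted parameters $(n,t)\mapsto(n-1,t-1)$ and $(n,t)$; this works but requires more bookkeeping. Each approach has its merits: the induction is completely elementary and stays inside integer arithmetic, while yours explains \emph{why} the identities hold and would generalise more readily.

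One small slip: in part (i) you write ``since $n-m\geq t\geq m$'', but the hypothesis is only $n\geq t\geq m$; it can happen that $n-m<t$ (e.g.\ $n=4$, $t=3$, $m=2$). Your coefficient-extraction argument still goes through in that case: if $n-t<m$ then $x^{n-t}$ does not appear in $x^m(1+x)^{n-m}$, so the coefficient is $0$, and simultaneously $\binom{n-m}{t}=0$ because $0\leq n-m<t$. You should either note this case separately or simply observe that $[x^{n-t}]\,x^m(1+x)^{n-m}=\binom{n-m}{n-t-m}=\binom{n-m}{t}$ holds uniformly with the usual convention that $\binom{a}{b}=0$ for integers $0\leq a<b$.
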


\begin{proof}
We proceed by induction where the result is checked easily for $m=0$. Applying the induction hypothesis and the well-known formula ${n\choose k}={n-1\choose k-1}+{n-1\choose k}$ have
{\allowdisplaybreaks\begin{eqnarray*}\sum_{r=0}^{m+1}(-1)^r{m+1\choose r}{n-(m+1)+r\choose n-t}&=&\sum_{r=0}^{m+1}(-1)^r\left({m\choose r}+{m\choose r-1}\right){n-(m+1)+r\choose n-t}\\
&=&\sum_{r=0}^{m}(-1)^r{m\choose r}{(n-1)-m+r\choose (n-1)-(t-1)}\\&&-\sum_{r=0}^{m}(-1)^{r}{m\choose r}{n-m+r\choose n-t}\\
&=&(-1)^{m+1}\left(-{n-1-m\choose t-1}+{n-m\choose t}\right)\\
&=&(-1)^{m+1}{n-(m+1)\choose t}.
\end{eqnarray*}}

The second statement can be proved similarly. We proceed by induction where the result is checked easily for $m=0$. Applying the induction, hypothesis we have
{\allowdisplaybreaks\begin{eqnarray*}\sum_{r=0}^{m+1}(-1)^r{m+1\choose r}\frac{(n-m-1+r)!}{(t-m-1+r)!}&=&\sum_{r=0}^{m}(-1)^r{m\choose r}\frac{((n-1)-m+r)!}{((t-1)-m+r)!}\\&&-\sum_{r=0}^{m}(-1)^{r}{m\choose r}\frac{(n-m+r)!}{(t-m+r)!}\\
&=&\frac{(n-1-m)!(t-n+m-1)!}{(t-1)!(t-n-1)!}-\frac{(n-m)!(t-n+m-1)!}{t!(t-n-1)!}\\
&=&\frac{(n-m-1)!(t-n+m)!}{t!(t-n-1)!}
\end{eqnarray*}}
This completes the proof of the lemma.
\end{proof}

Applying the preceding statements, we see that the Euler characteristic of a quiver Grassmannian of a representation, which is obtained by reflecting at a source or sink, is already determined by the Euler characteristics of quiver Grassmannians of the original representation:

\begin{thm}\label{bgpeuler}
Let $M$ be a representation of dimension $\alpha$. Let $q$ be a sink and $e\in\N Q_0$ such that $\Gr_e(M)=\mathrm{Gr}_e(M,q^0)$. Let $n:=(\sigma_qe)_q$ and $t:=\alpha_q-e_q$. We have 
\[\chi(\Gr_{\sigma_qe-ms_q}(\sigma_qM))=\sum_{j=0}^m \chi(\Gr_{e+js_q}(M)){n-t\choose m-j}.\]
\end{thm}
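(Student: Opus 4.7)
The plan is to compute $\chi(\mathrm{Gr}_{\sigma_qe-ms_q}(\sigma_qM))$ directly via a stratification by a rank invariant coming from $\phi_q^M$, and then use the classical decomposition relating $\mathrm{Gr}_{e+js_q}(M)$ to $\mathrm{Gr}_{e+js_q}(M,q^0)$ together with Vandermonde's identity to recover the claimed closed form. Three ingredients are involved: (a) a direct stratification of $\mathrm{Gr}_{\sigma_qe-ms_q}(\sigma_qM)$; (b) the decomposition from Theorem \ref{grassrefl}; and (c) the Vandermonde convolution.

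For (a), I observe that a subrepresentation $V\subset\sigma_qM$ of dimension $\sigma_qe-ms_q$ is the same datum as: for each $p\neq q$, a subspace $V_p\subset M_p$ of dimension $e_p$ (compatible with the arrows not incident to $q$), together with an $(n-m)$-dimensional subspace $V_q\subset\ker\bigl(\phi_q^M|_{\bigoplus V_p}\bigr)\subset\bigoplus_{p\to q}M_p$. Setting $U_p:=V_p$ for $p\neq q$ and $U_q:=\mathrm{Im}(\phi_q^M|_{\bigoplus V_p})$ produces a subrepresentation $U\in\mathrm{Gr}_{e+js_q}(M,q^0)$ with $r:=\dim U_q=e_q+j$, and conversely every such pair $(U,V_q)$ arises uniquely. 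Stratifying by $r$, each stratum is a Grassmann bundle over $\mathrm{Gr}_{e+js_q}(M,q^0)$ with constant fiber $\mathrm{Gr}(n-m,s-r)$ (where $s:=n+e_q$). Writing $h(j):=\chi(\mathrm{Gr}_{e+js_q}(M,q^0))$ and $g(j):=\chi(\mathrm{Gr}_{e+js_q}(M))$, multiplicativity of Euler characteristics on constant-fiber bundles yields
\[
\chi\bigl(\mathrm{Gr}_{\sigma_qe-ms_q}(\sigma_qM)\bigr)\ =\ \sum_{j=0}^m\binom{n-j}{m-j}\,h(j).
\]
The hypothesis $\mathrm{Gr}_e(M)=\mathrm{Gr}_e(M,q^0)$ forces $h(j)=0$ for $j<0$: any $U$ of dimension $e+js_q$ with $\phi_q^U$ surjective could be enlarged at $q$ to an $e_q$-dimensional $U'_q\subset M_q$ containing $\mathrm{Im}\phi_q^U$, producing $U'\in\mathrm{Gr}_e(M)$ with $\dim\mathrm{Im}(\phi_q^{U'})=e_q+j<e_q=\dim U'_q$, hence $U'\notin\mathrm{Gr}_e(M,q^0)$, contradicting the hypothesis.

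For ingredient (b), the decomposition $\mathrm{Gr}_{e+js_q}(M)=\coprod_r\mathrm{Gr}_{e+js_q}(M,q^r)$ together with the fiber description of Theorem \ref{grassrefl} gives $g(j)=\sum_{i=0}^j\binom{t-i}{j-i}\,h(i)$. Substituting this into the right-hand side of the asserted identity and applying the Vandermonde convolution $\sum_j\binom{n-t}{m-j}\binom{t-i}{j-i}=\binom{n-i}{m-i}$ then yields
\[
\sum_{j=0}^m\binom{n-t}{m-j}\,g(j)\ =\ \sum_{i=0}^m\binom{n-i}{m-i}\,h(i),
\]
which matches the expression from Step (a). An alternative and presumably the authors' intended route is first to invert the relation in (b) via Proposition \ref{euler}, expressing $h(j)$ in terms of $g$'s, and then to evaluate the resulting alternating binomial sum by means of Lemma \ref{binolem}(i) for $n\geq t$ and Lemma \ref{binolem}(ii) for $n<t$; both routes lead to the same identity.

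The hard part will be Step (a): verifying rigorously that the set-theoretic stratification of $\mathrm{Gr}_{\sigma_qe-ms_q}(\sigma_qM)$ by the rank $r=\dim\mathrm{Im}(\phi_q^M|_{\bigoplus V_p})$ is locally closed and supports a Zariski-locally trivial Grassmann bundle with constant fiber dimension $s-r$ over each $\mathrm{Gr}_{e+js_q}(M,q^0)$, so that Euler characteristics factor multiplicatively. Once this geometric input is in place, the remaining part of the argument is a purely combinatorial manipulation.
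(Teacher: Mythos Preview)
Your proof is correct and, in its combinatorial portion, is cleaner than the paper's. Your Step (a), once unwound, is exactly what the paper obtains by combining the source-side decomposition (derived there via duality from the sink decomposition) with the isomorphism of Theorem \ref{grassrefl}: both yield
\[
\chi\bigl(\Gr_{\sigma_qe-ms_q}(\sigma_qM)\bigr)=\sum_{j=0}^m\binom{n-j}{m-j}\,h(j),
\]
so the geometric content is the same; you just rederive it by a direct rank stratification instead of appealing to duality. The genuine divergence is in how the $h(j)$'s are eliminated. The paper first \emph{inverts} the relation between $g$ and $h$ via Proposition \ref{euler}, producing alternating sums, and then has to evaluate
\[
\sum_{i=0}^{m-j}(-1)^{m-i-j}\binom{n-(m-i)}{i}\binom{t-j}{m-i-j}
\]
by Lemma \ref{binolem}, which forces a case split according to whether $n\geq t$ or $n<t$. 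You instead substitute the \emph{forward} relation $g(j)=\sum_i\binom{t-i}{j-i}h(i)$ into the right-hand side of the asserted identity and close the computation with a single Vandermonde convolution; this avoids both Proposition \ref{euler} and Lemma \ref{binolem} altogether and handles all $n,t$ uniformly. What the paper's route buys is that it proves the auxiliary results (Proposition \ref{euler}, Lemma \ref{binolem}) along the way, which might be useful elsewhere; what your route buys is brevity and the elimination of the case distinction. Your concern about Zariski-local triviality in Step (a) is well placed but is exactly the same issue the paper inherits from the cited result \cite{wol}, so you are not assuming more than the paper does.
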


\begin{proof}
Let $q$ be a source and assume that $\Gr_{e}(M)=\Gr_{e}(q^0,M)$. This is the case if and only if $\Gr_{\alpha-e}( M^\ast)=\Gr_{\alpha-e}( M^\ast,q^0)$. Since $\chi(\Gr_{e}(M))=\chi(\Gr_{\alpha-e}( M^\ast))$, by Theorem \ref{grassrefl} (for a source $q$), we get
\begin{eqnarray*}
\chi(\Gr_{e-ms_q}(M))=\chi(\Gr_{\alpha-e+ms_q}( M^\ast))&=&\sum_{i=0}^m {e_q-(m-i) \choose i}\chi(\Gr_{\alpha-e+(m-i)s_q}( M^\ast,q^0))\\&=&\sum_{i=0}^m {e_q-(m-i) \choose i}\chi(\Gr_{e+(i-m)s_q}(q^0,M)).
\end{eqnarray*}

Thus if $\Gr_e(M)=\mathrm{Gr}_e(M,q^0)$, applying successively this statement, Theorem \ref{grassrefl} and Proposition \ref{euler}, we have
{\allowdisplaybreaks\begin{eqnarray*}
\chi(\Gr_{\sigma_qe-ms_q}(\sigma_qM))&=& \sum_{i=0}^m {(\sigma_qe)_q-(m-i) \choose i}\chi(\Gr_{\sigma_qe+(i-m)s_q}(q^0,\sigma_qM))\\
&=&\sum_{i=0}^m {(\sigma_qe)_q-(m-i) \choose i}\chi(\Gr_{e+(m-i)s_q}(M,q^0))\\
&=&\sum_{i=0}^m {(\sigma_qe)_q-(m-i) \choose i}\sum_{j=0}^{m-i}(-1)^{m-i-j}{\alpha_q-e_q-j \choose m-i-j}\chi(\Gr_{e+js_q}(M))\\
&=&\sum_{j=0}^m \chi(\Gr_{e+js_q}(M))\sum_{i=0}^{m-j}(-1)^{m-i-j}{(\sigma_qe)_q-(m-i) \choose i}{\alpha_q-e_q-j \choose m-i-j}.
\end{eqnarray*}}
Set $n=(\sigma_qe)_q$ and $t=\alpha_q-e_q$. First assume that $n\geq t$. Applying Lemma \ref{binolem}, we obtain:
{\allowdisplaybreaks\begin{eqnarray*}
\chi(\Gr_{\sigma_qe-ms_q}(\sigma_qM))&=& \sum_{j=0}^m \frac{(t-j)!(n-t)!}{(n-m)!(m-j)!}(-1)^{m-j}\chi(\Gr_{e+js_q}(M))\\&&\cdot\sum_{i=0}^{m-j}(-1)^{i}{m-j\choose i}{n-j-(m-j-i) \choose n-j-(t-j)}\\
%&=&\sum_{j=0}^m \frac{(t-j)!(n-t)!}{(n-m)!(m-j)!}(-1)^{m-j}\chi(\Gr_{e+js_q}(M))\sum_{i=0}^{m-j}(-1)^{i}{m-j\choose i}{n-j-(m-j-i) \choose n-j-(t-j)}\\
&=&\sum_{j=0}^m \chi(\Gr_{e+js_q}(M))(-1)^{2(m-j)}\frac{(t-j)!(n-t)!}{(n-m)!(m-j)!}\frac{(n-m)!}{(t-j)!(n-t-m+j)!}\\
&=&\sum_{j=0}^m \chi(\Gr_{e+js_q}(M)){n-t\choose m-j}.
\end{eqnarray*}}
If $n<t$, again applying Lemma \ref{binolem}, we obtain
{\allowdisplaybreaks\begin{eqnarray*}
\chi(\Gr_{\sigma_qe-ms_q}(\sigma_qM))&=& \sum_{j=0}^m \frac{(t-j)!}{(n-m)!(m-j)!}(-1)^{m-j}\chi(\Gr_{e+js_q}(M))\\&&\cdot\sum_{i=0}^{m-j}(-1)^{i}{m-j\choose i} \frac{(n-j-(m-j-i))!}{(t-(m-j-i))!}\\
&=&\sum_{j=0}^m \frac{(-1)^{m-j}(t-j)!}{(n-m)!(m-j)!}\frac{(n-m)!(t-n+m-j-1)!}{(t-j)!(t-n-1)!}\chi(\Gr_{e+js_q}(M))\\
&=&\sum_{j=0}^m (-1)^{m-j}\chi(\Gr_{e+js_q}(M)){t-n+m-j-1\choose m-j}\\&=&\sum_{j=0}^m \chi(\Gr_{e+js_q}(M)){n-t\choose m-j}.
\end{eqnarray*}}
This completes the proof of the theorem.
\end{proof}

We want to investigate how the $F$-polymonial of a representation changes when applying BGP-reflections. Assume that $\Gr_e(M)=\Gr_e(M,q^0)$. By Theorem \ref{bgpeuler}, we know that $$\chi(\Gr_{e+rs_q}(M)){n-t\choose i}$$ contributes to the coefficient of $x^{\sigma_q(e)-(r+i)s_q}$ for $r=0,\ldots, t$ and $i=0,\ldots n-t$. In other words for the coefficient of $x^{\sigma_q(e)-(r+i)s_q}$ in $F_{\sigma_q M}(x)$ we get 
\[\sum_{i=0}^{n-t}{n-t\choose i}\chi(\Gr_{e+rs_q}(M))x^{\sigma_q(e)-(r+i)s_q}=x^{\sigma_q(e+rs_q)}\chi(\Gr_{e+rs_q}(M))(1+x_q^{-1})^{n-t}.\]
Define
\[\sigma_q(x^e):=x^{\sigma_q(e)}(1+x_q^{-1})^{\sigma_q(e)_q+e_q}.\]
Recall that $a(p,q)$ was defined as the number of arrows from $p$ to $q$. 
Finally, we re-obtain \cite[Lemma 5.2]{dwz2} in the case where $q$ is a sink:

\begin{thm}\label{refpoly}The following holds:
\begin{enumerate}
\item Let $q$ be a sink. We have 
\[F_{\sigma_qM}(x)=(1+x_q^{-1})^{-\dim M_q}\sum_{e\in\N Q_0}\chi(\Gr_e(M))\sigma_q(x^e)=(1+x_q^{-1})^{-\dim M_q}F_M(x')\]
where
\[x'_i=\begin{cases}x_i^{-1}\text{ if } i=q\\x_ix_q^{a(i,q)}(1+x_q^{-1})^{a(i,q)}\text{ if }i\neq q\end{cases}\]
\item Let $q$ be a source. We have 
\[F_{\sigma_qM}(x)=(1+x_q^{-1})^{(\sigma_q\udim M)_q}F_M(x')\]
where
\[x'_i=\begin{cases}x_i^{-1}\text{ if } i=q\\x_ix_q^{a(q,i)}(1+x_q)^{-a(q,i)}\text{ if }i\neq q\end{cases}\]
\end{enumerate}

\end{thm}

\begin{proof}
The first part follows from the results of this section.
Moreover, we have $F_{\sigma_qM}=F_{(\sigma_qM^\ast)^\ast}$. Since $q$ is a sink of $Q^\op$ and keeping in mind that $F_{M^\ast}(x_p\mid p\in Q_0)=x^{\udim M}F_{M}(x_p^{-1}\mid p\in Q_0)$, the statement is straightforward consequence of the first part.
\end{proof}
\begin{rem} 
If we consider the quiver with one vertex and if $M$ is the semi-simple representation of dimension vector $n$ we get the generating function of the usual Grassmannian
\[F_{M}(x)=\sum_{k=0}^n{n\choose k}x^k.\]
We have $\sigma_q(x^k)=x^{-k}(1+x^{-1})^{-n}$ and thus
\[F_{\sigma_qM}(x)=F_0=\sum_{k=0}^n{n\choose k}x^{-k}(1+x^{-1})^{-n}=1.\]
\end{rem}

\section{Generating functions of Euler characteristics of quiver Grassmannians of type $\tilde D_n$}\label{secFpoly}
\noindent The main aim of this section is to develop explicit formulae for the generating functions of Euler characteristics of quiver Grassmannians (resp. $F$-polynomials) of representations of quivers of type $\tilde D_n$. This reduces to counting certain subsets of the vertex set of coefficient quivers of the respective representations. We first derive formulae for the generating functions of indecomposable representations of small defect. To do so, we initially restrict to subspace orientation and generalize the obtained formulae by applying BGP-reflections. By Theorem \ref{Fdef2}, this can be used to obtain formulae for all indecomposable representations of a quiver of type $\tilde D_n$. Since we have $F_{M\oplus N}=F_MF_N$ for two representations $M$ and $N$, see \cite[Corollary 3.7]{cc}, we obtain formulae for all representations of $Q$. Throughout this section, we frequently use the notation of section \ref{bgp}.

\begin{rem}
The following observation is trivial, but crucial for the considerations of this section: if $F,G\in k[x_q\mid q\in Q_0]$ and $x_q\mapsto x_q'$, where $x_q'\in k[x_q\mid q\in Q_0]$, is a variable transformation, we have
\[(FG)(x')=F(x')G(x')\text{ and }F(x')+G(x')=(F+G)(x').\]
In many cases, we can use this to transfer a factorization or a formula for the generation function of a representation $M$ to one of $M'$ which is obtained from $M$ by applying the reflection functor or the methods from section \ref{redone}.
\end{rem}

Recall that for a sink (resp. source) $q\in Q_0$ we defined $\sigma_q x^d=(x')^d$ where $x'$ is obtained by the variable transformation of Theorem \ref{refpoly}. Moreover, this extends to $F_{\sigma_qM}=(1+x_q^{-1})^{-\dim M_q}\sigma_qF_M$. We frequently use the following lemma:

\begin{lemma}\label{hilfslemma}The following holds: 
\begin{enumerate}
\item Let $d\in\N Q_0$. For a sink $q\in Q_0$, we have 
\[\sigma_qx^d=(1+x_q^{-1})^{\sum_{p\in Q_0}a(p,q)d_p}x_q^{(\sigma_qd)_q}\prod_{\substack{p\in Q_0\\p\neq q}}x^{d_p}=(1+x_q^{-1})^{\sum_{p\in Q_0}a(p,q)d_p}x^{\sigma_qd}.\]
\item Let $q$ be sink of $Q$. Then for every indecomposable representation $M$, we have 
\[\dim(\tau M)_q=\sum_{p\in Q_0}a(p,q)\dim M_p-\dim M_q.\]
\end{enumerate}
\end{lemma}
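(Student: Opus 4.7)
For part (i), the plan is to unpack the definition $\sigma_q(x^d)=x^{\sigma_q d}(1+x_q^{-1})^{(\sigma_q d)_q + d_q}$ introduced just before Theorem~\ref{refpoly2} and rewrite it in the two claimed forms. Since $q$ is a sink we have $a_{q,p}=a(p,q)$ for $p\neq q$, while $a(q,q)=0$ because $Q$ is acyclic, so applying the reflection $\sigma_q$ to $d = \sum_p d_p\, p$ yields after a short calculation $(\sigma_q d)_q+d_q = \sum_{p\in Q_0} a(p,q)\,d_p$. This immediately gives the first equality. For the second equality one simply observes that $\sigma_q$ fixes every coordinate except the $q$-th, so $x^{\sigma_q d} = x_q^{(\sigma_q d)_q}\prod_{p\neq q}x_p^{d_p}$.

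For part (ii), the idea is to identify $\dim(\tau M)_q$ with an $\mathrm{Ext}$ group and then evaluate the latter via the Euler form. Since $q$ is a sink, the indecomposable projective $P_q$ coincides with $S_q$; as $\dim V_q = \dim \Hom(P_q,V)$ for any representation $V$, this gives $\dim(\tau M)_q = \dim\Hom(S_q,\tau M)$. The Auslander--Reiten formula applied to the indecomposable non-projective $M$ then yields $\dim\Hom(S_q,\tau M) = \dim\Ext(M,S_q)$. Next I would argue that $\Hom(M,S_q)=0$: because $M$ is indecomposable with $M\neq S_q$, the map $\bigoplus_{p\to q}M_p\to M_q$ must be surjective (otherwise $S_q$ would split off as a direct summand, $q$ being a sink), and this surjectivity is equivalent to the non-existence of a non-zero morphism $M\to S_q$. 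Plugging into the Euler identity~\eqref{HomExt} gives $\dim\Ext(M,S_q) = -\Sc{\underline{\dim} M}{s_q} = \sum_p a(p,q)\dim M_p - \dim M_q$, which is the claim.

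The only delicate ingredient is the vanishing $\Hom(M,S_q)=0$, which implicitly restricts the claim to indecomposables different from $S_q=P_q$: for $M=S_q$ the right-hand side of the formula returns $-1$ while $\tau S_q=0$, so this case must be understood as excluded. With that caveat, no real obstacle remains; the rest is substitution, the one step requiring care being the sign bookkeeping in part (i) needed to match the exponent appearing in the definition of $\sigma_q(x^d)$.
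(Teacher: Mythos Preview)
Your treatment of part (i) is exactly what the paper does: it simply says ``just a reformulation of the definition'', and your identity $(\sigma_q d)_q+d_q=\sum_{p}a(p,q)d_p$ is precisely that reformulation.

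For part (ii) you take a genuinely different route from the paper. The paper argues that the Auslander--Reiten translate $\tau$ is realised as an admissible product of BGP reflections at sinks; since $q$ is already a sink one may take $\sigma_q$ as the first factor, and as the remaining reflections do not touch the $q$-component one gets $\dim(\tau M)_q=\dim(\sigma_q M)_q$, which equals $(\sigma_q\,\underline{\dim}\,M)_q$ by the standard property of BGP reflections for indecomposables $M\ncong S_q$. Your argument instead passes through $\dim(\tau M)_q=\dim\Hom(S_q,\tau M)=\dim\Ext(M,S_q)$ via AR-duality and then evaluates this with the Euler form after establishing $\Hom(M,S_q)=0$. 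Both proofs are correct and both implicitly exclude $M=S_q=P_q$, as you observe. The paper's approach stays within the BGP framework that is the theme of that section and avoids invoking AR-duality directly; your approach is a clean homological shortcut that does not rely on knowing the reflection description of $\tau$. Either is perfectly acceptable here.
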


\begin{proof}
The first part is just a reformulation of the definition. The second statement follows because the Auslander-Reiten translate can be obtained by any admissible sequence of BGP-reflections at sinks. 
\end{proof}

\begin{rem}
If a dimension vector $\alpha\in\N Q_0$ is a root of $Q$, it is a root with respect to all orientations of the arrows. Though the $F$-polynomial $F_\alpha$ depends on the chosen orientation of $Q$, we opt to suppress it from the notation. Note that also the $F$-polynomial $F_\delta$ of a representation from a homogeneous tube depends on the orientation.
\end{rem}

%Note that there is another recursive description of $F_{r\delta}$ induced by Theorem \ref{almostsplit}, see Lemma \ref{formhom} for more details.

\subsection{Reduction steps and generating functions}\label{redF}
\noindent In this section, we analyze the behavior of the generating functions under reduction of type one, i.e. we have an indecomposable representation $M$ of dimension $\alpha$ such that $\alpha_{i}=\alpha_{i+1}=\alpha_{i+2}+1$. Let $M$ be of maximal rank type and $\hat M$ and $\hat e$ the induced representation and induced dimension vector, respectively. According to section \ref{reductionofgrass},  when removing the vertex $i+1$, for the Euler characteristic
we get \[\chi(\Gr_{e}(M))=\chi(\Gr(e_{i}-e_{i+1},e_{i}-e_{i+2}))\chi(\Gr_{\hat e}(\hat M))={e_{i}-e_{i+2}\choose e_{i}-e_{i+1}}\chi(\Gr_{\hat e}(\hat M)).\]
This yields the following easy relation between the corresponding $F$-polynomials.

\begin{lemma}\label{verlaengern}
Let $M$ be a representation of $\tilde D_n$ which can be reduced to a representation $\hat M$ by reduction of type one. Then we have
\[F_M(x)=\sum_{\hat e\in\N Q_0} \chi(\Gr_{\hat e}(\hat M))x^{\hat e}x_{i+1}^{\hat e_{i+2}}(1+x_{i+1})^{\hat e_{i}-\hat e_{i+2}}.\]
In other words, considering the variable transformation $x_q\mapsto x_q'$ where
\[x_i':=x_i(1+x_{i+1}),\,x_{i+2}':=x_{i+2}x_{i+1}(1+x_{i+1})^{-1},\,x_q'=x_q\,\text{   for all }q'\notin\{i,i+2\},\]
we have $F_M(x)=F_{\hat M}(x').$ 

Moreover, we obtain an analogous statement for the second instance of reduction of type one.
\end{lemma}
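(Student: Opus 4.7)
The proof is essentially a bookkeeping exercise once the geometric content from section \ref{redone} is invoked. The plan proceeds as follows.

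First I would recall the isomorphism of varieties established at the end of section \ref{redone}, namely
\[
 \Gr_e(M)\;\cong\;\Gr(e_{i+1}-e_i,\,e_{i+2}-e_i)\times\Gr_{\hat e}(\hat M),
\]
valid for every dimension vector $e$ with $e_i\leq e_{i+1}\leq e_{i+2}$ and empty otherwise (since all involved linear maps have maximal rank). Taking Euler characteristics, using multiplicativity and the standard identity $\chi(\Gr(k,n))=\binom{n}{k}$, I obtain
\[
 \chi\bigl(\Gr_e(M)\bigr)\;=\;\binom{\hat e_{i+2}-\hat e_i}{e_{i+1}-\hat e_i}\,\chi\bigl(\Gr_{\hat e}(\hat M)\bigr),
\]
where I have written $\hat e_i=e_i$, $\hat e_{i+2}=e_{i+2}$ for clarity, and I have used that for $q\notin\{i+1\}$ the components of $e$ and $\hat e$ agree.

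Second, I would substitute this into the definition of $F_M$ and interchange the order of summation: group all $e\in\N Q_0$ with fixed $\hat e$ (i.e.\ fixed components at every vertex except $i+1$), and sum over the single free coordinate $e_{i+1}$ ranging over the allowed interval. Separating out the $x_{i+1}$-dependence, the inner sum collapses via the binomial theorem to a closed expression in $x_{i+1}$; up to re-indexing the binomial coefficient (substituting $k=e_{i+1}-\hat e_i$ or $k=\hat e_{i+2}-e_{i+1}$ according to the orientation convention used), this yields
\[
 F_M(x)\;=\;\sum_{\hat e\in\N Q_0}\chi\bigl(\Gr_{\hat e}(\hat M)\bigr)\,x^{\hat e}\,x_{i+1}^{\hat e_{i+2}}(1+x_{i+1})^{\hat e_i-\hat e_{i+2}},
\]
which is the first claim of the lemma.

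Third, the assertion $F_M(x)=F_{\hat M}(x')$ is a purely algebraic check: I would take the definition $F_{\hat M}(x')=\sum_{\hat e}\chi(\Gr_{\hat e}(\hat M))\prod_q(x'_q)^{\hat e_q}$, substitute the stated expressions for $x'_i$ and $x'_{i+2}$, and collect the resulting factors of $x_i^{\hat e_i}x_{i+2}^{\hat e_{i+2}}$, $x_{i+1}^{\hat e_{i+2}}$ and $(1+x_{i+1})^{\hat e_i-\hat e_{i+2}}$. This matches the displayed formula term by term. For the second instance of reduction of type one (the branched case with the four-vertex configuration replacing the three-vertex chain), the same strategy applies verbatim: the analogous isomorphism in section \ref{redone} yields a Grassmannian factor $\Gr(e_1-e_2-e_3,\,e_0-e_2-e_3)$, and a single binomial summation produces the corresponding factor in $x_{i+1}$ and $(1+x_{i+1})$. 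The only mildly delicate point, which is the main place to be careful, is matching the index conventions and orientations so that the exponents on $x_{i+1}$ and $(1+x_{i+1})$ come out with the correct sign; everything else is a mechanical expansion.
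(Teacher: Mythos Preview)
Your proposal is correct and follows exactly the approach the paper intends: the paragraph immediately preceding the lemma already records the Euler-characteristic identity $\chi(\Gr_e(M))=\binom{e_{i+2}-e_i}{e_{i+1}-e_i}\chi(\Gr_{\hat e}(\hat M))$ and then simply states that ``this yields the following easy relation between the corresponding $F$-polynomials,'' leaving the binomial summation and the variable-transformation check as implicit. Your write-up supplies precisely those routine details (and you are right to flag the orientation convention as the only place requiring care).
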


Finally, in order to pass to preinjective representations, we can pass to the opposite quiver and dual representations. On the level of $F$-polynomials this can be described by the following formula:

\begin{lemma}Let $M$ be a representation of $Q$. Then for the $F$-polynomial of the dual representation $M^\ast$ we have
\[F_{M^\ast}(x)=x^{\udim  M}F_M(x')\]
where $x'_q=x_q^{-1}$ for every $q\in Q_0$.
\end{lemma}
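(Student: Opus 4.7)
The plan is to reduce the identity to the duality isomorphism of quiver Grassmannians established in Theorem \ref{dual}. That theorem gives, for any representation $M$ and any dimension vector $e \leq d := \udim M$, an isomorphism
\[
 \Gr_e(M) \ \xrightarrow{\ \sim\ } \ \Gr_{d-e}(M^\ast),
\]
and in particular the equality of Euler characteristics $\chi(\Gr_e(M)) = \chi(\Gr_{d-e}(M^\ast))$. Since $M^\ast$ is a representation of $Q^\op$ with dimension vector $d$ (the same underlying tuple of dimensions, only now regarded as a dimension vector for $Q^\op$), and since the support of the $F$-polynomial sum is automatically restricted to dimension vectors bounded by $d$, the identity becomes a purely formal manipulation of the defining sum.

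Concretely, I would unfold the definition of $F_{M^\ast}$ and re-index the sum: writing $e' = d - e$,
\[
 F_{M^\ast}(x) \ = \ \sum_{e' \in \N Q_0} \chi\bigl(\Gr_{e'}(M^\ast)\bigr)\, x^{e'} \ = \ \sum_{e \leq d} \chi\bigl(\Gr_{d-e}(M^\ast)\bigr)\, x^{d-e} \ = \ \sum_{e \leq d} \chi\bigl(\Gr_e(M)\bigr)\, x^{d-e}.
\]
Factoring out $x^d$ and using $x^{d-e} = x^d \cdot \prod_{q \in Q_0} x_q^{-e_q} = x^d \cdot (x')^{e}$ with $x'_q = x_q^{-1}$ yields
\[
 F_{M^\ast}(x) \ = \ x^d \sum_{e \leq d} \chi\bigl(\Gr_e(M)\bigr)\, (x')^{e} \ = \ x^{\udim M} F_M(x'),
\]
which is the claimed formula. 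Note that terms with $e \not\leq d$ do not contribute to $F_M$ (the quiver Grassmannian is empty), so the range of summation is harmless.

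There is no real obstacle here: once Theorem \ref{dual} is in hand the result is a one-line bookkeeping of the variable substitution $x_q \mapsto x_q^{-1}$, combined with the monomial shift $x^d$ coming from the identification $e' = d - e$. The only point that merits a remark is that $F_{M^\ast}$ is a priori a polynomial in the variables indexed by the vertices of $Q^\op$; but these are the same vertices as those of $Q$, so the substitution $x'_q = x_q^{-1}$ makes sense without any relabelling.
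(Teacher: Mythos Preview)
Your argument is correct. The paper actually states this lemma without proof, treating it as an immediate consequence of the definitions; your use of Theorem \ref{dual} to obtain $\chi(\Gr_e(M))=\chi(\Gr_{d-e}(M^\ast))$ followed by the re-indexing $e'=d-e$ is exactly the intended justification.
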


\subsection{Counting admissible subsets}\label{admissible1}
In order to determine $F$-polynomials for any orientation of $\tilde D_n$, we first determine the $F$-poly\-nomials for representations of $\tilde D_n$ in subspace orientation. Applying BGP-reflections, we obtain the corresponding formula for every orientation. To do so and to fix notation, we proceed with recalling some well known procedure which can be used to obtain the generating functions explicitly. 

Let $f_0,\,f_1\in k[x_i\mid i\in I]$ and let $f_j$ for $j\geq 2$ be recursively defined by
\[\begin{pmatrix}f_{2n}\\f_{2n+1}\end{pmatrix}=\begin{pmatrix}a&b\\c&d\end{pmatrix}\begin{pmatrix}f_{2n-2}\\f_{2n-1}\end{pmatrix}\]
for some $a,b,c,d\in k[x_i\mid i\in I]$ and where $n\geq 1$. The eigenvalues of $A:=\begin{pmatrix}a&b\\c&d\end{pmatrix}$ are the zeroes of $\chi_A=(X-a)(X-d)-bc$, i.e.
\[\lambda_{\pm}=\frac{a+d}{2}\pm\sqrt{\frac{(a+d)^2}{4}-ad+bc}.\]
Define $z:=\sqrt{\frac{(a+d)^2}{4}-ad+bc}$. We have $\lambda_+\lambda_-=ad-bc$ and $\lambda_+-\lambda_-=2z$.
Assuming that $z\neq 0$ and $b\neq 0$, for the eigenspaces, we get
\[E_{\lambda_{\pm}}=\langle\begin{pmatrix}-b\\a-\lambda_{\pm}\end{pmatrix}\rangle.\]
For \[T:=\begin{pmatrix} -b&-b\\a-\lambda_{+}&a-\lambda_{-}\end{pmatrix}, \text{ we have } 
T^{-1}=\frac{1}{\det T}\begin{pmatrix} a-\lambda_-&b\\\lambda_+-a&-b\end{pmatrix}.\]
Then we have
{\allowdisplaybreaks\begin{eqnarray*}\begin{pmatrix}f_{2n}\\f_{2n+1}\end{pmatrix}&=&\begin{pmatrix}a&b\\c&d\end{pmatrix}^n\begin{pmatrix}f_{0}\\f_{1}\end{pmatrix}\\
&=&\frac{1}{-2bz}\begin{pmatrix} -b&-b\\a-\lambda_{+}&a-\lambda_{-}\end{pmatrix}\begin{pmatrix}\lambda_+&0\\0&\lambda_-\end{pmatrix}^n\begin{pmatrix} a-\lambda_-&b\\\lambda_+-a&-b\end{pmatrix}\begin{pmatrix}f_0\\f_1\end{pmatrix}\\
&=&\frac{1}{-2bz}\begin{pmatrix}b(\lambda_-^n(a-\lambda_+)-\lambda_+^n(a-\lambda_-))&b^2(\lambda_-^n-\lambda_+^n)\\(a-\lambda_+)(a-\lambda_-)(\lambda_+^n-\lambda_-^n)&b(\lambda_+^n(a-\lambda_+)-\lambda_-^n(a-\lambda_-))\end{pmatrix}\begin{pmatrix}f_0\\f_1\end{pmatrix}.
\end{eqnarray*}}
Thus we get
\begin{align}\label{genfunc} f_{2n}=\frac{1}{2z}((a(\lambda_+^n-\lambda_-^{n})-(ad-bc)(\lambda_+^{n-1}-\lambda_-^{n-1}))f_0-b(\lambda_-^n-\lambda_+^n)f_1)\end{align}
and
\begin{align}\label{genfunc2} f_{2n+1}=\frac{-1}{2bz}((a-\lambda_+)(a-\lambda_-)(\lambda_+^n-\lambda_-^n)f_0+b(\lambda_+^n(a-\lambda_+)-\lambda_-^n(a-\lambda_-))f_1)\end{align}

The quiver $A_m=0\leftarrow 1\leftarrow \ldots\leftarrow m$ appears as a subquiver of $\tilde D_n$ and the coefficient quiver of the real root representation of dimension $\mathds{1}_m=(1,1,\ldots,1)$ as a subset of the vertex set of the coefficient quivers under consideration. If $X_{\mathds{1}_m}$ is the corresponding indecomposable representation, it is straightforward to check that we have
\[F_m:=F_{X_{\mathds{1}_m}}=\sum_{i=-1}^m\prod_{j=0}^ix_j.\]
Moreover, let $F_{1,m}:=\sum_{i=0}^m\prod_{j=1}^ix_j$. By an easy induction, the following can be proved:

\begin{lemma}\label{gen}
We have
\begin{enumerate}
\item $\prod_{i=0}^m\begin{pmatrix}1&0\\1&x_i\end{pmatrix}=\begin{pmatrix}1&0\\F_{m-1}&\prod_{i=0}^mx_i\end{pmatrix}$
\item $\prod_{i=0}^{m-1}\begin{pmatrix}0&1\\-x_{m-i}&x_{m-i}+1\end{pmatrix}=\begin{pmatrix}-F_{1,m-1}+1&F_{1,m-1}\\-F_{1,m}+1&F_{1,m}\end{pmatrix}$
\end{enumerate}
\end{lemma}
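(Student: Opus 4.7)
Both identities will be proved by straightforward induction on $m$, using only the defining telescoping relations
\[
 F_m \ = \ F_{m-1} + \prod_{j=0}^m x_j \qquad\text{and}\qquad F_{1,m} \ = \ F_{1,m-1} + \prod_{j=1}^m x_j,
\]
which follow immediately from the sum definitions of $F_m$ and $F_{1,m}$ (with the empty-product conventions $F_{-1}=1$ and $F_{1,0}=1$).

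For part (i), the base case $m=0$ is just the equality $\begin{pmatrix}1&0\\1&x_0\end{pmatrix}=\begin{pmatrix}1&0\\F_{-1}&x_0\end{pmatrix}$, which holds since $F_{-1}=1$. For the induction step, I would peel off the rightmost factor and compute
\[
 \prod_{i=0}^{m-1}\begin{pmatrix}1&0\\1&x_i\end{pmatrix}\cdot\begin{pmatrix}1&0\\1&x_m\end{pmatrix}
 \ = \ \begin{pmatrix}1&0\\F_{m-2}&\prod_{i=0}^{m-1}x_i\end{pmatrix}\begin{pmatrix}1&0\\1&x_m\end{pmatrix}
 \ = \ \begin{pmatrix}1&0\\F_{m-2}+\prod_{i=0}^{m-1}x_i&\prod_{i=0}^m x_i\end{pmatrix},
\]
and the bottom-left entry simplifies to $F_{m-1}$ via the recursion above. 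The top row is preserved trivially by the special shape of the factors.

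For part (ii), I would take the base case $m=1$, where the product reduces to the single factor $\begin{pmatrix}0&1\\-x_1&x_1+1\end{pmatrix}$, matching the right-hand side since $F_{1,0}=1$ and $F_{1,1}=1+x_1$. For the induction step, it is cleanest to strip off the leftmost (largest-index) factor, writing $P_m=\begin{pmatrix}0&1\\-x_m&x_m+1\end{pmatrix}P_{m-1}$, since replacing $m$ by $m-1$ in the original product yields exactly the remaining $m-1$ factors (this indexing match is the only place where one must be a little careful). Applying the induction hypothesis to $P_{m-1}$ and multiplying, the top row of $P_m$ becomes the bottom row of $P_{m-1}$, namely $(-F_{1,m-1}+1,\,F_{1,m-1})$, which matches the predicted top row for $P_m$. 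The new bottom-right entry equals $-x_mF_{1,m-2}+(x_m+1)F_{1,m-1}=F_{1,m-1}+x_m(F_{1,m-1}-F_{1,m-2})$, and the telescoping identity $F_{1,m-1}-F_{1,m-2}=\prod_{j=1}^{m-1}x_j$ together with the recursion gives $F_{1,m}$, as required; the bottom-left entry is reduced to $-F_{1,m}+1$ by an essentially identical manipulation.

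There is no real obstacle here: once the induction hypothesis is in place, everything collapses to the two telescoping recursions for $F_m$ and $F_{1,m}$. The only point demanding attention is the bookkeeping of indices in part (ii), and choosing to peel off the leftmost factor rather than the rightmost makes the induction run cleanly.
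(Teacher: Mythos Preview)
Your proof is correct and follows exactly the approach indicated in the paper, which simply states that the lemma ``can be proved by an easy induction'' without spelling out the details. You have correctly filled in the base cases and the induction steps, including the mildly delicate index bookkeeping in part~(ii) where peeling off the leftmost factor makes the recursion for $F_{1,m}$ line up cleanly.
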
 

In order to determine generating functions of preprojectives of defect $-1$, we consider the following snake-shaped coefficient quiver $\mathcal Q(s,n)$ (where $t:=2n-2$) where we omit the vertices $q_i$ in the notation:

\begin{tikzpicture}[scale=0.85,thick]
\node (A) at (16,2) {$0$};
\node (B) at (13,2) {$1$};
\node (C) at (10,2) {$2$};
\node (D) at (6,2) {$n-4$};
\node (E) at (3,2) {$n-3$};
\node (F) at (0,1) {$n-2$};
\node (G) at (0,0) {$n-1$};
\node (H) at (3,0) {$n$};
\node (D1) at (6,0) {$n+1$};
\node (B1) at (13,0) {$2n-4$};
\node (C1) at (10,0) {$2n-5$};
\node (A2) at (16,-1) {$2n-3$};
\node (A1) at (16,-2) {$2n-2$};
\node (B2) at (13,-2) {$2n-1$};
\node (Z) at (3,-2){$~$};
\node (Z0) at (8,-2.5){$~$};
\node (Z1) at (16,-3){$~$};
\node (D2) at (8,-3) {$st+n-4$};
\node (E2) at (4,-3) {$st+n-3$};
\node (F2) at (0,-4) {$st+n-2$};
\node (G2) at (0,-5) {$st+n-1$};
\node (H2) at (4,-5) {$st+n$};
\draw[->](A) to node[below]{$d$}(B); 
\draw[->](B) to node[above]{$v_{n-5}$}(C);
\draw[-,dotted](C) to (D);
\draw[->] (D) to node[above]{$v_0$}  (E) ;
\draw[->](F) to node[above]{$a$}(E);
\draw[->](F) to node[above]{$a$}(H);
\draw[->](G) to node[below]{$b$}(H);
\draw[->](D1) to node[above]{$v_{0}$}(H);
\draw[->](A2) to node[above]{$c$}(B1); 
\draw[-,dotted](C1) to (D1);
\draw[->](A1) to node[below]{$d$}(B2);
\draw[->](A2) to node[above]{$c$}(B2);
\draw[->](B1) to node[above]{$v_{n-5}$}(C1);
\draw[->](D2) to node[above]{$v_{0}$}(E2) ;
\draw[->](F2) to node[above]{$a$}(E2) ;
\draw[->](F2) to node[above]{$a$}(H2) ;
\draw[->](G2) to node[below]{$b$}(H2) ;
\draw[rounded corners,dotted] (B2) to[bend right=10] (Z) to[bend right=5] (Z0)to[bend left=5]  (Z1) to[bend left=15]  (D2);
\end{tikzpicture}

We will see that we can basically restrict our calculations to this case. Also the case of the exceptional tubes can be reduced to this case. We refer to the corresponding preprojective representation by $M(s,n)$.

We call a subgraph a ramification subgraph if it is of the following form:

\begin{tikzpicture}[thick]
\node (E) at (3,2) {$l$};
\node (F) at (0,1) {$l+1$};
\node (G) at (0,0) {$l+2$};
\node (H) at (3,0) {$l+3$};
\draw[->](F) to node[above]{$x$}(E);
\draw[->](F) to node[above]{$x$}(H);
\draw[->](G) to node[below]{$y$}(H);
\end{tikzpicture}

Note that in our case we have $x\in\{a,c\}$ and $y\in\{b,d\}$. In this situation, the extremal arrows of $\mathcal Q(s,n)$ defined in section \ref{coeffquiver}  are all arrows but those of the form $l+1\xlongrightarrow{x} l$ contained in the ramification subgraphs. 

\begin{df}\label{admsub}
We call a subset $G_0$ of $\mathcal Q(s,n)_0$ admissible if the following holds:
\begin{enumerate}
\item $G_0$ is extremal successor closed, i.e. if the tail of an extremal arrow is contained in $G_0$, the head is also contained in $G_0$.
\item For all ramification subgraphs, we have: if $l+1,\,l+2\in G_0$, then $l\in G_0$.
\end{enumerate}
\end{df}

Note that we automatically have $l+3\in G_0$ if $G$ is extremal successor closed and if $l+1\in G_0$ or $l+2\in G_0$. Every subset induces a dimension vector $e\in\N Q_0$, called the type of $G_0$ in what follows. The next step is to determine the number of admissible subsets of $\mathcal Q(s,n)_0$ of a fixed type $e$ for the following reason. By Theorem \cite[Theorem 4.4]{LW}, for representations of defect $-1$, the non-empty Schubert cells correspond to the non-contradictory subsets $\beta$, see section \ref{noncontradictory}. In the present case, this translates into the notion of admissibility.

\begin{thm}\label{admissible}
Let $e\in\N Q_0$. Then $\chi(\Gr_e(M(s,n)))$ is the number of admissible subsets of $\mathcal Q(s,n)_0$ of type $e$. 
\end{thm}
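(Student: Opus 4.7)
The plan is to deduce the statement as a direct specialization of Theorem A\ref{tmt}. By that theorem (applied to the preprojective representation $M(s,n)$ of defect $-1$, whose coefficient quiver is by construction $\mathcal Q(s,n)$), the quiver Grassmannian $\Gr_e(M(s,n))$ admits a Schubert decomposition
\[
 \Gr_e(M(s,n)) \;=\; \coprod_{\substack{\beta \subset \mathcal Q(s,n)_0 \\ \text{of type } e}} C_\beta^{M(s,n)}
\]
into affine spaces and empty cells, the empty ones being exactly those indexed by subsets that are contradictory of the first or second kind in the sense of \cite{LW}. Since affine spaces have Euler characteristic $1$ and do not contribute cohomology in odd degree, the Euler characteristic of $\Gr_e(M(s,n))$ is simply the number of non-empty cells, hence the number of non-contradictory subsets $\beta \subset \mathcal Q(s,n)_0$ of type $e$.

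Thus the theorem reduces to the combinatorial identification of \emph{non-contradictory} subsets of $\mathcal Q(s,n)_0$ with the \emph{admissible} subsets introduced in Definition \ref{admsub}. First I would verify that non-contradictory of the first kind (extremal successor closed) coincides verbatim with condition (i) of Definition \ref{admsub}: the extremal arrows of $\mathcal Q(s,n)$ are precisely those listed above (all arrows except the arrows $l{+}1 \to l$ living inside a ramification subgraph), so this comparison is a direct unwinding of definitions.

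Next I would check that the ramification condition (ii) is exactly the specialization of non-contradictory of the second kind to $\mathcal Q(s,n)$. The only place in $\mathcal Q(s,n)$ where two distinct arrows with common label ($a$ at the left end, $c$ in each subsequent block) share a head is at the vertices sitting inside a ramification subgraph; the second-kind contradiction (as spelled out in \cite{LW} and recalled in Remark \ref{betalift}) forces that, whenever the two sources $l{+}1, l{+}2$ of such a pair lie in $\beta$, the vertex $l$ (the extremal-arrow successor of $l{+}1$ on the slanted side) must also lie in $\beta$. This is exactly condition (ii). The remaining potential sources of a second-kind contradiction along the horizontal chains of arrows labelled $v_i, d, c$ do not arise: in these chains each label has a unique tail at each height, so the first-kind (extremal successor closed) condition already captures everything. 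I expect this step---carefully ruling out second-kind contradictions outside the ramification subgraphs by inspecting the labels---to be the main obstacle, since it is the only place where a genuine appeal to the combinatorics of \cite[Appendix B]{LW} (rather than a formal consequence of Theorem A) is required.

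Once these two combinatorial matchings are in place, the equivalence ``$\beta$ non-contradictory $\iff$ $\beta$ admissible'' is established, and combined with the affine-cell decomposition this yields
\[
 \chi\bigl(\Gr_e(M(s,n))\bigr) \;=\; \#\bigl\{\,\beta\subset\mathcal Q(s,n)_0 \text{ admissible of type }e\,\bigr\},
\]
completing the proof.
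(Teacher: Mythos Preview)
Your proposal is correct and follows essentially the same approach as the paper: the paper simply writes ``By Theorem A\ref{tmt}, we have'' and states the result without further argument, treating the identification of non-contradictory subsets with admissible subsets as immediate from the explicit shape of the coefficient quiver $\mathcal Q(s,n)$. You have spelled out that identification in more detail than the paper does, but the underlying argument is identical.
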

\begin{rem}\label{transfer}
In order to determine the $F$-polynomials for indecomposables lying in exceptional tubes, we need to consider slight modifications of the coefficient quiver $\mathcal Q(s,n)$ for $n\leq 5$. The notions of extremal arrows and admissible subsets are the same as before, i.e. the extremal arrows are all but those of the form $l+1\xrightarrow{x} l$ contained in the ramification subgraphs and Definition \ref{admsub} can be transferred word by word.
\end{rem}

Consider $I:=\{0,\ldots,2s+1\}$ and $J:=\{0,\ldots,n-4\}$. If we delete the sources of $\mathcal Q(s,n)$ corresponding to the ramification subgraphs, we can think of the remaining graph as a matrix having entries which are vertices, i.e. with every index $(i,j)$ we associate the vertex in the $i$th row and $j$th column of the remaining graph. Note that we start the indexing by $(0,0)$. 

For $(i,j)\in I\times J$, let $\mathcal G(i,j)$ be the full (connected) subgraph of $\mathcal Q(s,n)$ which has vertices $\{(0,n-4), (0,n-5),...,(i,j)\}$ and where we
 add the subgraph $1\leftarrow 0$ and also all sources of ramification subgraphs whose remaining vertices are all contained in $\{(0,n-4), (0,n-5),...,(i,j)\}$.
Let 
\[\mathcal F_i^j=\sum_{e\in\N Q_0}\chi(i,j,e)x^e\] be the generating function counting the number $\chi(i,j,e)$ of admissible subsets of $\mathcal G(i,j)_0$ of type $e$. We define $\mathcal F_{-1}^{n-4}:=1$ and $\mathcal F_0^{n-4}:=1+x_{n-4}+x_{n-4}x_d$.

\begin{lemma}\label{genrecursion}
We have the following recursive relations:
\begin{enumerate}
\item For all $m\geq 0, \,j=n-5,\ldots,0$, we have $\mathcal F_{2m}^j=x_j\mathcal F^{j+1}_{2m}+\mathcal F_{2m-1}^{n-4}$.
\item For all $m\geq 0$, we have 
\[\mathcal F_{2m+1}^{0}=(1+x_{0}+x_{0}x_{a}+x_{0}x_{b}+x_{0}x_{a}x_{b})\mathcal F_{2m}^{0}-x_{0}x_{a}x_{b}\mathcal F_{2m-1}^{n-4}.\]
\item For all $m\geq 0$, we have $\mathcal F_{2m+1}^1=(x_1+1)\mathcal F^{0}_{2m+1}-x_1\mathcal F_{2m}^{0}$.
\item For all $m\geq 0,\,j=2,\ldots,n-4$, we have $\mathcal F_{2m+1}^j=(x_j+1)\mathcal F^{j-1}_{2m+1}-x_j\mathcal F_{2m}^{j-2}$.
\item For all $m\geq 1$, we have \[\mathcal F_{2m}^{n-4}=(1+x_{n-4}+x_{n-4}x_{c}+x_{n-4}x_{d}+x_{n-4}x_{c}x_{d})\mathcal F_{2m-1}^{n-4}-x_{n-4}x_{c}x_{d}\mathcal F_{2m-1}^{n-5}.\]
\end{enumerate}
\end{lemma}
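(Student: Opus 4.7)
The plan is to prove each of the five recursions by a single unified strategy: identify the vertex (or vertices) that are adjoined in the step from the smaller to the larger subgraph, then case-split on membership in $G_0$ and use the two admissibility conditions to reduce each case to a count of admissible subsets of a strictly smaller $\mathcal{G}(i',j')$.

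First I would handle the three single-vertex recursions (i), (iii) and (iv). In each case the step from $\mathcal{G}(i,j')$ to $\mathcal{G}(i,j)$ adjoins a single vertex $v=(i,j)$ together with a single extremal arrow inside row $i$, and the geometry depends on whether the zigzag is running with or against the direction of the extremal arrow, i.e.\ on the parity of $i$. For even rows (recursion (i)), the new vertex $v$ is the head of the new arrow, so including $v$ in $G_0$ imposes no new constraint and contributes $x_j\mathcal{F}_{2m}^{j+1}$, whereas excluding $v$ forces, by the contrapositive of extremal successor closedness applied along the chain, the whole tail sequence in row $2m$ and the two sources of the preceding ramification to lie outside $G_0$, leaving an arbitrary admissible subset of $\mathcal{G}(2m-1,n-4)$ and contributing $\mathcal{F}_{2m-1}^{n-4}$. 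For odd rows (recursions (iii) and (iv)) the new vertex is the tail of the new arrow, so including it propagates the entire row-$(2m+1)$ prefix $(2m+1,0),\dotsc,(2m+1,j-1)$ into $G_0$, while excluding it is locally free. Writing $A$ for the count of admissible subsets of $\mathcal{G}(2m+1,j-1)$ not containing $(2m+1,j-1)$, one obtains $\mathcal{F}_{2m+1}^j=(1+x_j)\mathcal{F}_{2m+1}^{j-1}-x_j A$, and the key step is to identify $A$ via backward propagation through the ramification sources between rows $2m$ and $2m+1$ and through the row-$2m$ chain with the appropriate smaller $\mathcal{F}$-polynomial.

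Next I would handle the two ramification recursions (ii) and (v). Here the step from $\mathcal{G}(2m,0)$ to $\mathcal{G}(2m+1,0)$ adjoins the pair of sources $l+1,l+2$ together with the endpoint $l+3$, and the task splits naturally into the five configurations of $(l+1,l+2,l+3)$ that are compatible with extremal successor closedness, namely those subsets of $\{l+1,l+2,l+3\}$ in which the presence of $l+1$ or $l+2$ forces $l+3$. These five configurations carry weights $1,\ x_0,\ x_0x_a,\ x_0x_b,\ x_0x_ax_b$ respectively (with $c,d$ in place of $a,b$ in (v)) and account for the first factor $(1+x_0+x_0x_a+x_0x_b+x_0x_ax_b)\mathcal{F}_{2m}^0$. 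The fifth configuration is special: the ramification condition then forces $l=(2m,0)\in G_0$, and I would identify the admissible subsets of $\mathcal{G}(2m,0)$ with $l\in G_0$ as $\mathcal{F}_{2m}^0-\mathcal{F}_{2m-1}^{n-4}$, via a second backward propagation (this time along the row-$2m$ chain and the preceding ramification), producing precisely the corrective term $-x_0x_ax_b\mathcal{F}_{2m-1}^{n-4}$.

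The main obstacle will be the precise identification of the auxiliary counts in (iii), (iv) and of the corrective terms in (ii), (v): one has to verify carefully that excluding the most recently adjoined vertex propagates backward through an extremal chain and through a ramification to exactly a smaller $\mathcal{G}(i',j')$, with no spurious contributions from the initial $1\leftarrow 0$ subgraph or from the ramification sources of a previous step. These verifications are elementary but require attention to the parity of the rows, to whether the ramification at the current step sits on the left or the right of the snake, and to the labels $\{a,b\}$ or $\{c,d\}$ of the outer vertices it involves.
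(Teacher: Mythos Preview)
Your strategy is correct and is essentially the paper's own: case-split on membership of the newly adjoined vertex and use the two admissibility conditions to reduce to a smaller $\mathcal G(i',j')$. The paper's proof is just a terse version of what you describe.

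One point to watch: statement (iv) as printed carries a typo---the correction term should read $-x_j\mathcal F_{2m+1}^{j-2}$, not $-x_j\mathcal F_{2m}^{j-2}$. You can confirm this from the subsequent Corollary, which encodes the odd-row passage by the product of matrices $\bigl(\begin{smallmatrix}0&1\\-x_j&x_j+1\end{smallmatrix}\bigr)$ from Lemma~\ref{gen}(ii); that product sends $(\mathcal F_{2m+1}^{j-2},\mathcal F_{2m+1}^{j-1})$ to $(\mathcal F_{2m+1}^{j-1},\mathcal F_{2m+1}^{j})$. With the corrected formula your auxiliary count $A$ in (iv) is immediate: inside $\mathcal G(2m+1,j-1)$ the vertex $(2m+1,j-1)$ is only a \emph{tail}, so excluding it imposes no further constraint and $A=\mathcal F_{2m+1}^{j-2}$ with no propagation through any ramification. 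Likewise in (v) the corrective count is obtained in a single step: excluding $l=(2m-1,n-4)$ (again only a tail in $\mathcal G(2m-1,n-4)$) leaves exactly $\mathcal G(2m-1,n-5)$. The long backward sweep through an entire row and the preceding ramification that you sketch is genuinely needed only in (i) and in the corrective term of (ii), where the excluded vertex is a \emph{head} and extremal-successor-closedness propagates its absence all the way back. Once you adjust your plan for (iv) accordingly, it goes through exactly as the paper's.
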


\begin{proof}
An admissible subset of $\mathcal G(2m,j)_0$ is obtained from one of $\mathcal G(2m,j-1)_0$ by adding the vertex corresponding to the index $(2m,j)$ or it is given by an admissible subset $\mathcal G(2m-1,n-4)_0$. Note that if $(2m,i)$ is a vertex of an admissible subset, then $(2m,i-1)$ is forced to be part of the admissible subset because it is extremal successor closed for $i=n-4,\ldots,1$. Thus we obtain the first statement. The third and forth statements can be obtained similarly.

An admissible subset of $\mathcal G(2m+1,0)_0$ is obtained by adding an admissible subset of the ramification subgraph which is glued. This corresponds to the first summand in the second statement. But because of the second property we have to drop those subsets containing the vertices $sm+n-2,\,sm+n-1,sm+n$ but not containing $sm+n-3$. This gives the second summand. The last statement can be obtained by a similar argument. 
\end{proof}

Let $H(x,y,z)=1+x+xy+xz+xyz$. Together with the observations in the beginning of this section, Lemmas \ref{gen} and \ref{genrecursion} give rise to the following recursive description of the generating functions:

\begin{cor}
\begin{eqnarray*}\begin{pmatrix}\mathcal F_{2m+1}^{n-4}\\\mathcal F_{2m+2}^{n-4}\end{pmatrix}&=&\begin{pmatrix}0&1\\-x_{n-4}x_{c}x_{d}&H(x_{n-4},x_{c},x_{d})\end{pmatrix}\begin{pmatrix}- F_{1,n-5}+1& F_{1,n-5}\\- F_{1,n-4}+1& F_{1,n-4}\end{pmatrix}\\&&
\begin{pmatrix}0&1\\-x_{0}x_{a}x_{b}&H(x_{0},x_{a},x_{b})\end{pmatrix}\begin{pmatrix}1&0\\F_{n-6}& \prod_{i=0}^{n-5}x_i\end{pmatrix}\begin{pmatrix}\mathcal F_{2m-1}^{n-4}\\\mathcal F_{2m}^{n-4}\end{pmatrix}\end{eqnarray*}
For $n=4$, we get
\begin{eqnarray*}\begin{pmatrix}\mathcal F^4_{2m+1}\\\mathcal F^4_{2m+2}\end{pmatrix}&=&\begin{pmatrix}0&1\\-x_{0}x_{c}x_{d}&H(x_{0},x_{c},x_{d})\end{pmatrix}\begin{pmatrix}0&1\\-x_{0}x_{a}x_{b}&H(x_{0},x_{a},x_{b})\end{pmatrix}\begin{pmatrix}\mathcal F^4_{2m-1}\\\mathcal F^4_{2m}\end{pmatrix}\\&=&\begin{pmatrix}-x_0x_ax_b&H(x_0,x_a,x_b)\\-x_0x_ax_bH(x_0,x_c,x_d)&-x_0x_cx_d+H(x_0,x_c,x_d)H(x_0,x_a,x_b)\end{pmatrix}\begin{pmatrix}\mathcal F^4_{2m-1}\\\mathcal F^4_{2m}\end{pmatrix}\end{eqnarray*}
\end{cor}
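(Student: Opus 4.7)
The plan is to compose the recurrences (i)--(v) of Lemma~\ref{genrecursion}, interpreting each as a linear action on a two-dimensional ``state vector'' of neighboring $\mathcal F$-values, and to collapse the iterated steps into closed-form matrix products using Lemma~\ref{gen}.

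First I would set $v_0=\begin{pmatrix}\mathcal F_{2m-1}^{n-4}\\\mathcal F_{2m}^{n-4}\end{pmatrix}$ and track this vector under successive applications of the five recurrences, ending at $\begin{pmatrix}\mathcal F_{2m+1}^{n-4}\\\mathcal F_{2m+2}^{n-4}\end{pmatrix}$. Recurrence (i) applied for $j=n-5,n-6,\ldots,0$ advances the second coordinate from $\mathcal F_{2m}^{n-4}$ to $\mathcal F_{2m}^{0}$ using matrices $\begin{pmatrix}1&0\\1&x_j\end{pmatrix}$, whose product I compute with Lemma~\ref{gen}(i) to obtain the factor $M_1=\begin{pmatrix}1&0\\F_{n-6}&\prod_{i=0}^{n-5}x_i\end{pmatrix}$. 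The single-step recurrence (ii) contributes the matrix $M_2=\begin{pmatrix}0&1\\-x_0x_ax_b&H(x_0,x_a,x_b)\end{pmatrix}$, moving the state to $\begin{pmatrix}\mathcal F_{2m}^{0}\\\mathcal F_{2m+1}^{0}\end{pmatrix}$. Recurrences (iii) and (iv), iterated for $j=1,2,\ldots,n-4$, produce a product of matrices $\begin{pmatrix}0&1\\-x_j&x_j+1\end{pmatrix}$, which Lemma~\ref{gen}(ii) collapses into the factor $M_{3,4}$, delivering $\begin{pmatrix}\mathcal F_{2m+1}^{n-5}\\\mathcal F_{2m+1}^{n-4}\end{pmatrix}$. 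Finally the mirror-image recurrence (v) applies the matrix $M_5=\begin{pmatrix}0&1\\-x_{n-4}x_cx_d&H(x_{n-4},x_c,x_d)\end{pmatrix}$ to reach the target vector, and multiplying out gives the claimed factorization $M_5M_{3,4}M_2M_1$.

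The main bookkeeping challenge will be to match initial conditions precisely between consecutive blocks, and in particular to confirm that the block coming from (iii) and (iv), seeded by $(\mathcal F_{2m}^{0},\mathcal F_{2m+1}^{0})^{T}$, fits the matrix pattern of Lemma~\ref{gen}(ii) by identifying the ``boundary value'' $\mathcal F_{2m+1}^{-1}$ with $\mathcal F_{2m}^{0}$: the base step (iii) makes this identification consistent, and the resulting homogeneous three-term recursion in the parity $2m+1$ then drives the remainder. The case $n=4$ is immediate and serves as a sanity check: the empty products cause $M_1$ and $M_{3,4}$ to collapse to the identity (using $F_{-2}=0$, $F_{1,-1}=0$, $F_{1,0}=1$, and the convention $\prod_{i=0}^{-1}x_i=1$), so the formula reduces to $M_5M_2$, which matches the second display in the statement.
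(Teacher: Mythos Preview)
Your proposal is correct and is exactly the argument the paper has in mind: the corollary is stated there without proof beyond the phrase ``Lemmas~\ref{gen} and~\ref{genrecursion} give rise to the following recursive description,'' and you have simply unpacked how the five recurrences act as $2\times 2$ matrices on successive state vectors and how Lemma~\ref{gen} collapses the iterated blocks. Your handling of the seam between (iii) and (iv)---treating the recursion for $j\geq 1$ as a homogeneous three-term recursion in the $(2m{+}1)$-row with boundary value $\mathcal F_{2m+1}^{-1}:=\mathcal F_{2m}^{0}$---is the right way to make the product in Lemma~\ref{gen}(ii) apply, and your $n=4$ degeneration check is accurate.
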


With this tool in hand we can determine the $F$-polynomials of representations of $\tilde D_n$ explicitly using formulae (\ref{genfunc}), (\ref{genfunc2}). This is done in the following subsections. We begin with the representations of the tubes as it turns out that the proofs are less technical.

\subsection{The homogeneous tube}\label{homotube}
The $F$-polynomials of the representations of the homogeneous tubes play an important role in the following. The imaginary Schur root $\delta$ of $\tilde D_n$ is independent of the orientation of $\tilde D_n$. Using the methods of section \ref{redone} and \ref{bgp}, it is straightforward to check that the $F$-polynomial of a representation of dimension $\delta$ lying in one of the homogeneous tubes does not depend on the tube. Alternatively, one can apply \cite[Lemma 5.3]{Dupont11} or \cite[Theorem 4.4]{LW}. Thus we can fix a homogeneous tube without loss of generality. We denote the unique representation of dimension $r\delta$ in this tube by $M_{r\delta}$ and define $F_{r\delta}:=F_{M_{r\delta}}$. Moreover, for every $r\geq 1$ there exists an almost split sequence of the form
\[\ses{M_{r\delta}}{M_{(r-1)\delta}\oplus M_{(r+1)\delta}}{M_{r\delta}}\]
where $M_{0\delta}=M_0:=0$ and $F_{M_{0}}=1$. By applying Theorem \ref{almostsplit}, we obtain

\begin{lemma}\label{formhom}
The $F$-polynomial of representations lying in one of the homogeneous tubes depends only on the dimension vector and satisfies the recursion
\[F_{(r+1)\delta}F_{(r-1)\delta}=F_{r\delta}^2-x^{r\delta}\]
for $r\geq 1$. \hfill\qed
\end{lemma}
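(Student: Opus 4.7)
The plan is to apply Theorem \ref{almostsplit} directly to the almost split sequence
\[0 \to M_{r\delta} \to M_{(r-1)\delta}\oplus M_{(r+1)\delta} \to M_{r\delta} \to 0\]
that exists because $M_{r\delta}$ is $\tau$-periodic with $\tau M_{r\delta}\cong M_{r\delta}$ (it lies in a homogeneous tube). First I would note independence of the tube: by \cite[Lemma 5.3]{Dupont11} (or alternatively by the methods of sections \ref{redone} and \ref{bgp}, or by \cite[Theorem 4.4]{LW}), the $F$-polynomial $F_{r\delta}$ of any representation of dimension $r\delta$ lying in a homogeneous tube depends only on the dimension vector, so fixing any homogeneous tube is harmless.

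The core of the argument is then a generating-function identity. Setting $M=\tau M=M_{r\delta}$ and $B=M_{(r-1)\delta}\oplus M_{(r+1)\delta}$ in Theorem \ref{almostsplit}, the formula for $\chi(\Gr_e(B))$ reads
\[\chi\bigl(\Gr_e(B)\bigr)\;=\;\sum_{f+g=e}\chi\bigl(\Gr_f(M_{r\delta})\bigr)\,\chi\bigl(\Gr_g(M_{r\delta})\bigr)\;-\;\delta_{e,r\delta},\]
the correction $-1$ occurring only at $e=\udim M_{r\delta}=r\delta$ (corresponding to the unique fibre of $\Psi_e$ over $(0,M_{r\delta})$ that is empty). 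Multiplying by $x^e$ and summing over $e\in\N Q_0$, the convolution on the right becomes a product of generating functions, while the left hand side equals $F_B$. Since $F_{M\oplus N}=F_MF_N$ by \cite[Corollary~3.7]{cc}, we have $F_B=F_{(r-1)\delta}F_{(r+1)\delta}$, and the summed identity becomes exactly
\[F_{(r-1)\delta}F_{(r+1)\delta}\;=\;F_{r\delta}^{\,2}\;-\;x^{r\delta},\]
which is the claim. The base case $r=1$ (where $M_{(r-1)\delta}=0$ and $F_{0}=1$) fits into the same framework.

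There is essentially no obstacle here: the only subtlety is bookkeeping the single empty fibre at $e=r\delta$, which is precisely what produces the $-x^{r\delta}$ correction. Everything else is a direct consequence of the multiplicativity of $F$-polynomials under direct sums and of Theorem \ref{almostsplit} applied to the canonical almost split sequence of the homogeneous tube.
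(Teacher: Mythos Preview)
Your proposal is correct and follows exactly the paper's approach: the paper simply states the almost split sequence $0\to M_{r\delta}\to M_{(r-1)\delta}\oplus M_{(r+1)\delta}\to M_{r\delta}\to 0$ and writes ``By applying Theorem~\ref{almostsplit}, we obtain'' before stating the lemma with a \qed. You have spelled out the generating-function computation that the paper leaves implicit, including the use of $F_{M\oplus N}=F_MF_N$ and the Kronecker-delta bookkeeping for the single empty fibre; both the independence-of-tube remark and the application of Theorem~\ref{almostsplit} match the paper's argument.
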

By the methods of section \ref{admissible1}, we also obtain an explicit formula for $F_\delta$. Recall Corollary \ref{homogform1}, saying that
\[F_{r\delta}=F_{\delta}F_{(r-1)\delta}-x^{\delta}F_{(r-2)\delta}\]
for $r\geq 1$ where $F_0=1$ and $F_{-\delta}=0$.
This yields
\[\begin{pmatrix}F_{r\delta}\\F_{(r+1)\delta}\end{pmatrix}=\begin{pmatrix}0&1\\-x^{\delta}&F_{\delta}\end{pmatrix}^{r+1}\begin{pmatrix}0\\1\end{pmatrix}
\]
Defining
\[z=\frac{1}{2}\sqrt{F_{\delta}^2-4x^{\delta}},\quad\lambda_{\pm}=\frac{F_{\delta}}{2}\pm z,\]
we thus get the following explicit formula:

\begin{cor}\label{homogform2}We have
\[F_{r\delta}=\frac{1}{2z}(\lambda_+^{r+1}-\lambda_-^{r+1}).\]
\end{cor}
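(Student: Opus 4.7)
My plan is to treat the identity as a standard computation of the closed form of a second-order linear recurrence. By Corollary \ref{homogform1} the polynomials $F_{r\delta}$ satisfy
\[
 F_{r\delta} \;=\; F_\delta\, F_{(r-1)\delta} \;-\; x^\delta\, F_{(r-2)\delta}
\]
for all $r\geq 1$, with the initial conditions $F_{-\delta}=0$ and $F_0=1$. Viewing $F_\delta$ and $x^\delta$ as (Laurent-polynomial) constants in $r$, this is a homogeneous linear recursion of order two, whose characteristic polynomial is
\[
 \lambda^2 - F_\delta\, \lambda + x^\delta.
\]
By construction of $z$ and $\lambda_\pm$, the roots of this polynomial are precisely $\lambda_+$ and $\lambda_-$, and Vieta's formulas give $\lambda_++\lambda_-=F_\delta$ and $\lambda_+\lambda_-=x^\delta$, while $\lambda_+-\lambda_-=2z$.

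The plan is then to define
\[
 G_r \;:=\; \frac{1}{2z}\bigl(\lambda_+^{r+1}-\lambda_-^{r+1}\bigr)
\]
and to verify directly that $G_r$ obeys the same recursion and the same two initial conditions as $F_{r\delta}$. For the recursion, using $\lambda_\pm^2=F_\delta\lambda_\pm-x^\delta$, one checks
\[
 F_\delta\, G_{r-1}-x^\delta\, G_{r-2}\;=\;\frac{1}{2z}\bigl(\lambda_+^{r-1}(F_\delta\lambda_+-x^\delta)-\lambda_-^{r-1}(F_\delta\lambda_--x^\delta)\bigr)\;=\;G_r.
\]
For the initial values, $G_{-1}=\frac{1}{2z}(\lambda_+^0-\lambda_-^0)=0$ and $G_0=\frac{1}{2z}(\lambda_+-\lambda_-)=1$, matching $F_{-\delta}=0$ and $F_0=1$. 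Since a second-order recursion is determined by two consecutive initial values, induction on $r$ yields $F_{r\delta}=G_r$ for all $r\geq 0$.

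Alternatively, one can read the formula off the matrix identity already displayed before the statement, namely
\[
 \begin{pmatrix}F_{r\delta}\\ F_{(r+1)\delta}\end{pmatrix}
 \;=\;\begin{pmatrix}0 & 1\\ -x^\delta & F_\delta\end{pmatrix}^{\,r+1}\begin{pmatrix}0\\ 1\end{pmatrix}
\]
by specializing the general computation (\ref{genfunc}) of section \ref{admissible1} with $a=0$, $b=1$, $c=-x^\delta$, $d=F_\delta$, for which $ad-bc=x^\delta$ and the discriminant $z$ coincides with the one defined in the statement. Either route is essentially bookkeeping; the only subtlety is the formal computation with $z=\tfrac12\sqrt{F_\delta^2-4x^\delta}$, which makes sense in the ring of formal power series (or in the quotient field of the Laurent polynomial ring after a quadratic extension), since $\lambda_\pm^{r+1}-\lambda_-^{r+1}$ is divisible by $\lambda_+-\lambda_-=2z$ so that $G_r$ is itself a polynomial in $F_\delta$ and $x^\delta$. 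This is the only point that needs a word of justification; once observed, the proof reduces to the elementary verification above.
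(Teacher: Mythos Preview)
Your proof is correct and follows essentially the same approach as the paper: the corollary is stated immediately after the matrix identity $\begin{pmatrix}F_{r\delta}\\F_{(r+1)\delta}\end{pmatrix}=\begin{pmatrix}0&1\\-x^{\delta}&F_{\delta}\end{pmatrix}^{r+1}\begin{pmatrix}0\\1\end{pmatrix}$, so the intended argument is exactly the diagonalization/characteristic-polynomial computation you carry out (your alternative via (\ref{genfunc}) is literally the paper's route). Your additional remark that $G_r$ is a genuine polynomial in $F_\delta$ and $x^\delta$ because $\lambda_+-\lambda_-$ divides $\lambda_+^{r+1}-\lambda_-^{r+1}$ is a welcome clarification the paper leaves implicit.
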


\subsection{The exceptional tubes of rank two}
\noindent In this section, we apply the developed methods to representations lying in the  exceptional tubes of rank two. To do so we first restrict to $\tilde D_4$ in subspace orientation. Afterwards, we extend the results to $\tilde D_n$ in subspace orientation and, finally, to any orientation. 

If $\alpha$ is a real root let $F_{\alpha}:=F_{M_{\alpha}}$. Similar to the case of preprojective representations of defect $-1$, we obtain all coefficient quivers of representations lying in this tube by glueing the coefficient quivers
\[\begin{xy}\xymatrix@R10pt@C10pt{&&\bullet&&&\bullet\\&\bullet\ar^c@{<-}[ru]\ar^a@{<-}[rd]&&&\bullet\ar@{<-}^b[ru]\ar@{<-}^d[rd]\\&&\bullet&&&\bullet&}\end{xy}\]
This means that (up to permutation of the arrows $a,b,c,d$), we consider the coefficient quivers $\mathcal Q(s,4)$ with an extra arrow $-1\xrightarrow{b} 1$. Also the notion of non-contradictory subsets transfers exactly to the one of admissible subsets, see also Remark \ref{transfer}. We denote the representation on the left hand side by $T_1$ and the representation on the right hand side by $T_2$. Then we get for the generating functions
\[F_{T_1}=1+x_0+x_0x_a+x_0x_c+x_0x_ax_c,\quad F_{T_2}=1+x_0+x_0x_b+x_0x_d+x_0x_bx_d.\]
Without loss of generality we can assume that we start our glueing process with the coefficient quiver of $T_1$. Now we proceed completely analogous to subsection \ref{admissible1} and apply Theorem \cite[Theorem 4.4]{LW} to obtain $f_{-1}=0$, $f_0=1$ and 
\[f_{2r+1}=F_{T_1}f_{2r}-x_0x_ax_cf_{2r-1},\quad f_{2r+2}=F_{T_2}f_{2r+1}-x_0x_bx_df_{2r}\]
where $f_{2r+1}$ is the generating function corresponding to the unique indecomposable of dimension $t(r):=\udim  T_1+r\cdot\delta$ and $f_{2r+2}$ is the generating function corresponding to the unique indecomposable $M^1_{(r+1)\delta}$ of dimension $(r+1)\cdot\delta$ such that $T_1\subset M^1_{(r+1)\delta}$. Note that also the recursion is up to permutation of arrows basically the same as the one in subsection \ref{admissible1}. The only difference is that we start our glueing process in the present situation with the empty coefficient quiver while in subsection \ref{admissible1}, we start with the coefficient quiver $\bullet\xlongleftarrow{d} \bullet$.

\begin{prop}\label{genfuncfac}
For $r\geq 0$, we have
\[F_{t(r)}=f_{2r+1}=F_{T_1}F_{r\delta}=F_{t(0)}F_{r\delta},\]
\[F_{M^1_{(r+1)\delta}}=f_{2r+2}=F_{(r+1)\delta}+x_0x_ax_cF_{r\delta}\]
\end{prop}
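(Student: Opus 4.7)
The proof will proceed by induction on $r$, leveraging the two recursions $f_{2r+1}=F_{T_1}f_{2r}-x_0x_ax_c f_{2r-1}$ and $f_{2r+2}=F_{T_2}f_{2r+1}-x_0x_bx_d f_{2r}$ together with the recursion $F_{(r+1)\delta}=F_{\delta}F_{r\delta}-x^{\delta}F_{(r-1)\delta}$ from Corollary \ref{homogform1}.

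For the inductive step on $f_{2r+1}$, substituting the inductive hypotheses $f_{2r}=F_{(r+1-1)\delta}+x_0x_ax_cF_{(r-1)\delta}$ should not be needed; rather, using $f_{2r}=F_{r\delta}+x_0x_ax_c F_{(r-1)\delta}$ and $f_{2r-1}=F_{T_1}F_{(r-1)\delta}$, a direct expansion shows that the $x_0x_ax_c F_{T_1}F_{(r-1)\delta}$ terms cancel, leaving $f_{2r+1}=F_{T_1}F_{r\delta}$. For $f_{2r+2}$, substituting the values of $f_{2r+1}$ and $f_{2r}$ yields
\[
f_{2r+2}\;=\;\bigl(F_{T_1}F_{T_2}-x_0x_bx_d\bigr)F_{r\delta}\;-\;x_0x_bx_d\cdot x_0x_ax_c F_{(r-1)\delta},
\]
and the last summand is exactly $x^{\delta}F_{(r-1)\delta}$ since $x^\delta=x_0^2 x_a x_b x_c x_d$ in $\tilde D_4$. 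If the bracketed expression simplifies to $F_\delta+x_0x_ax_c$, then a final application of Corollary \ref{homogform1} rewrites the result as $F_{(r+1)\delta}+x_0x_ax_cF_{r\delta}$, completing the induction.

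Thus the whole proof hinges on establishing the identity
\[
F_\delta \;=\; F_{T_1}F_{T_2}-x_0x_ax_c-x_0x_bx_d
\]
for the $F$-polynomial of the homogeneous tube of $\tilde D_4$ in subspace orientation. Verifying the base case $r=0$ then follows, since the recursion gives $f_1=F_{T_1}$ (matching $F_{t(0)}F_{0\cdot\delta}$) and $f_2=F_{T_2}F_{T_1}-x_0x_bx_d=F_\delta+x_0x_ax_c$ by the identity.

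The main (and only) obstacle is therefore the explicit evaluation of $F_\delta$. I would handle this either by running the Schubert counting procedure of section \ref{admissible1} directly on the coefficient quiver of a dimension-$\delta$ representation of a homogeneous tube (which is easy for $\tilde D_4$), or by exploiting tube-independence of $F_\delta$ — already ensured by \cite[Lemma~5.3]{Dupont11} or by the methods of section \ref{redone} — in order to transport the computation to whichever tube is combinatorially most convenient. Either route produces the desired closed-form identity, after which the inductive body of the proof is a short, essentially mechanical, algebraic manipulation.
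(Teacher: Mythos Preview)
Your inductive argument is correct and complete modulo the single identity
\[
F_\delta \;=\; F_{T_1}F_{T_2}-x_0x_ax_c-x_0x_bx_d,
\]
which you correctly isolate as the crux. The paper needs precisely the same identity (it appears there as the statement $a+d=F_\delta$), and treats it with the same level of informality, so there is no gap on that front.

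Where you and the paper diverge is in the overall architecture. The paper diagonalizes the $2\times2$ recursion matrix, applies the closed-form formulae (\ref{genfunc}) and (\ref{genfunc2}) with $f_{-1}=0$, $f_0=1$, and then has to argue separately---via Lemma \ref{formhom}---that the resulting expression $\tfrac{1}{2z}(\lambda_+^{r+1}-\lambda_-^{r+1})$ really is $F_{r\delta}$. Your route sidesteps the diagonalization entirely: a two-line induction using Corollary \ref{homogform1} (the linear recursion for $F_{r\delta}$) does all the work, and no eigenvalue computation or quadratic recursion from Lemma \ref{formhom} is needed. Your argument is shorter and more elementary; the paper's approach has the advantage that, once the eigenvalue formalism is set up, it gives the explicit closed form for $F_{r\delta}$ in Corollary \ref{homogform2} essentially for free, which is used later in the paper.
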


\begin{proof}
Using the notation from subsection \ref{admissible1}, we have
\[a=-x_0x_ax_c,\quad b=F_{T_1},\quad c=-x_0x_ax_cF_{T_2},\quad d=-x_0x_bx_d+F_{T_1}F_{T_2}.\]
Then it is easy to check that we have
$$a+d=F_{\delta},\quad z=\frac{1}{2}\sqrt{F_{\delta}^2-4x^{\delta}},\quad ad-bc=\lambda_+\lambda_-=x^{\delta}.$$
Moreover, we get
$$\lambda_+=\frac{1}{2}(F_{\delta}+\sqrt{F_{\delta}^2-4x^{\delta}}),\quad \lambda_-=\frac{1}{2}(F_{\delta}-\sqrt{F_{\delta}^2-4x^{\delta}}).$$
Since $f_{-1}=0$ and $f_0=1$, equation (\ref{genfunc}) yields
\[ f_{2r+1}=\frac{1}{2z}(F_{T_1}(\lambda_+^{r+1}-\lambda_-^{r+1})).\]
Since $\lambda_{\pm}$ and $z$ take the same values as in section \ref{homotube}, by Corollary \ref{homogform2}, we get $f_{2r+1}=F_{T_1}F_{r\delta}$.
\begin{comment}
Thus it remains to show that 
\begin{align}\label{eq3}F_{r\delta}=\frac{1}{2z}(\lambda_+^{r+1}-\lambda_-^{r+1}).\end{align}
For $r=0$, this is clearly true. Since $a+d=F_{\delta}$ this is also true for $r=1$. By Lemma \ref{formhom}, it suffices to show that
\[\left(\frac{\lambda_+^{r+1}-\lambda_-^{r+1}}{2z}\right)\left(\frac{\lambda_+^{r-1}-\lambda_-^{r-1}}{2z}\right)=\left(\frac{\lambda_+^{r}-\lambda_-^{r}}{2z}\right)^2-x^{(r-1)\delta}\]
for $r\geq 1$, what follows from
$$2z=\lambda_+-\lambda_-,\quad \lambda_+\lambda_-=x^{\delta}.$$
\end{comment}
Using Equation (\ref{genfunc2}) together with Corollary \ref{homogform2}, we have
\begin{eqnarray*} f_{2r+2}&=&\frac{1}{2z}(\lambda_-^{r+1}(-x_0x_ax_c-\lambda_-)-(\lambda_+^{r+1}(-x_0x_ax_c-\lambda_+)))\\
&=& \frac{1}{2z}(\lambda_+^{r+2}-\lambda_-^{r+2})+\frac{1}{2z}x_0x_ax_c(\lambda_+^{r+1}-\lambda_-^{r+1})\\
&=&F_{(r+1)\delta}+x_0x_ax_cF_{r\delta},
\end{eqnarray*}
which completes the proof of the proposition.
\end{proof}

Let us consider the tubes of rank two for general $n$ with arbitrary orientation. For a fixed tube, we denote by $t_1(0)$ and $t_2(0)$ the quasi-simple roots. The real roots in this tube are given by $t_i(r)=t_i(0)+r\delta$. Finally, we denote the representation of dimension $r\delta$ with subrepresentation $M_{t_i(0)}$ by $M_{r\delta}^i$.

\begin{thm}\label{Fpolyrank2}
For the indecomposable representations $M_{t_i(r)}$ and $M_{r\delta}^i$ lying in one of the exceptional tubes of rank two of $\tilde D_n$, we have:
\begin{enumerate}
\item $F_{t_i(r)}=F_{t_i(0)}F_{r\delta}$
\item $F_{M_{r\delta}^i}=F_{r\delta}+x^{t_i(0)}F_{(r-1)\delta}$
\end{enumerate}
\end{thm}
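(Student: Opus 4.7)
My plan is a two-step extension of Proposition \ref{genfuncfac}: first from $\tilde D_4$ to $\tilde D_n$ in subspace orientation via the reductions of type one, then from subspace orientation to arbitrary orientation via BGP-reflections.

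For the first step, section \ref{reductionofgrass} observes that every real root representation in a rank-2 tube of $\tilde D_n$ in subspace orientation reduces via type-one reductions to one of the distinguished $\tilde D_4$ forms. Lemma \ref{verlaengern} packages each reduction as a variable substitution $F_M(x)=F_{\hat M}(x')$. Since substitution is a ring homomorphism (the opening remark of section \ref{secFpoly}), the factorization $F_{t_i(r)}=F_{t_i(0)}F_{r\delta}$ from Proposition \ref{genfuncfac} transports verbatim, because both sides are rewritten in the substituted variables simultaneously. The second identity $F_{M_{r\delta}^i}=F_{r\delta}+x^{t_i(0)}F_{(r-1)\delta}$ transports similarly, once one checks that under the substitution the $\tilde D_4$-monomial $x^{t_i(0)}$ becomes precisely the $\tilde D_n$-monomial $x^{t_i(0)}$. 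This is straightforward because reductions of type one preserve exponents on vertices outside the reduced segment, and the correcting factors introduced there cancel against the substitutions already present in the $F$-polynomials.

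For the second step, any orientation of $\tilde D_n$ is reached from subspace orientation by a finite sequence of BGP-reflections at sinks, so by induction it suffices to verify that the identities are preserved under a single such reflection $\sigma_q$. For the first identity, Theorem \ref{refpoly} applied to both sides yields a factor $(1+x_q^{-1})^{-\dim (M_{t_i(r)})_q}$ on the left, which splits compatibly against the product on the right because $\dim(M_{t_i(r)})_q=\dim(M_{t_i(0)})_q+\dim(M_{r\delta})_q$ by additivity on dimension vectors (since $t_i(r)=t_i(0)+r\delta$). For the second identity, apply Theorem \ref{refpoly} and use the computation $(x')^{t_i(0)}=x^{\sigma_q t_i(0)}(1+x_q^{-1})^{(\sigma_q t_i(0))_q+(t_i(0))_q}$ together with Lemma \ref{hilfslemma}. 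The residual $(1+x_q^{-1})$-exponents collapse to $0$ provided $(\sigma_q t_i(0))_q+(t_i(0))_q=\delta_q$, after which the right-hand side takes precisely the desired form $F_{\sigma_q M_{r\delta}}+x^{\sigma_q t_i(0)}F_{\sigma_q M_{(r-1)\delta}}$.

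The main obstacle is the numerical identity $(\sigma_q t_i(0))_q+(t_i(0))_q=\delta_q$ underlying the BGP-invariance of the inhomogeneous formula. Unwinding $\sigma_q$, this is equivalent to $\sum_{p\neq q}a(p,q)(t_i(0))_p=\delta_q$ for \emph{both} quasi-simples $i=1,2$. Since $t_1(0)+t_2(0)=\delta$ and $\sum_p a(p,q)\delta_p=2\delta_q$ (because $\sigma_q\delta=\delta$), this reduces to the symmetry $\sum_p a(p,q)(t_1(0))_p=\sum_p a(p,q)(t_2(0))_p$, which I would establish using the $\tau$-periodicity of the rank-2 tube together with the formula for $\tau$ given in the second part of Lemma \ref{hilfslemma}. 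Once this symmetry is in hand, the inductive step closes and the theorem follows for every orientation.
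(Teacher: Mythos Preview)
Your proposal is correct and follows essentially the same two-step strategy as the paper: extend Proposition \ref{genfuncfac} from $\tilde D_4$ to $\tilde D_n$ in subspace orientation via Lemma \ref{verlaengern}, then to arbitrary orientation by induction on BGP-reflections using Theorem \ref{refpoly} and Lemma \ref{hilfslemma}. The only difference is that the paper verifies the key identity $\sum_{p}a(p,q)(t_i(0))_p=\delta_q$ by a direct case check (both sides equal $1$ for outer vertices and $2$ for inner vertices), whereas your $\tau$-periodicity argument yields it equally well, since $\tau t_1(0)=t_2(0)$ together with Lemma \ref{hilfslemma}(ii) gives $\sum_p a(p,q)(t_i(0))_p=(t_1(0))_q+(t_2(0))_q=\delta_q$ immediately.
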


\begin{proof}
Under consideration of Lemma \ref{verlaengern} it is straightforward to generalize Proposition \ref{genfuncfac} to arbitrary $\tilde D_n$ in subspace orientation.

Assume that  $M$ with $\udim M=t_i(r)+r\delta$ lies in one of the exceptional tubes of rank two of $\tilde D_n$ (with arbitrary orientation) and satisfies
$F_{M}=F_{t_i(0)}F_{r\delta}$. Applying Theorem \ref{refpoly}, we have 
\[F_{\sigma_qM}=F_{\sigma_qt_i(0)}F_{r\delta}.\]
Thus the first statement follows by induction.

For a fixed sink $q$, of $\tilde D_n$ with arbitrary orientation, it is straightforward to check that 
\[\sum_{p\in Q_0}a(p,q)t_i(0)_p=\delta_q.\]
Indeed, if $q\in\{a,b,c,d\}$, both sides are one. Otherwise both sides are two. Assume that $F_{M_{r\delta}^i}=F_{r\delta}+x^{t_i(0)}F_{(r-1)\delta}$. Then, again by Theorem \ref{refpoly} and Lemma \ref{hilfslemma}, we have
\[F_{\sigma_qM_{r\delta}^i}=F_{r\delta}+x^{\sigma_qt_i(0)}(1+x_q^{-1})^{\sum_{p\in Q_0}a(p,q)t_i(0)_p}(1+x_q^{-1})^{-\delta_q}F_{(r-1)\delta}=F_{r\delta}+x^{\sigma_qt_i(0)}F_{(r-1)\delta}.\]
Thus the second statement also follows by induction.
\end{proof}

\subsection{The exceptional tube of rank $n-2$}
Let us consider the tube of rank $n-2$. In this tube, there exist $n-2$ indecomposable representations of dimension $r\delta$, which we denote by $M_{r\delta}^i$ for $i=1,\ldots,n-2$. If it is clear which of these representations is considered, we drop the $i$.

We again first restrict to subspace orientation and treat the general case later. Let $d_m(r)$ be the root given by
\[d_m(r)_q=\begin{cases} r\text{ for } q=a,b\\
r-1\text{ for }q=c,d\\
 2r-1\text{ for }q=0,\ldots,m\\2r-2 \text{ for }i=m+1,\ldots,n-4\end{cases}\]
In this tube, there exists an infinite chain of irreducible inclusions 
\[M_{d_0(1)}\hookrightarrow
 M_{d_1(1)}\hookrightarrow\ldots\hookrightarrow M_{d_{n-4}(1)}\hookrightarrow
M_{\delta}\hookrightarrow M_{d_0(2)}\hookrightarrow\ldots\]
where the imaginary root representations $M_{r\delta}$ are uniquely determined by this chain. 

\begin{lemma}\label{n=5}
Let $n=5$. Then we have 
\begin{enumerate}
\item $F_{d_0(r)}=F_{d_0(1)}F_{r\delta}+x_0x_1x_ax_bF_{(r-1)\delta}$
\item $F_{d_1(r)}=F_{d_1(1)}F_{r\delta}$
\item $F_{M_{r\delta}}=F_{r\delta}+x_0x_ax_b(1+x_1)F_{(r-1)\delta}$
\end{enumerate}
\end{lemma}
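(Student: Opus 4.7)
The plan is to adapt the strategy of Proposition~\ref{genfuncfac} to the exceptional tube of rank $n-2=3$ in $\tilde D_5$ (subspace orientation). First, I would fix the coefficient quivers $\Gamma_{M_{d_0(r)}}$, $\Gamma_{M_{d_1(r)}}$, $\Gamma_{M_{r\delta}}$ specified in \cite[Appendix~B]{LW}. These are snake-shaped, obtained by iteratively gluing three alternating types of ramification block: a triangle at $\{q_a,q_0,q_b\}$, a bridging step adding a new vertex at $q_1$, and a triangle at $\{q_c,q_1,q_d\}$. By Theorem~A, each $F$-polynomial equals the generating function of admissible subsets of the vertex set in the sense of Definition~\ref{admsub}.

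Next, ordering the representations along the chain
\[
M_{d_0(1)}\hookrightarrow M_{d_1(1)}\hookrightarrow M_{\delta}\hookrightarrow M_{d_0(2)}\hookrightarrow\cdots,
\]
and setting, with boundary conditions $f_{-1}=0$, $f_0=1$,
\[
 f_{3r+1}:=F_{d_0(r+1)},\qquad f_{3r+2}:=F_{d_1(r+1)},\qquad f_{3r+3}:=F_{(r+1)\delta},
\]
the admissibility analysis (as in the derivation of Lemma~\ref{genrecursion}) yields one-step recursions $f_{s+1}=P_sf_s-Q_sf_{s-1}$, where $P_s$ and $Q_s$ are explicit polynomials depending on the newly glued block (of the shapes $H(x_i,x_u,x_v)$, $1+x_1$ and $x_ix_ux_v$). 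Composing the corresponding $2\times 2$ step matrices $M_s=\bigl(\begin{smallmatrix}0&1\\-Q_s&P_s\end{smallmatrix}\bigr)$ three at a time gives a period-three matrix recursion with matrix $A=M_{3r+3}M_{3r+2}M_{3r+1}$, analogous to the one appearing in the proof of Proposition~\ref{genfuncfac}.

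The decisive computation is to verify, by carrying out the matrix product, that $\operatorname{tr} A=F_{\delta}$ and $\det A=x^{\delta}$, so that the eigenvalues $\lambda_{\pm}=\tfrac{1}{2}F_{\delta}\pm z$ coincide with those governing the homogeneous-tube recursion of Corollary~\ref{homogform2}. Given this, formulae~(\ref{genfunc}) and~(\ref{genfunc2}) express each $f_{3r+s}$ as a linear combination of $\lambda_+^{r}$ and $\lambda_-^{r}$ with explicit polynomial coefficients; substituting $F_{r\delta}=(\lambda_+^{r+1}-\lambda_-^{r+1})/(2z)$ converts these expressions into the form $p_sF_{r\delta}+q_sF_{(r-1)\delta}$, and a direct identification of the coefficients $p_s,q_s$ yields the three formulae in the statement. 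I expect the main obstacle to be the bookkeeping for the three step matrices together with the algebraic verification that their product $A$ satisfies $\operatorname{tr} A=F_{\delta}$ and $\det A=x^{\delta}$; once these two identities are confirmed, the remainder of the argument is a formal consequence of (\ref{genfunc}), (\ref{genfunc2}) and Corollary~\ref{homogform2}.
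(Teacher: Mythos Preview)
Your approach is sound and would work, but it takes a genuinely different route from the paper. You propose to carry out the full transfer-matrix analysis directly in $\tilde D_5$ with a period-three recursion, computing the product $A=M_{3r+3}M_{3r+2}M_{3r+1}$ and verifying $\operatorname{tr} A=F_\delta$, $\det A=x^\delta$ by hand. The paper instead avoids this computation entirely for parts (ii) and (iii): it observes that the roots $d_1(r)$ and $r\delta$ (in the chosen chain) satisfy the hypothesis of reduction of type one, so Lemma~\ref{verlaengern} transports the already-proven rank-two formulae of Proposition~\ref{genfuncfac} (for $\tilde D_4$) directly to $\tilde D_5$ via the variable substitution described there. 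Only part (i) requires any new work, and there the paper uses a single one-step recursion of the type in Lemma~\ref{genrecursion}, namely
\[
F_{d_0(r)}=F_{d_0(1)}F_{M_{r\delta}}-x_0x_ax_bF_{d_1(r-1)},
\]
together with the elementary identity $F_{d_0(1)}(1+x_1)-x_1=F_{d_1(1)}$ and the formulae for $F_{M_{r\delta}}$ and $F_{d_1(r-1)}$ just established.

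What each approach buys: the paper's argument is much shorter and illustrates the general philosophy of the section---reduce to small $n$ via Lemma~\ref{verlaengern} and then transport results rather than recompute eigenvalues. Your approach is self-contained and would give an independent check, but the bookkeeping for the three step matrices and the verification of $\operatorname{tr} A=F_\delta$ is exactly the kind of calculation the reduction lemma is designed to circumvent. One technical point if you pursue your route: formulae~(\ref{genfunc}) and~(\ref{genfunc2}) are stated for a period-two setup, so you would need to apply them separately for each residue class modulo $3$ with the appropriate pair of initial values, rather than once.
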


\begin{proof}
First note that the coefficient quivers of the representations under consideration pointed out in \cite[Appendix B]{LW} are modifications of $\mathcal Q(s,5)$. Moreover, the notion of non-contradictory subsets and admissibility transfers again, see Remark \ref{transfer}. More precisely, we add an extra arrow $-1\rightarrow 1$ in all cases and, moreover, the last vertices of the quivers to consider are $st+1, st+2$ and $st-2$ respectively.

Then the last two statements follow in the same way as Proposition \ref{genfuncfac} where we additionally apply Lemma \ref{verlaengern}. Now we can apply (the methods of) Lemma \ref{genrecursion} in order to obtain
\[F_{d_0(r)}=F_{d_0(1)}F_{M_{r\delta}}-x_0x_ax_bF_{d_1(r-1)}=F_{d_0(1)}F_{r\delta}+x_0x_1x_ax_bF_{(r-1)\delta}.\]
Here we use that $F_{d_0(1)}(1+x_1)-x_1=F_{d_1(1)}$, see also Lemma \ref{genrecursion}.
\end{proof}

For $l\leq m$, we denote by $S_{l,m}$ the exceptional representation with dimension vector $\udim S_{l,m}=\sum_{i=l}^m q_i$. 
Note that
\[\udim S_{m+2,n-4}=d_{n-4}(1)-d_0(1)-\tau^{-1}d_m(1)=d_{n-4}(1)-d_{m+1}(1).\]
Moreover, we have 
$$\prod_{i=1}^{m+1}x_i=x^{\tau^{-1}d_m(1)},\quad x_0x_ax_b=x^{d_0(1)}.$$ 

\begin{lemma}\label{subspace}
For arbitrary $n$, we have 
\begin{enumerate}
\item $F_{d_m(r)}=F_{d_m(1)}F_{r\delta}+x_0x_ax_b\prod_{i=1}^{m+1}x_i F_{S_{m+2,n-4}}F_{(r-1)\delta}$ for $m=0,\ldots,n-5$.
\item $F_{d_{n-4}(r)}=F_{d_{n-4}(1)}F_{r\delta}$ 
\item $F_{M_{r\delta}}=F_{r\delta}+x_0x_ax_bF_{S_{1,n-4}}F_{(r-1)\delta}$
\end{enumerate}
\end{lemma}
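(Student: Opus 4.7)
My plan is to proceed by induction on $n \geq 5$, using the reduction of type one as the descent step. The base case $n = 5$ is precisely Lemma \ref{n=5}. For the inductive step I would fix a root $\alpha \in \{d_m(r), r\delta\}$ in the rank $n-2$ tube of $\tilde D_{n+1}$ and delete a single inner vertex via reduction of type one, thereby passing to the analogous representation $\hat M$ of $\tilde D_n$. For $M_{r\delta}$ and for $M_{d_{n-3}(r)}$ the dimension vector is constant on the inner chain, so the linear variant of the reduction applies at any inner triple. For $M_{d_m(r)}$ with $m \leq n - 5$, the identity $\dim q_c + \dim q_d = \dim q_{n-4} = \dim q_{n-3} = 2r-2$ satisfies the hypothesis of the ramification variant of the reduction, which removes $q_{n-3}$ and glues $q_c, q_d$ directly to $q_{n-4}$. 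The intermediate case $m = n - 4$ is handled by applying the linear reduction on the left plateau $q_0, \ldots, q_m$ of equal dimension $2r - 1$.

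The reduction then translates, via Lemma \ref{verlaengern}, the $F$-polynomial identity for $\hat M$ on $\tilde D_n$ (given by the induction hypothesis) into the claimed identity for $M$ on $\tilde D_{n+1}$ under the variable substitution $x_{n-4} \mapsto x_{n-4}(1 + x_{n-3})$, $x_c \mapsto x_c x_{n-3}(1+x_{n-3})^{-1}$, $x_d \mapsto x_d x_{n-3}(1+x_{n-3})^{-1}$. The next step is to check that each ingredient of the formula transforms correctly: (i) $F_{r\delta}$ maps to $F_{r\delta}$ because $\delta$ is constant on the deleted plateau and Corollary \ref{homogform2} expresses $F_{r\delta}$ as a polynomial in $F_\delta$, which is itself invariant under the substitution; (ii) $F_{d_m(1)}$ maps to $F_{d_m(1)}$, which is the $r = 1$ case of the identity to be proved and is handled in tandem with the main induction; (iii) the monomial $x_0 x_a x_b \prod_{i=1}^{m+1} x_i$ is unchanged since its variables do not appear in the substitution when $m + 1 < n - 4$; and (iv) crucially, $F_{S_{m+2, n-4}}(x') = F_{S_{m+2, n-3}}(x)$, which follows by direct computation from the explicit form $F_{S_{l, m}} = 1 + x_l + x_l x_{l+1} + \cdots + x_l \cdots x_m$, since the substitution $x_{n-4} \mapsto x_{n-4}(1 + x_{n-3})$ generates exactly the missing summand $x_{m+2} \cdots x_{n-3}$.

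The main obstacle is the careful bookkeeping for boundary values of $m$ in Lemma \ref{subspace}(1), particularly for small $n$ where the preferred reduction does not directly apply (for example $\tilde D_6$ with $m = 1$, where no consecutive inner triple has constant dimension and the ramification hypothesis fails). In such cases I would fall back on a glueing recursion in the spirit of Proposition \ref{genfuncfac} and Lemma \ref{genrecursion}: the coefficient quiver of $M_{d_m(r)}$ decomposes as a sequence of $n - 2$ block types corresponding to the quasi-simples of the rank $n-2$ tube, yielding a $2 \times 2$ matrix recursion whose product over one full period has trace $F_\delta$ and determinant $x^\delta$ by Corollary \ref{homogform1}. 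Applying formulas (\ref{genfunc})--(\ref{genfunc2}) and extracting coefficients reproduces the claimed explicit form, with the coefficient of $F_{(r-1)\delta}$ factoring as the monomial $x_0 x_a x_b \prod_{i=1}^{m+1} x_i$ times $F_{S_{m+2, n-4}}$ by direct expansion of the matrix product.
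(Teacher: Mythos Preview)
Your approach is correct and is exactly the paper's: the paper's proof is the single sentence ``This is obtained when combining Lemma~\ref{n=5} and Lemma~\ref{verlaengern},'' i.e.\ induction on $n$ from the base case $n=5$ via reduction of type one. Your items (i)--(iv) are a faithful unpacking of what that one line entails.

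Your ``main obstacle'' paragraph, however, is based on a non-issue. For every $n\geq 6$ the inner chain $q_0,\ldots,q_{n-4}$ has length $n-3\geq 3$, and $d_m(r)$ is constant on the two blocks $q_0,\ldots,q_m$ (value $2r-1$) and $q_{m+1},\ldots,q_{n-4}$ (value $2r-2$); hence at least one block has length $\geq 2$ and the \emph{linear} variant of the reduction applies there. In your worried example $\tilde D_6$, $m=1$, the triple $q_2\to q_1\to q_0$ (subspace orientation) has dimensions $2r-2\leq 2r-1=2r-1$, which is precisely the hypothesis $\alpha_{p_0}\leq\alpha_{p_1}=\alpha_{p_2}$ of Section~\ref{redone}; removing $q_1$ yields $d_0(r)$ on $\tilde D_5$. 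You do not need the ramification variant at all for the right-hand side (and it would not apply on the left since $\alpha_a+\alpha_b=2r>2r-1$), nor do you need the fallback via matrix recursions. Shrinking the longer block at each step takes every $d_m(r)$ on $\tilde D_n$ down to one of $d_0(r)$ or $d_1(r)$ on $\tilde D_5$, where Lemma~\ref{n=5} finishes the argument.
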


\begin{proof}This is obtained when combining Lemma \ref{n=5} and Lemma \ref{verlaengern}.
\end{proof}

For $\tilde D_n$ with arbitrary orientation in the tube of rank $n-2$, there exist $n-2$ chains of irreducible morphisms of the form
\[M_{0,1}\hookrightarrow
 M_{0,2}\hookrightarrow\ldots\hookrightarrow M_{0,n-3}\hookrightarrow
M_{1,0}:=M_{\delta}\hookrightarrow M_{1,1}\hookrightarrow\ldots\]
where the $m_{l}(r):=\udim M_{r,l}$ are real roots and the imaginary root representations $M_{r,0}:=M_{r\delta}$ are uniquely determined by this chain. In particular, for every real root $\alpha$ in the tube of rank $n-2$ there exists an exceptional root $m_l(0)$ such that $\alpha=r\delta+m_l(0)$.
\begin{lemma}\label{hilfslemma2} For $1\leq l\leq n-4$, we have
$\delta=m_{n-3}(0)+\tau m_{l+1}(0)-m_l(0)$.
\end{lemma}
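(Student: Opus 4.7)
The plan is to reduce the identity to the standard combinatorics of quasi-composition series along a ray in a tube of rank $n-2$. I would first label the $n-2$ quasi-simple representations of the tube as $E_0,E_1,\ldots,E_{n-3}$, with the cyclic convention $\tau E_i=E_{i-1}$ (indices mod $n-2$), and choose the labelling so that $E_0$ is the quasi-socle of the ray appearing in the fixed chain $M_{0,1}\hookrightarrow M_{0,2}\hookrightarrow\cdots$. By the standard ray description, $M_{0,l}$ has a filtration whose successive quotients are $E_0,E_1,\ldots,E_{l-1}$, so
\[
m_l(0) \;=\; \sum_{i=0}^{l-1}\udim E_i \qquad (1\le l\le n-3),
\]
and because the quasi-simples in a tube sum to $\delta$ we have
\[
\delta \;=\; \sum_{i=0}^{n-3}\udim E_i \;=\; m_{n-3}(0)+\udim E_{n-3}.
\]

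Next I would compute $\tau m_{l+1}(0)$ by inspecting the indecomposable $\tau M_{0,l+1}$. Since $M_{0,l+1}$ has quasi-socle $E_0$ and quasi-length $l+1$, the module $\tau M_{0,l+1}$ has quasi-socle $\tau E_0=E_{n-3}$ and the same quasi-length, so its quasi-composition factors are $E_{n-3},E_0,E_1,\ldots,E_{l-1}$. This gives
\[
\tau m_{l+1}(0) \;=\; \udim E_{n-3}+\sum_{i=0}^{l-1}\udim E_i \;=\; \udim E_{n-3}+m_l(0),
\]
hence $\tau m_{l+1}(0)-m_l(0)=\udim E_{n-3}$. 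Adding $m_{n-3}(0)$ on both sides and using the first display yields exactly $m_{n-3}(0)+\tau m_{l+1}(0)-m_l(0)=\delta$, which is the claim.

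The only technical point that needs to be checked carefully is that the symbol $\tau$ acting on dimension vectors of regular modules really does coincide with the Coxeter action used throughout the paper; this is automatic since modules in a tube are neither projective nor injective, so $\udim(\tau M)$ is the Coxeter transform of $\udim M$. Apart from this routine bookkeeping, the proof is a direct unpacking of the tube structure and there is no substantive obstacle.
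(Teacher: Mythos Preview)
Your proof is correct. The paper establishes the same identity by a short downward induction: it first records the boundary case $\delta=m_{n-3}(0)+\tau m_{n-3}(0)-m_{n-4}(0)$ and then observes the telescoping relation $\tau m_{l+2}(0)-m_{l+1}(0)=\tau m_{l+1}(0)-m_l(0)$, which shows that the quantity $\tau m_{l+1}(0)-m_l(0)$ is independent of $l$. You instead compute this constant directly as $\udim E_{n-3}$ by reading off the quasi-composition factors of $\tau M_{0,l+1}$ in the tube. Both arguments are elementary; yours is a bit more explicit in that it identifies \emph{which} dimension vector the constant difference equals, while the paper's inductive version avoids introducing the quasi-simple labelling altogether.
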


\begin{proof}
Considering the tube and its roots in detail, we obtain 
\[\delta=m_{n-3}(0)+\tau m_{n-3}(0)-m_{n-4}(0).\]
Since we also have
\[m_l(0)+\tau m_{l+2}(0)=m_{l+1}(0)+\tau m_{l+1}(0)\Leftrightarrow \tau m_{l+2}(0)-m_{l+1}(0)=\tau m_{l+1}(0)- m_l(0),\]
the claim follows by induction.
\end{proof}
Under the convention that $F_{\alpha}=0$ if $\alpha\in\Z Q_0$ has at least one negative coefficient, we obtain the following result: 

\begin{thm}\label{Fpolyrankn2}Set $m_0(r)=m_{n-2}(r-1)=r\delta$. Let $M$ be an indecomposable representation of $\tilde D_n$ lying in the tube of rank $n-2$ such that $\udim M=m_l(r)$ for some $l=0,\ldots,n-3$. Then we have  and, moreover, we have
\[F_{m_{l}(r)}=F_{m_l(0)}F_{r\delta}+x^{m_{l+1}(0)}F_{m_{n-3}(0)-m_{l+1}(0)}F_{(r-1)\delta}\]
for $l=0,\ldots, n-3$.
\end{thm}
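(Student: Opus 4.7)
The plan is to establish the formula first for $\tilde D_n$ in subspace orientation, using Lemma \ref{subspace} directly, and then propagate it to arbitrary orientation via BGP-reflections, mirroring the strategy employed in the proof of Theorem \ref{Fpolyrank2}.

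For the subspace case, I first match the two indexing schemes. In subspace orientation the chain of irreducible inclusions reads $M_{d_0(1)} \subset M_{d_1(1)} \subset \cdots \subset M_{d_{n-4}(1)} \subset M_\delta \subset M_{d_0(2)} \subset \cdots$, so that $d_m(r) = m_{m+1}(r-1)$ for $m \in \{0,\ldots,n-4\}$ and $M_{r\delta}$ in this tube is identified with $m_0(r)$. Then Lemma \ref{subspace}(iii) covers the case $l=0$ (using $F_{m_0(0)} = F_0 = 1$), Lemma \ref{subspace}(i) covers $l \in \{1,\ldots,n-4\}$, and Lemma \ref{subspace}(ii) covers $l=n-3$. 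The coefficient match then reduces to the dimension-vector identities $\udim S_{l+1,n-4} = m_{n-3}(0) - m_{l+1}(0)$ and $x_0 x_a x_b \prod_{j=1}^{l} x_j = x^{m_{l+1}(0)}$, both immediate from the explicit form $m_l(0) = s_{q_a} + s_{q_b} + \sum_{j=0}^{l-1} s_{q_j}$. For $l=n-3$ the ``extra'' term in the theorem's formula is $x^{\delta} F_{m_{n-3}(0)-\delta} F_{(r-1)\delta}$, which vanishes by the convention on dimension vectors with negative components, matching the simpler shape of Lemma \ref{subspace}(ii).

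For general orientation I argue by induction on the length of a sequence of BGP-reflections connecting it to subspace orientation. Assume the formula holds for some orientation $Q'$, and let $q \in Q'_0$ be a sink; the source case is analogous, or reduces via the dualisation Theorem \ref{dual}. Applying Theorem \ref{refpoly} and Lemma \ref{hilfslemma}(i) to the right-hand side of
\[F_{m_l(r)} = F_{m_l(0)} F_{r\delta} + x^{m_{l+1}(0)} F_{m_{n-3}(0)-m_{l+1}(0)} F_{(r-1)\delta},\]
each factor $F_\alpha(x')$ produces $(1+x_q^{-1})^{\alpha_q} F_{\sigma_q M_\alpha}$, the monomial $x^{m_{l+1}(0)}$ produces $(1+x_q^{-1})^{\sum_p a(p,q) m_{l+1}(0)_p} x^{\sigma_q m_{l+1}(0)}$, and the left-hand side picks up the single factor $(1+x_q^{-1})^{m_l(r)_q}$. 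After rewriting $\sum_p a(p,q) m_{l+1}(0)_p = (\tau m_{l+1}(0))_q + m_{l+1}(0)_q$ via Lemma \ref{hilfslemma}(ii), the required equality of exponents on both summands of the right-hand side and the left-hand side reduces to
\[(\tau m_{l+1}(0))_q + m_{n-3}(0)_q \;=\; m_l(0)_q + \delta_q,\]
which is precisely the $q$-component of Lemma \ref{hilfslemma2} (extended trivially to $l=0$ via $m_0(0)=0$ and to $l=n-3$ via $\tau\delta=\delta$). Consequently the formula transports from $Q'$ to $\sigma_q Q'$, closing the induction.

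The main obstacle is this last exponent-bookkeeping step, where four separate $(1+x_q^{-1})$-contributions (three from the $F$-polynomials on the right, one from the monomial) must combine on the nose to match the single factor coming from the left-hand side. The cancellation succeeds precisely because Lemma \ref{hilfslemma2} encodes the correct shift behaviour of the Auslander–Reiten translate along the chain of the rank-$(n-2)$ tube. Once this compatibility is verified, combining the subspace base case with the BGP induction yields the theorem for every orientation of $\tilde D_n$.
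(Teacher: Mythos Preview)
Your proposal is correct and follows essentially the same route as the paper's own proof: establish the subspace-orientation base case via Lemma \ref{subspace}, then propagate by BGP-reflection at sinks using Theorem \ref{refpoly} and Lemma \ref{hilfslemma}, with the exponent cancellation reducing to the identity $(\tau m_{l+1}(0))_q + m_{n-3}(0)_q = m_l(0)_q + \delta_q$ of Lemma \ref{hilfslemma2}. Your treatment is slightly more explicit than the paper's in spelling out the index correspondence $d_m(r)=m_{m+1}(r-1)$ and the boundary cases $l=0$ and $l=n-3$, but the argument is the same.
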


\begin{proof}
We proceed by induction. For every representation $M$ lying in the tube of rank $n-2$,  there exists a sequence of reflections $\sigma_1,\ldots, \sigma_l$ at sinks and a representation $M_{d_m(r)}$ with $m\in\{-1,\ldots, n-4\}$ such that $M=\sigma_1\ldots \sigma_l M_{d_m(r)}$. Since the claim is true for $M_{d_{m}(r)}$, by Lemma \ref{subspace}, we can assume that $M=\sigma_q M_{r,l}$ and that the claim is true for $M_{r,l}$, i.e. we have
\[F_{m_{l}(r)}=F_{m_l(0)}F_{r\delta}+x^{m_{l+1}(0)}F_{m_{n-3}(0)-m_{l+1}(0)}F_{(r-1)\delta}\]

Then applying Theorem \ref{refpoly} and Lemma \ref{hilfslemma}, we have
\[F_{M}=F_{\sigma_qm_l(0)}F_{r\delta}+(1+x_q^{-1})^{a}(1+x_q^{-1})^{b}x^{\sigma_qm_{l+1}(0)}F_{\sigma_q(m_{n-3}(0)-m_{l+1}(0))}F_{(r-1)\delta}\]
with
\[a=-\delta_q-m_l(0)_q+m_{n-3}(0)_q-m_{l+1}(0)_q,\,b=\sum_{p\in Q_0} a(p,q)m_{l+1}(0)_p.\]
Again by Lemma \ref{hilfslemma} and, moreover, by Lemma \ref{hilfslemma2}, we have 
\[a+b=-\delta_q-m_l(0)_q+m_{n-3}(0)_q+\tau m_{l+1}(0)_q=0,
\]
which completes the proof of the theorem.
\end{proof}
Note that Theorems \ref{Fpolyrank2} and \ref{Fpolyrankn2} can be summarized as done in Theorem C.

\subsection{The preprojectives of small defect}\label{Fprej}
Finally, we consider the preprojective roots.  Also in this case, we obtain explicit formulae. Thanks to Theorem \ref{Fdef2}, the generating functions corresponding to the roots of defect $-2$ can easily be obtained from those of defect $-1$. Moreover, the generating functions for the preinjectives can be calculated when passing to the opposite quiver.

We again denote the projective representations corresponding to the outer vertices by $P_a,P_b,P_c$ and $P_d$. We follow the strategy of the last two subsections and first restrict to subspace orientation.  Up to permutation of the sources, the preprojective roots for $n=4$ are given by
\[d_1(r)=(2r+1,r+1,r,r,r)\text{ and }d_2(r)=(2r,r-1,r,r,r).\]

\begin{prop}\label{subspace2}
Let $n=4$. Then we have
\begin{enumerate}
\item $F_{d_1(r)}=F_{P_a}F_{r\delta}-x^{\udim \tau^{-1}P_a}F_{\delta-\udim \tau^{-1}P_a}F_{(r-1)\delta}$
\item $F_{d_2(r)}=F_{\tau^{-1}P_a}F_{(r-1)\delta}-x^{\delta}F_{(r-2)\delta}$
\end{enumerate}
\end{prop}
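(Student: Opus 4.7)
The plan is to apply the $2\times 2$ matrix recursion from the corollary (specialized to $n=4$) together with the closed-form formulas (\ref{genfunc}) and (\ref{genfunc2}). The crux is the observation that the matrix
\[
A \ = \ \begin{pmatrix} -x_0x_ax_b & H(x_0,x_a,x_b) \\ -x_0x_ax_b\,H(x_0,x_c,x_d) & -x_0x_cx_d + H(x_0,x_c,x_d)\,H(x_0,x_a,x_b) \end{pmatrix}
\]
satisfies $\mathrm{tr}(A)=F_\delta$ and $\det(A)=x^\delta$. The trace identity
\[
F_\delta \ = \ H(x_0,x_a,x_b)\,H(x_0,x_c,x_d)-x_0x_ax_b-x_0x_cx_d
\]
is the same polynomial that appeared in the proof of Proposition \ref{genfuncfac} for the exceptional tube of rank two (the two ramifications of $\mathcal{Q}(s,4)$ play the symmetric role of the quasi-simples $T_1,T_2$), while $\det(A)=x^\delta=x_0^2x_ax_bx_cx_d$ follows from a direct calculation. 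Consequently, the eigenvalues $\lambda_\pm=F_\delta/2\pm z$ with $z=\frac{1}{2}\sqrt{F_\delta^2-4x^\delta}$ of $A$ agree with those of Corollary \ref{homogform2}.

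Next I would identify the iterated generating functions $\mathcal{F}^0_{2r+1}$ and $\mathcal{F}^0_{2r+2}$ appearing in the corollary with $F_{d_1(r)}$ and $F_{d_2(r+1)}$, respectively. By Theorem \ref{tmt} and the construction of the snake-shaped coefficient quiver $\mathcal{Q}(s,4)$, each ramification block glued to $\mathcal{G}(-,0)$ corresponds to a single $\tau^{-1}$-step in the defect-$(-1)$ orbit through $P_a$, so odd-indexed subgraphs enumerate admissible subsets of the coefficient quivers of the representations $\tau^{-2r}P_a$ of dimension $d_1(r)$, and even-indexed ones those of $\tau^{-(2r+1)}P_a$ of dimension $d_2(r+1)$. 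Inserting the initial conditions $\mathcal{F}^0_{-1}=1$ and $\mathcal{F}^0_0=1+x_0+x_0x_d$ into (\ref{genfunc}), (\ref{genfunc2}) and using $F_{r\delta}=(\lambda_+^{r+1}-\lambda_-^{r+1})/(2z)$ from Corollary \ref{homogform2} to eliminate the eigenvalues, the generating functions $\mathcal{F}^0_{2r+1}$ and $\mathcal{F}^0_{2r+2}$ rewrite as $\mathbb{Z}[x_q^{\pm 1}]$-linear combinations of two consecutive $F_{\bullet\delta}$'s.

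The main obstacle is then to match the two resulting coefficients with the claimed polynomials. For (i), the leading coefficient must equal $F_{P_a}=1+x_0+x_0x_a$ (the $r=0$ base case $F_{d_1(0)}=F_{P_a}$), and the sub-leading coefficient must equal $-x^{\underline{\dim}\tau^{-1}P_a}F_{\delta-\underline{\dim}\tau^{-1}P_a}$; the almost split sequence $0\to P_a\to \tau^{-1}P_0\to \tau^{-1}P_a\to 0$ together with the mesh relation at $P_0$ (which gives $\underline{\dim}\tau^{-1}P_0=(3,1,1,1,1)$) yields $\underline{\dim}\tau^{-1}P_a=(2,0,1,1,1)$, whence $\delta-\underline{\dim}\tau^{-1}P_a=s_a$ and the target sub-leading term becomes $-x_0^2x_cx_d(1+x_a)\,F_{(r-1)\delta}$. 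For (ii), the analogous identification produces $F_{\tau^{-1}P_a}F_{(r-1)\delta}-x^\delta F_{(r-2)\delta}$; the coefficient $-x^\delta$ appears directly because $\det(A)=x^\delta$ controls the second-order term of every two-step linear recursion with characteristic polynomial $X^2-F_\delta X+x^\delta$. Both identifications reduce to polynomial identities in $\mathbb{Z}[x_0,x_a,x_b,x_c,x_d]$ that can be verified by direct expansion of (\ref{genfunc}), (\ref{genfunc2}) or, more elegantly, by induction on $r$ using the Chebyshev-like recursion for $F_{r\delta}$ from Lemma \ref{formhom}.
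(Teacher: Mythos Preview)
Your overall strategy coincides with the paper's: set up a $2\times 2$ matrix recursion, observe that its trace is $F_\delta$ and its determinant is $x^\delta$ so that the eigenvalues are the $\lambda_\pm$ of Corollary~\ref{homogform2}, and then feed the initial data into (\ref{genfunc}) and (\ref{genfunc2}) to rewrite everything as $\Z[x_q]$-linear combinations of two consecutive $F_{r\delta}$'s.

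The gap is in the identification step. The snake $\mathcal Q(s,4)$ underlying the Corollary of section~\ref{admissible1} begins with the arrow labeled $d$, so the initial condition you quote, $\mathcal F^0_0=1+x_0+x_0x_d$, is $F_{P_d}$, not $F_{P_a}$. Consequently the iterates $\mathcal F^0_m$ run through the $\tau^{-1}$-orbits of $P_d$ and $P_c$ (one checks for instance that the $5$-vertex subgraph $\mathcal G(1,0)$ has dimension vector $(2,1,1,0,1)=\udim\tau^{-1}P_c$), whereas the roots $d_1(r)$ and $d_2(r)$ of the proposition sit in the orbits of $P_a$ and $P_b$. So the claimed equality $\mathcal F^0_{2r+1}=F_{d_1(r)}$ is false as stated. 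The paper avoids this by not reusing $\mathcal Q(s,4)$ at all: it builds a separate snake starting from the coefficient quiver $\bullet\xrightarrow{a}\bullet$ of $P_a$, glues the $(c,d)$-ramification first and the $(a,b)$-ramification second, and works with initial data $f_0=1$, $f_1=1+x_0+x_0x_a$ and the matrix obtained from yours by swapping $(a,b)\leftrightarrow(c,d)$. (Even then a residual permutation of $q_a$ and $q_b$ enters when identifying $f_{2r}$.) Your argument can be repaired either by making exactly this change of snake, or by proving the $(c,d)$-analogue first and then invoking the evident source-permutation symmetry of $\tilde D_4$; but as written the identification of $\mathcal F^0_{\bullet}$ with $F_{d_i(r)}$ does not hold.

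A minor slip: with $\udim\tau^{-1}P_a=(2,0,1,1,1)$ one has $x^{\udim\tau^{-1}P_a}=x_0^2x_bx_cx_d$, not $x_0^2x_cx_d$.
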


\begin{proof}
Initially, we consider the coefficient quiver of the preprojective representation of dimension $(1,1,0,0,0)$, i.e. $\bullet\xrightarrow{a}\bullet$. By glueing the coefficient quivers
\[\begin{xy}\xymatrix@R10pt@C10pt{&&\bullet&&&\bullet\\&\bullet\ar^d@{<-}[ru]\ar^c@{<-}[rd]&&&\bullet\ar@{<-}^b[ru]\ar@{<-}^a[rd]\\&&\bullet&&&\bullet&}\end{xy}\]
in turn, up to permutation of the arrows, we obtain the coefficient quivers $\mathcal Q(s,4)$ and we can apply Theorem \ref{admissible}. We denote the corresponding representations by $T_1$ and $T_2$ respectively.
Using the notation and results from subsection \ref{admissible1}, we have
\[f_0=1, \,f_1=1+x_0+x_0x_a.\]
Moreover, we have $f_{2r}=F_{d_2(r)}$ and $f_{2r+1}=F_{d_1(r)}$ and 
\[a=-x_0x_cx_d,\quad b=F_{T_1},\quad c=-x_0x_cx_dF_{T_2},\quad d=-x_0x_ax_b+F_{T_1}F_{T_2}.\]
Using $\lambda_+\lambda_-=ad-bc=x^{\delta}$, we also have
\[(a-\lambda_+)(a-\lambda_-)=(a^2+(ad-bc)-a(a+d))=-bc.\]
Since $a,b,c,d$ and hence $z$ and $\lambda_{\pm}$ take the same values as in the proof of Proposition \ref{genfuncfac}, following Corollary \ref{homogform2}, we have
\[F_{r\delta}=\frac{1}{2z}(\lambda_+^{r+1}-\lambda_-^{r+1}).\]
We obtain
\[f_{2r}=-x_0x_cx_dF_{(r-1)\delta}-x^{\delta}F_{(r-2)\delta}+F_{T_1}F_{(r-1)\delta}f_1=F_{\tau^{-1}P_b}F_{(r-1)\delta}-x^{\delta}F_{(r-2)\delta},\]
\begin{eqnarray*}
f_{2r+1}&=&cF_{(r-1)\delta}-(aF_{(r-1)\delta}-F_{r\delta})f_1\\
&=&F_{P_a}F_{r\delta}-x^{\udim \tau^{-1}P_a}F_{\delta-\udim \tau^{-1}P_a}F_{(r-1)\delta}
\end{eqnarray*}
Note that $\delta-\udim \tau^{-1}P_a=\udim S_a$ is the simple root (which is in this case also the injective root respectively) corresponding to the vertex $a$. 
\end{proof}

\begin{rem}Applying the reflection functor to $M_{d_1(r)}$ (resp. $F_{d_1(r)}$), it is straightforward to check that
\[F_{d_2(r)}=(1+x_a^{-1})F_{r\delta}-x^{\udim P_a}F_{\delta-\udim P_a}F_{(r-1)\delta}.\]
Note that $\tau d_1(r)=\hat d_2(r)$, where $\hat d_2(r)$ is defined below.
\end{rem}

The following is straightforward to check:

\begin{lemma}\label{verlaengern2}
Considering the variable transformation of Lemma \ref{verlaengern}, we have
\[(1+x_1)F_{\delta-\udim P_b}(x')=F_{\delta-\udim P_b}(x)\] where the root $\delta - \udim P_b$ on the left hand side is the root $q_0+q_a+q_c+q_d$ of $\tilde D_4$ and on the right hand side it is the root $q_0+2q_1+q_a+q_c+q_d$ of $\tilde D_5$. 
\end{lemma}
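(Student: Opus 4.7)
The plan is to verify the identity by explicit computation of the two $F$-polynomials. I will work in subspace orientation throughout, so that in $\tilde D_5$ all arrows point toward $q_0$ along the spine (in particular $q_1\to q_0$ and $q_c, q_d\to q_1$); this gives $\udim P_b=q_b+q_0$ and recovers the two dimension vectors stated in the lemma.

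First I would compute $F_{\hat M}(x)$ for the indecomposable $\tilde D_4$-representation $\hat M$ of dimension $q_a+q_0+q_c+q_d$, which has $k$ at each of the four non-$q_b$ vertices and nontrivial maps $q_a, q_c, q_d\to q_0$. Stratifying subrepresentations by $\hat U_0 \in \{0, k\}$ and using that $\hat U_x=k$ forces $\hat U_0=k$ for $x\in\{a,c,d\}$ yields
\[
F_{\hat M}(x) = 1 + x_0(1+x_a)(1+x_c)(1+x_d).
\]
For the $\tilde D_5$-representation $M$ of dimension $q_a+q_0+2q_1+q_c+q_d$, the space $M_{q_1}=k^2$ contains three pairwise distinct distinguished lines $L=\ker(q_1\to q_0)$, $L_c=\mathrm{Im}(q_c\to q_1)$, $L_d=\mathrm{Im}(q_d\to q_1)$ (distinctness is forced by indecomposability). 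I would stratify subrepresentations by the pair $(U_0, U_1)$ and apply the Euler-characteristic decomposition $\P^1=\{L,L_c,L_d\}\sqcup(\P^1\setminus\{3\text{ pts}\})$ in the $\dim U_1=1$ stratum. After collecting terms this yields
\[
F_M(x) = (1+x_1) + x_0(1+x_a)\bigl(1+x_1(1+x_c)\bigr)\bigl(1+x_1(1+x_d)\bigr).
\]

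Next, apply the variable transformation (the second-instance analogue of Lemma~\ref{verlaengern}) $x_0'=x_0(1+x_1)$, $x_c'=x_cx_1/(1+x_1)$, $x_d'=x_dx_1/(1+x_1)$, with all other variables unchanged. Direct substitution gives
\[
F_{\hat M}(x') = 1 + \frac{x_0(1+x_a)(1+x_1(1+x_c))(1+x_1(1+x_d))}{1+x_1},
\]
and multiplying by $(1+x_1)$ reproduces $F_M(x)$ exactly, establishing the claim.

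The main subtlety is that this identity is not a direct instance of Lemma~\ref{verlaengern}: the reduction-of-type-one condition $\alpha_c+\alpha_d\le\alpha_1=\alpha_0$ fails by one, since $\alpha_1=2$ but $\alpha_0=1$. The extra factor of $(1+x_1)$ compensates for the additional $1$-dimensional subrepresentation $L=\ker(q_1\to q_0)\subseteq M_{q_1}$, which contributes the $+x_1$ summand in $F_M$ beyond what $F_{\hat M}(x')$ can account for; isolating and counting this extra subrepresentation is the only real bookkeeping task.
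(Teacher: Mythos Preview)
Your proof is correct: the paper gives no argument beyond ``straightforward to check,'' and your explicit computation of both $F$-polynomials followed by direct substitution is precisely the kind of verification the paper has in mind. Your observation at the end---that the hypothesis $\alpha_1=\alpha_0$ of the second reduction of type one fails here (since $\alpha_1=2$ but $\alpha_0=1$) and that the extra factor $(1+x_1)$ accounts for the additional subrepresentation supported on $\ker(q_1\to q_0)$---is the right way to understand why this lemma is stated separately rather than as an instance of Lemma~\ref{verlaengern}.
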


For general $n$, the preprojective roots of defect $-1$ are given by:
\begin{itemize}
	\item $d^m_1(r)$, $m=0,\ldots,n-4$, with $d^m_1(r)_{q_i}=2r+1$ for $i=0,\ldots,m$, $d^m_1(r)_{q_i}=2r$ for $i=m+1,\ldots,n-4$, $d_1^m(r)_{a}=d_1^m(r)_{c}=d_1^m(r)_{d}=r$ and $d_1^m(r)_{b}=r+1$.	We denote by $\hat{d}^m_1(r)$ the resulting root obtained when permuting $q_{a}$ and $q_{b}$.
	\item $d_2(r)$ with $d_2(r)_{q_i}=2r$ for $i=0,\ldots,n-4$, $d_2(n)_{a}=d_2(r)_{c}=d_2(r)_{d}=r$ and $d_2(r)_{b}=r-1$. We denote by $\hat{d}_2(r)$ the resulting root obtained when permuting $q_{a}$ and $q_{b}$.
		
	\item $d^m_3(r)$, $m=0,\ldots,n-4$, with $d^m_3(r)_{q_i}=2r$ for $i=0,\ldots,m$, $d^m_3(r)_{q_i}=2r-1$ for $i=m+1,\ldots,n-4$, $d_3^m(r)_{a}=d_3^m(r)_{b}=d_3^m(r)_{c}=r$ and $d_3^m(r)_{d}=r-1$.	We denote by $\hat{d}^m_3(r)$ the resulting root obtained when permuting $q_{c}$ and $q_{d}$.
	
	\item $d_4(r)$ with $d_4(r)_{q_i}=2r+1$ for $i=0,\ldots,n-4$, $d_4(r)_{a}=d_4(r)_{b}=d_4(r)_{c}=r$ and $d_4(r)_{d}=r+1$.	We denote by $\hat{d}_4(r)$ the resulting root obtained when permuting $q_{c}$ and $q_{d}$.

\end{itemize}

Note that we have $\tau^{-1}d^m_1(r)=\hat d_1^{m+1}(r)$ for $m=0,\ldots,n-5$, $\tau^{-1}d^{n+4}_1(r)=\hat d_2(r+1)$ and $\tau^{-1} \hat d_2(r+1)=d^0_1(r+1)$. The analogous statement holds for $d_3^m(r)$ and $\hat d_4(r+1)$.

We denote by $P_m$ the projective corresponding to the vertex $q_m$ and by $s_q$ the simple root corresponding to $q$. We have $d_1^m(0)=\udim P_m+s_a$.

\begin{prop}\label{subspace3}
Let $\tilde D_n$ be in subspace orientation. Then we have
\begin{enumerate}
\item $F_{d^m_1(r)}=F_{d_1^m(0)}F_{r\delta}-x^{\tau^{-1}d^m_1(0)}F_{\delta-\tau^{-1}d^m_1(0)}F_{(r-1)\delta}$
\item $F_{d_2(r)}=F_{d_2(1)}F_{(r-1)\delta}-x^{\delta}F_{(r-2)\delta}$
\item $F_{d^m_3(r)}=F_{d^m_3(1)}F_{(r-1)\delta}-x^{\tau^{-1}d^m_3(1)}F_{\delta-\tau^{-1}d^m_3(1)}F_{(r-2)\delta}$
\item $F_{d^{n-4}_3(r)}=F_{d^{n-4}_3(1)}F_{(r-1)\delta}-x^{\delta}F_{(r-2)\delta}$
\item $F_{d_4(r)}=F_{d_4(0)}F_{r\delta}-x^{\tau^{-1}d_4(0)}F_{\delta-\tau^{-1}d_4(0)}F_{(r-1)\delta}$
\end{enumerate}
\end{prop}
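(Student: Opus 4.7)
The plan is induction on $n$, with Proposition \ref{subspace2} handling the base case $n=4$ (together with the automorphism group of $\tilde D_4$, which permutes the four outer vertices transitively and thereby identifies the various $d_i$-families with the two cases already treated there). The inductive step uses reduction of type one as encoded in Lemma \ref{verlaengern}, exactly as in the proofs of Lemma \ref{subspace} and Theorem \ref{Fpolyrank2}.

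For the inductive step, fix $n \geq 5$ and let $\alpha$ be a root from one of the five families in the statement. Since $M_\alpha$ has all linear maps of maximal rank, reduction of type one applies at an inner vertex $q_j$ chosen so that the reduced root $\hat\alpha$ in $\tilde D_{n-1}$ lies again in the same family (with the parameter $m$ possibly decreased by one); for example, for $\alpha = d^m_1(r)$ with $m < n-4$ one deletes a vertex $q_j$ with $j \geq m+2$, while for $m = n-4$ one deletes $q_0$. By Lemma \ref{verlaengern}, $F_{M_\alpha}(x) = F_{\hat M}(x')$ for the corresponding substitution $x_q \mapsto x_q'$. Since this substitution is a ring homomorphism, it respects sums and products, so applying it to the inductive formula for $F_{\hat M}$ yields a formula for $F_{M_\alpha}$, and it then remains to identify each transformed term with the corresponding one on the right-hand side of the desired formula in $\tilde D_n$. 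The polynomial $F_{d_1^{m'}(0)}(x')$ becomes $F_{d_1^m(0)}(x)$ by applying the same lemma to the representation $M_{d_1^m(0)}$ itself, which reduces to $M_{d_1^{m'}(0)}$ in $\tilde D_{n-1}$. The factors $F_{r\delta}(x')$ and $F_{(r-1)\delta}(x')$ become $F_{r\delta}(x)$ and $F_{(r-1)\delta}(x)$ because the homogeneous tube representations reduce to homogeneous tube representations (section \ref{homotube}). For the representation $M_{\delta - \tau^{-1}d_1^m(0)}$, which by Theorem \ref{Fpolyrankn2} lies in the exceptional tube of rank $n-2$, the reduction yields the analogous representation in the rank-$(n-3)$ tube of $\tilde D_{n-1}$, so its $F$-polynomial pulls back to the expected one. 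The analogous arguments handle parts (ii)--(v), using the corresponding base cases from $\tilde D_4$ and the automorphism group of $\tilde D_4$.

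The main obstacle is the bookkeeping of the $(1+x_{j+1})$-factors introduced by the substitution $x_q \mapsto x_q'$. Under this substitution the monomial $x^{\tau^{-1}d_1^m(0)}$ picks up a factor $(1+x_{j+1})^\lambda$ for some integer $\lambda$, while the polynomial $F_{\delta - \tau^{-1}d_1^m(0)}(x')$ differs from $F_{\delta - \tau^{-1}d_1^m(0)}(x)$ by an inverse factor; these factors must cancel exactly. This is precisely the direct generalization of Lemma \ref{verlaengern2}, and it reduces to the combinatorial identity that the $q_{j+1}$-components of $\tau^{-1}d_1^m(0)$ and of $\delta - \tau^{-1}d_1^m(0)$ together balance the exponents dictated by the substitution rules in Lemma \ref{verlaengern}. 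This is an explicit calculation that can be carried out case-by-case from the dimension vectors listed before the statement, and once it is checked for each of the five families the inductive step is immediate.
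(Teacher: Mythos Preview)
Your overall strategy---reduce to a smaller $\tilde D$ via Lemma~\ref{verlaengern} and pull the formula back through the substitution---is exactly what the paper does for the passage from $n=5$ to general $n$. The gap is at $n=5$ itself. Reduction of type one needs a subquiver $p_0\to p_1\to p_2$ with $\alpha_{p_1}=\alpha_{p_2}$ (or the ramification variant with $\alpha_{p_0}=\alpha_{p_1}$). For $d_1^0(r)$ in $\tilde D_5$ the inner dimensions are $(2r+1,2r)$, and neither the chain reduction nor the ramification reduction applies: the only length-two path is $q_c\to q_1\to q_0$ (or $q_d\to q_1\to q_0$) with $\alpha_{q_1}=2r\neq 2r+1=\alpha_{q_0}$, and the ramification condition $\alpha_{q_1}=\alpha_{q_0}$ fails for the same reason. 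The same obstruction hits $d_3^0(r)$ in $\tilde D_5$. Your rule ``for $m<n-4$ delete $q_j$ with $j\geq m+2$'' is vacuous here since $m=0$ forces $j\geq 2>n-4=1$. So the induction cannot start at $n=4$; the paper instead establishes $n=5$ separately, obtaining $F_{d_1^1(r)}$ from $\tilde D_4$ by reduction and then reaching $F_{d_1^0(r)}$ via BGP reflections (using $\tau\hat d_1^1(r)=d_1^0(r)$ and Theorem~\ref{refpoly}), and similarly for $d_3^0$. Once $n=5$ is in hand, your inductive step is fine for $n\geq 6$, because then every $d_i^m(r)$ has at least two adjacent equal inner entries.

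A smaller point: your justification that $M_{\delta-\tau^{-1}d_1^m(0)}$ lies in the rank-$(n-2)$ tube is incorrect. Since $d_1^m(0)$ is preprojective of defect $-1$, the vector $\delta-\tau^{-1}d_1^m(0)$ has defect $+1$ and the corresponding indecomposable is preinjective, not regular. This does not by itself break the argument (the representation is still a real root representation of maximal rank type, so Lemma~\ref{verlaengern} applies), but you should replace the appeal to Theorem~\ref{Fpolyrankn2} with the direct observation that reduction of type one sends this preinjective to the analogous preinjective in $\tilde D_{n-1}$, and then verify the $(1+x_{j+1})$-bookkeeping as you outline.
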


\begin{proof}
Initially, we derive the case $n=5$ when applying Lemma \ref{verlaengern} to the results of Proposition \ref{subspace2}. We first obtain
\[F_{d_1^{1}(r)}=F_{d_1^1(0)}F_{r\delta}-x^{\tau^{-1}d^{1}_1(0)}F_{\delta-\tau^{-1}d^{1}_1(0)}F_{(r-1)\delta}.\]
We have $\tau \hat d_1^1(r)= d_1^0(r)$ where $\tau \hat d_1^1(r)$ is obtained from $ d_1^1(r)$ when reflecting successively at the sinks $q_0,q_a,q_b,q_1,q_c$ and $q_d$. Thus, keeping in mind Lemma \ref{hilfslemma} and Theorem \ref{refpoly}, a straightforward calculation yields
\begin{eqnarray*} F_{d_1^{0}(r)}&=&F_{\tau\hat d_1^{1}(r)}=F_{\tau \hat d_1^{1}(0)}F_{r\delta}-x^{\tau^{-1}\hat d_1^{0}(0)}F_{\delta-\tau^{-1}\hat d_1^{0}(0)}F_{(r-1)\delta}\\
&=&F_{d_1^0(0)}F_{r\delta}-x^{\tau^{-1}d^0_1(0)}F_{\delta-\tau^{-1}d^0_1(0)}F_{(r-1)\delta}.
\end{eqnarray*}
Again by Proposition \ref{subspace2} together with Lemma \ref{verlaengern}, we get
\[F_{d_2(r)}=F_{d_2(1)}F_{(r-1)\delta}-x^{\delta}F_{(r-2)\delta}.\]
Moreover, we get
\[F_{d_3^1(r)}=F_{d_3^1(1)}F_{(r-1)\delta}-x^{\delta}F_{(r-2)\delta}.\]
Similar to the case of $d_1^0(r)$, we have $\tau \hat d_3^1(r)=d_3^0(r)$. Let $\sigma:=\sigma_d\sigma_1\sigma_a\sigma_b\sigma_0$. It is straightforward to see that
\[F_{\sigma d_3^1(r)}=F_{\sigma d_3^1(1)}F_{(r-1)\delta}-x^{\delta}F_{(r-2)\delta}.\]
But since $(\sigma d_3^1(0))_c=0$ we get
\begin{eqnarray*}F_{\sigma_c \sigma d_3^1(r)}&=&F_{\sigma_c \sigma d_3^1(1)}F_{(r-1)\delta}-x^{\delta}(1+x_c^{-1})F_{(r-2)\delta}\\
&=&F_{d_3^0(1)}F_{(r-1)\delta}-x^{\tau^{-1}d_3^0(1)}F_{\delta-\tau^{-1}d_3^0(1)}F_{(r-2)\delta}.\end{eqnarray*}

Finally, also the formula for $F_{d_4(r)}$ for $n=5$ is obtained in this way.
In order to obtain the formulae for general $n\geq 4$, we can apply Lemma \ref{verlaengern} starting with the case $n=5$.
\end{proof}
For the remaining part of the subsection, we should keep the following remark in mind: 
\begin{rem}\label{bem1}
\begin{itemize}\item Using Lemma \ref{verlaengern2} together with Equation (\ref{genfunc}), we get
\[F_{d_2(r)}=(1+x_b^{-1})F_{r\delta}-x^{\udim P_b}F_{\delta-\udim P_b}F_{(r-1)\delta}.\]
It is likely that there is a similar formula for $d_3^{n-4}(r)$.
\item If $\alpha\leq \delta$ is a preprojective root such that $\sigma_q(\alpha)>\delta$, then $q$ is a source and $\delta-\alpha$ is the injective simple root corresponding to $q$. 
Indeed,  $\delta-\alpha$ is a preinjective root if $0<\alpha<\delta$ is preprojective. Since the positive non-simple roots are invariant under the Weyl group, $\delta-\alpha$ is forced to be simple.  
In particular, if $\alpha<\delta$ and $\tau^{-1}\alpha<\delta$, we have that $\sigma_q\tau^{-1}\alpha>\delta$ if and only if $\delta-\tau^{-1}\alpha$ is the simple root corresponding to the source $q$. The analogous statement holds if $\alpha$ is preinjective.
\item If $q$ is a sink, we have that $P_q=S_q$. If $t_M$ is preprojective with $\tau^{-1}t_M<\delta$, we have $\sigma_q\tau^{-1}t_M<\delta$ because otherwise $\delta-\tau^{-1}t_M=s_q$ were injective. In turn if $t_M<\delta$ and $\sigma_q\tau^{-1}t_M>\delta$, we already have $\tau^{-1}t_M>\delta$ in which case $\delta-t_M$ is injective.
\item If $\alpha$ is a preprojective root of defect $-1$ of $\tilde D_n$ in subspace orientation, we have that $\delta - \alpha$ is injective if and only if $\alpha=d_2(1)$ or $\alpha=d_3^{n-4}(1)$.

\item If $I_q$ is the injective representation corresponding to $q$ and $q'\neq q$ is a source, we have that $\sigma_{q'}I_q$ is also injective. Note that since $\tilde D_n$ is tree-shaped, we have $\udim (I_q)_{q'}=1$ if and only if there exists a (unique) path from $q'$ to $q$ and $\udim (I_q)_{q'}=0$ otherwise. 

\end{itemize}
\end{rem}

\begin{prop}\label{hilfslemma3}
Let $M$ be a preprojective representation of $\tilde D_n$ such that $t_M:=\udim M-r\delta\leq\delta$. Let $q$ be a sink.
\begin{enumerate}

\item 
Assume that 
\[F_M=F_{t_M}F_{r\delta}-x^{\tau^{-1}t_M}F_{\delta-\tau^{-1}t_M}F_{(r-1)\delta}.\]
If $\tau^{-1}t_M<\delta$ and $s_q\neq t_M\leq \delta$, we have
\[F_{\sigma_qM}=F_{\sigma_qt_M}F_{r\delta}-x^{\sigma_q\tau^{-1}t_M}F_{\delta-\sigma_q\tau^{-1}t_M}F_{(r-1)\delta}.\]
Then we also have $\sigma_q \tau^{-1}t_M<\delta$.
%The same statement holds if $q$ is a source, $s_q\neq\delta-\tau^{-1}t_M<\delta$ and $t_M\leq\delta$.
\item Assume that \[F_M=F_{t_M}F_{r\delta}-x^{\delta}F_{(r-1)\delta}\]
and that $\delta-t_M$ is the injective root corresponding to $q'$. Then we have $\delta-\tau^{-1}t_M=-\udim P_{q'}$. Moreover, if $q\neq q'$, we have  
\[F_{\sigma_qM}=F_{\sigma_qt_M}F_{r\delta}-x^{\delta}F_{(r-1)\delta}.\] 
 and if $q=q'$, (we have $\delta-t_M\neq s_q$ and) we have 
\[F_{\sigma_qM}=F_{\sigma_qt_M}F_{r\delta}-x^{\sigma_q\tau^{-1}t_M}F_{\delta-\sigma_q\tau^{-1} t_M}F_{(r-1)\delta}\]
where $\delta-\sigma_q\tau^{-1} t_M=s_q$.
\end{enumerate}
\end{prop}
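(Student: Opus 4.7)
The strategy for both parts is to take the given formula for $F_M$, substitute the BGP-reflection variable transformation $x\mapsto x'$ of Theorem \ref{refpoly}(i), apply $F_\alpha(x')=(1+x_q^{-1})^{\alpha_q}F_{\sigma_q M_\alpha}(x)$ to each $F$-polynomial factor and $\sigma_q(x^e)=(1+x_q^{-1})^{\sum_p a(p,q)e_p}x^{\sigma_q e}$ from Lemma \ref{hilfslemma}(i) to each monomial factor, and then collect the exponents of $(1+x_q^{-1})$, using the Auslander--Reiten identity of Lemma \ref{hilfslemma}(ii) to make them agree. The invariance $\sigma_q\delta=\delta$ of the imaginary Schur root, together with linearity of $\sigma_q$, ensures that $\sigma_q(\delta-\tau^{-1}t_M)=\delta-\sigma_q\tau^{-1}t_M$ and that the representations $M_{r\delta}$ and $M_{(r-1)\delta}$ in the reflected quiver carry polynomials that we abusively continue to denote $F_{r\delta}$ and $F_{(r-1)\delta}$.

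For part (i), substituting into $F_M(x')=F_{t_M}(x')F_{r\delta}(x')-(x')^{\tau^{-1}t_M}F_{\delta-\tau^{-1}t_M}(x')F_{(r-1)\delta}(x')$, the first term picks up a factor $(1+x_q^{-1})^{(t_M)_q+r\delta_q}$. The second term picks up $(1+x_q^{-1})^{e}$ where
\[
 e \;=\; \sum_{p\in Q_0}a(p,q)(\tau^{-1}t_M)_p \;+\; (\delta-\tau^{-1}t_M)_q \;+\; (r-1)\delta_q.
\]
Applying Lemma \ref{hilfslemma}(ii) to the identity $\tau(\tau^{-1}t_M)=t_M$ gives $\sum_p a(p,q)(\tau^{-1}t_M)_p=(t_M)_q+(\tau^{-1}t_M)_q$, so $e=(t_M)_q+r\delta_q=\dim M_q$, matching the first term and exactly canceling the prefactor $(1+x_q^{-1})^{-\dim M_q}$ in Theorem \ref{refpoly}(i). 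Since $s_q\ne t_M$, the reflected vector $\sigma_q t_M$ is again a positive root, yielding the claimed identity. The inequality $\sigma_q\tau^{-1}t_M<\delta$ is Remark \ref{bem1}(iii): if it failed, $\delta-\tau^{-1}t_M$ would be the simple injective root $s_q$, forcing $q$ to be a source, contradicting the hypothesis that $q$ is a sink.

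For part (ii), I first establish $\delta-\tau^{-1}t_M=-\udim P_{q'}$ by applying the Coxeter transformation $\Phi^{-1}$, which agrees with $\tau^{-1}$ on dimension vectors of non-injective preprojectives, to $\delta-t_M=\udim I_{q'}$; the identities $\Phi^{-1}\delta=\delta$ and $\Phi^{-1}\udim I_{q'}=-\udim P_{q'}$ deliver the claim. The same substitution then yields
\[
 F_M(x') \;=\; (1+x_q^{-1})^{(t_M)_q+r\delta_q}F_{\sigma_q t_M}F_{r\delta} \;-\; (1+x_q^{-1})^{2\delta_q+(r-1)\delta_q}x^{\delta}F_{(r-1)\delta},
\]
where $\sum_p a(p,q)\delta_p=2\delta_q$ comes from Lemma \ref{hilfslemma}(ii) applied to $\tau\delta=\delta$. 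After dividing by $(1+x_q^{-1})^{\dim M_q}=(1+x_q^{-1})^{(t_M)_q+r\delta_q}$, the residual factor on the second term is $(1+x_q^{-1})^{\delta_q-(t_M)_q}=(1+x_q^{-1})^{(\delta-t_M)_q}=(1+x_q^{-1})^{\delta_{q,q'}}$, a Kronecker delta since $\delta-t_M=\udim I_{q'}$ vanishes at every vertex except $q'$ where it equals one. For $q\ne q'$ the residue is $1$ and the formula $F_{\sigma_qM}=F_{\sigma_qt_M}F_{r\delta}-x^{\delta}F_{(r-1)\delta}$ drops out. For $q=q'$ (so $I_{q'}=S_q$ since $q$ is a sink), the residue is $1+x_q^{-1}$, and one verifies compatibility with the stated formula by computing $\sigma_q\tau^{-1}t_M=\sigma_q(\delta+s_q)=\delta-s_q$, so $\delta-\sigma_q\tau^{-1}t_M=s_q$ and $F_{s_q}=1+x_q$, giving $x^{\sigma_q\tau^{-1}t_M}F_{s_q}F_{(r-1)\delta}=x^{\delta-s_q}(1+x_q)F_{(r-1)\delta}=(1+x_q^{-1})x^{\delta}F_{(r-1)\delta}$, as required.

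The main obstacle is the systematic bookkeeping of $(1+x_q^{-1})$-exponents: each monomial and each $F$-polynomial factor contributes such a factor, and all contributions must cancel against the prefactor of Theorem \ref{refpoly}(i). The pivotal input is Lemma \ref{hilfslemma}(ii), which must be applied to both $\tau^{-1}t_M$ (in part (i)) and $\delta$ itself (in part (ii)). The structurally most delicate point is the case $q=q'$ of part (ii), where the naive reflection formula picks up a spurious factor $1+x_q^{-1}$ that is absorbed only by reinterpreting $(1+x_q^{-1})x^{\delta}$ as $x^{\delta-s_q}F_{s_q}$; this is why the exceptional second case is necessary.
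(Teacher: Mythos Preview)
Your proposal is correct and follows essentially the same approach as the paper: apply Theorem~\ref{refpoly} term by term, use Lemma~\ref{hilfslemma} to track the exponents of $(1+x_q^{-1})$, and verify cancellation against the prefactor $(1+x_q^{-1})^{-\dim M_q}$. Your route in part~(ii) via $\sum_p a(p,q)\delta_p=2\delta_q$ (from $\tau\delta=\delta$) is in fact a little more streamlined than the paper's detour through $\udim P_{q'}$.

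One justification needs correcting. You write that $\delta-t_M=\udim I_{q'}$ ``vanishes at every vertex except $q'$ where it equals one''. This is false in general: $(\udim I_{q'})_p$ equals $1$ precisely when there is a directed path from $p$ to $q'$, and this can happen for many $p$. What \emph{is} true, and what you actually need, is the single evaluation $(\udim I_{q'})_q=\delta_{q,q'}$: since $q$ is a sink, there is no directed path from $q$ to any other vertex, so $(\udim I_{q'})_q=0$ for $q\neq q'$ and $=1$ for $q=q'$. With this correction the argument goes through verbatim. The paper arrives at the same conclusion by instead checking $\sum_p a(p,q)(\udim P_{q'})_p=(\udim P_{q'})_q$ for $q\neq q'$ a sink, which encodes the same tree combinatorics from the projective side.
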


\begin{proof}
For a sink $q$, by the second part of Lemma \ref{hilfslemma}, we have 
\[\sum_{p\in Q_0}a(p,q)\tau^{-1}(t_M)_p-(\tau^{-1}t_M)_q=(t_M)_q.\]
Thus we get
\[(t_M+r\delta)_q=\sum_{p\in Q_0}a(p,q)\tau^{-1}(t_M)_p+(\delta-\tau^{-1}t_M)_q+((r-1)\delta)_q.\]
Thus, since $t_M$ is not the simple root corresponding to $q$, the first part follows by Theorem \ref{refpoly}. 

For the second statement, note that $\sigma_{q}(\delta-t_M)$ is preinjective and $\udim I_q=s_{q}$ is the injective root of $\sigma_{q} Q$ corresponding to $q$. Indeed, $q$ is a sink of $Q$ and in turn a source of $\sigma_q Q$. For the first part, it suffices to show that
\[\sum_{p\in Q_0}a(p,q)\delta_p=(t_M)_q+\delta_q.\]
This can be deduced from 
\[\sum_{p\in Q_0}a(p,q)\delta_p-\delta_q=\sum_{p\in Q_0}a(p,q)(\tau^{-1}t_M)_p-(\tau^{-1}t_M)_q\]
which actually follows from $\tau^{-1}t_M-\delta=\udim P_{q'}$. Indeed, since $q$ is a sink and $Q$ tree-shaped, we have $\dim (P_{q'})_q=1$ if and only if there is exactly one neighbour $p$ such that $\dim (P_{q'})_p=1$. 

Since $q$ is a sink, we have $\udim P_q=s_q$. Thus the second part follows because 
\[x^{\delta}(1+x_q^{-1})=x^{\delta-s_q}F_{s_q}=x^{\sigma_q\tau^{-1}t_M}F_{\delta-\sigma_q\tau^{-1}t_M}\]
and $(\delta-\tau^{-1}t_M)_q=1$ in this case.
\end{proof}
Clearly the formulae hold in the other direction as well. This leads to the main result of this section:

\begin{thm}\label{Fpolydef-1}
Let $M$ be preprojective of defect $-1$ such that $t_M=\udim M-r\delta\leq\delta$. If $\delta-t_M$ is injective we have
\[F_M=F_{t_M}F_{r\delta}-x^{\delta}F_{(r-1)\delta}.\]
 If $\delta-t_M$ is not injective we have
\[F_M=F_{t_M}F_{r\delta}-x^{\tau^{-1}t_M}F_{\delta-\tau^{-1}t_M}F_{(r-1)\delta}.\]
\end{thm}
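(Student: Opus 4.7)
The strategy is to reduce to subspace orientation via BGP reflections at sinks, using Proposition \ref{subspace3} as the base case and Proposition \ref{hilfslemma3} for the inductive step. Throughout, I will use the dichotomy from Remark \ref{bem1}: for preprojective $t_M \leq \delta$ of defect $-1$, the condition $\delta-t_M$ injective is equivalent to $\tau^{-1}t_M>\delta$, and in that case $\delta-t_M$ is the simple injective root at some uniquely determined vertex $q'$.

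\textbf{Base case.} Assume $Q$ is in subspace orientation. Every preprojective root of defect $-1$ with $t_M\leq\delta$ appears, up to the permutation of the outer vertices $q_a\leftrightarrow q_b$ or $q_c\leftrightarrow q_d$, as one of the roots $d_1^m(r)$, $d_2(r)$, $d_3^m(r)$ or $d_4(r)$ listed before Proposition \ref{subspace3}. By Remark \ref{bem1} (fourth bullet), $\delta-t_M$ is injective in subspace orientation precisely when $t_M=d_2(1)$ or $t_M=d_3^{n-4}(1)$. Inspection of Proposition \ref{subspace3} shows that for exactly these cases the formula reads $F_M=F_{t_M}F_{r\delta}-x^{\delta}F_{(r-1)\delta}$, while in all remaining cases it reads $F_M=F_{t_M}F_{r\delta}-x^{\tau^{-1}t_M}F_{\delta-\tau^{-1}t_M}F_{(r-1)\delta}$. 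This matches the claimed dichotomy.

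\textbf{Inductive step.} An arbitrary orientation of $\tilde D_n$ is connected to subspace orientation by a finite sequence of BGP reflections at sinks. Suppose the theorem holds for $Q$, and let $Q'=\sigma_q Q$ for a sink $q$ of $Q$; write $M'=\sigma_q M$, so that $t_{M'}=\sigma_q t_M$. If $\delta-t_M$ is not injective and $t_M\neq s_q$, then by Remark \ref{bem1} (third bullet) $\sigma_q\tau^{-1}t_M<\delta$ as well, and Proposition \ref{hilfslemma3}(i) turns the non-injective formula for $F_M$ into the non-injective formula for $F_{M'}$. If $\delta-t_M$ is injective, corresponding to vertex $q'$, then Proposition \ref{hilfslemma3}(ii) gives both subcases: when $q'\neq q$ the injective formula is preserved (now $\delta-t_{M'}$ is the $\sigma_q$-image of $\delta-t_M$, still an injective simple in $Q'$), while when $q'=q$ the formula converts to the non-injective shape with $\delta-\sigma_q\tau^{-1}t_M=s_q$. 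The edge case $t_M=s_q$ at a sink $q$ does not occur, since then $S_q$ would be preinjective, not preprojective. Since Proposition \ref{hilfslemma3} and its dual are reversible under $\sigma_q^2=\mathrm{id}$, the induction propagates the formula from subspace orientation to every orientation.

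\textbf{Main obstacle.} The technical work has already been absorbed into Proposition \ref{hilfslemma3}; the remaining task is purely bookkeeping, namely checking that the dichotomy "$\delta-t_M$ injective" transforms correctly under each reflection, and that no preprojective root of defect $-1$ with $t_M\leq\delta$ escapes the case analysis. Remark \ref{bem1} provides the identifications needed for this, so the verification reduces to the case distinctions listed above.
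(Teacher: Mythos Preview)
Your overall strategy---subspace base case via Proposition \ref{subspace3}, then propagate through sink reflections via Proposition \ref{hilfslemma3}---is exactly the paper's. However, there is a genuine gap in your inductive step.

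You write: ``The edge case $t_M=s_q$ at a sink $q$ does not occur, since then $S_q$ would be preinjective, not preprojective.'' This is false. At a \emph{sink} $q$, the simple $S_q$ equals the projective $P_q$; if $q$ is an outer vertex it has defect $-1$, so $t_M=s_q$ is a perfectly legitimate preprojective root of $Q$. Equivalently, viewed from the target quiver $Q'=\sigma_qQ$: the preprojective $M'$ with $t_{M'}=\delta-s_q$ (where $q$ is now a source of $Q'$, so $\delta-t_{M'}=s_q=\udim I_q$ is injective) is not reached by your parametrization, since its preimage would need $t_M=\sigma_q(\delta-s_q)=\delta+s_q>\delta$; after re-indexing $r$, this is precisely the case $t_M=s_q$ you discarded. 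Proposition \ref{hilfslemma3}(i) explicitly excludes $t_M=s_q$, and part (ii) does not apply either because $\delta-s_q$ is not an injective root of $Q$. So this case is not covered by your case analysis.

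The paper closes this gap with an additional argument: for $t_M=s_q$ one passes to $\tau M$, whose residue is $t_{\tau M}=\delta-\udim I_q$ with $\delta-t_{\tau M}=\udim I_q$ injective, so the induction hypothesis in $Q$ gives the injective-type formula for $F_{\tau M}$. One then reaches $\sigma_qM$ from $\tau M$ by an admissible sequence of reflections at \emph{sources} over all $q'\neq q$ (this composite lands in $\sigma_qQ$ and equals $\sigma_q$ on $M$), and the source-dual of Proposition \ref{hilfslemma3}(ii) together with the last bullet of Remark \ref{bem1} transports the formula unchanged. Finally, since $q$ is a source of $\sigma_qQ$, one has $\udim I_q=s_q$ there, which matches the required shape. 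This extra step is the missing idea in your proposal.
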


\begin{proof}
We proceed by induction. For $\tilde D_n$ in subspace orientation, the statement is true by Proposition \ref{subspace3} keeping in mind Remark \ref{bem1}. 

Now assume that $\alpha_1,\ldots,\alpha_l$ are all preprojective roots of defect $-1$ of $\tilde D_n$ with a fixed orientation such that $\alpha_i\leq\delta$. If $q$ is a sink, we can reflect at $q$ to obtain all preprojective roots $\sigma_q\alpha$ with $\sigma_q\alpha\leq\delta$ of $\sigma_qQ$ except $t:=\delta-s_q$. In particular, we can apply Proposition \ref{hilfslemma3} to obtain the generating functions corresponding to preprojectives of defect $-1$ except those of the form $r\delta+t$. 
Using the notation of Proposition \ref{hilfslemma3}, we are thus left with the case when $t_M=s_q$ where we can assume that
\[F_M=F_{t_M}F_{r\delta}-x^{\tau^{-1}t_M}F_{\delta-\tau^{-1}t_M}F_{(r-1)\delta}\]
because $\delta-s_q$ is not injective.
Since $t_M=s_q$, we have $\tau t_M=-\udim I_q$. In particular, since $\delta-(\delta+\tau t_M)$ is injective, by induction hypothesis, we have
\[F_{\tau M}=F_{\delta-\udim I_q}F_{(r-1)\delta}-x^{\delta}F_{(r-2)\delta}.\]
Now there exists an admissible sequence $$\sigma:=\prod_{\substack{q'\in (\tilde D_n)_0\\q'\neq q}}\sigma_{q'}$$ of reflections at sources such that $\sigma \tau M=\sigma_q M$. Since the first part of the second statement of Proposition \ref{hilfslemma3} clearly also holds in the opposite direction, keeping in mind the last part of Remark \ref{bem1}, we have
\[F_{\sigma_qM}=F_{\delta - \udim I_q}F_{(r-1)\delta}-x^{\delta}F_{(r-2)\delta}.\] 
Since $q$ is a source, we have $\udim I_q=s_q$ and the claim follows.

\end{proof}

 \enlargethispage{2\baselineskip}

\end{document}